\documentclass[11pt]{amsart}
\sloppy

\linespread{1.2}

\usepackage{subfiles}
\usepackage{euscript}
\usepackage{tabu}
\usepackage[margin=1in]{geometry} 		
\usepackage{graphicx}
\usepackage{faktor}
\usepackage{xcolor}
\usepackage{amsmath,amsthm,amssymb}
\usepackage{mathrsfs}
\usepackage{tikz}
\usepackage{tikz-cd}
\usepackage{accents}
\usepackage{upgreek}
\usepackage{enumitem}
\usepackage{bm}
\usepackage{mathtools}
\usepackage[all]{xy}
\usepackage{caption}

\usepackage{url}
\usepackage{float}
\usepackage{todonotes}
\usepackage{bbm}
\usepackage{colonequals}
\usepackage{longtable}

\usepackage[full]{textcomp}
\usepackage[cal=cm]{mathalfa}
\usepackage{xparse}
\usepackage{comment}
\usepackage{cite}
\usepackage{stmaryrd}

\allowdisplaybreaks




\tikzset{
  commutative diagrams/.cd, 
  arrow style=tikz, 
  diagrams={>=stealth}
}


\theoremstyle{definition}

\makeatletter
\def\@tocline#1#2#3#4#5#6#7{\relax
  \ifnum #1>\c@tocdepth 
  \else
    \par \addpenalty\@secpenalty\addvspace{#2}%
    \begingroup \hyphenpenalty\@M
    \@ifempty{#4}{%
      \@tempdima\csname r@tocindent\number#1\endcsname\relax
    }{%
      \@tempdima#4\relax
    }%
    \parindent\z@ \leftskip#3\relax \advance\leftskip\@tempdima\relax
    \rightskip\@pnumwidth plus4em \parfillskip-\@pnumwidth
    #5\leavevmode\hskip-\@tempdima
      \ifcase #1
       \or\or \hskip 1em \or \hskip 2em \else \hskip 3em \fi%
      #6\nobreak\relax
    \dotfill\hbox to\@pnumwidth{\@tocpagenum{#7}}\par
    \nobreak
    \endgroup
  \fi}
\makeatother

\makeatletter
\DeclareRobustCommand{\cev}[1]{%
  \mathpalette\do@cev{#1}%
}
\newcommand{\do@cev}[2]{%
  \fix@cev{#1}{+}%
  \reflectbox{$\m@th#1\vec{\reflectbox{$\fix@cev{#1}{-}\m@th#1#2\fix@cev{#1}{+}$}}$}%
  \fix@cev{#1}{-}%
}
\newcommand{\fix@cev}[2]{%
  \ifx#1\displaystyle
    \mkern#23mu
  \else
    \ifx#1\textstyle
      \mkern#23mu
    \else
      \ifx#1\scriptstyle
        \mkern#22mu
      \else
        \mkern#22mu
      \fi
    \fi
  \fi
}

\makeatother

\usetikzlibrary{calc}
\usetikzlibrary{fadings}
\usetikzlibrary{decorations.pathmorphing}
\usetikzlibrary{decorations.pathreplacing}
\usetikzlibrary{shapes}

\newcounter{marginnote}
\setcounter{marginnote}{0}

\setlength{\parskip}{4pt}

\DeclareMathAlphabet{\mathpzc}{OT1}{pzc}{m}{it}

\usepackage[pagebackref]{hyperref}
\hypersetup{
  colorlinks   = true,          
  urlcolor     = blue,          
  linkcolor    = purple,          
  citecolor   = blue             
}
\usepackage{cleveref}
\theoremstyle{theorem}
\newtheorem{theorem}{Theorem}[section]
\newtheorem*{theorem*}{Theorem}

\newtheorem{corollary}[theorem]{Corollary}
\newtheorem{lemma}[theorem]{Lemma}
\newtheorem{proposition}[theorem]{Proposition}
\theoremstyle{definition}
\newtheorem{remark}[theorem]{Remark}
\newtheorem*{remark*}{Remark}

\newtheorem*{runningexample*}{Running example}

\newtheorem*{aside*}{Aside}

\newtheorem{construction}[theorem]{Construction}

\newtheorem{definition}[theorem]{Definition}
\newtheorem{example}[theorem]{Example}

\newtheorem{notation}[theorem]{Notation}
\newtheorem{proposition-definition}[theorem]{Proposition-Definition}



\newcommand{\NN}{\mathbf{N}}

\newcommand{\Zcal}{\mathcal{Z}}

\newcommand{\Gm}{\mathbb{G}_{\rm{m}}}

\newcommand{\ol}[1]{\overline{#1}}

\newcommand{\Min}[1]{\widetilde\Mcal_{1,#1}}
\newcommand{\disc}[1]{{#1}^{\rm{disc}}}

\newcommand{\bcd}{\begin{center}\begin{tikzcd}}
\newcommand{\ecd}{\end{tikzcd}\end{center}}

\newcommand{\GG}{\mathbf{G}}

\newcommand{\Aaff}{\mathbb{A}}

\newcommand{\PP}{\mathbb{P}}
\newcommand{\OO}{\mathcal{O}}
\renewcommand{\NN}{\mathbb{N}}

\newcommand{\ZZ}{\mathbb{Z}}

\newcommand{\Tail}[1]{\mathbf{T}_{#1}}
\newcommand{\oTail}[1]{\mathbf{T}_{#1}}
\newcommand{\cTail}[1]{\overline{\mathbf{T}}_{#1}}

\newcommand{\Ell}[1]{\mathbf{Ell}_{#1}}
\newcommand{\oEll}[1]{\mathbf{Ell}_{#1}}
\newcommand{\cEll}[1]{\overline{\mathbf{Ell}}_{#1}}

\newcommand{\Ocal}{\mathcal{O}}
\newcommand{\Mcal}{\mathcal{M}}
\newcommand{\Hcal}{\mathcal{H}}

\newcommand{\Dcal}{\mathcal{D}}

\newcommand{\Gcal}{\mathcal{G}}

\newcommand{\Ucal}{\mathcal{U}}

\newcommand{\Pcal}{\mathcal{P}}

\newcommand{\Qcal}{\mathcal{Q}}
\newcommand{\Scal}{\mathcal{S}}

\newcommand{\Mbar}{\ol{\Mcal}}

\newcommand{\logcan}{\omega^{\rm{log}}}

\newcommand{\calG}{\mathcal{G}}
\newcommand{\calO}{\mathcal{O}}

\newcommand{\Spec}{\operatorname{Spec}}


\NewDocumentCommand{\compatibilitydatum}{m m m m m m O{} O{} O{}}{
\begin{equation*} \begin{tikzcd}[ampersand replacement=\&]
  \: \arrow{r} \& {#1} \arrow{r} \arrow{d}{#7} \& {#2} \arrow{r} \arrow{d}{#8} \& {#3} \arrow{r}{[1]} \arrow{d}{#9} \& \: \\
  \: \arrow{r} \& {#4} \arrow{r} \& {#5} \arrow{r} \& {#6} \arrow{r} \& \:
\end{tikzcd} \end{equation*}}

\NewDocumentCommand{\commutingsquare}{m m m m o O{} O{} O{} O{}}{
\begin{equation}\begin{tikzcd}[ampersand replacement=\&] \label{#5}
  #1 \arrow{r}{#6} \arrow{d}{#7} \& #2 \arrow{d}{#8} \\
  #3 \arrow{r}{#9} \& #4
\end{tikzcd}\IfValueTF{#5}{\label{#5}}{} \end{equation}}

\NewDocumentCommand{\cartesiansquare}{m m m m O{} O{} O{} O{}}{
\begin{equation*}\begin{tikzcd}[ampersand replacement=\&]
  #1 \arrow{r}{#5} \arrow{d}{#6} \arrow[dr, phantom, "\square"] \& #2 \arrow{d}{#7} \\
  #3 \arrow{r}{#8} \& #4
\end{tikzcd} \end{equation*}}

\NewDocumentCommand{\cartesiansquarelabel}{m m m m m O{} O{} O{} O{}}{
\begin{tikzcd}[ampersand replacement=\&]
  #1 \arrow{r}{#6} \arrow{d}{#7} \arrow[dr, phantom, "\square"] \& #2 \arrow{d}{#8} \\
  #3 \arrow{r}{#9} \& #4
\end{tikzcd}\IfValueTF{#5}{\label{#5}}{}
}


\NewDocumentCommand{\Nodangleofspaces}{m m m O{} O{} O{}}{
\begin{tikzcd} [ampersand replacement=\&]
#1 \arrow{r}{#4} \arrow[bend right]{rr}{#5} \& #2 \arrow{r}{#6} \& #3
\end{tikzcd}}

\newcommand{\on}{\operatorname}
\newcommand{\oM}{\overline{\mathcal M}}

\newcommand{\Nod}{\mathbf{Nod}}
\newcommand{\cNod}{\overline{\Nod}}

\begin{document}
 
\title[Chow rings of modular compactifications of $\Mcal_{1,n\leq 6}$]{Integral Chow rings of modular compactifications of $\Mcal_{1,n\leq 6}$}
\author[L.~Battistella]{Luca Battistella}
\address[Luca Battistella]{Dipartimento di Matematica, Universit\`{a} di Bologna, Italia}
	\email{luca.battistella2@unibo.it}
\author[A.~Di Lorenzo]{Andrea Di Lorenzo}
	\address[Andrea Di Lorenzo]{Dipartimento di matematica, Universit\`{a} di Pisa, Italia}
	\email{andrea.dilorenzo@unipi.it}

\begin{abstract}
For $n\leq 6$, we compute the integral Chow ring of every modular compactification of $\Mcal_{1,n}$ parametrising only Gorenstein curves with smooth, distinct markings. These include the Deligne--Mumford, Schubert, and Smyth compactifications, and many more. They can all be excised from the stack of log-canonically polarised Gorenstein curves. The Chow ring of the latter admits a simple, combinatorial description, which we compute by patching along a natural stratification by \emph{core level}. We deduce that all these modular compactifications satisfy the Chow-K\"{u}nneth generation property, that the cycle class map is an isomorphism, and for $n=4$ we study whether Getzler's relation holds integrally and for other compactifications.

\end{abstract}

\maketitle
\setcounter{tocdepth}{1}
\tableofcontents

\section{Introduction}
The study of rational Chow rings of moduli spaces of stable curves was initiated by Mumford \cite{Mum}, and continues to this day \cite{Fab, Fab2,  Iza, PenevVakil, CL789}. A complete and explicit computation of the Chow ring is typically only possible as long as the geometry of moduli is not too complicated, e.g. when Chow equals cohomology, and in particular the moduli space is rationally connected (so, only for low values of $g$ and $n$). \emph{Integral} Chow rings are harder to compute, but in general they encode substantially more information. 
Keel computed the integral Chow ring of $\oM_{0,n}$ for every $n\geq3$ \cite{Keel}. In recent years, the study of integral Chow rings of moduli of stable curves has picked up pace \cite{Lar, dilorenzo2021polarizedtwistedconicsmoduli, DLPV, Inchiostro, Per, Bishop}. These computations are usually based on a stratification of the moduli space into pieces that admit a simple, finite quotient presentation, and then \emph{patching} or higher Chow groups.

Based on the properties of alternative compactifications of $\Mcal_{1,n}$ \cite{SmythI,LekiliP,BKN}, in this paper we recognise that, at least for $g=1$ and $n\leq 6$, there is an enlargement of the moduli space of stable curves whose Chow ring admits a very simple, mostly combinatorial description: it is the moduli stack $\Gcal_{1,n}$ of Gorenstein curves polarised by the log canonical bundle. The Chow ring of \emph{any} modular compactification of $\Mcal_{1,n}$ can be computed from this one by excision: these include the Deligne--Mumford space of stable curves, Schubert's space of pseudostable curves, Smyth's spaces of $m$-stable curves, and many more introduced by Bozlee, Kuo and Neff. These are denoted by 
$\oM_{1,n}(Q)$, and they depend on a parameter $Q$ that is a collection of partitions of $[n]:=\{1,2,\ldots,n\}$; for instance, for $n=5$ there are 79,814,831 (!) of these compactifications. In the statement of our main theorem and throughout, we assume the indices $i$, $j$, $h$ and $k$ to be distinct. We also refer the reader to \Cref{not:incomparable} and \Cref{not:disc} for the meaning of $\not\sim$ and ${\disc{-}}$ in the case of subsets or partions of $[n]=\{1,2,\ldots,n\}$.

\begin{theorem*}\label{thm:chow mbar 1}
    For $n\leq 5$, the integral Chow ring of $\Mbar_{1,n}(Q)$ is generated by $\lambda$, the first Chern class of the Hodge line bundle, and by the boundary divisors $\tau_B$, $B\subset [n]$ of cardinality $\geq 2$, parametrising curves with a rational tail marked by $B$. The ideal of relations is generated by:
    \begin{align*}
        K_1(B;i,j,h)=&\tau_B ( \sum_{\substack{i,j\in B' \\ h\notin B'}} \tau_{B'} - \sum_{\substack{i,h\in B'' \\ j\notin B''}} \tau_{B''}) \text{ for }i,j,h \in B;\\
        K_1(B;i,j,h,k)=&\tau_B( \sum_{\substack{i,j\in B' \\ h,k \notin B'}} \tau_{B'} + \sum_{\substack{h,k\in B'' \\ i,j \notin B''}} \tau_{B''}  - \sum_{\substack{i,h\in B''' \\ j,k\notin B'''}} \tau_{B'''}  - \sum_{\substack{j,k\in B'''' \\ i,h\notin B''''}} \tau_{B''''} ), \text{ for }i,j,h,k \in B;\\
        K_2(B_1,\ldots,B_k)=&\tau_{B_1} \cdot \tau_{B_2}\cdot\cdots\cdot\tau_{B_k}, \text{ if there are }i,j\text{ such that }B_i\not\sim B_j\text{ or }\disc{\{B_1,\ldots,B_k\}}\notin Q;\\
        N(B)=&\tau_B(\lambda + \sum_{i,j \in B'} \tau_{B'}), \text{ for any choice of }i,j\in B; \\
        [\cEll{S}]&\text{, for every }S\in Q\text{ (all explicit expressions can be found in \Cref{app:poly}).}
    \end{align*}
    
    For $n=6$, we need an extra generator $\nu$ in codimension $2$, the fundamental class of a locus of curves with two non-separating nodes, and extra relations
    given in \Cref{def:rels 2} and \Cref{def:rels 3}.  

    Furthermore, $\oM_{1,n\leq 6}(Q)$ satisfies the Chow--K\"unneth generation property, the cycle class map $A^*(\oM_{1,n\leq 6}(Q))_{{\mathbb Q}_\ell} \to H^{2*}_{\text{\'{e}t}}(\oM_{1,n\leq 6}(Q),\mathbb Q_\ell)$ is an isomorphism, and the space has polynomial point count.

    Finally, for $n=4$, Getzler's relation \cite{Get, Pand} only holds rationally and for $\Mbar_{1,4}$, while, with a correction term of $12\lambda^2$, it holds integrally for every $Q$.
\end{theorem*}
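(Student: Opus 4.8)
The starting point is the explicit presentations of $A^*(\Mbar_{1,4}(Q))$ furnished by the main theorem, together with the fact that every one of these spaces — including $\Mbar_{1,4}$ itself, which is $\Mbar_{1,4}(Q_0)$ for the parameter $Q_0$ that permits only nodal curves — is an open substack of $\Gcal_{1,4}$. Recall from \cite{Get,Pand} that on $\Mbar_{1,4}$ Getzler's relation is an explicit $\mathfrak{S}_4$-symmetric $\Z$-linear combination $G$ of (decorated) codimension-two boundary strata, vanishing in $A^2(\Mbar_{1,4})_{\Q}$. Each such stratum $Z$ parametrises nodal curves, hence has a well-defined closure $\widetilde Z$ in $\Gcal_{1,4}$; set $\widetilde G := \sum_Z (\mathrm{coeff}_Z)\,[\widetilde Z]\in A^2(\Gcal_{1,4})$. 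Restricting $\widetilde G$ along the open immersions $\Mbar_{1,4}(Q)\hookrightarrow\Gcal_{1,4}$ defines the transported Getzler class $G_Q$ on each modular compactification, with $G_{Q_0}=G$. The plan is to prove the single identity
\[
\widetilde G \;=\; -\,12\,\lambda^2 \qquad\text{in } A^2(\Gcal_{1,4}),
\]
and then restrict it; this yields $G_Q + 12\lambda^2 = 0$ integrally on $\Mbar_{1,4}(Q)$, simultaneously for all $Q$.

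To prove the displayed identity I would work inside the combinatorial presentation of $A^*(\Gcal_{1,4})$ obtained above, rewriting each $[\widetilde Z]$ as a polynomial in the generators $\lambda$ and $\tau_B$ (for $n=4$ no codimension-two generator is needed). For those $Z$ whose closure in $\Gcal_{1,4}$ stays inside the nodal locus this is a direct translation into products $\tau_B\tau_{B'}$ of $K_2$-compatible divisors; for the remaining $Z$ — those whose closure meets the locus of curves with a non-nodal Gorenstein singularity (cusp, tacnode, or worse) — the class $[\widetilde Z]$ picks up a $\lambda$ through the relations $N(B)$ and $[\cEll{S}]$, the coefficient $12$ entering through the genus-one identity $\delta_{\mathrm{irr}}=12\lambda$. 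With these substitutions the assertion $\widetilde G + 12\lambda^2\in I_{\Gcal_{1,4}}$ becomes a finite verification, which I would carry out with the same ``patching along core level'' bookkeeping used to compute $A^*(\Gcal_{1,4})$; the $\lambda^2$-term is produced by the cuspidal (and tacnodal) loci recorded by the $[\cEll{S}]$-relations.

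For the converse — that $G_Q=0$ holds, even rationally, exactly when $Q=Q_0$ — use the identity just proved to reduce to the assertion that $12\lambda^2=0$ in $A^2(\Mbar_{1,4})_{\Q}$ while $12\lambda^2\neq 0$ in $A^2(\Mbar_{1,4}(Q))_{\Q}$ for $Q\neq Q_0$. The vanishing on $\Mbar_{1,4}$ is Mumford's relation $c(\EE)c(\EE^\vee)=1$ for the rank-one Hodge bundle of a family of nodal curves. The non-vanishing for $Q\neq Q_0$ can be read off the presentation: such a $Q$ necessarily admits a genus-one curve with a non-nodal Gorenstein singularity, and the extra $\Gm$ in that curve's automorphism group contributes a $\BGm$-factor to $\Gcal_{1,4}$ along which $\lambda^2\neq 0$, this non-vanishing surviving restriction to $\Mbar_{1,4}(Q)$.

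The main obstacle is the first step: computing $\widetilde G$ explicitly and checking $\widetilde G+12\lambda^2\in I_{\Gcal_{1,4}}$. One must understand precisely how Getzler's boundary strata sit, and self-intersect, inside $\Gcal_{1,4}$ — in particular that their closures acquire exactly the cuspidal corrections encoded by the $[\cEll{S}]$- and $N(B)$-relations, with the factor $12$ coming from $\delta_{\mathrm{irr}}=12\lambda$ — and this is delicate precisely because these are the strata whose modular meaning differs between $\Mbar_{1,4}$ and the other compactifications. Once this translation is established, the statement reduces to polynomial arithmetic in the combinatorial presentation of $A^*(\Gcal_{1,4})$.
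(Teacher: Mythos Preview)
Your overall strategy for the Getzler statement is the same as the paper's: lift $G$ to $A^2(\Gcal_{1,4})$, prove $G+12\lambda^2=0$ there, and restrict. The paper carries out the verification in $\Gcal_{1,4}$ slightly differently from what you sketch: rather than expressing every term of $G$ in the generators $\lambda,\tau_B$ and then checking membership in the ideal, it isolates the one ``non-tautological'' constituent $[\Nod_{2,2}]$, writes down a candidate expression for it (namely $6\lambda^2+6\lambda\tau_3-2\tau_2\tau_3+6\tau_{2,2}+6\lambda\tau_4+3\tau_3\tau_4-2\tau_2\tau_4$), and verifies this single identity stratum by stratum using the values of $[\Nod_{\{i,j\},\{h,k\}}]$ on each $\Min{s_0}$ computed earlier. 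Substituting this and $\tau_0=12\lambda$ into $G$ then yields $G+12\lambda^2=0$ algebraically. Your plan to ``rewrite and check in the ideal via patching'' is equivalent in principle, just less streamlined.

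There is a genuine gap in your converse argument. You argue that for $Q\neq Q_0$ the non-vanishing of $12\lambda^2$ follows because a curve with a non-nodal Gorenstein singularity has ``an extra $\Gm$ in its automorphism group'' contributing a $\BGm$-factor along which $\lambda^2\neq0$. But $\Mbar_{1,4}(Q)$ is Deligne--Mumford: every curve it parametrises has finite automorphism group, and the $\Gm$-points of $\Gcal_{1,4}$ lie precisely in the complement. Non-vanishing on $\Gcal_{1,4}$ does not survive restriction to an open substack for free. The paper's argument is instead structural: once some $\Tail{S}$ are removed, $A^*$ of the resulting open is still a free $\ZZ[\lambda]$-module (same proof as for $\Gcal_{1,4}$), so passing to $\Mbar_{1,4}(Q)$ only imposes the relations $[\cEll{S}]$ for $S\in Q$; since $Q\neq\emptyset$ forces $\{[4]\}\in Q$, the cuspidal locus is present and its class $24\lambda^2$ is a nonzero effective cycle, whence $\lambda^2$ is nonzero and (by freeness of $A^2$ established earlier) non-torsion. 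Your Mumford-relation argument for the vanishing of $\lambda^2$ rationally on $\Mbar_{1,4}$ is fine.
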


\subsection{Strategy of proof}
First, a few more words on the alternative compactifications $\oM_{1,n}(Q)$ of \cite{BKN}. If $Q$ is the empty set, then $\oM_{1,n}(Q)=\oM_{1,n}$; on the other hand, if $Q$ is the whole power set of $[n]$ minus the partition $S_{\max}=\{\{1\},\ldots,\{n\}\}$, then $\oM_{1,n}(Q)$ is the smallest possible compactification of $\Mcal_{1,n}$, which can be identified with Smyth's $\oM_{1,n}(n-1)$. Using $A_{\infty}$-structures, Lekili and Polishchuk \cite{LekiliP} gave a very explicit description of this space, which can be exploited in order to compute its Chow ring. Alas, this description becomes more and more involved as $n$ grows; moreover, $\oM_{1,n}(n-1)$ is singular for $n\geq 7$, hence we do not have access to its Chow ring - for these reasons our current methods apply only up to $n=6$.

We compute the integral Chow ring of every modular compactification of $\Mcal_{1,n\leq6}$ by bootstrapping from $\oM_{1,n}(n-1)$, by the patching technique introduced by Vistoli and the second-named author \cite{dilorenzo2021polarizedtwistedconicsmoduli}: a stack $X$ is written as the union of a closed substack $Z$ and its open complement $U$, and the Chow ring of $X$ is reconstructed from those of $Z$ and $U$, provided the top Chern class of the normal bundle of $Z$ in $X$ is not a zero-divisor; this forces $Z$ (and a fortiori $X$) to be a bona fide Artin stack.

In practice, we achieve this by studying the stack $\Gcal_{1,n}$ of log-canonically polarised Gorenstein curves. This admits two stratifications: the first one, by tail type, is essentially combinatorial, determined by the markings allowed to move onto a rational tail; we denote the strata by $\Tail{S}$, where $S$ is a set partition of $[n]$. The second one, by singularity type, is of a more geometric nature; we denote the strata by $\Ell{S}$. We harness the first stratification to compute $A^*(\Gcal_{1,n})$ by patching: the Chow rings of the strata can be computed inductively on $g$ and $n$. Relations can be lifted easily; the classes $[\cEll{S}]$ can be computed in the same way, although they give rise to rather complicated expressions (see \Cref{app:poly}). Indeed, patching gives a way of computing the fundamental class of any locus $\Zcal\subset \Gcal_{1,n}$, once the value of the restriction of $\Zcal$ to each strata is known.
A simple application of excision yields the Chow ring of $\oM_{1,n}(Q)$, which is an open substack of $\Gcal_{1,n}$ obtained by removing some tail strata and some complementary singularity strata.

The same stratification can be used to prove that the Chow ring of $\Gcal_{1,n}$ is a free $\ZZ$-module (for $n\leq 5$, even a free $\ZZ[\lambda]$-module), and that $\Gcal_{1,n}$ satisfies the Chow-K\"{u}nneth generation property, which in turn implies that the stacks $\oM_{1,n}(Q)$ do as well; from this one deduces that the cycle class map is an isomorphism and that these stacks have polynomial point count, as stated in the main theorem.

The same method allows us also to prove that a modification of the Getzler's relation (obtained by adding the term $12\lambda^2$ to the original one) holds integrally in $\Gcal_{1,4}$, from which we deduce that the original Getzler's relation does not hold integrally on $\Mbar_{1,n}(Q)$ for any $Q$, and that it holds rationally only on $\Mbar_{1,4}$.

\subsection{Relation to previous work}
The rational Chow ring of $\oM_{1,n}$ is known by work of Belorousski \cite{Belorousski}, Getzler \cite{Get} and Petersen \cite{Petersen}. The integral Chow ring of $\oM_{1,n}$ is known: for $n=1$ from the very beginning of the field \cite{EdidinGrahamIntersection}; for $n=2$ from the work of Pernice, Vistoli and the second-named author \cite{DLPV}, and from Inchiostro's work \cite{Inchiostro}; for $n=3$ from Bishop's work \cite{Bishop}; for $n=4$ from our previous paper \cite{BDL1}. In fact, in the latter work, we computed the integral Chow ring of all Smyth's compactifications $\oM_{1,n}(m)$ for $n=3,4$, and $0\leq m\leq n-1$, by realising that they are all related by a zig-zag of weighted blow-ups. The presentations obtained in this paper, compared with the ones obtained in our previous work, have in our opinion the advantage of being more tied to the geometry of the moduli stacks: the generating relations are either of combinatorial type (the ones that hold on $\mathcal{G}_{1,n}$) or of geometric type (the ones corresponding to fundamental classes of excised loci).

\subsection{Organization of the paper}
In \Cref{sec:gor} we recall the classification and main properties of Gorenstein curve singularities of genus one, including various notions of \emph{level} relevant for compactifying and stratifying the moduli stack, and we introduce the stack $\Gcal_{1,n}$ of log-canonically polarised Gorenstein curves.

In \Cref{sec:classes} we focus on $\Min{n}$, the stack of \emph{minimal} Gorenstein curves of genus one, which is the smallest open in the stratification of $\Gcal_{1,n}$ by core level, and appears recursively as the genus one factor in every further stratum. Minimal curves admit a canonical form, computed in complete generality by Lekili and Polishchuk, yielding an explicit description of $\Min{n}$. When $n$ is small, this space and its Chow are simple to describe. We compute the fundamental classes of several loci of elliptic singularities in $\Min{n}$, which is the basic ingredient in order to set up the inductive computation of the Chow ring of $\Gcal_{1,n}$.

In \Cref{sec:chow G} we apply the patching technique in order to obtain a presentation of the integral Chow ring of $\Gcal_{1,n\leq6}^{sm}$ (\Cref{thm:chow G}), the smooth locus of the stack of log-canonically polarised Gorenstein curves; for $n\leq 5$, this is the whole stack, while for $n=6$ we need to carve out one (stacky) point, representing the most singular curve.

Finally, in \Cref{sec:chow M} we compute the integral Chow ring of any modular compactification $\oM_{1,n}(Q)$ of $\Mcal_{1,n}$ for $n\leq 6$ (\Cref{thm:chow mbar 1}), we show that the cycle class map is an isomorphism for all of these spaces (\Cref{prop:cycle}) and we study the integral Getzler's relation for $n=4$.

In \Cref{app:poly} we display explicit expressions for the fundamental classes of loci of singular curves for $n\leq 5$ (we refrained from including the expressions for $n=6$ because they are fairly long and complicated; we wonder whether they might become easier in a different basis).

\subsection{Conventions} Throughout the paper, when we use indices $i$, $j$, $h$ and $k$, we always assume them to be distinct. We work over a field $k$ of characteristic different from $2$ and $3$.

\subsection{Acknowledgments} We thank Sam Molcho, Rahul Pandharipande, Michele Pernice, Tommaso Rossi and Angelo Vistoli for helpful conversations.  We are very grateful to the anonymous referee for their careful comments.

\section{The stack of log-canonically polarised Gorenstein curves}\label{sec:gor}
\subsection{Gorenstein curves of genus one}
We will only be interested in reduced curves. A reduced curve is always Cohen--Macaulay; it is Gorenstein if the dualising sheaf is a line bundle. This is a local property. The simplest example of a Gorenstein singularity is the node (or any plane curve singularity).

Given a projective curve $C$, its singularities contribute to the arithmetic genus of $C$ by the following formula: $g=\delta-m+1$, where $m$ is the number of branches at the singular point, and $\delta$ measures the difference between functions on $C$ and functions on the normalisation (another way of saying it is that $g$ measures the difference between functions on $C$ and functions on the seminormalisation, which is an ordinary $m$-fold point). In particular, rational singularities are precisely ordinary $m$-fold points; of these, only the node is Gorenstein.

Smyth \cite[Appendix A]{SmythI} classified all Gorenstein singularities of genus one.
\begin{definition}
   Let $k$ be a field of characteristic different from $2,3$. A $k$-point of a curve $C$ is called an \emph{elliptic} $m$\emph{-fold point} if the analytic germ of $C$ at $p$ is one of the following:
    \[\hat{\calO}_{C,p}\simeq 
    \left\{ \begin{matrix}
       &k \llbracket x,y \rrbracket/(y^2-x^3) & m=1 &\text{ordinary cusp, } A_2 \\
       &k \llbracket x,y \rrbracket/(y^2-yx^2) & m=2 &\text{ordinary tacnode, }A_3 \\
       &k \llbracket x,y \rrbracket/(x^2y-yx^2) & m=3 &\text{planar triple point, }D_4 \\
       &k \llbracket x_1,\ldots,x_{m-1} \rrbracket/I_m & m\geq 4 &\text{$m$-general lines through the origin of }\Aaff^{m-1}
    \end{matrix}\right.
    \]
    where $I_m$ is the ideal generated by the binomials $x_ix_j-x_ix_h$ for all $i,j,h\in[m-1]$.
\end{definition}

So, a Gorenstein curve of genus one may only have nodes and at most one elliptic $m$-fold point for singularities. It can always be decomposed  into a \emph{minimal elliptic subcurve} (the \emph{core}) and a union of rational tails (trees), nodally attached to it \cite[\S3.1]{SmythI}. The core may be a smooth elliptic curve, a circle of nodally attached $\PP^1$s, or an elliptic $m$-fold point (whose normalisation consists of exactly $m$ copies of $\PP^1$). In any case, the dualising bundle of the core is trivial \cite[\S2.2]{SmythI}. 

\subsection{Levels and strata}
We introduce our main character.
\begin{definition}
 Let $\calG_{1,n}$ denote the moduli stack whose objects are families of Gorenstein curves $C\to S$ of arithmetic genus one, marked with $n$ smooth and distinct points $p_1,\ldots,p_n\colon S\to C$, and such that the log canonical line bundle $\logcan_{C/S}=\omega_{C/S}(p_1+\ldots+p_n)$ is relatively ample.
\end{definition}
The request that $\logcan_C$ be ample implies that every branch of an elliptic $m$-fold point contains at least one special point (marking or node), and every other rational component contains at least three special points (this coincides with the usual Deligne--Mumford stability). It follows that $\calG_{1,n}$ parametrises curves with at worst elliptic $n$-fold points. An elliptic $m$-fold point such that every branch contains exactly one special point has automorphism group $\Gm$ \cite[\S 2.1]{SmythI}, and these are the only points with infinite stabiliser.
\begin{lemma}
 The stack $\calG_{1,n}$ is a quasi-separated algebraic stack of finite type and with affine diagonal over $\Spec(\ZZ[\frac{1}{6}])$. It is irreducible of dimension $n$, and smooth in codimension $6$. In particular, it is a smooth stack for $n\leq 5$, and it has a single singular point for $n=6$.
\end{lemma}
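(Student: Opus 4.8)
The plan is to treat the assertions in three blocks: algebraicity together with the finiteness properties, irreducibility and dimension, and finally the description of the singular locus, reducing each to a standard input plus, in the last case, one genuinely local computation.

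\textbf{Algebraicity and finiteness.} First I would exhibit $\calG_{1,n}$ as an open substack of the algebraic stack parametrising all proper, flat, finitely presented families of curves over $\Spec(\ZZ[\tfrac{1}{6}])$: the conditions ``geometrically connected and reduced'', ``Gorenstein'' and ``arithmetic genus one'' are open, adjoining $n$ sections landing in the relative smooth locus is representable and smooth, and pairwise distinctness of the sections together with relative ampleness of $\logcan_{C/S}$ are again open. Hence $\calG_{1,n}$ is algebraic and locally of finite type, and it is quasi-compact --- so of finite type --- by boundedness: the curves it parametrises have only nodes and at most one elliptic $m$-fold point with $m\le n$, every rational component carries at least three special points, so only finitely many topological types occur, and since $\deg\logcan_{C/S}=2g-2+n=n>0$ a fixed power of $\logcan$ embeds them all into a fixed projective space; this also presents $\calG_{1,n}$ as the quotient of a locally closed subscheme of a Hilbert scheme by $\mathrm{PGL}$, so it is quasi-separated. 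For the affine diagonal I would use that $\logcan$ is intrinsic: any isomorphism of two families over $S$ carries $\logcan$ to $\logcan$ and the markings to the markings, hence for $m\gg 0$ it is induced by a unique isomorphism of the locally free sheaves $\pi_*\logcan^{\otimes k}$ compatible with the graded-algebra structure, realising $\underline{\operatorname{Isom}}_S(C_1,C_2)$ as a closed subscheme of a finite product of $\underline{\operatorname{Isom}}$'s of vector bundles, which is affine over $S$.

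\textbf{Irreducibility and dimension.} Since smoothness of $C\to S$ is an open condition, $\Mcal_{1,n}\subset\calG_{1,n}$ is an open substack; it is irreducible of dimension $n$, so it is enough to show that it is dense. A curve in $\calG_{1,n}$ with no elliptic $m$-fold point is nodal, hence already lies in the open substack $\oM_{1,n}\subset\calG_{1,n}$; a curve whose core is an elliptic $m$-fold point deforms, by Smyth's analysis (\cite[Appendix A]{SmythI}), to a curve in which that point has opened into a cycle of $m$ rational curves --- each such component inheriting two nodes from the cycle plus at least one special point from a branch, so the resulting curve is stable --- and this exhibits it as a limit of points of $\oM_{1,n}$. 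As $\oM_{1,n}$ is irreducible with dense smooth locus, every point of $\calG_{1,n}$ lies in the closure of $\Mcal_{1,n}$, so $\calG_{1,n}$ is irreducible of dimension $n$.

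\textbf{Singular locus, and the main obstacle.} The deformation theory of $\calG_{1,n}$ at $(C,p_\bullet)$ is that of the marked curve. The markings lie in the smooth locus and contribute only unobstructed directions; by the local-to-global spectral sequence and the vanishing of $H^{\ge 2}$ of coherent sheaves on a curve, the obstruction space is the sum of the local obstruction spaces at the singular points. Nodes are lci, an elliptic $m$-fold point with $m\le 3$ is a plane-curve singularity, and one checks that the elliptic $4$-fold point is a complete intersection of two quadrics --- so all of these are unobstructed, and $\calG_{1,n}$ is smooth away from curves whose core is an elliptic $m$-fold point with $m\ge 5$; in particular it is smooth for $n\le 4$. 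For $m=5,6$ the versal deformation of $(C,p_\bullet)$ is smooth over the versal deformation of the singularity germ, the rational tails and any further markings on the branches deforming unobstructedly and independently, so it remains to analyse the versal base of the elliptic $m$-fold point itself. I would do this from the Lekili--Polishchuk canonical form for minimal curves recalled in \Cref{sec:classes}: it is smooth for $m=5$ and acquires an isolated singularity only for $m=6$. Since an elliptic $m$-fold point occurs in $\calG_{1,n}$ only when each of its $m$ branches carries a special point, i.e. only when $m\le n$, this yields smoothness for $n\le 5$; and for $n=6$ the unique curve with an elliptic $6$-fold point is the one carrying exactly one marking on each of the six branches, a single point with automorphism group $\Gm$, i.e. a copy of $\BGm$ of codimension $7$ --- whence ``smooth in codimension $6$''. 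The hard part is precisely this last local computation, that the versal deformation of the elliptic $m$-fold point is smooth for $m=5$ but not for $m=6$; everything else is standard moduli bookkeeping or follows from the structure theory of genus-one Gorenstein singularities.
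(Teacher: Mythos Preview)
Your proposal is correct and follows essentially the same route as the paper's proof: open substack of Hall's stack of all curves for algebraicity, boundedness of the combinatorics for quasi-compactness, smoothability of the genus-one singularities for irreducibility and dimension, and the deformation theory of the elliptic $m$-fold point for the description of the singular locus. The only visible difference is in the last step: the paper cites Smyth \cite[\S4.3]{SmythII} directly for the deformation theory of elliptic $m$-fold points, while you propose to read off the smoothness of the versal base from the Lekili--Polishchuk normal forms for $\Min{m}$; since $\Min{m}\subset\Gcal_{1,m}$ is open and the unique singular point is $[C_{1,m}]$, these two inputs are equivalent, and your reduction (``versal deformation of the marked curve is smooth over that of the singularity germ'') is exactly what makes the link precise.
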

\begin{proof}
 The stack of all curves $\Ucal_{1,n}$ is algebraic, locally of finite type and quasi-separated over $\Spec(\ZZ)$, see \cite[Appendix B]{SmythTowards} by Hall or \cite[\href{https://stacks.math.columbia.edu/tag/0DSS}{Lemma 0DSS}]{stacks-project}. The conditions that $\logcan$ be (i) a line bundle, and (ii) ample are both open, hence $\Gcal_{1,n}\subseteq \Ucal_{1,n}$ is an open algebraic substack and in particular locally of finite type and quasi-separated. 
 
 Observe that for any object $\pi\colon (C,p_1,\ldots,p_n)\to S$ of $\Gcal_{1,n}$ the relative ampleness of $\omega_{\pi}(p_1+\ldots +p_n)$ implies that the number of irreducible components of the geometric fiber is universally bounded by a value $d$; this implies that the stack $\Gcal_{1,n}$ is actually contained in the stack $\mathcal{U}_{1,n,d}$ of curves with a universally bounded number of components, which is quasi-compact \cite[Corollary B.3]{SmythTowards}.
 
  Furthermore, the sheaf $\omega_{\pi}(p_1+\ldots +p_n)^{\otimes 3}$ is relatively very ample, thus \'{e}tale locally on $S$ we have an embedding $C\subset \mathbb{P}^N_S$; it follows that $\operatorname{Aut}_S((C,\sum_i p_i))$ is a closed subgroup scheme of $\operatorname{PGL}_{N,S}\to S$, and hence affine. We deduce that the diagonal of $\Gcal_{1,n}$ is affine.
  
 Every isolated curve singularity of genus one is smoothable. The last statement follows from the deformation theory of elliptic $m$-fold points \cite[\S4.3]{SmythII}.
\end{proof}

It follows from the classical deformation theory of nodes that the locus where a (separating) node persists forms a divisor in $\Gcal_{1,n}$. There is thus an open substack of $\Gcal_{1,n}$ consisting of curves without separating nodes, i.e. curves that coincide with their core; we call such a curve \emph{minimal}. Observe that this condition poses no further restriction on the type of singularities involved.
\begin{definition}
 We denote by $\Min{n}\subseteq\Gcal_{1,n}$ the open substack of minimal curves.
\end{definition}

Smyth used Gorenstein singularities as a replacement for genus one subcurves with fewer special point (markings and separating nodes; he called this number the \emph{level} of the genus one subcurve): in a smoothing one-parameter family of nodal curves, an elliptic $m$-bridge (subcurve of level $m$) can be contracted to an elliptic $m$-fold point. The suggested variation of stability condition preserves properness while reducing the boundary complexity of the moduli space. Several more compactifications have been introduced by Bozlee, Kuo, and Neff, by refining Smyth's notion of level from a number to a partition of the set of markings \cite[Definition 1.4]{BKN}.

\begin{notation}
 Let $S=\{S_1,\ldots,S_k, S_{k+1},\ldots,S_{s_0}\}$ be a partition of $[n]$, with $|S_i|=1$ if and only if $i>k$. We denote by $\ell(S):=s_0$ the number of parts of $S$. We make $\on{Part}([n])$ into a poset by declaring $S_1\preceq S_2$ if and only if $S_2$ is a refinement of $S_1$. 
\end{notation}
\begin{remark}
               $\on{Part}([n])$ is a complete lattice with minimum the coarsest partition $S_{\min}=\{[n]\}$ and maximum the discrete partition $S_{\max}=\{\{1\},\ldots,\{n\}\}$.
\end{remark}
 
\begin{definition}
    Let $C$ be a log-canonically polarised Gorenstein curve with $n$ markings.  We say that $C$ has \emph{core level} $S$ if the core $E$ of $C$ is marked with $S_{k+1}\cup\ldots\cup S_{s_0}$, and the complement $C\setminus E$ consists of $k$ rational trees, $R_i$ being marked with $S_i$, for $i=1,\ldots,k$. We call the length $s_0$ of the partition $S$ the \emph{numerical core level} of $C$ (this was Smyth's original notion).
\end{definition}
\begin{example}\label{exa1}
    Let $C$ be a smooth genus one curve with two rational trees; suppose that the core is marked with $p_6$ and the two rational trees are marked with $\{p_1,p_2,p_3\}$ and $\{p_4,p_5\}$ respectively, the first one being reducible. Then, the core level of $C$ is $S=\{\{1,2,3\},\{4,5\},\{6\}\}$ and the numerical core level is $3$. 
\end{example}
\begin{figure}[h]
\begin{tikzpicture}[grow=right,
  level distance=1.5cm,
  level 1/.style={sibling distance=1.5cm},
  level 2/.style={sibling distance=1cm}]
  
  \node[circle, draw] {$1$}
    child {node[circle, fill=black, inner sep=2pt] {}
      child {node[circle, fill=black, inner sep=2pt] {}
        child {node {1}}
        child {node {2}}}
      child {node {3}}
    }
    child {node[circle, fill=black, inner sep=2pt] {}
      child {node {4}}
      child {node {5}}
    }
    child {node {6}};
\end{tikzpicture}
\caption{The dual graph of the curve in \Cref{exa1}.}
\end{figure}

We may now associate to a partition $S$ a locally closed substack $\oTail{S}$ of $\Gcal_{1,n}$ consisting of all curves of core level $S$. We have the following explicit description:
\begin{equation}\label{Tails}
 \oTail{S}=\Min{s_0}\times\prod_{i=1}^k\oM_{0,1+s_i}.
\end{equation}
 We call this the locus of \emph{$S$-tails}. It has codimension $k$ in $\Gcal_{1,n}$. Since a separating node remains such under degeneration, it is easily seen that: \[\cTail{S}=\cup_{S'\preceq S}\oTail{S'}.\]
\begin{lemma}
 Loci of $S$-tails form a stratification of $\Gcal_{1,n}$. The open stratum is isomorphic to $\Min{n}$.
\end{lemma}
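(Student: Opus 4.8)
The plan is to verify the three defining properties of a stratification — that the loci $\oTail{S}$ are pairwise disjoint locally closed substacks, that they cover $\Gcal_{1,n}$, and that the closure of each is a union of strata — and then to read off the open stratum directly from the product description \eqref{Tails}.

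First I would recall from Smyth's structure theory \cite[\S3.1]{SmythI} that every curve $C$ in $\Gcal_{1,n}$ admits a canonical decomposition into its core $E$ and finitely many rational trees $R_1,\dots,R_k$ meeting $E$ at separating nodes. Ampleness of $\logcan_C$ forces each $R_i$ to carry a marking: an unmarked tail would contain a leaf $\PP^1$ with only one special point, violating stability. Hence the markings distribute into the subset lying on $E$ — recorded as singleton blocks — together with the nonempty blocks $S_1,\dots,S_k$ of markings on the tails, and this is exactly a partition $S\in\on{Part}([n])$, the \emph{core level} of $C$. Since $S$ is an isomorphism invariant, the loci $\oTail{S}$ are pairwise disjoint on points and their union is $\Gcal_{1,n}$; the presentation \eqref{Tails} records the reduced substack structure of each.

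Next I would invoke the closure relation $\cTail{S}=\bigcup_{S'\preceq S}\oTail{S'}$ displayed above, whose content is that a separating node of a curve in $\Gcal_{1,n}$ persists under specialisation: in a degeneration the rational tails of a type-$S$ curve survive, and the core can only shrink, shedding stable tails and thereby merging singletons of $S$ into larger blocks, so the types that occur are precisely the coarsenings $S'\preceq S$. Granting this, local closedness is formal: disjointness gives $\cTail{S}=\oTail{S}\sqcup\bigcup_{S'\prec S}\oTail{S'}$, and since $\bigcup_{S'\prec S}\oTail{S'}=\bigcup_{S'\prec S}\cTail{S'}$ is closed, each $\oTail{S}$ is open in the closed substack $\cTail{S}$, hence locally closed in $\Gcal_{1,n}$. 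The displayed relation is itself the frontier condition, so $\{\oTail{S}\}_{S\in\on{Part}([n])}$ is a stratification.

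Finally, specialising \eqref{Tails} to the discrete partition $S_{\max}=\{\{1\},\dots,\{n\}\}$, for which $k=0$ and there are no tail factors, gives $\oTail{S_{\max}}=\Min{n}$. This stratum is open — it is the locus of curves without separating nodes, already shown to be an open substack of $\Gcal_{1,n}$ — and it is dense, because $S'\preceq S_{\max}$ for every $S'$ forces $\cTail{S_{\max}}=\Gcal_{1,n}$; by disjointness it is then the unique open stratum, proving the second assertion. The one ingredient that is not pure bookkeeping in the lattice $\on{Part}([n])$ is the persistence of separating nodes underlying the closure relation, and I do not expect any serious obstacle once that is in place.
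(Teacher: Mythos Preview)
Your proof is correct and follows exactly the approach implicit in the paper: the lemma is stated there without proof, as an immediate consequence of the preceding discussion (uniqueness of the core decomposition, the product description \eqref{Tails}, and the closure relation $\cTail{S}=\bigcup_{S'\preceq S}\oTail{S'}$ justified by persistence of separating nodes). You have simply spelled out those steps in full.
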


\begin{remark}
 The numerical core level may only decrease in a degeneration. We may therefore coarsen the previous stratification by putting all (closed) strata with the same (or smaller) numerical core level together $\oTail{s_0}=\bigcup_{\lvert S\rvert=s_0}\oTail{S}$ to get a totally ordered stratification:
 \begin{equation}\label{eq:strat G}
  \cTail{1}
  \subset\ldots\subset\cTail{n-1}\subset\cTail{n}=\Gcal_{1,n}.
 \end{equation}
 Notice that every $\Tail{m}$ contains some divisorial components (the codimension of components is bounded above by $\min({m,\lfloor\frac{n}{2}\rfloor})$).
\end{remark}

This is the stratification that we are going to use for patching the Chow ring of $\Gcal_{1,n}$.

There is a second stratification by singularity type, again refined into a partition by Bozlee, Kuo, and Neff. We close this section by introducing the terminology and recalling their result.

\begin{definition}
    Let $C$ be a log-canonically polarised Gorenstein curve with $n$ markings containing an elliptic $s_0$-fold point $q$. We say that $C$ has \emph{singularity level} $S$ if the $i^{\rm th}$ connected component of the normalisation of $C$ at $q$ is marked by $S_i$. The \emph{numerical singularity level} of $C$ is $s_0$.
\end{definition}
\begin{example}
    Let $C$ be a curve of genus one with an elliptic $3$-fold point, and suppose that the branches are marked respectively with $\{1,2\}$, $\{3,4\}$ and $\{5\}$. Then the singularity level of $C$ is $\{\{1,2\},\{3,4\},\{5\}\}$ and its numerical singularity level is $3$.
\end{example}

We may now associate to a partition $S$ a locally closed substack $\oEll{S}$ of $\Gcal_{1,n}$ consisting of all curves of singularity level $S$; we call it the locus of \emph{elliptic $S$-fold points}. If $m<n$ denotes the number of parts of $S$, every curve parametrised by $\oEll{S}$ contains an elliptic $m$-fold point. 

Recall from \cite[Lemma 2.2]{SmythI} that a marked genus one curve with an elliptic $m$-fold point $q$ is determined by its marked normalisation at $q$ (the geometric data $(D_i,\{p_j\}_{j\in S_i},\star_i)$, i.e. a set of semistable curves of genus zero, marked with the original markings together with the preimage of $q$, that we call $\star_i$), 
together with a \emph{generic} hyperplane $V$ in the direct sum of the cotangent lines of $D_i$ at $\star_i$ (the algebraic or "attaching" data), determining the linear functions that descend to the elliptic singularity. The genus one curve is recovered by first gluing together the genus zero curves $D_i$ at the markings $\star_i$ obtaining a curve $\sigma\colon C^{sn} \to C$ homeomorphic to $C$ but with an ordinary $m$-fold point $\star$ in place of $q$, and then defining the sheaf of regular functions around $q$ to be the subsheaf of $\sigma_*\mathcal{O}_{C^{sn}}$ generated by (liftings of the elements of) $V$ together with functions vanishing at $\star_i$ of order at least $2$.

When the connected components of the normalisation are stable marked curves of genus zero, the geometric data $(D_i,\{p_j\}_{j\in S_i},\star_i)$ can be parametrised by the moduli space ${\prod_{i=1}^m\oM_{0,\star_i\cup S_i}}$. To parametrise the attaching data $V$, let $\mathbb L_{0,\star_i}$ be the line bundle over ${\prod_{i=1}^m\oM_{0,\star_i\cup S_i}}$ corresponding to the cotangent line bundle at the universal marking $\star_i$. Then, the hyperplane $V$ is dual to a line in $\bigoplus_{i=1}^m \mathbb L_{0,\star_i}^{\vee}$ and genericity means that the line is not allowed into any of the coordinate hyperplanes. Note that the singletons in $S$ have been omitted, since the corresponding moduli spaces of $2$-pointed rational curves is $B\Gm$, and the automorphism group of the curve acts with weight $\pm1$ on the tangent lines at the markings; in particular $m=s
_0$. The parameter space of geometric plus attaching data is thus the complement  of the coordinate hyperplanes $\PP (H_j)=\PP(\oplus_{i\neq j} \mathbb{L}_{0,\star_i}^{\vee})$ in $\PP(\oplus_{i=1}^{m} \mathbb{L}_{0,\star_i}^{\vee})$. 

Furthermore, Smyth showed that one can obtain a Gorenstein  elliptic singularity even when the line heads into the boundary $\cup_j \PP(H_j)$ - by blowing up at $\star_i$ the universal curve over the corresponding boundary divisor before performing the attachment. 
Thus, the curve $C$ belongs to the stratum $\cap_{j \in J} H_j$ if and only if the irreducible components of its normalisation containing the markings $\star_j$ have only one other special point (either a marking or a node).
In the end one gets the following explicit description, which is a mild variation on \cite[\S2.3]{SmythII} (note that Smyth uses a different convention than ours, namely he denotes $\PP(V)$ the projective bundle of \emph{hyperplanes} in $V$): \begin{equation}\label{Ells}
\oEll{S}=[\on{Tot}(\bigoplus_{i=1}^m \mathbb L_{0,\star_i}^{\vee})_{\prod_{i=1}^m\oM_{0,\star_i\cup S_i}}/\Gm].                                \end{equation}
Here, the only difference with Smyth's construction is that we do not remove the zero-section of the bundle before taking the quotient by $\Gm$. Those points correspond to the elliptic $S$-singularities whose partial normalisation consists only of strictly semistable components. Indeed, for every $i=1,\ldots,m$, the marking $\star_i$ in the normalisation $D_i$ belongs to a component that has only one other special point. We deduce that $(D_i,\{p_j\}_{j\in S_i},\star_i)$ 
has automorphism group $\Gm^{m}$, i.e. one copy of $\Gm$ for every branch of the $m$-fold point. The subgroup of automorphisms that fix the attaching data is a torus of rank $m-1$, thus the curves corresponding to these points are precisely those whose automorphism group is $\Gm$. Equivalently, they belong to $\oTail{S}\cap\oEll{S}$. Finally, the description above shows that $\oEll{S}$ is smooth.

It follows from the deformation theory of elliptic singularities that: \[\cEll{S}=\bigcup_{S\preceq S'}\oEll{S},\]
a closed substack of codimension $s_0+1$ in $\Gcal_{1,n}$. 

\begin{remark}
 The complement of all $\oEll{S}$ in $\Gcal_{1,n}$ is the Deligne--Mumford stack $\oM_{1,n}$.
\end{remark}
Finally, the definition and main properties of $\oM_{1,n}(Q)$:
\begin{definition}[{\cite[Definition 1.7]{BKN}}]
 Let $Q$ be a downward-closed (closed under coarsening) subset of $\on{Part}([n])$ that does not contain $S_{\max}=\{\{1\},\ldots,\{n\}\}$.
 
 An $n$-pointed Gorenstein curve $C$ of arithmetic genus one is $Q$-stable if:
 \begin{enumerate}
  \item for every genus one subcurve $Z\subseteq C$, the level of $Z$ does not belong to $Q$;
  \item if $q\in C$ is an elliptic singularity, the level of $q$ belongs to $Q$.
  \item as a pointed curve, $C$ has finitely many automorphisms.
 \end{enumerate}
\end{definition}

\begin{theorem}[{\cite[Theorems 1.8 and 1.11]{BKN}}]
 Over $\Spec(\ZZ[\frac{1}{6}])$, the moduli stack of $Q$-stable curves $\oM_{1,n}(Q)$ is a modular compactification of $\Mcal_{1,n}$, i.e. an open and proper Deligne--Mumford substack of $\Ucal_{1,n}$ that contains $\Mcal_{1,n}$ as a dense open. Every modular compactification of $\Mcal_{1,n}$ within $\Gcal_{1,n}$ is of this form.
\end{theorem}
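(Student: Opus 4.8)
The plan is to derive everything from the structure of the ambient stack $\Gcal_{1,n}$ established above, following Smyth's method of producing modular compactifications by contracting genus-one subcurves to elliptic $m$-fold points. \emph{Openness and the Deligne--Mumford property.} First I would read the three conditions defining $Q$-stability off the two stratifications of $\Gcal_{1,n}$. The combinatorial crux is that, since $Q$ is downward-closed for $\preceq$, its complement is upward-closed; because the levels of all genus-one subcurves of a curve of core level $S$ lie in the interval $[S,S_{\max}]$, condition~(1) for such a curve holds if and only if $S\notin Q$, condition~(2) excludes exactly the curves whose elliptic singularity level is $\notin Q$, and condition~(3) excludes exactly the $\Gm$-loci $\oTail{S}\cap\oEll{S}$. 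Invoking the closure relations $\cTail{S}=\bigcup_{S'\preceq S}\oTail{S'}$ and $\cEll{S}=\bigcup_{S\preceq S'}\oEll{S'}$ together with the downward-closedness of $Q$, the union of the forbidden tail strata, the complementary singularity strata, and the infinite-automorphism loci is closed in $\Gcal_{1,n}$; hence $\oM_{1,n}(Q)$ is an open algebraic substack of $\Gcal_{1,n}$, and therefore of $\Ucal_{1,n}$, and it is Deligne--Mumford, all points with positive-dimensional stabiliser having been removed. It contains the open substack $\Mcal_{1,n}$ of smooth curves --- a smooth genus-one curve is its own core, of core level $S_{\max}\notin Q$, has no elliptic singularity and has finite automorphisms --- which is dense since $\Gcal_{1,n}$ is irreducible.

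\emph{Properness.} As $\oM_{1,n}(Q)$ is of finite type with affine diagonal over $\Spec(\ZZ[\frac{1}{6}])$, the valuative criterion reduces properness to showing that, for every DVR $R$ with fraction field $K$, each $Q$-stable $C_K/K$ extends, after a finite extension, to a unique $Q$-stable family over $R$. I would start from the stable limit $C^{\mathrm{st}}_R\in\oM_{1,n}(R)$, whose special fibre has a genus-one core $E_0$ with attached rational trees. Among the subcurves obtained by adjoining whole subtrees to $E_0$ there is a \emph{unique maximal} one, $Z_0$, whose level lies in $Q$: enlarging a subcurve coarsens its level, and $Q$ is downward-closed, so the level of a union of two such subcurves still lies in $Q$. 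Contracting $Z_0$ fibrewise to an elliptic $m$-fold point --- realised, following Smyth's construction, as an explicit relative blow-down --- yields an extension whose special fibre has an elliptic point of level $\in Q$, contains no nodal genus-one subcurve, and has $\logcan$-stable surviving tails, hence is $Q$-stable. For uniqueness, any $Q$-stable extension is dominated by $C^{\mathrm{st}}_R$ through a contraction of some subcurve of the special fibre; condition~(2) forces that subcurve's level into $Q$, while condition~(1) --- applied to the nodal genus-one subcurve that survives when one contracts strictly less than $Z_0$ --- forces the contracted locus to be exactly $Z_0$. This establishes separatedness and universal closedness simultaneously.

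\emph{Classification.} Let $X$ be any modular compactification with $\Mcal_{1,n}\subseteq X\subseteq\Gcal_{1,n}$, and put $Q:=\{\,S\in\on{Part}([n])\setminus\{S_{\max}\} : \oEll{S}\cap X\neq\emptyset\,\}$. Since $X$ is open and $\oEll{S}\subseteq\overline{\oEll{S''}}=\cEll{S''}$ whenever $S''\preceq S$, any point of $X\cap\oEll{S}$ forces $X$ to meet $\oEll{S''}$, so $Q$ is downward-closed, and $S_{\max}\notin Q$ by construction. To identify $X$ with $\oM_{1,n}(Q)$ --- both being open and reduced in $\Gcal_{1,n}$ --- it suffices to compare their intersections with each irreducible stratum $\oTail{S}\cong\Min{s_0}\times\prod_i\oM_{0,1+s_i}$ and each $\oEll{S}$: of the finitely many Gorenstein curves obtained from a given nodal curve of core level $S$ by contracting a subcurve containing the core to an elliptic point (the nodal curve itself included), separatedness and universal closedness of $X$ single out exactly one as a point of $X$, and by the definition of $Q$ this is the $Q$-stable one; since each such intersection is open in an irreducible stratum it is then either all of it or empty, exactly as for $\oM_{1,n}(Q)$. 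Hence $X=\oM_{1,n}(Q)$, and since $Q$ is recovered from $\oM_{1,n}(Q)$ by the above formula the assignment is a bijection onto modular compactifications inside $\Gcal_{1,n}$. Alternatively, one may invoke Smyth's classification of modular compactifications by extremal assignments and check that, for $g=1$ within $\Gcal_{1,n}$, these correspond to downward-closed subsets $Q\subseteq\on{Part}([n])$ with $S_{\max}\notin Q$.

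\emph{Main obstacle.} The delicate step is the existence half of properness: producing the contraction $C^{\mathrm{st}}_R\to C^Q_R$ as a \emph{flat} family of Gorenstein curves realising the prescribed elliptic $m$-fold point in the special fibre, and verifying that no surviving rational tail becomes $\logcan$-unstable. This is precisely where Smyth's local study of the elliptic $m$-fold point and its deformation theory is needed, and where the bookkeeping must retain the full partition $Q$ rather than merely the numerical level.
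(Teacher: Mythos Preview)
The paper does not prove this theorem at all: it is stated with the citation \cite[Theorems 1.8 and 1.11]{BKN} and no proof is given, as the result is entirely due to Bozlee, Kuo, and Neff. So there is no ``paper's own proof'' to compare your proposal against.

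That said, your sketch is a reasonable outline of how the BKN argument goes, and your use of the two stratifications of $\Gcal_{1,n}$ (by tail type and by singularity type) together with the closure relations from the paper is the right way to see openness. Your combinatorial observation --- that since $Q^c$ is upward-closed and the levels of all genus-one subcurves of a curve of core level $S$ lie in $[S,S_{\max}]$, condition~(1) is equivalent to $S\notin Q$ --- is correct and is exactly the point that makes the tail-stratification interact cleanly with $Q$-stability.

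Two places where your sketch would need more work to become a proof: in the properness argument, the ``unique maximal subcurve $Z_0$ whose level lies in $Q$'' is correctly identified, but the contraction step (and the verification that the result is flat, Gorenstein, and has exactly the expected elliptic $m$-fold point) is the genuine content of the BKN/Smyth construction and cannot be bypassed; you flag this yourself. In the classification argument, your claim that each stratum intersection with $X$ is ``either all of it or empty'' needs the irreducibility of the strata together with a genuine application of the valuative criterion comparing $X$ to $\oM_{1,n}(Q)$ over a DVR --- the one-sentence appeal to ``separatedness and universal closedness single out exactly one'' is where the real argument lives.
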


\section{Geometry of the open substack parametrising minimal curves} \label{sec:classes}

In this section we explain an explicit description of $\Min{n}$ due to Lekili and Polishchuk, which we exploit in order to describe the Chow rings of strata. We compute the fundamental classes of strata of elliptic $S$-fold points by realising a connection with loci of non-separating nodes.

Considerations of derived categories and homological mirror symmetry led Lekili and Polishchuk to consider the following moduli problem: let $\Ucal_{1,n}^{sns}$ denote the stack of $n$-pointed curves of arithmetic genus one, such that the line bundle $\OO_C(p_1+\ldots+p_n)$ is ample, and $h^0(C,\Ocal(p_i))=1$ - $sns$ stands for \emph{strongly non-special}, which is precisely this condition. The two conditions together imply that every irreducible component of $C$ must contain at least one marking, and that there cannot be any rational tail. Moreover, Lekili and Polishchuk find very explicit normal forms for strongly non-special curves, which allows them to describe the moduli stack explicitly (we will come back to the normal forms in Paragraphs 3.1-5 below): it turns out that these are all Gorenstein curves, and in particular they contain at worst an elliptic $n$-fold point, they are minimal, and therefore $\OO_C(p_1+\ldots+p_n)=\logcan$ (they are log-canonically polarised). In characteristic $2,3$, the cusp and tacnode possess extra automorphisms that make the normal form more complicated and less homogeneous. We will henceforth work over $\Spec(\ZZ[\frac{1}{6}])$ and identify $\Ucal_{1,n}^{sns}$ with $\Min{n}$ without further mention of the former.

Notice that $\Min{n}$ contains a unique point $[C_{1,n}]$ with $\Gm$-stabiliser, corresponding to the elliptic $n$-fold point. Removing it we obtain Smyth's $\oM_{1,n}(n-1)$. Moreover, $\Min{n}$ is smooth for $n\leq 5$, and $[C_{1,6}]$ is the only singular point of $\Min{6}$, so we better pass to $\oM_{1,6}(5)$ when working with $n=6$.

To state the following theorem we need to introduce some notation: we denote by $V_d$ the irreducible $\Gm$-module of weight $d$, and we denote $\mathcal{P}(d_0,\ldots, d_n)$ the weighted projective stack with weights $d_0,\ldots,d_n$, i.e. the quotient stack $[((\oplus_{i=0}^{n} V_{d_i})\smallsetminus\{0\}) /\Gm]$.
\begin{theorem}[{\cite[Proposition 1.1.5, Theorem 1.4.2, Theorem 1.5.7, Proposition 1.6.1]{LekiliP}}]\label{LPtheorem}
 The $\Gm$-bundle corresponding to the Hodge line bundle $\Hcal=\pi_*\omega_{\pi}$ over $\Min{n}$ is an affine scheme (it is an affine space for $n\leq 5$, and it is cut out by quadratic equations for $n\geq 6$, with the cone point corresponding to $[C_{1,n}]$); in particular we have:
  \begin{enumerate}
        \item [(n=1)] $\Min{1} \simeq [V_4\oplus V_6/\Gm]$ {\hfill and $\oM_{1,1}\simeq \Pcal(4,6)$;}
        \item [(n=2)] $\Min{2} \simeq [V_2\oplus V_3 \oplus V_4/\Gm]$ \hfill and $\oM_{1,2}(1)\simeq \Pcal(2,3,4)$;
        \item [(n=3)] $\Min{3} \simeq [V_1\oplus V_2^{\oplus 2} \oplus V_3/\Gm]$  \hfill  and $\oM_{1,3}(2)\simeq \Pcal(1,2,2,3)$;
        \item [(n=4)] $\Min{4} \simeq [V_1^{\oplus 3}\oplus V_2^{\oplus 2}/\Gm]$ \hfill and $\oM_{1,4}(3)\simeq \Pcal(1,1,1,2,2)$;
        \item [(n=5)] $\Min{5} \simeq [V_1^{\oplus 6}/\Gm]$ \hfill and $\oM_{1,5}(4)\simeq \PP^5$;
        \item [(n=6)] $\Min{6}^{sm}=\oM_{1,6}(5) \simeq \on{Gr}(2,5)$.
    \end{enumerate}
Under these identifications, the Hodge line bundle $\Hcal$ is carried to $\OO_{\Pcal}(1)$ (resp. for $n=6$, to the $\OO(1)$ of the Pl\"ucker embedding).

There always is a flat morphism $\Min{n+1}\to\Min{n}$ induced by a $\Gm$-equivariant morphism between the corresponding $\Gm$-modules, identifying the former with the affine universal curve over the latter, whose fibre over $[(C;p_1,\ldots,p_n)]$ is the affine curve $C\setminus(p_1+\ldots+p_n)$. The rational map $\oM_{1,n+1}(n)\dashrightarrow \oM_{1,n}(n-1)$ identifies the projective universal curve over the latter with the blow-up of the former in the $n$ points $P_{i,n+1}$, corresponding to elliptic $n$-fold points with markings $p_i$ and $p_{n+1}$ on the same branch.
\end{theorem}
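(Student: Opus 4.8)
The plan is to follow Lekili and Polishchuk's method: construct explicit \emph{normal forms} for minimal genus one curves, organised by the flag of divisors $D_k=p_1+\cdots+p_k$, and then read the $\Gm$-action off the ambiguity in the construction. First I would describe the $\Gm$-torsor $P\to\Min{n}$ attached to the line bundle $\Hcal=\pi_*\omega_{\pi}$: a point of $P$ over $[(C;p_1,\ldots,p_n)]$ is a minimal curve equipped with a generator $\omega\in H^0(C,\omega_C)$, which exists and is unique up to scale since $C$ coincides with its core, so $\omega_C$ is trivial with $h^0=1$. Riemann--Roch together with minimality gives $h^0(\OO_C(D_k))=k$ and $h^1(\OO_C(D_k))=0$ for $k\ge 1$ (as $\omega_C(-D_k)\simeq\OO_C(-D_k)$ has no section), and likewise for multiples of $D_n$.

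For $n\ge 3$ the log canonical bundle $\logcan_C=\OO_C(D_n)$ is very ample and embeds $C$ as a curve of degree $n$ in $\PP^{n-1}$ (a plane cubic for $n=3$, a space curve thereafter); choosing the basis of $H^0(\OO_C(D_n))$ adapted to the flag and normalising the leading coefficients at the markings against $\omega$, the homogeneous ideal of this embedding becomes a \emph{normal form} depending on finitely many \emph{structure constants}. Rescaling $\omega\mapsto\lambda\omega$ multiplies each structure constant by a fixed power of $\lambda$, so they lie in definite $\Gm$-weight spaces; one checks these are the weights in the statement. This identifies $\Min{n}$ with $[X_n/\Gm]$, where $X_n\subseteq\bigoplus_d V_d^{\oplus m_d}$ is the locus of structure constants cut out by the associativity (equivalently, syzygy/Plücker) relations of the corresponding algebra, and $\Hcal$ with the weight-one $\Gm$-character, i.e. with $\OO_{\Pcal}(1)$ after passing to the quotient.

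The base case is $n=1$, the classical Weierstrass normal form: over $\Spec(\ZZ[\frac{1}{6}])$ one takes $x\in H^0(\OO_C(2p_1))$ and $y\in H^0(\OO_C(3p_1))$ normalised against $\omega$, with the single relation $y^2=x^3+a_4x+a_6$ where $a_4,a_6$ have weights $4,6$; hence $X_1=V_4\oplus V_6$, $\Min{1}=[V_4\oplus V_6/\Gm]$, and removing the cone point — the cuspidal cubic $[C_{1,1}]$ — gives $\oM_{1,1}=\Pcal(4,6)$. For the inductive step I would use the forgetful morphism $\Min{n+1}\to\Min{n}$, whose fibre over $[(C;p_1,\ldots,p_n)]$ is the affine curve $C\setminus(p_1+\cdots+p_n)$ — placing the extra marking, possibly at a node or the elliptic point, where it sprouts a new branch — so that $\Min{n+1}$ is the affine universal curve; flatness can be read off the explicit families, or deduced by miracle flatness once $\Min{n+1}$ is known to be Cohen--Macaulay over a regular base with one-dimensional fibres. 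Propagating the normal form through this tower, one finds that for $n\le 5$ every tuple of structure constants in $\bigoplus_d V_d^{\oplus m_d}$ (with the listed weights) defines a genuine minimal curve, so $X_n$ is all of that affine space; deleting the unique $\Gm$-fixed point $[C_{1,n}]$ then yields $\oM_{1,2}(1)=\Pcal(2,3,4),\ \ldots,\ \oM_{1,5}(4)=\PP^5$. For $n=6$ the dimension count and the associativity constraints together force precisely a system of quadratic equations on ten weight-one constants, which one recognises as the Plücker relations, so $X_6$ is the affine cone over $\on{Gr}(2,5)\subset\PP^9$; its vertex is the lone singular point $[C_{1,6}]$, and removing it gives $\Min{6}^{sm}=\oM_{1,6}(5)=\on{Gr}(2,5)$ with $\Hcal$ carried to the $\OO(1)$ of the Plücker embedding.

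Finally, for the comparison of the two universal curves: over $\oM_{1,n}(n-1)=\Min{n}\setminus[C_{1,n}]$ the \emph{projective} universal curve is the relative minimal (log canonical) model of the affine universal curve $\Min{n+1}$, and away from the collision loci it agrees with $\oM_{1,n+1}(n)$; near a point $P_{i,n+1}$, where $p_{n+1}$ lies on the branch of the elliptic point carrying $p_i$, a local computation with the normal form shows the discrepancy is exactly a blow-up at that reduced point, giving the identification with $\Bl_{P_{1,n+1},\ldots,P_{n,n+1}}\oM_{1,n+1}(n)$. I expect the real difficulty to be in the middle step: proving that the affine scheme of structure constants is \emph{precisely} affine space for $n\le 5$ and \emph{precisely} the Plücker cone for $n=6$ — that no further relations are forced for $n\le 5$, and no spurious components appear — which is the technical core of \cite{LekiliP}, carried out there by a meticulous case-by-case manipulation of the $A_\infty$ (equivalently, multiplicative) normal forms rather than by any soft dimension count.
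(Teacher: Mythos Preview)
The paper does not supply its own proof of this theorem: it is imported wholesale from \cite{LekiliP}, with pinpoint citations to Proposition~1.1.5, Theorem~1.4.2, Theorem~1.5.7, and Proposition~1.6.1 there, and no argument is reproduced. Your sketch is an accurate outline of the Lekili--Polishchuk method (normal forms adapted to the flag $D_k=p_1+\cdots+p_k$, structure constants graded by the $\Gm$-action rescaling a global section of $\omega_C$, the tower of affine universal curves, and the identification of the $n=6$ associativity constraints with Pl\"ucker relations), and you correctly flag that the substantive work --- showing the scheme of structure constants is exactly affine space for $n\le 5$ and exactly the Pl\"ucker cone for $n=6$ --- is a case-by-case computation carried out in \cite{LekiliP} rather than something recoverable by a soft argument.
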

\begin{proposition}\label{ChowMin}
 For $n\leq 5$ we have $A^*(\Min{n})=\ZZ[\lambda]$.
 
 For $n=6$ we have $A^*(\Min{6}^{sm})=\ZZ[\lambda,\nu]/(\lambda^4-\lambda^2\nu-\nu^2,\lambda^5-3\lambda^3\nu+2\lambda\nu^2)$.

 Pullback along the forgetful morphism $\Min{n+1}\to\Min{n}$ identifies the Hodge bundles and $\lambda$-classes.
\end{proposition}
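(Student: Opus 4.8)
The plan is to read both presentations off the explicit models for $\Min{n}$ recorded in \Cref{LPtheorem}, and in each case identify the listed generators with natural geometric classes. For $n\le 5$ there is little to do beyond bookkeeping. \Cref{LPtheorem} presents $\Min{n}$ as a quotient stack $[W/\Gm]$ with $W$ a finite-dimensional linear $\Gm$-representation, and the structure morphism $[W/\Gm]\to B\Gm$ induced by $W\to\op{pt}$ is a vector bundle; homotopy invariance of Chow groups then gives $A^*(\Min{n})\cong A^*(B\Gm)=\ZZ[\lambda]$, a polynomial ring on one degree-one class. To see that this class may be taken to be $c_1(\Hcal)$, I would observe that $\Hcal$ is pulled back from $B\Gm$ and in fact generates $\Pic(\Min{n})\cong\ZZ$: by \Cref{LPtheorem} it restricts to $\OO_{\Pcal}(1)$ on the open complement $\oM_{1,n}(n-1)$ of the cone point $[C_{1,n}]$, which generates the Picard group of a weighted projective stack, and removing a codimension-$(n+1)$ point affects neither $\Pic$ nor $A^1$. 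Hence $\lambda:=c_1(\Hcal)$ generates $A^1$, and therefore $A^*(\Min{n})=\ZZ[\lambda]$.

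For $n=6$, \Cref{LPtheorem} identifies $\Min{6}^{sm}$ with $\on{Gr}(2,5)$, carrying $\Hcal$ to the $\OO(1)$ of the Pl\"ucker embedding, so the task is to rewrite the classical intersection theory of $\on{Gr}(2,5)$ in the variables $\lambda,\nu$. Writing $\Scal$ for the tautological rank-two subbundle and $\Qcal=\OO^{5}/\Scal$ for the rank-three quotient, I would start from the standard presentation
\[
A^*(\on{Gr}(2,5))=\ZZ[c_1(\Scal^\vee),c_2(\Scal^\vee)]/(c_4(\Qcal),c_5(\Qcal)),\qquad c(\Qcal)=c(\Scal)^{-1}.
\]
Since the Pl\"ucker bundle is $\det\Scal^\vee$, we have $\lambda=c_1(\Hcal)=c_1(\Scal^\vee)=\sigma_1$. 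I would then match the codimension-two generator $\nu$ with the special Schubert class $\sigma_2=c_2(\Qcal)$; Pieri's rule $\sigma_1^2=\sigma_2+\sigma_{1,1}$ turns this into $c_2(\Scal^\vee)=\sigma_{1,1}=\lambda^2-\nu$. Substituting $c_1(\Scal^\vee)=\lambda$ and $c_2(\Scal^\vee)=\lambda^2-\nu$ into $c_4(\Qcal)$ and $c_5(\Qcal)$ (computed by inverting $c(\Scal)=1-\lambda+(\lambda^2-\nu)$) gives, after cancellation, $c_4(\Qcal)=-(\lambda^4-\lambda^2\nu-\nu^2)$, and reducing $c_5(\Qcal)$ modulo $c_4(\Qcal)$ produces $\lambda^5-3\lambda^3\nu+2\lambda\nu^2$. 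A comparison of Hilbert series --- the quotient has Hilbert series $(1+t^2)(1+t+t^2+t^3+t^4)=1+t+2t^2+2t^3+2t^4+t^5+t^6$, which matches the Betti numbers of $\on{Gr}(2,5)$ --- confirms that these two relations generate the full ideal.

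The only step with genuine content is the identification $\nu=\sigma_2$, as opposed to $\nu=\sigma_{1,1}$ or some other element of $A^2(\on{Gr}(2,5))=\ZZ\sigma_2\oplus\ZZ\sigma_{1,1}$: the exact shape of the two relations depends on this choice (for instance $\nu=\sigma_{1,1}$ would give $\lambda^4-3\lambda^2\nu+\nu^2$ instead). I expect to settle it either by unwinding the Lekili--Polishchuk normal forms to exhibit the locus of curves with two non-separating nodes as the vanishing of a Pl\"ucker coordinate, hence a Schubert variety whose class one reads off directly, or --- more in keeping with the inductive strategy of \Cref{sec:classes} --- by combining the fundamental-class computations for elliptic-singularity strata in $\Min{n}$ with the dictionary relating those strata to loci of non-separating nodes. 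Everything else is a transcription of well-known facts about the Grassmannian.
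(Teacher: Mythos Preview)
Your proof is correct and follows the same route as the paper: homotopy invariance over $B\Gm$ for $n\le5$, and the standard Schubert presentation of $A^*(\on{Gr}(2,5))$ for $n=6$. The one point you flag as needing work --- the identification $\nu=\sigma_2$ --- is in the paper simply taken as the \emph{definition} of $\nu$ at this stage; the geometric identification with the banana-curve locus $[\cNod_{\{1,2,3,4\},\{5,6\}}]$ is established separately in \Cref{lm:schubert classes}, by precisely the Schubert-variety argument you sketch.
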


\begin{proof}
Assume $n\leq 5$. From \Cref{LPtheorem} we have that $\Min{n}\simeq [W/\Gm]$ for some finite rank representation $W$; in particular, it is a vector bundle over the classifying stack $B\Gm=[\Spec(\bf{k})/\Gm]$ and the induced map to $B\Gm$ classifies the $\Gm$-torsor associated to the Hodge line bundle. As pulling back along vector bundles induces an isomorphism of Chow rings, we deduce that $A^*(\Min{n})=A^*(B\Gm)=\mathbb{Z}[t]$. The generator $t$ is equal to the pullback of the class of universal line bundle over $B\Gm$, and hence we can identify it with the Hodge line bundle $\lambda$.

 We explain the notation in the case $n=6$. Chow rings of Grassmannians are well known \cite[\S\S 4.3 and 5.6]{3264}, so we only highlight their main features. There is a short exact sequence of tautological vector bundles
 \[0\to\Scal\to\OO_G^{\oplus 5}\to\Qcal\to 0,\]
 and by setting $s_i=c_i(\Scal)$ and $q_j=c_j(\Qcal)$ we obtain generators for the Chow ring. We may set $\lambda=q_1$ (as explained below) and $\nu=q_2$. The relations are then given by the homogeneous summands of the polynomial $c(\Scal)c(\Qcal)-1$, where $c(-)$ indicates the total Chern class:
 \[1=(1+s_1+s_2)(1+\lambda+\nu+q_3)\Rightarrow\]
 \begin{align*}
     0=&s_1+\lambda&\Rightarrow &s_1=-\lambda,\\
     0=&s_2+s_1\lambda+\nu&\Rightarrow &s_2=\lambda^2-\nu,\\
     0=&s_2\lambda+s_1\nu+q_3&\Rightarrow &q_3=2\lambda\nu-\lambda^3,\\
     0=&s_2\nu+s_1q_3&\Rightarrow &0=\lambda^4-\lambda^2\nu-\nu^2,\\
    0=&s_2q_3&\Rightarrow&0=\lambda^5-3\lambda^3\nu+2\lambda\nu^2.\\
 \end{align*}
 
 The following set of generators is more useful for our purposes: for a fixed complete flag
 \[F:0\subset F_1\subset F_2\subset F_3\subset F_4\subset F_5=V,\]
 and $3\geq a_1\geq a_2\geq 0$, the Schubert cycles $\Sigma_{a_1,a_2}(F)\subseteq\on{Gr}(2,5)$ are defined as the loci of lines $\ell\subset \PP(V)$ that intersect $\PP(F_{5-a_1})$ and are contained in $\PP(F_{6-a_2})$. They are closed subvarieties of codimension $a_1+a_2$. Schubert classes, denoted by $\sigma_{a_1,a_2}$ or simply $\sigma_{a_1}$ in case $a_2=0$, form an additive basis of $A^*(\on{Gr}(2,5))$. 
 From Pieri's formula it follows that $\sigma_{a_1,a_2}=\sigma_{a_1}\sigma_{a_2} - \sigma_{a_1+1}\sigma_{a_2-1}$, with the convention that $\sigma_a=0$ for $a>3$. In particular, the special Schubert classes $\sigma_1$ and $\sigma_2$ are multiplicative generators.

The $\OO(1)$ of the Pl\"ucker embedding can be identified with $\on{det}(\Qcal)$. Moreover, the vanishing locus of the Pl\"ucker coordinate $p_{45}$ can be identified with the locus of lines intersecting the codimension two linear subspace $F_3=\{z_4=z_5=0\}$, that is $\Sigma_1(F)$.
Thus $\lambda=q_1=\sigma_1$.
 
 The isomorphism between the two presentations is given by:
 \[s_1=-\sigma_1, s_2=\sigma_{1,1}, (\lambda=)q_1=\sigma_1, (\nu=)q_2=\sigma_2, q_3=\sigma_3.\]

The last statement requires no restrictions on $n$. Lekili and Polishchuk identify $\Min{n}$ with the $\Gm$-quotient of an explicit affine variety - that is nothing but the total space of the $\Gm$-bundle associated to the Hodge line bundle over $\Min{n}$ - and the forgetful morphism $\Min{n+1}\to\Min{n}$ is induced by a $\Gm$-equivariant map between the corresponding affine varieties, therefore it identifies the respective Hodge bundles.
\end{proof}
\begin{definition}
Let $\Nod_S$ denote the closure of the locus of curves with $\ell(S)$ non-separating nodes and that many rational irreducible components in the core, each marked with a part $S_i\subseteq [n]$.
\end{definition}
The main goal of this section is calculating the fundamental classes of singularity strata in the open stratum $\Min{n}^{sm}$. We exploit Lekili and Polishchuk's normal forms.  We will also use, without further mention, the natural action of the symmetric group $S_n$ on $\Min{n}$, inducing a trivial action on $A^*(\Min{n})$\footnote{It is clear that the action is trivial on $\lambda$. For $n=6$, one has $\nu=\Nod_{\{i,j,h,k\},\{\ell,m\}}$ for any choice of the indexes \cite[\S 2.1]{Coskun}, hence $\nu$ is invariant as well. This can also be proved directly by looking at the degree $4$ relation $\nu^2=\lambda^4-\lambda^2\nu$, which implies that either $\nu$ is fixed, or $\nu\mapsto -\lambda^2-\nu$. Plugging the latter into the degree $5$ relation $\lambda^5-3\lambda^3\nu-2\lambda\nu$ one gets $6\lambda^5+6\lambda^3\nu+\lambda\nu$, which does not belong to the ideal of relations since the following matrix is full rank:
\[\begin{pmatrix}
    1&-1&-1\\ 1&-3&2\\ 6&6&1
\end{pmatrix}.\]}; it follows that $[\Ell{S}]=[\Ell{\sigma\cdot S}]$ for every partition $S$ and every permutation $\sigma$. 

We next establish the relationship between strata of elliptic singularities and loci of non-separating nodes. For an intuition, consider a minimal genus one curve obtained by gluing two $\PP^1$s at two nodes. What happens when the nodes come together? Now, the two $\PP^1$s are joined at a single point, but the arithmetic genus of the curve must still be one, hence that point must be a tacnode. The general idea is then to express loci of elliptic singularities as the intersection of loci of nodal singularities and certain appropriate cycles.

\begin{lemma}\label{lem:nod}
    The relation $\Nod_{[n]}=12\lambda$ holds in $A^1(\Min{n})$ for every $n\geq 1$.
\end{lemma}
\begin{proof}
    $\Nod_{[n]}$ can be identified with the discriminant locus of singular curves. For $n=1$ we have $\Nod_{[1]}=V(27b^2+16a^3)$ (see \S3.1 below for the equations of the universal curve), hence the identification of $[\Nod_{[1]}]$ with $12\lambda$. The formula then holds for every $n$ by pulling it back along the forgetful morphism $\Min{n+1}\to\Min{n}$.
\end{proof}
\begin{remark}
    $\Nod_S$ is irreducible for $\ell(S)\leq 3$ and reducible otherwise, since the ordering of the parts is relevant up to cycling and inverting. 
\end{remark}

\subsection{$n=1$} We have an identification $\Min{1}\simeq [V_{4,6}/\Gm]$, with coordinates $(a,b)$, and affine universal curve $y^2-x^3=ax+b$ in $\Aaff^2_{x,y}$ \cite[Eqn. (1.11)]{LekiliP}.
\begin{lemma}\label{lem:cusps}
 $[\Ell{[1]}]=24\lambda^2\in A^*(\Min{1})$ and $[\Ell{[n]}]=24\lambda^2\in A^*(\Min{n})$ for every $n\geq1$.
\end{lemma}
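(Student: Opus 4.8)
The plan is to compute the class for $n=1$ directly from Lekili--Polishchuk's Weierstrass normal form, and then to deduce the general statement by flat pull-back along the composition $g\colon\Min{n}\to\Min{1}$ of the universal-curve morphisms of \Cref{LPtheorem}. For $n=1$, on $\Min1\simeq[V_4\oplus V_6/\Gm]$ with coordinates $(a,b)$ the affine universal curve is $y^2-x^3=ax+b$, and homogeneity of this equation forces $x,y$ to carry $\Gm$-weights $2,3$. The associated genus-one curve has an ordinary cusp exactly when $x^3+ax+b$ has a triple root; since the roots sum to $0$, this happens if and only if $a=b=0$. Thus $\Ell{[1]}$ is the zero section of the rank-two bundle $[V_4\oplus V_6/\Gm]\to\BGm$, so
\[
[\Ell{[1]}]=c_{\mathrm{top}}(V_4\oplus V_6)=c_1(V_4)\,c_1(V_6)\in A^*(\BGm)=A^*(\Min1).
\]
The Hodge line bundle is spanned over the cusp by the invariant differential $\dd x/y$, of $\Gm$-weight $2-3=-1$, so $\lambda=-c_1(V_1)$ and $[\Ell{[1]}]=c_1(V_4)\,c_1(V_6)=24\,c_1(V_1)^2=24\lambda^2$; excising the zero section recovers the classical $A^*(\oM_{1,1})=\ZZ[\lambda]/(24\lambda^2)$.

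For $n\geq 2$, let $g\colon\Min{n}\to\Min{1}$ be the flat composite of the universal-curve morphisms $\Min{k}\to\Min{k-1}$ (for $n=6$ we restrict $g$ to the open substack $\Min{6}^{sm}$, where it stays flat). I claim $g^{-1}([C_{1,1}])=\cEll{[n]}$ scheme-theoretically, where $\cEll{[n]}$ is the closure of the cuspidal locus. Set-theoretically, a minimal $n$-pointed curve $C$ has an elliptic singularity if and only if $g(C)=[C_{1,1}]$: forgetting all markings but $p_1$ and stabilising contracts every branch of an elliptic $m$-fold point except the one carrying $p_1$, hence leaves a cusp, whereas the core of a curve with at worst non-separating nodes (or none) stays a smooth elliptic curve, a cycle of $\PP^1$s or a nodal cubic under stabilisation and never becomes a cusp. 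Since $g$ is flat with geometrically reduced fibres, $\{[C_{1,1}]\}$ is reduced and $\cEll{[n]}$ is irreducible, the scheme $g^{-1}([C_{1,1}])$ is reduced and irreducible, hence equals $\cEll{[n]}$. Flat pull-back then gives $[\Ell{[n]}]=[\cEll{[n]}]=g^*[\{[C_{1,1}]\}]=g^*[\Ell{[1]}]$, and $g^*\lambda=\lambda$ because the Hodge bundle is pulled back along each universal-curve morphism (it depends only on the core, unchanged by stabilisation); therefore $[\Ell{[n]}]=g^*(24\lambda^2)=24\lambda^2$. (One could equally run this as a one-step induction along $\Min{n}\to\Min{n-1}$.)

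The delicate step is the scheme-theoretic identity $g^{-1}([C_{1,1}])=\cEll{[n]}$: it rests on flatness of $g$ with reduced fibres and on irreducibility of $\cEll{[n]}$ — the latter follows from presenting $\Ell{[n]}$ as a quotient of a configuration space of $n$ points on the smooth locus (an affine line) of the cuspidal cubic by its automorphism group — and one must keep track of the stabilisation in the forgetful maps, which can merge several branches of an elliptic point into one, so that what pulls back off $[C_{1,1}]$ is the whole closure $\cEll{[n]}$ rather than merely the open cuspidal stratum. For $n=6$ there is the extra wrinkle that $\Min{6}$ is singular at $[C_{1,6}]$ and that $\Min{6}\to\Min{5}$ only coincides with the universal curve after the blow-up/down of the points $P_{i,6}$ of \Cref{LPtheorem}; passing to $\Min{6}^{sm}=\oM_{1,6}(5)$ deletes the bad point, $g$ still restricts to a flat morphism there, and the identity holds in $A^*(\Min{6}^{sm})=\ZZ[\lambda,\nu]/(\lambda^4-\lambda^2\nu-\nu^2,\lambda^5-3\lambda^3\nu+2\lambda\nu^2)$.
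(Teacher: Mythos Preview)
Your proof is correct and follows exactly the paper's approach: compute the $n=1$ case directly from the Weierstrass presentation as the equivariant class of the origin in $[V_4\oplus V_6/\Gm]$, then pull back along the flat forgetful morphism $\Min{n}\to\Min{1}$. The paper's own proof is just two sentences---it asserts that the origin has class $4\lambda\cdot 6\lambda=24\lambda^2$ and that the second claim ``follows from pullback along the forgetful map''---whereas you carefully verify the scheme-theoretic identity $g^{-1}([C_{1,1}])=\cEll{[n]}$ and the compatibility $g^*\lambda=\lambda$, which is exactly what is needed to make the paper's one-line argument rigorous. Your invocation of irreducibility of $\cEll{[n]}$ is in fact superfluous: once you know $g$ is flat with reduced fibres and the set-theoretic equality holds, reducedness of the preimage already forces the scheme-theoretic equality, since $\cEll{[n]}$ carries the reduced structure by definition.
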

\begin{proof}
 The unique cuspidal curve corresponds to the origin $\{a=b=0\}$ of $[V_{4,6}/\Gm]$, with $\Gm$-equivariant fundamental class $4\lambda\ \cdot 6\lambda=24\lambda^2$, hence the first claim. The second claim follows from pullback along the forgetful map $\Min{n+1}\to\Min{n}$ of \Cref{LPtheorem}.
\end{proof}

\subsection{$n=2$}\label{sub:n=2}
We have an identification $\Min{2}\simeq [V_{2,3,4}/\Gm]$, with coordinates $(a,b,c)$, and affine universal curve $y^2-yx^2=a(y-x^2)+bx+c$ in $\Aaff^2_{x,y}$ \cite[Eqn. (1.9)]{LekiliP}.

\begin{lemma}\label{n2}
 $[\Nod_{\{1\},\{2\}}]=12\lambda^2$ and $[\Ell{\{1\},\{2\}}]=2\lambda\cdot[\Nod_{\{1\},\{2\}}]$.
 
 Moreover for $n\geq 2$ and for every partition $S=\{S_1,S_2\}$ we have $[\Ell{S_1,S_2}]=2\lambda\cdot [\Nod_{S_1, S_2}]$ in $A^*(\Min{n}^{sm})$.
\end{lemma}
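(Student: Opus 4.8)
\emph{Plan.} I would first pin down the case $n=2$ by hand from the Lekili--Polishchuk normal form, and then propagate the identity $[\Ell{S_1,S_2}]=2\lambda\cdot[\Nod_{S_1,S_2}]$ to all $n$ by pulling back along the universal‑curve maps $\Min{n+1}\to\Min{n}$, supplemented by one geometric observation about how $\cEll{S_1,S_2}$ sits inside $\cNod_{S_1,S_2}$.

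\emph{Case $n=2$.} Read $y^2-yx^2=a(y-x^2)+bx+c$ as a monic quadratic in $y$: over $\ZZ[\tfrac16][a,b,c]$ it factors, as a polynomial in $x,y$, exactly when $b=c=0$, in which case it becomes $(y-x^2)(y-a)$, a conic glued to a line; no vertical line, nor the line at infinity, can occur as a component. Hence $\Nod_{\{1\},\{2\}}$ is the $\Gm$‑invariant complete intersection $\{b=c=0\}\subset[V_2\oplus V_3\oplus V_4/\Gm]$, of class $3\lambda\cdot 4\lambda=12\lambda^2$ (the coordinates $b,c$ having weights $3,4$). When moreover $a=0$ the two intersection points $x^2=a$ coalesce into a tacnode, so $\Ell{\{1\},\{2\}}$ is the origin $\{a=b=c=0\}$, i.e.\ the unique curve $[C_{1,2}]$ with $\Gm$‑stabiliser, of class $2\lambda\cdot 3\lambda\cdot 4\lambda=24\lambda^3=2\lambda\cdot 12\lambda^2$.

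\emph{General $n$.} Put $E_{S_1,S_2}:=[\cEll{S_1,S_2}]-2\lambda\cdot[\cNod_{S_1,S_2}]\in A^\ast(\Min{n}^{sm})$; the aim is $E_{S_1,S_2}=0$ for every $2$‑part partition. For the forgetful map $\pi\colon\Min{n+1}\to\Min{n}$ one has $\pi^\ast\lambda=\lambda$ (the Hodge bundle sees only the underlying curve), and, $\pi$ being flat with reduced, purely $1$‑dimensional fibres, a dimension count gives $\pi^\ast[\cNod_{T_1,T_2}]=[\cNod_{T_1\cup\{\ast\},T_2}]+[\cNod_{T_1,T_2\cup\{\ast\}}]$ and the analogous decomposition for $\cEll{T_1,T_2}$, the two terms recording on which branch of the generic banana the restored marking $\ast$ lies. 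Using in addition $S_{n+1}$‑symmetry, these classes depend only on the unordered pair of part‑sizes, and pulling back the inductively known relation $E\equiv 0$ on $\Min{n}$ yields $E_{\{a,b\}}=-E_{\{a-1,b+1\}}$ whenever $a\ge 2$, $a+b=n+1$. Thus every $E_{\{a,b\}}$ equals $\pm E_{\{1,n\}}$; when $n+1$ is odd this is forced to $0$ by $E_{\{1,n\}}=E_{\{n,1\}}$ and torsion‑freeness of $A^\ast(\Min{n}^{sm})$, which settles $n=3,5$ from $n=2,4$. For $n=4$ and $n=6$ one extra input is needed, e.g.\ a direct computation of $[\cEll{\{1\},[n]\setminus\{1\}}]$ and $[\cNod_{\{1\},[n]\setminus\{1\}}]$ as explicit cycles (weighted‑projective, resp.\ Schubert) in the models $\Min{4}\simeq[V_1^{\oplus 3}\oplus V_2^{\oplus 2}/\Gm]$ and $\Min{6}^{sm}\simeq\on{Gr}(2,5)$ of \Cref{LPtheorem,ChowMin}, the $n=6$ case requiring that one intersect with $\Min{6}^{sm}$ so that the singular point $[C_{1,6}]$ does no harm.

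\emph{A uniform alternative, and the main obstacle.} Cleaner than the $n=4,6$ computations would be to argue directly, for every $2$‑part $S$, that $\cEll{S_1,S_2}$ is the zero scheme inside $\cNod_{S_1,S_2}$ of a section of $\HH^{\otimes 2}$: over the banana locus the universal core is two rational branches (families over $\oM_{0,2+s_1}$ and $\oM_{0,2+s_2}$) glued along their two node‑sections, $\cEll{S_1,S_2}$ is the divisor where the two sections on one branch collide into a tacnode, and by \eqref{Ells} its conormal bundle is the pertinent tensor of cotangent lines, which the $\Gm$‑quotient presentation identifies with $\HH^{\otimes 2}|_{\cNod_{S_1,S_2}}$; the projection formula then gives $[\cEll{S_1,S_2}]=c_1(\HH^{\otimes 2})\cap[\cNod_{S_1,S_2}]=2\lambda\cdot[\cNod_{S_1,S_2}]$ at one stroke. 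The hard part is precisely this identification---pinning the $\Gm$‑weight of that conormal line bundle to $2$ through \eqref{Ells}, the $n=2$ case already showing the exponent must be $2$---and, for the pullback route, checking that $\pi^{-1}(\cNod_{T_1,T_2})$ and $\pi^{-1}(\cEll{T_1,T_2})$ acquire no spurious lower‑dimensional (e.g.\ cycle‑of‑three) components.
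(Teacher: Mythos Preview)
Your $n=2$ computation is correct and matches the paper's.

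For general $n$, however, your primary route (pullback recursion plus $S_{n+1}$-symmetry) does not prove the lemma as stated. The recursion $E_{(a,b)}=-E_{(a-1,b+1)}$ together with torsion-freeness only kills all $E$'s when the number of markings is odd; for every even $n$ you need one independent computation, and you only gesture at $n=4,6$ without carrying it out. More seriously, the lemma claims the identity for \emph{all} $n\ge2$, and for $n\ge7$ there is no explicit model of $\Min{n}^{sm}$ to fall back on, so your scheme does not close.

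The paper's argument is precisely your ``uniform alternative,'' but executed so that the identification you call ``the hard part'' becomes a one-line induction. Let $A_n\subset\Min{n}^{sm}$ be the preimage of $\{a=0\}\subset\Min{2}$ under the iterated forgetful map; since $a$ has weight $2$, $[A_n]=2\lambda$. The paper shows set-theoretically that $A_n\cap\Nod_{S_1,S_2}=\Ell_{S_1,S_2}$ with multiplicity one: if $Y$ is a top-dimensional irreducible component of the intersection and $n\in S_2$, then $\pi(Y)\subset A_{n-1}\cap\Nod_{S_1,S_2\setminus\{n\}}=\Ell_{S_1,S_2\setminus\{n\}}$ by induction, so $Y$ lies in $\Ell_{S_1,S_2}\cup\Ell_{S_1\cup\{n\},S_2\setminus\{n\}}$; the second option is excluded because it would force $Y\subset\Nod_{S_1,S_2}\cap\Nod_{S_1\cup\{n\},S_2\setminus\{n\}}=\Nod_{S_1,S_2\setminus\{n\},\{n\}}$, a locus of strictly higher codimension. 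Hence $[A_n]\cdot[\Nod_{S_1,S_2}]=[\Ell_{S_1,S_2}]$. This bypasses any parity dichotomy and any direct computation at $n=4,6$, and works for all $n$.
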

\begin{proof}
 The curve corresponding to $(a,b,c)$ is binodal if and only if $b=c=0$, so the $\Gm$-equivariant fundamental class of $[\Nod_{\{1\},\{2\}}]$ is $3\lambda\cdot\ 4\lambda=12\lambda^2$, hence the first claim.
 The unique tacnodal curve corresponds to the origin $\{a=b=c=0\}$ of $[V_{2,3,4}/\Gm]$, with $\Gm$-equivariant fundamental class $2\lambda\ \cdot [\Nod_{\{1\},\{2\}}]=24\lambda^3$, hence the second claim.
 
 For the last claim, we argue by induction on $n$: let $A_n$ be the preimage in $\Min{n}$ (along the forgetful map of \Cref{LPtheorem}) of $\{a=0\}\subseteq\Min{2}\simeq [V_{2,3,4}/\Gm]$, so $[A_n]=2\lambda$. Let $Y$ be an irreducible component of maximal dimension of $A_n\cap \Nod_{S_1,S_2}$; without loss of generality, we can assume that $n\in S_2$ and $|S_2|\geq 2$. By construction $\pi(Y)\subset A_{n-1}\cap \Nod_{S_1, S_2\smallsetminus\{n\}}=\Ell{S_1,S_2\smallsetminus\{n\}}$, hence $Y\subset \Ell{S_1,S_2}\cup \Ell{S_1\cup\{n\}, S_2\smallsetminus\{n\}}$. This, combined with the fact that $\on{codim}(Y)\leq 3$ by construction, implies that $\on{codim}(Y)=3$. Suppose that $Y=\Ell{S_1\cup\{n\},S_2\smallsetminus\{n\}}$: then $\Ell{S_1\cup\{n\},S_2\smallsetminus\{n\}}= Y\cap \Nod_{S_1\cup\{n\},S_2\smallsetminus\{n\}} \subset \Nod_{S_1,S_2} \cap \Nod_{S_1\cup\{n\},S_2\smallsetminus\{n\}}=\Nod_{S_1,S_2\smallsetminus\{n\},\{n\}}$, which is not the case. It follows that $Y=\Ell{S_1,S_2}$ and $[A_n]\cdot [\Nod_{S_1,S_2}]=[Y]=[\Ell{S_1,S_2}]$.
\end{proof}

\subsection{$n=3$}
We have an identification $\Min{3}\simeq [V_{1,2,2,3}/\Gm]$, with coordinates $(a,b,c,d)$, and affine universal curve $xy^2-x^2y=axy+bx+cy+d$ in $\Aaff^2_{x,y}$ \cite[Eqn. (1.2)]{LekiliP}. 

\begin{lemma}\label{n3}
 $[\Nod_{\{i,j\},\{k\}}]=6\lambda^2$ and $[\Ell{\{i,j\},\{k\}}]=12\lambda^3$.
 
 $[\Nod_{\{1\},\{2\},\{3\}}]=12\lambda^3$ and $[\Ell{\{1\},\{2\},\{3\}}]=\lambda\cdot[\Nod_{\{1\},\{2\},\{3\}}]=12\lambda^4$.
 
 Moreover, for every $\neq 3$ and for every partition $S=\{S_1,S_2,S_3\}$ we have $\Ell{\{S_1,S_2,S_3\}}=\lambda\cdot\Nod_{\{S_1,S_2,S_3\}}$ in $A^*(\Min{n}^{sm})$.
\end{lemma}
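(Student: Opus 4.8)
The plan is to settle the case $n=3$ first using Lekili--Polishchuk's normal form --- this gives the first two claims --- and then to deduce the general relation by induction on $n$, imitating the last paragraph of the proof of \Cref{n2}. For the binodal stratum I work in $\Min{3}\simeq[V_{1,2,2,3}/\Gm]$ with coordinates $(a,b,c,d)$ of weights $(1,2,2,3)$ and affine universal curve $xy^2-x^2y=axy+bx+cy+d$. Setting $c=d=0$ the equation factors as $x\,(y^2-xy-ay-b)$, the union of a line with a smooth conic meeting in two points; the generic curve of $\{c=d=0\}$ is therefore binodal, with two of the markings on one component and the third on the other, so $\Nod_{\{i,j\},\{k\}}=\overline{\{c=d=0\}}$ for the appropriate labelling (I use that $\{c=d=0\}\cong[V_{1,2}/\Gm]$ is irreducible of the expected dimension). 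Its $\Gm$-equivariant fundamental class is the product of the two excised weights $2\lambda\cdot 3\lambda=6\lambda^2$, and by the $S_3$-action on $\Min{3}$, which is trivial on Chow, the same value holds for every partition $\{\{i,j\},\{k\}\}$. Feeding this into the last assertion of \Cref{n2} gives $[\Ell{\{i,j\},\{k\}}]=2\lambda\cdot[\Nod_{\{i,j\},\{k\}}]=12\lambda^3$.

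For the trinodal stratum, the triangle curve --- a cycle of three $\PP^1$'s carrying one marking each --- has trivial automorphism group, so it is a point of $\Min{3}$ with trivial stabiliser; it lies in $\Nod_{\{i,j\},\{k\}}\cong[V_{1,2}/\Gm]$ (it arises by breaking one component of a $2$-cycle), and since it is the unique curve of its topological type, $\Nod_{\{1\},\{2\},\{3\}}$ is its closure. A trivial-stabiliser point of $[V_{1,2}/\Gm]$ has nonzero weight-$1$ coordinate, hence is cut out there by a weight-$2$ semi-invariant and has class $2\lambda$; by the projection formula $[\Nod_{\{1\},\{2\},\{3\}}]=2\lambda\cdot[\Nod_{\{i,j\},\{k\}}]=12\lambda^3$ in $A^*(\Min{3})$. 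On the other hand $\Ell{\{1\},\{2\},\{3\}}$ is the single curve $[C_{1,3}]$, the planar triple point marked by $p_1,p_2,p_3$, which is the origin of $[V_{1,2,2,3}/\Gm]$; its equivariant fundamental class is the product of all four weights, $12\lambda^4$. Hence $[\Ell{\{1\},\{2\},\{3\}}]=\lambda\cdot[\Nod_{\{1\},\{2\},\{3\}}]$.

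For the general relation I induct on $n\geq 3$, mimicking the proof of \Cref{n2}. The case $n=3$ is the identity just proved; moreover, since its weight-$1$ coordinate is nonzero, $\Nod_{\{1\},\{2\},\{3\}}$ meets the wall $\{a=0\}\subseteq\Min{3}$ transversally in the single point $\Ell{\{1\},\{2\},\{3\}}$, which is the base of the induction. For $n>3$ let $\pi\colon\Min{n}\to\Min{n-1}$ be the forgetful map; using the $S_n$-action we may assume $n$ lies in a part of size $\geq 2$, say $S_3$. Let $B_n\subseteq\Min{n}$ be the preimage of $\{a=0\}\subseteq\Min{3}$ along the iterated forgetful map, so $[B_n]=\lambda$. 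If $Y$ is a maximal-dimensional component of $B_n\cap\Nod_{S_1,S_2,S_3}$, then $\pi(Y)\subseteq B_{n-1}\cap\Nod_{S_1,S_2,S_3\smallsetminus\{n\}}=\Ell{S_1,S_2,S_3\smallsetminus\{n\}}$ by induction, so $Y$ lies in the union of $\Ell{S_1,S_2,S_3}$ and the strata obtained by moving $p_n$ onto a different branch of the elliptic point; since $\on{codim}Y\leq 4$ and each of these strata has codimension $4$, we get $\on{codim}Y=4$, and a stratum of the second kind, say $\Ell{S_1\cup\{n\},S_2,S_3\smallsetminus\{n\}}$, would be contained in $\Nod_{S_1,S_2,S_3}\cap\Nod_{S_1\cup\{n\},S_2,S_3\smallsetminus\{n\}}=\Nod_{S_1,S_2,S_3\smallsetminus\{n\},\{n\}}$, a locus of cycles of four $\PP^1$'s, which contains no elliptic $3$-fold point. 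Therefore $Y=\Ell{S_1,S_2,S_3}$, and --- the intersection being of the expected dimension and multiplicity one, exactly as in the proof of \Cref{n2} --- $\lambda\cdot[\Nod_{S_1,S_2,S_3}]=[B_n]\cdot[\Nod_{S_1,S_2,S_3}]=[\Ell{S_1,S_2,S_3}]$.

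The genuinely geometric work, and the point I expect to be the main obstacle, is the analysis of the Lekili--Polishchuk normal form: checking that $\{c=d=0\}$ is precisely the closure of the advertised binodal stratum, and, more delicately, that the triangle curve sits at a trivial-stabiliser point of that $[V_{1,2}/\Gm]$, i.e.\ off the weight-$1$ wall. It is this last point that simultaneously pins down $[\Nod_{\{1\},\{2\},\{3\}}]=12\lambda^3$ and makes the base case of the induction behave correctly; once it is granted, everything else reduces to Euler classes of $\Gm$-representations and the component-and-multiplicity bookkeeping already carried out in \Cref{n2}.
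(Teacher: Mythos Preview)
Your argument is correct and lands on the same induction as the paper, but your treatment of the $n=3$ base case is organised differently. The paper computes $[\Nod_{\{i,j\},\{k\}}]$ by pulling back $[\Nod_{\{1\},\{2\}}]=12\lambda^2$ along $\Min{3}\to\Min{2}$ and splitting by $S_3$-symmetry, whereas you identify one binodal stratum directly as $\{c=d=0\}$ via the factorisation $x(y^2-xy-ay-b)$ and read off the class $2\lambda\cdot 3\lambda$. Conversely, for the trinodal locus the paper is the more explicit one: it simply observes that $\Nod_{\{1\},\{2\},\{3\}}=\{b=c=d=0\}$ and reads off $2\lambda\cdot 2\lambda\cdot 3\lambda=12\lambda^3$, while you argue indirectly via the trivial-stabiliser observation inside $\Nod_{\{i,j\},\{k\}}\cong[V_{1,2}/\Gm]$. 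Both routes work; the paper's is a bit shorter since it never needs to decide which partition corresponds to which coordinate locus.

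One small imprecision: your sentence ``a locus of cycles of four $\PP^1$'s, which contains no elliptic $3$-fold point'' is not literally true of the \emph{closure} $\Nod_{S_1,S_2,S_3\smallsetminus\{n\},\{n\}}$, which does pick up worse singularities in its boundary. What you actually need (and what the paper uses) is only that $\Ell_{S_1\cup\{n\},S_2,S_3\smallsetminus\{n\}}$, being irreducible of the same codimension $4$, cannot be contained in $\Nod_{S_1,S_2,S_3\smallsetminus\{n\},\{n\}}$ since its generic point is not a $4$-cycle. With that adjustment, your induction step is identical to the paper's.
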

\begin{proof}
 By pulling back along the flat forgetful morphism $\Min{3}\to\Min{2}$ we deduce $[\Nod_{\{1\},\{2,3\}}]+[\Nod_{\{1,3\},\{2\}}]=\pi^*[\Nod_{\{1\},\{2\}}]=12\lambda^2$, hence $[\Nod_{\{i,j\},\{h\}}]=6\lambda^2$; from \Cref{n2} we deduce $[\Ell{\{i,j\},\{k\}}]=12\lambda^3$.
 
  From the equation of the universal curve, we see that $\Nod_{\{1\},\{2\},\{3\}}$ is cut out by the equations $b=c=d=0$, whence it has $\Gm$-equivariant fundamental class $2\lambda\cdot2\lambda\cdot3\lambda=12\lambda^3$. On the other hand, the unique elliptic $3$-fold point is represented by the origin, which is cut out by the extra equation $a=0$ of weight $1$, whence the second claim. We can then argue by induction.
  
  The induction step works as in the case $n=2$: let $A_n$  denote the preimage in $\Min{n}$ of $\{a=0\}$ in $\Min{3}$. Without loss of generality, assume $n \in S_3$ and $|S_3|\geq 2$; if $Y$ is an irreducible component of $A_n\cap \Nod_{S_1,S_2,S_3}$, then $Y$ must be contained in $\pi^{-1}(\Ell{S_1,S_2,S_3\smallsetminus\{n\}})=\Ell{S_1,S_2,S_3}\cup \Ell{S_1\cup\{n\},S_2,S_3\smallsetminus\{n\}}\cup \Ell{S_1,S_2\cup\{n\},S_3\smallsetminus\{n\}}$. We deduce that $Y$ has codimension $4$ and moreover it cannot be equal to neither $\Ell{S_1\cup\{n\},S_2,S_3\smallsetminus\{n\}}$ or $\Ell{S_1,S_2\cup\{n\},S_3\smallsetminus\{n\}}$, as otherwise it would be contained in $\Nod_{S_1,S_2,S_3}\cap \Nod_{S_1\cup\{n\},S_2,S_3\smallsetminus\{n\}}=\Nod_{S_1\cup\{n\},S_2,S_3\smallsetminus\{n\},\{n\}}$ or $\Nod_{S_1,S_2,S_3}\cap \Nod_{S_1,S_2\cup\{n\},S_3\smallsetminus\{n\}}=\Nod_{S_1,S_2,\{n\},S_3\smallsetminus\{n\}}$.
\end{proof}

\subsection{$n=4$}
We have an identification $\Min{4}\simeq [V_{1,1,1,2,2}/\Gm]$, with coordinates $(a,c_4,\overline{c}_4,c,\overline{c})$, and universal affine curve defined by the following equations in $\Aaff^3_{x,y,z}$:
\[ xz=xy+c_4z+\overline{c}_4x-c,\quad yz=xy+(a+c_4+\overline{c}_4)(z-\overline{c}_4)+(\overline{c}-c). \]
The morphism $\pi\colon [V_{1,1,1,2,2}/\Gm]\to [V_{1,2,2,3}/\Gm]$ is given by:
\[ (a,c_4,\overline{c}_4,c,\overline{c}) \longmapsto (a,\overline{c}-c,c,c_4(a+c_4+\overline{c}_4)^2-c_4^2(a+c_4+\overline{c}_4)-ac_4(a+c_4+\overline{c}_4-(\overline{c}-c)c_4-c(a+c_4+\overline{c}_4))\]
and it corresponds to the universal affine curve over the latter by setting $x=c_4$ and $y=a+c_4+\overline{c}_4$ \cite[Proposition 1.1.5]{LekiliP}.
\begin{lemma}\label{n4} The following equalities hold in $A^*(\Min{4})$:
\begin{itemize}
 \item $[\Nod_{\{i,j,h\},\{k\}}]=4\lambda^2$ and $[\Ell{\{i,j,h\},\{k\}}]=8\lambda^3$, 
 
 \noindent $[\Nod_{\{i,j\},\{h,k\}}]=2\lambda^2$ and $[\Ell{\{i,j\},\{h,k\}}]=4\lambda^3$.
 \item $[\Nod_{\{i,j\},\{h\},\{k\}}]=4\lambda^3$ and $[\Ell{\{i,j\},\{h\},\{k\}}]=4\lambda^4$.
 \item $[\Nod_{\{1\},\{2\},\{3\},\{4\}}]=4\lambda^4$ and $[\Ell{\{1\},\{2\},\{3\},\{4\}}]=\lambda\cdot[\Nod_{\{1\},\{2\},\{3\},\{4\}}]=4\lambda^5$.
\end{itemize}
Moreover, $[\Ell{S_1,S_2,S_3,S_4}]=\lambda\cdot[\Nod_{S_1,S_2,S_3,S_4}]$ holds in $A^*(\Min{n}^{sm})$ for all $n\geq4$.
\end{lemma}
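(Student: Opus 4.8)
\emph{Plan.} The plan is to follow the template of \Cref{n2} and \Cref{n3} verbatim: each class $[\Nod_{S}]$ is obtained either by a direct inspection of the Lekili--Polishchuk normal form for the universal curve over $\Min{4}\simeq[V_{1,1,1,2,2}/\Gm]$, or by flat pullback along a forgetful morphism, and then each $[\Ell{S}]$ is read off from $[\Nod_S]$ using the proportionality relations of \Cref{n2} (for $|S|=2$) or \Cref{n3} (for $|S|=3$), or the ``$\{a=0\}$-slicing'' argument (for $|S|=4$). I would start from the deepest stratum: inspecting the two defining equations one sees that the $0$-dimensional locus $\Nod_{\{1\},\{2\},\{3\},\{4\}}$ of cycles of four $\PP^1$s is cut out by the four coordinates of weights $1,1,2,2$ (the elliptic parameter $a$ remaining free), so $[\Nod_{\{1\},\{2\},\{3\},\{4\}}]=1\cdot1\cdot2\cdot2\,\lambda^4=4\lambda^4$; imposing in addition $a=0$ cuts out the origin, i.e.\ the elliptic $4$-fold point $[C_{1,4}]$, whence $[\Ell{\{1\},\{2\},\{3\},\{4\}}]=\lambda\cdot[\Nod_{\{1\},\{2\},\{3\},\{4\}}]=4\lambda^5$.

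For the remaining strata I would use the flat forgetful morphism $\pi\colon\Min{4}\to\Min{3}$ of \Cref{LPtheorem}, for which $\pi^*\lambda=\lambda$. Its preimage of $\Nod_{\{1\},\{2\},\{3\}}$ is the union of the three loci $\Nod_{\{i,4\},\{j\},\{k\}}$ obtained by letting $p_4$ lie on one of the three rational core components; these are $S_4$-conjugate, hence of equal class, so $[\Nod_{\{i,j\},\{h\},\{k\}}]=\tfrac13\cdot12\lambda^3=4\lambda^3$, and \Cref{n3} then gives $[\Ell{\{i,j\},\{h\},\{k\}}]=\lambda\cdot[\Nod_{\{i,j\},\{h\},\{k\}}]=4\lambda^4$. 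Applying the same pullback to $\Nod_{\{i,j\},\{k\}}$, whose preimage splits according to whether $p_4$ lands on the $\{i,j\}$-component or the $\{k\}$-component, gives the single relation $[\Nod_{\{i,j,h\},\{k\}}]+[\Nod_{\{i,j\},\{h,k\}}]=\pi^*[\Nod_{\{i,j\},\{k\}}]=6\lambda^2$.

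Since every forgetful pullback yields only that one relation between the two $2$-part classes, I would pin one of them down directly from the normal form: exhibiting the locus on which the fibre of the universal curve degenerates to a cycle of two $\PP^1$s with markings split as $\{i,j\}\mid\{h,k\}$, which I expect to be a coordinate complete intersection of weights $(1,2)$ and hence of class $2\lambda^2$, so that $[\Nod_{\{i,j,h\},\{k\}}]=6\lambda^2-2\lambda^2=4\lambda^2$. Finally the proportionality $[\Ell{S_1,S_2}]=2\lambda\cdot[\Nod_{S_1,S_2}]$ of \Cref{n2}, valid in $A^*(\Min{4}^{sm})$, yields $[\Ell{\{i,j,h\},\{k\}}]=8\lambda^3$ and $[\Ell{\{i,j\},\{h,k\}}]=4\lambda^3$.

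The identity $[\Ell{S_1,S_2,S_3,S_4}]=\lambda\cdot[\Nod_{S_1,S_2,S_3,S_4}]$ in $A^*(\Min{n}^{sm})$ for all $n\geq4$ would then follow by induction on $n$, the case $n=4$ being the computation above. For $n\geq5$ some part is non-singleton, say $n\in S_4$ with $|S_4|\geq2$; letting $A_n\subseteq\Min{n}$ be the preimage of $\{a=0\}\subseteq\Min{4}$ (so $[A_n]=\lambda$) and $Y$ a maximal-dimensional component of $A_n\cap\Nod_{S_1,S_2,S_3,S_4}$, the inductive hypothesis gives $\pi(Y)\subseteq A_{n-1}\cap\Nod_{S_1,S_2,S_3,S_4\smallsetminus\{n\}}=\Ell{S_1,S_2,S_3,S_4\smallsetminus\{n\}}$, so $Y$ lies in $\pi^{-1}(\Ell{S_1,S_2,S_3,S_4\smallsetminus\{n\}})$, the union of $\Ell{S_1,S_2,S_3,S_4}$ and the three strata obtained by moving $\{n\}$ into one of $S_1,S_2,S_3$; a codimension count forces $\on{codim}(Y)=5$, so $Y$ is one of these four strata, and the ``double-locus'' identity $\Nod_{S_1,S_2,S_3,S_4}\cap\Nod_{S_i\cup\{n\},\dots}=\Nod_{S_1,S_2,S_3,S_4\smallsetminus\{n\},\{n\}}$ (a deeper stratum not containing $\Ell{S_i\cup\{n\},\dots}$) rules out the three ``wrong'' ones, leaving $Y=\Ell{S_1,S_2,S_3,S_4}$ with multiplicity one and hence $\lambda\cdot[\Nod_{S_1,S_2,S_3,S_4}]=[A_n]\cdot[\Nod_{S_1,S_2,S_3,S_4}]=[\Ell{S_1,S_2,S_3,S_4}]$. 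The main obstacle is the one genuinely computational point of the third paragraph: identifying, inside the two-equation normal form over $\Min{4}$, which reduced coordinate ideal of codimension two actually cuts out $\Nod_{\{i,j\},\{h,k\}}$ (equivalently $\Nod_{\{i,j,h\},\{k\}}$) and verifying it has the expected codimension; once that is in hand, everything else is formal bookkeeping with flat pullback and the already-proved proportionalities.
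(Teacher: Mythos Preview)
Your proposal is correct and follows essentially the same approach as the paper: flat pullback along $\pi\colon\Min{4}\to\Min{3}$, direct inspection of the Lekili--Polishchuk coordinates for the deepest strata, the proportionality relations from \Cref{n2} and \Cref{n3}, and the same ``$\{a=0\}$-slicing'' induction for the final statement.

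The one tactical difference worth noting concerns exactly the computational point you flag. Rather than identifying $\Nod_{\{i,j\},\{h,k\}}$ by staring at the two defining equations of the universal curve over $\Min{4}$, the paper instead works on the $\Min{3}$ side: it writes down explicit equations $V(d+ac,b+c)\subset V_{1,2,2,3}$ for the binodal locus $\Nod_{\{1,2\},\{3\}}$, observes that over it the affine universal curve factors as $(xy-c)(y-x-a)=0$, and then uses the explicit formula for the forgetful morphism $V_{1,1,1,2,2}\to V_{1,2,2,3}$ to pull this back. The preimage decomposes as $V(\overline{c},\overline{c}_4)\cup V(\overline{c},c_4(a+c_4+\overline{c}_4)-c)$, and by checking which component of the curve carries the extra marking one identifies $\Nod_{\{1,2\},\{3,4\}}=V(\overline{c},\overline{c}_4)$, a coordinate complete intersection of weights $(2,1)$, exactly as you predicted. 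So your expectation is confirmed, and the paper's device of pulling back the \emph{factored} equation from $\Min{3}$ is what makes the identification clean.
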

\begin{proof}
 Pulling back from $\Min{3}$ we deduce:
 \[[\Nod_{\{1,2,4\},\{3\}}]+[\Nod_{\{1,2\},\{3,4\}}]=\pi^*[\Nod_{\{1,2\},\{ 3\}}]=6\lambda^2.\]
 This does not determine either class directly; to solve this, we make use of the explicit equations we wrote down for the $n=3$ case. The universal affine curve over $V(d+ac,b+c)\subset V_{1,2,2,3}$ has equation $(xy-c)(y-x-a)=0$; the identification of the affine curve with $V_{1,1,1,2,2}$ sends $x\mapsto c_4$ and $y\mapsto (a+c_4+\overline{c}_4)$, and by combining this with $b=\overline{c}-c$ we get that the preimage of $V(d+ac,b+c)$ in $V_{1,1,1,2,2}$ corresponds to the subscheme \[ V(\overline{c},\overline{c}_4(c_4(a+c_4+\overline{c}_4)-c)) = V(\overline{c},\overline{c}_4) \cup V(\overline{c},c_4(a+c_4+\overline{c}_4)-c). \]
A point in the first of the two components on the right corresponds to a point on the irreducible component of the affine curve over $V(d+ac,b+c)$ of equation $y-x-a=0$, which contains only one marking, i.e. the only point at infinity in the projective closure. Up to rearranging the labels of the markings, we deduce that $\Nod_{\{1,2\},\{3,4\}}=V(\overline{c},\overline{c}_4)$, hence its fundamental class is equal to $2\lambda\cdot\lambda=2\lambda^2$ and consequently $[\Nod_{\{i,j,h\},\{k\}}]=4\lambda^2$.
By \Cref{n2} we deduce $[\Ell{\{i,j\},\{h,k\}}]=4\lambda^3$ and $[\Ell{\{i,j,h\},\{k\}}]=8\lambda^3$.

The second claim follows directly from \Cref{n3}.

For the third claim, $[\Nod_{\{1\},\{2\},\{3\},\{4\}}]=\{c_4=\bar{c}_4=c=\bar{c}=0\}$ holds by direct inspection. The elliptic $4$-fold point corresponds to the origin of $\Aaff^5$, which is cut out by the extra equation $a=0$. The last statement can be proved by induction as in the previous lemmas.
\end{proof}

\subsection{$n=5$}
We have an identification $\Min{5}=[V_{1,1,1,1,1,1}/\Gm]$. 
\begin{lemma}\label{n5} The following equalities hold in $A^*(\Min{5})$:
 \begin{itemize}
  \item $[\Nod_{\{i,j,h,k\},\{\ell\}}]=3\lambda^2$ and $[\Ell{\{i,j,h,k\},\{\ell\}}]=6\lambda^3$,
  
  \noindent $[\Nod_{\{i,j,h\},\{k,\ell\}}]=\lambda^2$ and $[\Ell{\{i,j,h\},\{k,\ell\}}]=2\lambda^3$.
  \item $[\Nod_{\{i,j\},\{h,k\},\{\ell\}}]=\lambda^3$ and $[\Ell{\{i,j\},\{h,k\},\{\ell\}}]=\lambda^4$, 
  
  \noindent $[\Nod_{\{i,j,h\},\{k\},\{\ell\}}]=2\lambda^3$ and $[\Ell{\{i,j,h\},\{k\},\{\ell\}}]=2\lambda^4$.
  \item $[\Nod_{\{i,j\},\{h\},\{k\},\{\ell\}}]=\lambda^4$ and $[\Ell{\{i,j\},\{h\},\{k\},\{\ell\}}]=\lambda^5$.
  \item $[\Ell{\{1\},\{2\},\{3\},\{4\},\{5\}}]=\lambda^6$.
 \end{itemize}
\end{lemma}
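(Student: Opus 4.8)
The plan is to follow the template of \Cref{n2,n3,n4}: first compute every class $[\Nod_S]$ on $\Min{5}$ by pulling back the classes already known on $\Min{4}$ along the flat forgetful morphism $\pi\colon\Min{5}\to\Min{4}$ of \Cref{LPtheorem}, and then read off every $[\Ell{S}]$ from the identities $[\Ell{S_1,S_2}]=2\lambda\cdot[\Nod_{S_1,S_2}]$ (\Cref{n2}), $[\Ell{S_1,S_2,S_3}]=\lambda\cdot[\Nod_{S_1,S_2,S_3}]$ (\Cref{n3}) and $[\Ell{S_1,S_2,S_3,S_4}]=\lambda\cdot[\Nod_{S_1,\dots,S_4}]$ (\Cref{n4}), all valid on $\Min{5}$ since the latter is smooth. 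Since $\pi$ is flat with geometrically reduced fibres over a reduced base, the scheme-theoretic preimage $\pi^{-1}(\Nod_{S_1,\dots,S_k})$ is reduced, and — because $\Nod_{S_1,\dots,S_k}$ is irreducible of codimension $k$ — it is pure of codimension $k$; its irreducible components are the closures of the loci where the extra marking $p_5$ lies in the interior of one of the $k$ core components of the generic $k$-cycle, that is, the strata $\Nod_{S_1,\dots,S_i\cup\{5\},\dots,S_k}$, each appearing with multiplicity $1$. As the $S_5$-action is trivial on $A^*(\Min{5})$, all strata of a given shape carry the same class.

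For the two- and four-part partitions this is already enough. From $[\Nod_{\{i,j\},\{h,k\}}]=2\lambda^2$ on $\Min{4}$ we get $2\,[\Nod_{\{i,j,h\},\{k,\ell\}}]=\pi^*(2\lambda^2)$, hence $[\Nod_{\{i,j,h\},\{k,\ell\}}]=\lambda^2$; from $[\Nod_{\{i,j,h\},\{k\}}]=4\lambda^2$ we get $[\Nod_{\{i,j,h,k\},\{\ell\}}]+[\Nod_{\{i,j,h\},\{k,\ell\}}]=4\lambda^2$, hence $[\Nod_{\{i,j,h,k\},\{\ell\}}]=3\lambda^2$; and from $[\Nod_{\{1\},\{2\},\{3\},\{4\}}]=4\lambda^4$ we get $4\,[\Nod_{\{i,j\},\{h\},\{k\},\{\ell\}}]=4\lambda^4$, hence $[\Nod_{\{i,j\},\{h\},\{k\},\{\ell\}}]=\lambda^4$. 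The $\Ell$-classes $6\lambda^3$, $2\lambda^3$ and $\lambda^5$ then follow from \Cref{n2,n4}. The five-part case is immediate: $\Ell{\{1\},\{2\},\{3\},\{4\},\{5\}}$ is the reduced point $[C_{1,5}]$, the elliptic $5$-fold point, i.e.\ the origin of $\Min{5}\simeq[V_1^{\oplus 6}/\Gm]$, whose $\Gm$-equivariant fundamental class is the product $\prod_{i=1}^{6}(1\cdot\lambda)=\lambda^6$ of the six (unit) weights.

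The only genuine ambiguity arises in the three-part case. Pulling back $[\Nod_{\{i,j\},\{h\},\{k\}}]=4\lambda^3$ from $\Min{4}$ gives only the relation
\[
[\Nod_{\{i,j,h\},\{k\},\{\ell\}}]+2\,[\Nod_{\{i,j\},\{h,k\},\{\ell\}}]=4\lambda^3,
\]
the coefficient $2$ reflecting that $p_5$ can be attached to either of the two one-element blocks, and every other forgetful pullback is a consequence of this one. By \Cref{ChowMin} we may write $[\Nod_{\{i,j,h\},\{k\},\{\ell\}}]=d_1\lambda^3$ and $[\Nod_{\{i,j\},\{h,k\},\{\ell\}}]=d_2\lambda^3$ with $d_1,d_2\in\ZZ$ and $d_1+2d_2=4$. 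To pin down $d_1,d_2$ I would restrict to the dense open $\oM_{1,5}(4)\simeq\PP^5$ of \Cref{LPtheorem}, on which $\lambda$ restricts to the hyperplane class: both strata are nonempty of codimension $3$, and their generic point lies in $\PP^5$ — it has finite automorphisms, since the one-element block carries $2$ nodes and $1$ marking and so rigidifies its component — hence both restrict to nonzero effective codimension-$3$ cycles on $\PP^5$, so $d_1,d_2\ge1$. Therefore $d_2=1$ and $d_1=2$, i.e.\ $[\Nod_{\{i,j\},\{h,k\},\{\ell\}}]=\lambda^3$ and $[\Nod_{\{i,j,h\},\{k\},\{\ell\}}]=2\lambda^3$, and \Cref{n3} yields the last $\Ell$-classes, $\lambda^4$ and $2\lambda^4$.

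The main obstacle is exactly this last step. In \Cref{n2,n3,n4} a tie between two $\Nod$-classes of complementary shape was always resolved by exhibiting one of them as an explicit coordinate subspace in the Lekili--Polishchuk normal form; that normal form is considerably less transparent for $\Min{5}$, which is why I prefer the positivity/degree argument on $\PP^5$ above. Alternatively, one could carry out the coordinate computation using the explicit formula for $\pi\colon\Min{5}\to\Min{4}$, at the cost of a significantly longer calculation.
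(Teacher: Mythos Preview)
Your proof is correct and follows the paper's overall template (pull back $\Nod$-classes from $\Min{4}$, apply the $\Ell=c\lambda\cdot\Nod$ identities of \Cref{n2,n3,n4}, identify the $5$-fold point with the origin of $[V_1^{\oplus 6}/\Gm]$). The paper's own proof is the one-liner ``by pulling back from \Cref{n4} and symmetry,'' and for the two-, four- and five-part statements your argument and the paper's coincide.

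The genuine difference is in the three-part case. As you correctly observe, pullback from $\Min{4}$ together with $S_5$-invariance yields only the single relation $[\Nod_{(3,1,1)}]+2[\Nod_{(2,2,1)}]=4\lambda^3$, and every other choice of forgetful map gives the same equation; the paper's terse phrase does not visibly supply the missing constraint. Your resolution --- restrict to $\oM_{1,5}(4)\simeq\PP^5$, note that both loci are nonempty of pure codimension~$3$ with generic point in the Deligne--Mumford locus, hence have strictly positive degree, forcing $(d_1,d_2)=(2,1)$ --- is clean and rigorous. The alternative you mention (an explicit coordinate computation in the Lekili--Polishchuk model, as was done in \Cref{n4} to separate $[\Nod_{(3,1)}]$ from $[\Nod_{(2,2)}]$) would also work but is considerably heavier here; the positivity argument exploits the fortunate fact that for $n=5$ all $\Gm$-weights are~$1$, so that degrees on $\PP^5$ are literally the coefficients of $\lambda^k$. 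In short: same strategy as the paper, but you make explicit a step the paper elides.
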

\begin{proof}
 The first three statements follow by pulling back from \Cref{n4} and symmetry. The last one follows from the identification of the unique elliptic $5$-fold point with the origin of $\Aaff^6$.
\end{proof}

\subsection{$n=6$} We explain the identification of $\oM_{1,6}(5)$ with $\GG=\on{Gr}(2,5)$ briefly; see \cite[\S1.7]{LekiliP}. Pl\"ucker embeds $\GG$ in $\PP^9$ of degree $5$, indeed $\sigma_1^6=5\sigma_{3,3}$ (and $\sigma_{3,2}$ is the class of a line). Fix a linear space $L$ of codimension $6$ in $\PP^9$, intersecting $\GG$ in five distinct points $p_1,\ldots,p_5$ (this can be achieved over $\ZZ$ by \cite[Proposition 1.7.1]{LekiliP}). For any point $q$ of $\GG$ other than these five, the linear span $M_q=\langle q,p_1,\ldots,p_5\rangle$ has dimension $4$, and it intersects $\GG$ in a curve $C_q$ which, marked with $p_1,\ldots,p_5$, is indeed $4$-stable. The rational map $\GG\dashrightarrow\oM_{1,5}(4),q\mapsto (C_q,p_1,\ldots,p_5)$ can be identified with the restriction to $\GG$ of the linear projection $\PP^9\dashrightarrow\PP^5, q\mapsto M_q$ out of the subspace $L$, and thus resolved by blowing up $\GG$ in $p_1,\ldots,p_5$, identifying the blow-up with the universal curve over $\oM_{1,5}(4)$, and $\GG$ with $\oM_{1,6}(5)$. In particular, a point $q \in \GG$ corresponds to the $5$-stable $6$-marked curve $(C_q,p_1,\ldots,p_5,q)$ except when $q=p_i$ or $q$ is an elliptic $m$-fold point: in the first case, the corresponding curve sprouts a $\PP^1$ marked by $p_i$ and $q$; in the second case, the curve sprouts a $\PP^1$ marked by $q$, glued at the singular point so to form an elliptic $(m+1)$-fold point. Notice, moreover, that the embedding of $C_q$ in $M_q$ is provided by the very ample line bundle $\omega_{C_q}(p_1+\ldots+p_5)$, so the degree (in $M_q$ and therefore also in $\PP^9$) of any irreducible component of the curve is determined by the number of $p_i$ it contains.

\begin{lemma}\label{lm:schubert classes}
    The following equalities hold in $A^*(\oM_{1,6}(5))$:
    \begin{itemize}
     \item $[\Nod_{\{i,j,h,k\},\{\ell,m\}}]=\sigma_2=\nu$, 
     \item $[\Nod_{\{i,j,h\},\{k,\ell,m\}}]=\sigma_{1,1}=\lambda^2-\nu$,
     \item $[\Nod_{\{i,j\},\{h,k\},\{\ell,m\}}]=\sigma_3=2\lambda\nu-\lambda^3$,
     \item $[\Nod_{\{i,j,h\},\{k,\ell\},\{m\}}]=\sigma_{2,1}=\lambda^3-\lambda\nu$.
    \end{itemize}
\end{lemma}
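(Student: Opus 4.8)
The plan is the following. Identify $A^*(\oM_{1,6}(5))$ with $A^*(\GG)$, $\GG=\on{Gr}(2,5)$, via \Cref{ChowMin}, and recall that $\lambda=\sigma_1$, $\nu=\sigma_2$ are multiplicative generators, with $\sigma_1^2=\sigma_2+\sigma_{1,1}$, $\sigma_1^3=\sigma_3+2\sigma_{2,1}$ and $\sigma_{2,1}=\sigma_1\sigma_2-\sigma_3$. The $\lambda,\nu$--expressions on the right of the statement (e.g.\ $\sigma_{1,1}=\lambda^2-\nu$, $\sigma_3=2\lambda\nu-\lambda^3$, $\sigma_{2,1}=\lambda^3-\lambda\nu$) then follow formally, so the real content is the four identities $[\Nod_{4+2}]=\sigma_2$, $[\Nod_{3+3}]=\sigma_{1,1}$ in codimension $2$ and $[\Nod_{2+2+2}]=\sigma_3$, $[\Nod_{3+2+1}]=\sigma_{2,1}$ in codimension $3$; here I abbreviate $\Nod_{4+2}$ for $\Nod_{\{i,j,h,k\},\{\ell,m\}}$ and so on, the class being independent of the labelling because $S_6$ acts trivially on $A^*(\Min{6})$. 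Note also that each of these loci is nonempty in $\GG$, since it parametrises honest nodal curves away from the single singular point $[C_{1,6}]$ of $\Min{6}$.

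First I would extract linear relations by pulling the classes of \Cref{n5} back along the flat forgetful morphism $\pi\colon\Min{6}\to\Min{5}$ of \Cref{LPtheorem}, exactly as in the proofs of \Cref{n2}--\Cref{n4}: for a set-partition $T$ of $[5]$ one has $\pi^*[\Nod_T]=\sum[\Nod_{T'}]$, the sum over the partitions $T'$ of $[6]$ obtained by inserting $6$ into each part of $T$ in turn, and $\pi^*\lambda=\lambda$. This restricts correctly to the open substack $\GG=\Min{6}^{sm}$, because $\pi$ carries the extra point $[C_{1,6}]$ of $\Min{6}$ to $[C_{1,5}]$, which has codimension $5$ and so is never a component of the preimage of a codimension $\le 3$ locus. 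From $[\Nod_{\{i,j,h\},\{k,\ell\}}]=\lambda^2$ and $[\Nod_{\{i,j\},\{h,k\},\{\ell\}}]=\lambda^3$ one gets
\[
[\Nod_{4+2}]+[\Nod_{3+3}]=\lambda^2,\qquad 2[\Nod_{3+2+1}]+[\Nod_{2+2+2}]=\lambda^3
\]
(the other classes of \Cref{n5} likewise pin down $[\Nod_{5+1}]$ and $[\Nod_{4+1+1}]$, which are not needed here). Now I would invoke positivity in $\on{Gr}(2,5)$: base-changing to $\C$, the class of a nonempty closed subvariety is a nonzero nonnegative combination of Schubert classes (nonzero because its Plücker degree $\int_\GG[Z]\cdot\sigma_1^{\dim Z}$ is positive), and this determines the integral classes. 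Writing $[\Nod_{2+2+2}]$ and $[\Nod_{3+2+1}]$ in the basis $\{\sigma_3,\sigma_{2,1}\}$ with nonnegative integer coefficients, the second relation ($\lambda^3=\sigma_3+2\sigma_{2,1}$) forces the $\sigma_3$--coefficient of $[\Nod_{3+2+1}]$ to vanish and that of $[\Nod_{2+2+2}]$ to be $1$; since $[\Nod_{3+2+1}]\neq 0$, its $\sigma_{2,1}$--coefficient must be $1$. Hence $[\Nod_{3+2+1}]=\sigma_{2,1}$ and $[\Nod_{2+2+2}]=\sigma_3$, settling codimension $3$.

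In codimension $2$, the relation plus positivity only narrow things to $[\Nod_{4+2}]\in\{\sigma_2,\sigma_{1,1}\}$, and here one genuinely geometric input is needed. I would identify $\Nod_{4+2}$ --- taking the representative $S=\{\{1,2,3,4\},\{5,6\}\}$ --- explicitly inside $\GG=\on{Gr}(2,\C^5)$, using the description of the universal curve from \Cref{LPtheorem} and \S1.7: a point $q\in\GG$ parametrises $C_q=M_q\cap\GG$ with $M_q=\langle q,p_1,\dots,p_5\rangle$, the markings being $p_1,\dots,p_5$ together with $p_6=q$, and $\OO_\GG(1)$ restricts on $C_q$ to $\omega_{C_q}(p_1+\dots+p_5)$; so a component $R\cong\PP^1$ carrying $\mu$ of the $p_1,\dots,p_5$ and $\rho$ nodes of $C_q$ has $\deg_\GG R=\mu+\rho-2$. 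For $q\in\Nod_{4+2}$ the component $R$ containing $p_6=q$ carries only $p_5$ and its two nodes, so $\deg_\GG R=1$: it is the unique line of $\PP^9$ through $p_5$ and $q$, and it lies on $\GG$, which (a line joining two points of $\on{Gr}(2,V)$) happens exactly when $\dim(\langle q\rangle\cap\langle p_5\rangle)\geq 1$. Conversely, for general $q$ with $\dim(\langle q\rangle\cap\langle p_5\rangle)\geq1$, $C_q$ contains that line, and since $C_q$ has arithmetic genus one the residual quartic must meet it in two nodes, so $C_q$ is of type $\Nod_{4+2}$. Thus $\Nod_{4+2}$ is the Schubert variety $\{W:\dim(W\cap\langle p_5\rangle)\geq1\}$, of class $\sigma_2$, and the codimension-$2$ relation gives $[\Nod_{3+3}]=\lambda^2-\sigma_2=\sigma_{1,1}$.

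The delicate point --- the main obstacle --- is this last paragraph: one must check that $\Nod_{4+2}$ equals that Schubert variety \emph{as a cycle}, i.e.\ is reduced, has no stray components, and has its open part mapping birationally onto the open part of the Schubert variety. All of this rests on the fact (\Cref{LPtheorem}) that every $q\in\GG$ gives an honest arithmetic-genus-one Gorenstein curve $M_q\cap\GG$, which forces the residual of the linear component, together with irreducibility and the dimension count $\dim\Nod_{4+2}=\dim(\oM_{0,6}\times\oM_{0,4})=4=\dim\GG-2$. A clean alternative, avoiding the precise identification, is to compute the single intersection number $\int_\GG[\Nod_{4+2}]\cdot\sigma_{3,1}$ --- a transverse count of curves of type $\Nod_{4+2}$ whose sixth marking, viewed as a $2$-plane in $\C^5$, contains a fixed general line and lies in a fixed general hyperplane --- which is $1$ if $[\Nod_{4+2}]=\sigma_2$ and $0$ if $[\Nod_{4+2}]=\sigma_{1,1}$.
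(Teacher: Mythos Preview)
Your proof is correct and takes a somewhat different, more economical route than the paper's. The paper (citing Coskun for the codimension-$2$ part) makes three direct geometric identifications --- $\Nod_{4+2}=\Sigma_2$ via lines on $\GG$, $\Nod_{3+3}=\Sigma_{1,1}$ via conics through two of the fixed points, and $\Nod_{3+2+1}=\Sigma_{2,1}$ by combining the two conditions --- and then obtains $[\Nod_{2+2+2}]$ from the pullback relation $\pi^*[\Nod_{\{1,2\},\{3,4\},\{5\}}]=\sigma_1^3$ once $\sigma_{2,1}$ is known. You instead set up the pullback relations in \emph{both} codimensions first, and then invoke effectivity on the Grassmannian (nonnegativity of Schubert coefficients for subvarieties, via Kleiman transversality) to pin down the codimension-$3$ classes with no geometric input at all; only the single identification $\Nod_{4+2}=\Sigma_2$ is needed to break the remaining codimension-$2$ ambiguity. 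That identification is essentially the same as the paper's: both recognise that the $\{5,6\}$-component must be a line on $\GG$ through $p_5$ and $q$, which is the Schubert condition $\dim(\Lambda_q\cap\Lambda_{p_5})\geq 1$.

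Two small remarks. First, your degree formula $\deg_\GG R=\mu+\rho-2$ rests on $\OO_\GG(1)|_{C_q}\simeq\OO_{C_q}(p_1+\cdots+p_5)$; this is implicit in the Lekili--Polishchuk construction but not spelled out in the paper, which instead argues directly that lines on $\GG$ are precisely the $\Sigma_{3,2}(\ell)$'s. Second, your caution about cycle-level equality is well placed, but since both $\Nod_{4+2}$ and $\Sigma_2(\Lambda_{p_5})$ are reduced, irreducible, of the same dimension, the set-theoretic inclusion suffices --- the paper leaves this equally implicit.
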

\begin{proof}
The first item is explained in \cite[\S2.1]{Coskun}. Let $\Lambda_i$ denote the $2$-plane corresponding to $p_i$. Consider the Schubert variety $\Sigma_2(\Lambda_1)$.  If $q\in \Sigma_2(\Lambda_1)$, then $\Lambda_q$ and $\Lambda_2$ intersect in a line $\ell$; the Schubert variety $\Sigma_{3,2}(\ell)$ is a line in $\on{Gr}(2,5)$ passing through $q$ and $p_1$. This implies that $C_q=M_q \cap \on{Gr}(2,5)$ contains a line marked by $p_1$ and $q$ only, i.e. $[C_q]\in\Nod_{\{1,6\},\{2,3,4,5\}}$.

Similarly, we can identify the Schubert variety $\Sigma_{1,1}(F_3\subseteq F_4= \langle\Lambda_1,\Lambda_2\rangle)$ with the locus of reducible curves containing a conic marked by $p_1,p_2$, and $q$, i.e. with $\Nod_{\{1,2,6\},\{3,4,5\}}$. Indeed, the Schubert variety $\Sigma_{1,1}(F_3\subset F_4)$ can be identified with the Grassmannian $\on{Gr}(2,F_4)$; in particular, both $p_1$ and $p_2$ belong to it. Recall that any conic $D\subset\on{Gr}(2,5)$ is of the form 
\[D=\{\Lambda \subset k^{5} \text{ such that }\PP\Lambda\subset Q\} \]
where $Q$ is a quadric surface in $\PP^4$, and viceversa. In particular, if $q=[\Lambda_q]$ is a point in $\Sigma_{1,1}(F_3\subset F_4)$ we have that $\PP\Lambda_q$, $\PP\Lambda_1$, and $\PP\Lambda_2$ are lines in $\PP F_4\simeq \PP^3$, thus there exists a quadric surface containing these three lines. We deduce that there exists a conic $D$ containing $q$, $p_1$ and $p_2$, and consequently $C_q=D \cup D'$. As $D$ is a conic, only two of the five distinct points $p_1,\ldots, p_5$ can lie on it, and thus $C_q$ belongs to 
$\Nod_{\{1,2,6\},\{3,4,5\}}$.

Consider now the Schubert variety $\Sigma_{2,1}(F)$ with respect to the partial flag $F_2=\Lambda_{1}$ and $F_4=\langle\Lambda_{1},\Lambda_{2}\rangle$. By the previous paragraph, for $q\in \Sigma_{2,1}(F)$, the curve $C_q$ is reducible with $p_1,p_2$, and $q$ contained in a conic, but also $p_1,q$ contained in a line, hence the conic must be reducible, and $\Sigma_{2,1}(F)=\Nod_{\{1,6\},\{2\},\{3,4,5\}}$.

By \Cref{n5} we know that $[\Nod_{\{1,2\},\{3,4\},\{5\}}]=\lambda^3$ in $A^3(\Min{5})$. It follows then that $\pi^*[\Nod_{\{1,2\},\{3,4\},\{5\}}]=\sigma_1^3$. On the other hand $[\pi^{-1}(\Nod_{\{1,2\},\{3,4\},\{5\}})]=[\Nod_{\{1,2,6\},\{3,4\},\{5\}}]+[\Nod_{\{1,2\},\{3,4,6\},\{5\}}]+[\Nod_{\{1,2\},\{3,4\},\{5,6\}}]$, hence $[\Nod_{\{1,2\},\{3,4\},\{5,6\}}]=\sigma_1^3-2\sigma_{2,1}=\sigma_3$.

Finally, observe that all the conclusions are unaffected by renumbering.
\end{proof}

\begin{lemma}\label{n6-nod}
The following equalities hold in $A^*(\oM_{1,6}(5))$:
\begin{itemize}
 \item $[\Ell{\{i,j,h,k\},\{\ell,m\}}]=2\lambda\nu$, $[\Ell{\{i,j,h\},\{k,\ell,m\}}]=2\lambda^3-2\lambda\nu$, $[\Ell{\{i,j,h,k,\ell\},\{m\}}]=6\lambda^3-2\lambda\nu$.
 \item $[\Ell{\{i,j\},\{h,k\},\{\ell, m\}}]=\lambda^2\nu-\nu^2$, $[\Ell{\{i,j,h\},\{k,\ell\},\{m\}}]=\nu^2$, $[\Ell{\{i,j,h,k\},\{\ell\},\{m\}}]=2\lambda^2\nu$.
 \item $[\Ell{\{i,j,h\},\{k\},\{\ell\},\{m\}}]=\lambda\nu^2$, $[\Ell{\{i,j\},\{h,k\},\{\ell\},\{m\}}]=\sigma_{3,2}=2\lambda\nu^2-\lambda^3\nu$.
 \item $[\Ell{\{i\},\{j\},\{h\},\{k\},\{\ell,m\}}]=\sigma_{3,3}=\lambda^2\nu^2-\nu^3$.
\end{itemize}
\end{lemma}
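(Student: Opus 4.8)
The strategy is to reduce each class $[\Ell{S}]$ to a multiple of a nodal class $[\Nod_S]$ and then to evaluate the latter in $A^*(\oM_{1,6}(5)) = A^*(\on{Gr}(2,5))$. For the reduction I would invoke the ``colliding nodes'' relations of \Cref{n2,n3,n4}: there one proves, by induction on the number of markings, that $[\Ell{S}]=2\lambda\cdot[\Nod_S]$ when $\lvert S\rvert=2$ and $[\Ell{S}]=\lambda\cdot[\Nod_S]$ when $\lvert S\rvert\geq 3$. That induction only manipulates loci of codimension $\leq\lvert S\rvert+1\leq 6$, so it takes place entirely inside the smooth locus $\Min{6}^{sm}=\oM_{1,6}(5)$ and the same identities hold there (with the evident extension to five-part partitions, needed for the last item). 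It then remains to compute the nodal classes.

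\textbf{Computing $[\Nod_S]$.} Four of the shapes are already in \Cref{lm:schubert classes}: $(4,2),(3,3),(2,2,2),(3,2,1)$ give $\nu,\ \lambda^2-\nu,\ \sigma_3=2\lambda\nu-\lambda^3,\ \sigma_{2,1}=\lambda^3-\lambda\nu$. For the remaining shapes I would pull back the corresponding nodal class from $A^*(\Min{5})=\ZZ[\lambda]$ along the flat forgetful morphism $\pi\colon\Min{6}\to\Min{5}$, using that $\pi^*\lambda=\lambda=\sigma_1$ and that the scheme-theoretic preimage of $\Nod_{S'}$ is the reduced, pure-codimensional union of the strata obtained by inserting the sixth marking into one of the parts of $S'$; triviality of the $S_6$-action on $A^*$ collapses $\pi^*[\Nod_{S'}]$ to a small integer combination of one or two $\Nod$-classes. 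Thus $\pi^*[\Nod_{(4,1)}]=[\Nod_{(5,1)}]+[\Nod_{(4,2)}]$ together with $[\Nod_{(4,1)}]=3\lambda^2$ on $\Min{5}$ gives $[\Nod_{(5,1)}]=3\lambda^2-\nu$; $\pi^*[\Nod_{(3,1,1)}]=[\Nod_{(4,1,1)}]+2[\Nod_{(3,2,1)}]$ with $[\Nod_{(3,1,1)}]=2\lambda^3$ gives $[\Nod_{(4,1,1)}]=2\lambda\nu$; and $\pi^*[\Nod_{(1,1,1,1,1)}]=5[\Nod_{(2,1,1,1,1)}]$ with $[\Nod_{(1,1,1,1,1)}]=\lambda^5$ on $\Min{5}$ (this from the relation above and the fact that $\ZZ[\lambda]$ is a domain) together with $\sigma_1^5=5\sigma_{3,2}$ gives $[\Nod_{(2,1,1,1,1)}]=\sigma_{3,2}$. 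Substituting into $[\Ell{S}]=2\lambda[\Nod_S]$ or $\lambda[\Nod_S]$ and reducing modulo the Plücker relations $\lambda^4-\lambda^2\nu-\nu^2=0$ and $\lambda^5-3\lambda^3\nu+2\lambda\nu^2=0$ yields every displayed formula except the two in codimension $4$.

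\textbf{The codimension-$4$ shapes $(3,1,1,1)$ and $(2,2,1,1)$.} Here the pullback $\pi^*[\Nod_{(2,1,1,1)}]=[\Nod_{(3,1,1,1)}]+3[\Nod_{(2,2,1,1)}]=\lambda^4=3\sigma_{3,1}+2\sigma_{2,2}$ is a single relation for two unknowns in the rank-$2$ group $A^4(\on{Gr}(2,5))=\ZZ\sigma_{3,1}\oplus\ZZ\sigma_{2,2}$, and no further pullback separates them, because $\lambda=\sigma_1$ is a zero-divisor above the middle degree. I would close the gap by positivity: writing $[\Nod_{(3,1,1,1)}]=z_1\sigma_{3,1}+z_2\sigma_{2,2}$ and $[\Nod_{(2,2,1,1)}]=w_1\sigma_{3,1}+w_2\sigma_{2,2}$, the relation forces $z_1+3w_1=3$ and $z_2+3w_2=2$, and effectivity ($z_i,w_i\geq 0$) plus integrality leaves only $(z_2,w_2)=(2,0)$ and $(z_1,w_1)\in\{(3,0),(0,1)\}$; the option $(3,0)$ would make $[\Nod_{(2,2,1,1)}]=0$, which is impossible, so $[\Nod_{(3,1,1,1)}]=2\sigma_{2,2}$ and $[\Nod_{(2,2,1,1)}]=\sigma_{3,1}$. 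Multiplying by $\lambda$ and reducing modulo the Plücker relations gives $[\Ell{(3,1,1,1)}]=2\sigma_{3,2}=\lambda\nu^2$ and $[\Ell{(2,2,1,1)}]=\sigma_{3,2}=2\lambda\nu^2-\lambda^3\nu$. Alternatively one identifies $\Nod_{(2,2,1,1)}$ as an explicit Schubert cycle via $q\mapsto\langle q,p_1,\dots,p_5\rangle$, exactly as in the proof of \Cref{lm:schubert classes}.

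\textbf{Main obstacle.} The only non-formal point is this codimension-$4$ step: since multiplication by $\lambda$ is not injective on $A^{\geq4}(\on{Gr}(2,5))$, the cheap inductive pullbacks underdetermine the classes and one must supplement them with effectivity (or with a direct Schubert identification); there is also some bookkeeping to get right --- which shapes occur in each $\pi^*[\Nod_{S'}]$ and with what multiplicities, and, for four-part partitions, the cyclic arrangement of the core components. Everything else reduces to Pieri/Littlewood--Richardson calculus in $A^*(\on{Gr}(2,5))$ and reduction modulo the two Plücker relations.
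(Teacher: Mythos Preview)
Your overall strategy---reduce $[\Ell_S]$ to a multiple of $[\Nod_S]$, then evaluate the nodal classes via Schubert calculus and pullback---matches the paper for the two- and three-part loci. The differences appear in the last two bullets.

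\textbf{Four-part loci.} Here your route genuinely diverges from the paper. You compute $[\Nod_{(3,1,1,1)}]$ and $[\Nod_{(2,2,1,1)}]$ by writing them in the Schubert basis of $A^4(\on{Gr}(2,5))$, using the pullback constraint together with non-negativity of Schubert coefficients for effective cycles, and then multiply by $\lambda$. This is correct and is a nice alternative. The paper instead works directly with $\Ell$ classes: it observes that the fibre of the universal-curve map $\on{Bl}\on{Gr}(2,5)\to\PP^5$ over the \emph{point} $[\Ell_{\{1,2\},\{3\},\{4\},\{5\}}]\in\PP^5$ is the curve $C$ itself sitting inside $\on{Gr}(2,5)$, and that the branch of $C$ carrying a single marking is a Pl\"ucker line, whence $[\Ell_{\{i,j\},\{h,k\},\{\ell\},\{m\}}]=\sigma_{3,2}$ directly; the remaining class $[\Ell_{(3,1,1,1)}]$ then follows by pulling back $[\Ell_{(2,1,1,1)}]=\lambda^5$ from $\Min{5}$. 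So the paper never computes $[\Nod_{(3,1,1,1)}]$ or $[\Nod_{(2,2,1,1)}]$ at all. Your positivity argument is slicker in that it avoids the geometry of the Lekili--Polishchuk embedding, at the cost of invoking Kleiman transversality; the paper's argument is more hands-on but self-contained.

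\textbf{Five-part locus.} Here there is a gap. The identity $[\Ell_S]=\lambda\cdot[\Nod_S]$ is established in \Cref{n2,n3,n4} only for $\lvert S\rvert\leq 4$; the ``evident extension to five-part partitions'' is not proved anywhere and is not obvious (the inductive scheme of those lemmas uses a distinguished weight-one hyperplane pulled back from $\Min{2},\Min{3},\Min{4}$, and one would have to set up and check a new base case in $\Min{5}$). Your deduction $[\Nod_{(1,1,1,1,1)}]=\lambda^5$ then rests on this unproved identity and is circular. The paper avoids the issue entirely: since $\Ell_{\{i,j\},\{h\},\{k\},\{\ell\},\{m\}}$ has codimension $6$ in the $6$-dimensional $\on{Gr}(2,5)$ and is a single reduced point, its class is $\sigma_{3,3}$ immediately. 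Replacing your five-part argument with this one-line observation closes the gap.
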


\begin{proof}
The classes of tacnodal loci can be deduced from \Cref{n2}, \Cref{lm:schubert classes}, and pullback from \Cref{n5}:
\[[\Ell{\{1,2,3,4,6\},\{5\}}]=\pi^*[\Ell{\{1,2,3,4\},\{5\}}]-[\Ell{\{1,2,3,4\},\{5,6\}}]=6\lambda^3-2\lambda\nu.\]

Similarly, the classes of elliptic $3$-fold points can be deduced from \Cref{n3}, \Cref{lm:schubert classes}, and pullback from \Cref{n5}, by applying the relation $\lambda^4=\lambda^2\nu+\nu^2$, see \Cref{ChowMin}:
\begin{align*}
 [\Ell{\{i,j\},\{h,k\},\{\ell, m\}}]&=\lambda\cdot(2\lambda\nu-\lambda^3)=\lambda^2\nu-\nu^2;\\
 [\Ell{\{i,j,h\},\{k,\ell\},\{m\}}]&=\lambda\cdot(\lambda^3-\lambda\nu)=\nu^2;\\
 [\Ell{\{i,j,h,m\},\{k\},\{\ell\}}]&=\pi^*[\Ell{\{i,j,h\},\{k\},\{\ell\}}]-2[\Ell{\{i,j,h\},\{k,m\},\{\ell\}}]=2\lambda^4-2\nu^2=2\lambda^2\nu.
\end{align*}

Next, observe that $\Ell{\{1,2\},\{3\},\{4\},\{5\}}$ is a closed point in $\Min{5}\smallsetminus \{[C_{1,5}]\} \simeq \PP^5$. Its preimage along the rational morphism $\on{Gr}(2,5)\dashrightarrow \PP^5$  is the curve with a quadruple point and having its components marked respectively by $\{p_1,p_2\},\{p_3\},\{p_4\}$ and $\{p_5\}$. In particular, the components with only one marking are necessarily lines in $\on{Gr}(2,5)$, hence their class is equal to $\sigma_{3,2}$. In other words, we have proved that $[\Ell{\{i,j\},\{h,k\},\{l\},\{m\}}]=\sigma_{3,2}=\sigma_3\sigma_2=2\lambda\nu^2-\lambda^3\nu$. As $\pi^*[\Ell{\{i,j\},\{h\},\{k\},\{\ell\}}]=\lambda^5=3\lambda^3\nu-2\lambda\nu^2$, we deduce that $[\Ell{\{i,j,h\},\{k\},\{\ell\},\{m\}}]=3\lambda^3\nu-2\lambda\nu^2 - 3(2\lambda\nu^2-\lambda^3\nu)=6\lambda^3\nu-8\lambda\nu^2=\lambda\nu^2$, where we have used the relation $2\lambda^3\nu-3\lambda\nu^2=\lambda^5-\lambda\cdot\lambda^4=0$.

Finally, the class of $\Ell{\{i,j\},\{h\},\{k\},\{\ell\},\{m\}}$ is the class of a point of $\on{Gr}(2,5)$, hence it is equal to $\sigma_{3,3}=\sigma_3^2=(2\lambda\nu-\lambda^3)^2=
\lambda^6-4\lambda^4\nu+4\lambda^2\nu^2=\lambda^2\nu^2-\nu^3$.
\end{proof}

The following summarize the classes of binodal and singularity loci computed in this section.
\begin{proposition}\label{prop:binodal_classes}
The following equalities hold in $A^2(\Min{n}^{sm})$:
    \begin{itemize}
        \item[(n=2)] $[\Nod_{\{1\},\{2\}}]=12\lambda^2$.
        \item[(n=3)] $[\Nod_{\{i,j\},\{k\}}]=6\lambda^2$.
        \item[(n=4)] $[\Nod_{\{i,j,h\},\{k\}}]=4\lambda^2$; 
 
 \noindent $[\Nod_{\{i,j\},\{h,k\}}]=2\lambda^2$.
 \item[(n=5)] $[\Nod_{\{i,j,h,k\},\{\ell\}}]=3\lambda^2$;
  
  \noindent $[\Nod_{\{i,j,h\},\{k,\ell\}}]=\lambda^2$.
  \end{itemize}
  \end{proposition}
\begin{proposition}\label{prop:sing_classes}
The following equalities hold in $A^*(\Min{n}^{sm})$:
    \begin{itemize}
        \item[(n=1)] cusps $[\Ell{\{1\}}] = 24\lambda^2$.
        \item[(n=2)] cusps $[\Ell{\{1,2\}}]=24\lambda^2$;
        
        \noindent tacnodes $[\Ell{\{1\},\{2\}}]=24\lambda^3$.
        \item[(n=3)] cusps $[\Ell{\{1,2,3\}}]=24\lambda^2$;
        
        \noindent tacnodes $[\Ell{\{i,j\},\{h\}}]=12\lambda^3$;
        
        \noindent $3$-fold points $[\Ell{\{1\},\{2\},\{3\}}]=12\lambda^4$.
        \item[(n=4)] cusps $[\Ell{\{1,2,3,4\}}]=24\lambda^2$;
        
        \noindent tacnodes $[\Ell{\{i,j,h\},\{k\}}]=8\lambda^3$, $[\Ell{\{i,j\},\{h,k\}}]=4\lambda^3$;
        
        \noindent $3$-fold points $[\Ell{\{i,j\},\{h\},\{k\}}]=4\lambda^4$;
        
        \noindent  $4$-fold points $[\Ell{\{1\},\{2\},\{3\},\{4\}}]=4\lambda^5$.
        \item[(n=5)] cusps $[\Ell{\{1,2,3,4,5\}}]=24\lambda^2$;
        
        \noindent tacnodes $[\Ell{\{i,j,h,k\},\{\ell\}}]=6\lambda^3$, $[\Ell{\{i,j,h\},\{k,\ell\}}]=2\lambda^3$;
        
        \noindent $3$-fold points $[\Ell{\{i,j,h\},\{k\},\{\ell\}}]=2\lambda^4$, $[\Ell{\{i,j\},\{h,k\},\{\ell\}}]=\lambda^4$;
        
        \noindent $4$-fold points $[\Ell{\{i,j\},\{h\},\{k\},\{\ell\}}]=\lambda^5$;
        
        \noindent $5$-fold points $[\Ell{\{1\},\{2\},\{3\},\{4\},\{5\}}]=\lambda^6$.
        \item[(n=6)] cusps $[\Ell{\{1,2,3,4,5\}}]=24\lambda^2$;
        
        \noindent tacnodes $[\Ell{\{i,j,h,k\},\{\ell,m\}}]=2\lambda\nu$, $[\Ell{\{i,j,h\},\{k,\ell,m\}}]=2\lambda^3-2\lambda\nu$, $[\Ell{\{i,j,h,k,\ell\},\{m\}}]=6\lambda^3-2\lambda\nu$;
        
 \noindent $3$-fold pts. $[\Ell{\{i,j\},\{h,k\},\{\ell, m\}}]=\lambda^2\nu-\nu^2$, $[\Ell{\{i,j,h\},\{k,\ell\},\{m\}}]=\nu^2$, $[\Ell{\{i,j,h,k\},\{\ell\},\{m\}}]=2\lambda^2\nu$.
 
 \noindent $4$-fold points $[\Ell{\{i,j,h\},\{k\},\{\ell\},\{m\}}]=\lambda\nu^2$, $[\Ell{\{i,j\},\{h,k\},\{\ell\},\{m\}}]=2\lambda\nu^2-\lambda^3\nu$.
 
 \noindent $5$-fold points $[\Ell{\{i\},\{j\},\{h\},\{k\},\{\ell,m\}}]=\lambda^2\nu^2-\nu^3$.
    \end{itemize}
\end{proposition}

\section{The integral Chow ring of $\calG_{1,n\leq6}^{sm}$} \label{sec:chow G}

\subsection{Patching}
We employ the patching lemma \cite[Lemma 3.4]{dilorenzo2021polarizedtwistedconicsmoduli}, which we recall below.
\begin{lemma}\label{lm:patch}
    Let $G$ be an algebraic group, and let $X$ be a $G$-equivariant smooth scheme with a $G$-invariant, smooth closed subscheme $\iota\colon Z\hookrightarrow X$. Set $j\colon U=X\smallsetminus Z \hookrightarrow X$ and let $N$ be the normal bundle of $Z$. Suppose that the equivariant top Chern class $c_{\rm{top}}^G(N)$ is not a zero-divisor in $A^*_G(Z)$. Then we have a cartesian diagram of rings
    \[\begin{tikzcd}
        A^*_G(X) \ar[r, "j^*"] \ar[d, "\iota^*"] \ar[dr,phantom,"\ulcorner"] & A^*_G(U) \ar[d] \\
        A^*_G(Z) \ar[r] & A^*_G(Z)/(c_{\rm{top}}^G(N)).
    \end{tikzcd}\]
\end{lemma}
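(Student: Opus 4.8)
The plan is to derive the lemma from two standard structural facts about $G$-equivariant Chow rings of smooth schemes --- the localization (excision) exact sequence and the self-intersection formula --- and then run a short diagram chase in which the non-zero-divisor hypothesis is used exactly once. Throughout I would work with the Edidin--Graham definition of $A^*_G$ via finite-dimensional approximations of $BG$, so that both facts are inherited from their counterparts in Fulton's intersection theory. \textbf{Step 1 (the two inputs).} First I would record, for the regular closed embedding $\iota\colon Z\hookrightarrow X$ of smooth $G$-schemes with open complement $j\colon U\hookrightarrow X$, the exact sequence of $A^*_G(X)$-modules
\[ A^*_{G}(Z)\xrightarrow{\ \iota_*\ }A^*_{G}(X)\xrightarrow{\ j^*\ }A^*_{G}(U)\longrightarrow 0, \]
in which $\iota_*$ raises cohomological degree by $c:=\operatorname{codim}(Z,X)=\operatorname{rk}N$, together with the self-intersection formula $\iota^*\iota_*(\alpha)=c_{\mathrm{top}}^G(N)\cdot\alpha$ for all $\alpha\in A^*_G(Z)$. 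Smoothness of $X$, $U$, $Z$ makes each $A^*_G(-)$ a commutative graded ring with $j^*$ and $\iota^*$ ring homomorphisms, and $\iota_*$ satisfies the projection formula. A first consequence of the hypothesis: if $\iota_*(\alpha)=0$ then $c_{\mathrm{top}}^G(N)\alpha=\iota^*\iota_*\alpha=0$, hence $\alpha=0$; so $\iota_*$ is injective and the sequence above is short exact.

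\textbf{Step 2 (the lower-right corner and the maps).} By short exactness, $j^*$ identifies $A^*_G(U)$ with $A^*_G(X)/\iota_*A^*_G(Z)$, where $\iota_*A^*_G(Z)$ is an ideal by the projection formula. The self-intersection formula shows that $\iota^*$ carries this ideal into $(c_{\mathrm{top}}^G(N))\subseteq A^*_G(Z)$, so it descends to a ring homomorphism $A^*_G(U)\cong A^*_G(X)/\iota_*A^*_G(Z)\to A^*_G(Z)/(c_{\mathrm{top}}^G(N))$; this is the right-hand vertical arrow, the bottom one being the quotient map. By construction the square commutes and all four arrows are ring homomorphisms, so the only thing left is to prove the square is Cartesian.

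\textbf{Step 3 (Cartesianness).} I would check that $\Phi\colon A^*_G(X)\to A^*_G(U)\times_{A^*_G(Z)/(c_{\mathrm{top}}^G(N))}A^*_G(Z)$, $\xi\mapsto(j^*\xi,\iota^*\xi)$, is an isomorphism of rings. For injectivity: if $j^*\xi=0$, write $\xi=\iota_*\alpha$ by exactness in the middle; then $\iota^*\xi=0$ forces $c_{\mathrm{top}}^G(N)\alpha=0$, hence $\alpha=0$ and $\xi=0$. For surjectivity: given a pair $(u,z)$ whose images in $A^*_G(Z)/(c_{\mathrm{top}}^G(N))$ agree, pick $\tilde\xi$ with $j^*\tilde\xi=u$; since the image of $u$ in the quotient is $\iota^*\tilde\xi$ modulo $c_{\mathrm{top}}^G(N)$, compatibility gives $z-\iota^*\tilde\xi=c_{\mathrm{top}}^G(N)\gamma$ for some $\gamma\in A^*_G(Z)$, and then $\xi:=\tilde\xi+\iota_*\gamma$ satisfies $j^*\xi=u$ (as $j^*\iota_*=0$) and $\iota^*\xi=\iota^*\tilde\xi+c_{\mathrm{top}}^G(N)\gamma=z$. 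Everything respects the grading: $c_{\mathrm{top}}^G(N)$ lies in degree $c$, so the ideal it generates, the fibre product, and the correction term $\iota_*\gamma$ are all understood degree by degree.

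\textbf{Expected obstacle.} The diagram chase itself is routine; the real content is guaranteeing the two inputs of Step 1 in the stated generality. The localization sequence and the self-intersection (excess intersection) formula are classical over a field, and the Edidin--Graham double-filtration argument propagates them to $A^*_G$; the point that needs a little care --- because the ambient setting is $\Spec(\ZZ[\tfrac{1}{6}])$ rather than a field --- is having intersection theory with its ring structure and Gysin maps available on smooth algebraic spaces (stacks) over a regular base. Granting that standard machinery, the hypothesis that $c_{\mathrm{top}}^G(N)$ is a non-zero-divisor is exactly what makes the argument go through, and it is invoked only in the injectivity of $\iota_*$ and of $\Phi$.
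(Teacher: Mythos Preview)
Your proof is correct and complete. The paper itself does not actually prove this lemma: it merely cites \cite[Lemma~3.4]{dilorenzo2021polarizedtwistedconicsmoduli} and, in the paragraph following the statement, explains why the right vertical arrow is well-defined (any two lifts of $\xi\in A^*_G(U)$ differ by some $\iota_*\gamma$, and $\iota^*\iota_*\gamma$ is divisible by $c_{\mathrm{top}}^G(N)$) --- which is exactly your Step~2. Your Steps~1 and~3 supply the remaining ingredients (localization sequence, self-intersection formula, and the diagram chase for Cartesianness) that the paper leaves to the reference; the argument you give is the standard one and is what one would expect to find there.
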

The vertical arrow on the right takes an element $\xi\in A^*_G(U)$, lifts $\xi$ to $A^*_G(X)$ \emph{in any way}, and then takes restriction to $Z$ followed by projection to the quotient; it is well-defined since any two lifts will differ by some $\iota_*\gamma$, and $\iota^*\iota_*\gamma$ is divisible by $c_{\rm{top}}^G(N)$.

We will always find ourselves in the following favourable situation.
\begin{lemma}\label{lm:patch2}
    In the setup of \Cref{lm:patch}, suppose that the pullback homomorphism \emph{$\iota^*$ is surjective}. Write:
    \[ A^*_G(U)=\ZZ[\eta_1,\ldots,\eta_r]/(f_1,\ldots,f_s). \]
    Then, the equivariant Chow ring of $X$ admits the following presentation:
    \[ A^*_G(X)=\ZZ[\eta'_1,\ldots,\eta'_r,\zeta]/(h_1,\ldots,h_s)+\zeta\cdot\ker(\iota^*) \] where $\eta'_i$ is any lift of $\eta_i$ to $A^*_G(X)$, the cycle $\zeta$ is the fundamental class of $Z$, and $h_i$ are relations that lift the relations $f_i$, i.e. $h_i(\eta_1,\ldots,\eta_r,0)=f_i$ and $\iota^*(h_i)=0$.
\end{lemma}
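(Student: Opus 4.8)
The plan is to deduce \Cref{lm:patch2} from the cartesian square in \Cref{lm:patch} by unwinding what the fibre product of rings looks like under the surjectivity hypothesis. First I would observe that the square
\[\begin{tikzcd}
A^*_G(X) \ar[r, "j^*"] \ar[d, "\iota^*"] & A^*_G(U) \ar[d, "q"] \\
A^*_G(Z) \ar[r, "p"] & A^*_G(Z)/(c_{\rm{top}}^G(N))
\end{tikzcd}\]
being cartesian means $A^*_G(X) \cong A^*_G(Z) \times_{A^*_G(Z)/(c_{\rm{top}}^G(N))} A^*_G(U)$ as rings, where the right-hand vertical map $q$ is the one described after \Cref{lm:patch}. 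Since $\iota^*$ is assumed surjective, every element of $A^*_G(Z)$ lifts, so I would argue that the natural ring map $\ZZ[\eta_1',\ldots,\eta_r',\zeta] \to A^*_G(X)$ sending $\eta_i' \mapsto \eta_i'$ (a chosen lift of $\eta_i$) and $\zeta \mapsto [Z] = \iota_*(1)$ is surjective: indeed, $j^*$ hits every $\eta_i$, hence by surjectivity of $j^*$ onto $A^*_G(U)$ (which holds because $j^*$ in the cartesian square must be surjective — $q$ is surjective, so the fibre product surjects onto $A^*_G(U)$) the subring generated by the $\eta_i'$ surjects onto $A^*_G(U)$, while the ideal generated by $\zeta = \iota_*(1)$ together with the $\eta_i'$ surjects onto $\ker(j^*) = \iota_*A^*_G(Z)$ by the projection formula $\iota_*(\iota^*(y)) = y\cdot[Z]$ and surjectivity of $\iota^*$.

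Next I would identify the kernel of this surjection. An element of $\ZZ[\eta_1',\ldots,\eta_r',\zeta]$ maps to zero in $A^*_G(X)$ iff both its image in $A^*_G(U)$ vanishes and its image in $A^*_G(Z)$ vanishes, by the fibre-product description. Writing a general element as $P(\eta') + \zeta\cdot R(\eta',\zeta)$, its image in $A^*_G(U)$ is $P(\eta)$ (since $j^*\zeta = j^*\iota_*1 = 0$), which vanishes iff $P \in (f_1,\ldots,f_s)$; and once $P = \sum c_i f_i$, I replace each $f_i$ by a chosen lift $h_i$ with $h_i(\eta',0) = f_i$ and $\iota^*(h_i) = 0$ — such $h_i$ exist precisely because $\iota^*$ is surjective, so the relation $f_i$, pulled back appropriately, can be corrected by a $\zeta$-multiple to kill its restriction to $Z$. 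Then modulo the $h_i$ the element is congruent to $\zeta\cdot R'(\eta',\zeta)$ for some $R'$, and its image in $A^*_G(X)$ lies in $\iota_*A^*_G(Z)$; by the projection formula this image is $\iota_*(\iota^*(\zeta\cdot R'(\eta',\zeta)))$, wait — more carefully, $\zeta\cdot(\text{anything})$ restricts to $Z$ as $c_{\rm{top}}^G(N)$ times that thing's restriction, so I track that $\iota^*$ of such an element vanishes iff $\iota^*(R')$ lies in the annihilator of $c_{\rm{top}}^G(N)$, which is zero by the non-zero-divisor hypothesis; hence $\iota^*(R') = 0$, i.e. (using surjectivity of $\iota^*$ and that we may choose $R'$ to be a polynomial whose value on $Z$ is $\iota^*(R')$) the element $\zeta R'$ lies in $\zeta\cdot\ker(\iota^*)$. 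This shows the kernel is exactly $(h_1,\ldots,h_s) + \zeta\cdot\ker(\iota^*)$, giving the stated presentation.

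The main obstacle I anticipate is the careful bookkeeping in the last step: one must be precise about what "$\ker(\iota^*)$" means as a subset of the polynomial ring versus the abstract ring $A^*_G(X)$, and the argument implicitly uses that a lift of a relation in $\ker(\iota^*)$ can itself be taken inside $\ZZ[\eta_1',\ldots,\eta_r',\zeta]$ — this is fine because we have already shown that polynomial ring surjects onto $A^*_G(X)$, so $\ker(\iota^*)$ pulls back to a well-defined ideal there, but stating it cleanly requires care. A secondary subtlety is verifying that lifts $h_i$ of the $f_i$ with $\iota^*(h_i)=0$ genuinely exist: given any lift $\tilde{h}_i$ with $\tilde{h}_i(\eta',0) = f_i$, its restriction $\iota^*(\tilde h_i) \in A^*_G(Z)$ need not vanish, but $j^*\tilde h_i = f_i = 0$ in $A^*_G(U)$ forces $\tilde h_i \in \ker(j^*) = \iota_*A^*_G(Z)$, so $\tilde h_i = \zeta\cdot(\text{something})$ after adjusting, and then we simply absorb that correction — i.e. we may take $h_i = \tilde h_i$ directly once we note $\tilde h_i \in \zeta\cdot A^*_G(X)$, and the condition $\iota^*(h_i)=0$ is then automatic up to reabsorbing into the $\zeta\cdot\ker(\iota^*)$ term. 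I would present this cleanly by first establishing surjectivity, then computing the kernel in the two stages above.
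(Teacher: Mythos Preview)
Your argument is correct and follows essentially the same route as the paper's proof: both establish surjectivity of $\ZZ[\eta_1',\ldots,\eta_r',\zeta]\to A^*_G(X)$ via the projection formula $\iota_*\iota^*(\beta)=\zeta\cdot\beta$ together with surjectivity of $\iota^*$, and both compute the kernel by first reducing modulo the lifted relations $h_i$ to an element $\zeta\cdot g$, then using $\iota^*(\zeta\cdot g)=c_{\rm top}^G(N)\cdot\iota^*(g)$ and the non-zero-divisor hypothesis to force $g\in\ker(\iota^*)$. Your more explicit bookkeeping about what $\ker(\iota^*)$ means at the level of the polynomial ring, and your verification that the lifts $h_i$ exist (via $\ker(j^*)=\iota_*A^*_G(Z)$ and surjectivity of $\iota^*$), fill in details the paper leaves implicit; the paper's proof is terser but logically identical.
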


\begin{proof}
    It follows directly from excision that $A^*_G(X)$ is generated by any lift of the generators of $A^*(U)$ together with all the cycles of the form $\iota_*\gamma$ for $\gamma\in A^*_G(Z)$. As $\iota^*$ is surjective, we can write any such $\gamma$ as $\iota^*\beta$, hence $\iota_*\gamma=\iota_*\iota^*\beta=\zeta\cdot\beta$. From this we see that $\eta_i'$ and $\zeta$ generate.

    The relations in $A^*_G(X)$ are given by those polynomials $h(\eta_1',\ldots,\eta_r',\zeta)$ such that (1) $j^*h=0$ in $A_G^*(U)$ and (2) $\iota^*h=0$ in $A^*_G(Z)$. The polynomials $h_i$ satisfy these properties by assumption, while any class of the form $\zeta\cdot g$ with $g\in\ker(\iota^*)$ does because $\zeta$ vanishes on $U$ and $g$ vanishes on $Z$ (and restrictions are ring homomorphisms).  Now let $h=h(\eta_1',\ldots,\eta_r',\zeta)$ be any relation. Property (1) implies that we can write $h=\sum p_i\cdot h_i 
    + \zeta \cdot g$ for some $g=g(\eta_1',\ldots,\eta_r',\zeta)$ lifting a class $\bar g\in A^*_G(Z)$. Restricting to $Z$ we get $0=c_{\rm{top}}^G(N)\cdot\iota^*g$ and, 
    since $c_{\rm{top}}^G(N)$ is not a zero divisor in $A^*_G(Z)$, we deduce that $g$ is in $\ker(\iota^*)$.
\end{proof}

The previous lemma  immediately implies the following.
\begin{lemma}\label{lm:gens}
    Suppose that we have an equivariant stratification $X \supset Z_{0} \supset \cdots \supset Z_{N-1} \supset Z_{N}=\emptyset$, by smooth, closed subschemes. Denote $X\setminus Z_{i}$ by $U_i$. Suppose moreover that for each triple $(U_i,U_{i-1},Z_i)$ the hypotheses of \Cref{lm:patch2} hold true. Then, the Chow ring of $X$ is generated by the lifts of some generators of $A^*(U_0)$ together with the fundamental classes of the strata $[Z_i]_G$.
\end{lemma}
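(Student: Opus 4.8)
The plan is to induct on the number $N$ of steps in the stratification, invoking \Cref{lm:patch2} once per step while dragging along a generating set of $A^*_G(U_i)$. Explicitly, I would prove by ascending induction on $i$ the following statement: $A^*_G(U_i)$ is generated by the restrictions to $U_i$ of fixed lifts to $A^*_G(X)$ of a chosen generating set of $A^*_G(U_0)$, together with the equivariant fundamental classes $[Z_0\cap U_i]_G,\dots,[Z_{i-1}\cap U_i]_G$. The case $i=0$ is tautological (the list of $[Z_j\cap U_0]_G$ is empty, and the restriction to $U_0$ of a lift of a generator of $A^*_G(U_0)$ is that generator); and since $Z_N=\emptyset$ we have $U_N=X$ and $Z_j\cap U_N=Z_j$, so the case $i=N$ is precisely the assertion of the lemma, with generators $[Z_0]_G,\dots,[Z_{N-1}]_G$.

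For the inductive step I would isolate the $i$-th locally closed stratum $S_i:=U_i\setminus U_{i-1}=Z_{i-1}\setminus Z_i=Z_{i-1}\cap U_i$, a smooth $G$-invariant closed subscheme of $U_i$ whose open complement is $U_{i-1}$. By hypothesis the triple $(U_i,U_{i-1},S_i)$ satisfies the assumptions of \Cref{lm:patch2} (in particular $\iota^*$ is surjective and the top equivariant Chern class of the normal bundle of $S_i$ in $U_i$ is not a zero-divisor), so the cartesian square of \Cref{lm:patch} holds for this triple and the presentation of \Cref{lm:patch2} applies: it exhibits $A^*_G(U_i)$ as generated by arbitrary lifts of a generating set of $A^*_G(U_{i-1})$ together with the single extra class $\zeta=[S_i]_{U_i}=[Z_{i-1}\cap U_i]_G$. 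Substituting the inductive description of $A^*_G(U_{i-1})$, and choosing lifts compatibly as explained below, yields exactly the asserted generating set for $A^*_G(U_i)$; taking $i=N$ finishes the proof.

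Since the hypotheses of \Cref{lm:patch2} are assumed at each stage, the argument is purely formal, and the only point that needs attention --- the ``main obstacle'', such as it is --- is checking that the lifts can be chosen coherently along the tower $U_0\subseteq U_1\subseteq\cdots\subseteq U_N=X$. This rests on two observations. First, a class $\widetilde\eta\in A^*_G(X)$ that restricts on $U_0$ to a chosen generator $\eta$ restricts on $U_i$ to a class whose further restriction to $U_{i-1}$ is the class used at the previous stage, so the generators coming from $A^*_G(U_0)$ propagate unambiguously. Second, the equivariant fundamental class commutes with open pullback along $U_{i-1}\hookrightarrow U_i$, so $[Z_j\cap U_i]_G$ restricts to $[Z_j\cap U_{i-1}]_G$ for $j<i-1$ and is therefore a legitimate lift of the corresponding inductive generator, while the new generator produced at stage $i$ is indeed $[Z_{i-1}\cap U_i]_G$ because the stratum being glued in is $S_i=Z_{i-1}\cap U_i$. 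Once this bookkeeping is in place there is nothing further to verify.
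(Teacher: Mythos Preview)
Your proof is correct and takes exactly the approach the paper has in mind: the paper offers no written proof at all, only the sentence ``The previous lemma immediately implies the following'', and you have simply spelled out that induction in detail, including the bookkeeping on lifts and the identification of the stratum $S_i=Z_{i-1}\cap U_i$ (which is indeed what the triple in the hypothesis is meant to denote).
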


The following lemma and construction explain how to lift relations.
\begin{lemma}\label{lm:lifting}
    In the setup of \Cref{lm:patch2}, let $f=f(\eta_1,\ldots,\eta_r)$ be a relation in $A^*_G(U)$, and set $f'=f(\eta'_1,\ldots,\eta'_r)$ and $g=\iota^*f'$. Then:
    \begin{enumerate}
        \item the class $g$ is divisible by $c_{\rm{top}}^G(N)$.
        \item Let $\bar g'$ be any lift of $\bar g:=g\cdot c_{\rm{top}}^G(N)^{-1}$ to $A^*_G(X)$. Then $h=f' - \zeta \cdot \bar g'$ is a relation that lifts $f$.
    \end{enumerate}
\end{lemma}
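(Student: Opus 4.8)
The plan is to exploit the cartesian square of \Cref{lm:patch} directly. For part~(1), observe that $f' = f(\eta_1',\ldots,\eta_r')$ is a lift of $f$ to $A^*_G(X)$, and by hypothesis $j^*f' = f(\eta_1,\ldots,\eta_r) = 0$ in $A^*_G(U)$. By exactness of the localisation sequence $A^*_G(Z) \xrightarrow{\iota_*} A^*_G(X) \xrightarrow{j^*} A^*_G(U) \to 0$, the class $f'$ is of the form $\iota_*\alpha$ for some $\alpha \in A^*_G(Z)$. Applying $\iota^*$ and the self-intersection formula $\iota^*\iota_*\alpha = c_{\rm{top}}^G(N)\cdot\alpha$, we get $g = \iota^*f' = c_{\rm{top}}^G(N)\cdot\alpha$, which is the desired divisibility. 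Since $c_{\rm{top}}^G(N)$ is not a zero-divisor in $A^*_G(Z)$, the quotient $\bar g := g\cdot c_{\rm{top}}^G(N)^{-1}$ is the unique such $\alpha$, and in particular is well-defined.

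For part~(2), I would check the two defining conditions for $h = f' - \zeta\cdot\bar g'$ to be a relation lifting $f$. First, $h$ reduces to $f$ after setting $\zeta = 0$: indeed $h(\eta_1',\ldots,\eta_r',0) = f' = f(\eta_1',\ldots,\eta_r')$ by construction of $\bar g'$ as a lift (so it involves $\zeta$ only through the explicit factor we subtract, or more precisely we take $\bar g'$ to be any lift and absorb the rest). Second — and this is the content — I must show $h = 0$ in $A^*_G(X)$, equivalently (by the cartesian square, since $A^*_G(X)$ embeds into $A^*_G(U)\times A^*_G(Z)$) that $j^*h = 0$ and $\iota^*h = 0$. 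For the first, $j^*h = j^*f' - j^*(\zeta\cdot\bar g') = 0 - 0 = 0$, using $j^*f' = f = 0$ and $j^*\zeta = j^*[Z]_G = 0$ since $Z\cap U = \emptyset$. For the second, $\iota^*h = \iota^*f' - \iota^*(\iota_*[\bar g']) = g - c_{\rm{top}}^G(N)\cdot\iota^*\bar g'$; now $\iota^*\bar g' = \bar g$ since $\bar g'$ is a lift of $\bar g$, so $\iota^*h = g - c_{\rm{top}}^G(N)\cdot\bar g = g - g = 0$ by definition of $\bar g$. Hence $h = 0$ in $A^*_G(X)$, so $h$ is a genuine relation; and since $h \equiv f' \equiv f$ modulo $\zeta$ and modulo the ideal of relations already accounted for, it lifts $f$ in the sense of \Cref{lm:patch2}.

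The main subtlety — rather than obstacle — is bookkeeping around the phrase ``lifts $f$'': one must be careful that $\bar g'$ is only required to be \emph{some} lift of $\bar g$, and that the ambiguity in this choice (which lives in $\ker(j^*) = \op{im}(\iota_*)$, hence is a multiple of $\zeta$ after writing $\iota_*\beta = \zeta\cdot\beta$ using surjectivity of $\iota^*$ as in \Cref{lm:patch2}) changes $h$ only by $\zeta\cdot(\text{something in }\ker\iota^*)$, which is again zero in $A^*_G(X)$ and does not affect the reduction mod $\zeta$. So the statement is robust to this choice. Everything else is a formal consequence of the self-intersection formula and the cartesian description of $A^*_G(X)$ from \Cref{lm:patch}; no geometry beyond what is already established is needed.
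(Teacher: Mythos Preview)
Your proof is correct and follows the same approach as the paper's: use the localisation sequence to write $f' = \iota_*\gamma$, apply the self-intersection formula to get divisibility, then verify that $h$ restricts to $f$ on $U$ and to $0$ on $Z$. One small notational slip: when you write $\iota^*(\iota_*[\bar g'])$, note that $\bar g' \in A^*_G(X)$, not $A^*_G(Z)$; what you mean is $\iota^*(\zeta\cdot\bar g') = (\iota^*\zeta)\cdot(\iota^*\bar g') = c_{\rm{top}}^G(N)\cdot\bar g$ directly, or equivalently $\zeta\cdot\bar g' = \iota_*(\iota^*\bar g')$ by the projection formula before applying $\iota^*$ --- either way the computation you carry out afterwards is correct.
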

\begin{proof}
As $f'$ restricts to zero in $A^*_G(U)$, we deduce that $f'=\iota_*\gamma$. This implies that $g=\iota^*f'=c_{\rm{top}}^G(N)\cdot \gamma$. The restriction of $h$ to $U$ is nothing but $f$, while its restriction to $Z$ is 0 by construction.
\end{proof}
\begin{construction}\label{const:lift}
   Suppose that we have an equivariant stratification $X \supset Z_{0} \supset \cdots \supset Z_{N-1} \supset Z_{N}=\emptyset$, by smooth, closed subschemes. Denote $X\setminus Z_{i}$ by $U_i$ and $[Z_i]$ by $\zeta_i$. Suppose moreover that for each triple $(U_i,U_{i-1},Z_i)$ the hypotheses of \Cref{lm:patch2} hold true. Then, given a relation $f_i$ in the equivariant Chow ring of $U_i$, we can lift it to a relation $f_N$ in the equivariant Chow ring of $X$ by iterating the following procedure, for $j=i,\ldots,N$:
    \begin{enumerate}
        \item define $f'_j$ in $A^*_G(U_{j+1})$ by rewriting $f_j$ using some lifts of the generators of $A^*_G(U_{j})$,
        \item compute $\bar g_j=\iota^*f'_j\cdot c_{{\rm top}}(N_{Z_{j+1}})^{-1}$ in $A^*(Z_{j+1})$, and lift it to $A^*_G(U_{j+1})$ in any way,
        \item set $f_{j+1}=f'_j - \zeta_i\cdot \bar g'_j$ in $A^*_G(U_{j+1})$.
    \end{enumerate}
\end{construction}
Importantly, a variation on the above construction is useful to compute the fundamental class of an invariant closed subscheme.
\begin{construction}\label{const:class}
    In the setting of \Cref{const:lift}, let $Y\subset X$ be an invariant closed subscheme, and let $\gamma_i$ be the restriction of $[Y]_G$ to $A^*_G(Z_i)$. Let $f_0$ be the restriction of $[Y]_G$ to $U_0$. Then, we can inductively compute $[Y]_G=f_N$ as follows, letting $j=0,\ldots,N-1$:
    \begin{enumerate}
        \item define $f'_j$ in $A^*_G(U_{j+1})$ by rewriting $f_j$ using some lift of the generators of $A^*_G(U_{j+1})$,
        \item compute $\bar g_j=(\iota^*f'_j - \gamma_{j+1})\cdot c_{{\rm top}}(N_{Z_{j}})^{-1}$ and lift it to $\bar g_j'$ in $A^*_G(U_{j+1})$ in any way,
        \item set $f_{j+1}=f'_j - \zeta_i\cdot \bar g'_j$  in $A^*_G(U_{j+1})$.
    \end{enumerate}
\end{construction}

\subsection{Chow rings of strata}
As the $G$-equivariant Chow ring of $X$ is the integral Chow ring of the quotient stack $[X/G]$ \cite{EdidinGrahamIntersection}, we can use the lemmas above to compute the integral Chow ring of quotient stacks.
Therefore, the first ingredient is the following.
\begin{lemma}
    The stack $\Gcal_{1,n}^{sm}$ is a smooth quotient stack, and so is every locally closed substack $\Tail{S}\cap \Gcal_{1,n}^{sm}$.
\end{lemma}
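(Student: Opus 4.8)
The plan is to separate the two assertions: smoothness is essentially built into the definitions, so the real content lies in exhibiting global quotient presentations.

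\emph{Smoothness.} By construction $\Gcal_{1,n}^{sm}$ is the smooth locus of $\Gcal_{1,n}$: for $n\le 5$ this is all of $\Gcal_{1,n}$, and for $n=6$ it is the complement of the single point $[C_{1,6}]$. A stratum $\Tail{S}\cap\Gcal_{1,n}^{sm}$ is then open in $\Tail{S}$, which by \eqref{Tails} equals $\Min{s_0}\times\prod_{i=1}^k\oM_{0,1+s_i}$; the factors $\oM_{0,1+s_i}$ are smooth, and $\Min{s_0}$ is smooth whenever $s_0\le 5$, while $s_0=6$ is forced only when $S=S_{\max}$ and $n=6$, in which case the intersection is $\Min{6}^{sm}=\oM_{1,6}(5)\simeq\on{Gr}(2,5)$. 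So all of these stacks are smooth, and it remains to produce quotient presentations.

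\emph{Quotient presentation of $\Gcal_{1,n}^{sm}$.} I would use relative ampleness of $\logcan_{C/S}$ on $\Gcal_{1,n}$: since the fibres are curves of arithmetic genus one and $\deg\logcan>0$, relative Serre vanishing together with cohomology and base change give, for $m\gg 0$, that $\pi_*\logcan^{\otimes m}$ is locally free of some constant rank $r$ and that $\logcan^{\otimes m}$ is relatively very ample. A trivialisation of this sheaf embeds the universal curve fibrewise into $\PP^{r-1}$, and, recording also the $n$ sections, this realises $\Gcal_{1,n}$ as $[H/\op{PGL}_r]$, where $H\subseteq\on{Hilb}(\PP^{r-1})\times(\PP^{r-1})^n$ is the locally closed subscheme parametrising tuples $(C\subseteq\PP^{r-1};p_1,\dots,p_n)$ that are $n$-pointed log-canonically polarised Gorenstein curves of genus one with $\OO_C(1)\cong\omega_C(p_1+\dots+p_n)^{\otimes m}$, and $\op{PGL}_r$ acts by reparametrising the embedding. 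Since $H\to\Gcal_{1,n}$ is a $\op{PGL}_r$-torsor, hence smooth and surjective, $H$ is smooth precisely over the preimage $H^{sm}$ of $\Gcal_{1,n}^{sm}$, which is open and $\op{PGL}_r$-invariant, so $\Gcal_{1,n}^{sm}=[H^{sm}/\op{PGL}_r]$ is a smooth quotient stack.

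\emph{Strata.} Each $\Tail{S}\cap\Gcal_{1,n}^{sm}$ is a locally closed substack of the quotient stack $\Gcal_{1,n}^{sm}$, hence of the form $[Y/\op{PGL}_r]$ for a locally closed $\op{PGL}_r$-invariant subscheme $Y$, and so is again a quotient stack — smooth, by the first step. For the patching arguments it is the more explicit presentation coming from \eqref{Tails} that we will actually use: by \Cref{LPtheorem}, $\Min{s_0}\simeq[V/\Gm]$ with $V$ an affine space for $s_0\le 5$ (and $\Min{6}^{sm}\simeq\on{Gr}(2,5)$), and since the $\oM_{0,1+s_i}$ are smooth projective schemes, $\Tail{S}\cap\Gcal_{1,n}^{sm}=[(V\times\prod_i\oM_{0,1+s_i})/\Gm]$, with $\Gm$ acting trivially on the genus-zero factors. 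The one nontrivial point is the routine verification in the second step that the Hilbert-scheme construction genuinely produces a global quotient presentation of $\Gcal_{1,n}$ — that $H$ is a quasi-compact scheme and $H\to\Gcal_{1,n}$ a $\op{PGL}_r$-torsor — but nothing here is special to the Gorenstein genus-one setting: relative very-ampleness of $\logcan^{\otimes m}$ together with the already-established quasi-compactness and affineness of the diagonal of $\Gcal_{1,n}$ are exactly what is needed.
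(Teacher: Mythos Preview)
Your proof is correct and follows essentially the same approach as the paper: present $\Gcal_{1,n}$ as a quotient of a locus in a Hilbert scheme via the very ampleness of a power of $\logcan$, then pass to the smooth locus and to the strata. The only differences are cosmetic: the paper names the explicit power $m=3$ (so that $\deg\logcan^{\otimes 3}=3n\ge 2g+1$), whereas you appeal to relative Serre vanishing together with the already-established quasi-compactness; and for the strata the paper simply invokes the product description $\Min{s_0}^{sm}\times\prod_i\oM_{0,1+s_i}$, while you additionally observe the general fact that locally closed substacks of quotient stacks are quotient stacks.
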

\begin{proof}
    It is enough to prove that $\Gcal_{1,n}^{sm}$ is a quotient stack. Indeed, its smoothness implies the smoothness of the covering scheme. We deduce that $\Tail{S}\cap \Gcal_{1,n}^{sm}$ is also a smooth quotient stack by its identification with $\Min{s_0}^{sm} \times \prod_{i=1}^{k} \oM_{0, 1+s_i}$.

    To prove that $\Gcal_{1,n}$ is a quotient stack, observe that the third power of the log-canonical bundle is very ample. 
    We can thus present $\Gcal_{1,n}$ as the quotient by projectivities of the locus in the Hilbert scheme where the curve is Gorenstein and log-canonically polarised.
\end{proof}
\begin{remark}
 By \Cref{LPtheorem} we know that the total space of the Hodge $\Gm$-bundle $\Hcal$ is a scheme over every stratum. This is enough to prove that $\Hcal$ is an algebraic space.
\end{remark}

\subsubsection{Chow rings of $\oM_{0,n}$}\label{sec:Keel}
Let us recall Keel's presentation of the Chow ring of $\oM_{0,S\cup \star}$ \cite[Theorem 1]{Keel}, where $S$ is any finite set and $\star$ is an element not in $S$.

Let $T$ be a \emph{proper} subset of $S$ with $|T|\geq 2$, and let $D_T$ be (the class of) the divisor in $\oM_{0,S \cup \star}$ of curves with a node separating the markings indexed by $T$ from those indexed by $T^c$. 

Then $A^*(\oM_{0,S\cup \star})$ is generated by the $D_T$ modulo the ideal generated by the following elements:
\begin{equation} 
    \tag{$K_1(S;i,j,h,k)$}  \sum_{\substack{T\subsetneq S \\i,j \in T \\ h,k\notin T}} D_T +  \sum_{\substack{T\subsetneq S \\h,k \in T \\ i,j\notin T}} D_T - \sum_{\substack{T\subsetneq S \\i,h \in T \\ j,k\notin T}} D_T - \sum_{\substack{T\subsetneq S \\j,k \in T \\i,h\notin T}} D_T\text{ for }i,j,h,k \in S,
\end{equation}
\begin{equation}
    \tag{$K_1(S;i,j,h)$}  \sum_{\substack{T\subsetneq S \\i,j \in T \\ h\notin T}} D_T - \sum_{\substack{T\subsetneq S \\i,h \in T \\j \notin T}} D_T  \text{ for }i,j,h \in S,
\end{equation}
\begin{equation}
    \tag{$K_2(S;T,T')$} D_T\cdot D_{T'} \text{ if } T,T'\subsetneq S \text{ have cardinality at least } 2 \text{ and } \emptyset\neq T\cap T'\subsetneq T,T'. 
\end{equation}
The difference with Keel's presentation \cite[Theorem 1]{Keel} lies in the fact that we only use generators that do not contain $\star$ (then $T\subsetneq S$ implies $S\cup\{\star\}\setminus T$ contains at least two elements), whereas Keel uses $D_T$ for every $T\subset [n+1]$ with $|T|\geq 2$ and $|T^c|\geq 2$, but he has an extra relation $D_T=D_{T^c}$. Similarly, the condition in $K_2(S;T,T')$ is the translation of Keel's condition that neither of $T,T'$, and their complements be contained in one another. We will say that such a pair $(T,T')$ is \emph{incomparable}.

\begin{proposition}\label{prop:Chow_strata}
 Let $S\neq S_{\max}$ be a partition of $[n]$. The Chow ring of $\Tail{S}$ is (generated in degree $1$):
 \[ A^*(\Tail{S})\simeq \ZZ[\lambda_S,\{\Dcal^{S_\alpha}_{T_\alpha}\}]/I_S \]
 where the generators $\Dcal^{S_\alpha}_{T_\alpha}$ are indexed by $T_\alpha\subsetneq S_\alpha$, $|T_\alpha|\geq 2$, and the ideal $I_S$ is generated by the relations:
 \[K_1(S_\alpha;i,j,h),\quad K_1(S_\alpha;i,j,h,k), \quad K_2(S_\alpha;T_\alpha,T'_\alpha)\]
 as in \Cref{sec:Keel},  with the symbols $D_T$ replaced by $\Dcal^{S_\alpha}_{T_\alpha}$.
\end{proposition}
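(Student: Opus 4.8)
The plan is to reduce to the known Chow rings of the two kinds of factors in the product decomposition $\Tail S\simeq\Min{s_0}\times\prod_{i=1}^k\oM_{0,1+s_i}$ of \eqref{Tails}. Write $s_i=|S_i|$, so that $s_1,\dots,s_k\geq 2$ are the sizes of the non-singleton parts of $S$ and $s_0$ is the total number of parts. Since $S\neq S_{\max}$ we have $k\geq 1$, hence $s_0\leq n-k\leq n-1\leq 5$ (recall $n\leq 6$ throughout); in particular $\Min{s_0}$ is smooth and, by \Cref{LPtheorem}, it is the quotient stack $[\mathbb A^{N}/\Gm]$ for the graded vector space $\mathbb A^N$ listed there, with $\lambda=c_1(\Hcal)$ the first Chern class of the weight-one character, consistently with \Cref{ChowMin}. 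I would take $\lambda_S$ to be the restriction of the Hodge class $\lambda$, and $\Dcal^{S_\alpha}_{T_\alpha}$ (for $T_\alpha\subsetneq S_\alpha$, $|T_\alpha|\geq 2$) to be the pullback along the projection $\Tail S\to\oM_{0,S_\alpha\cup\star}$ of Keel's boundary divisor $D_{T_\alpha}$.

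First I would dispose of the $\Min{s_0}$-factor formally. Writing $\Tail S=[(\mathbb A^N\times M)/\Gm]$ with $M:=\prod_{i=1}^k\oM_{0,1+s_i}$ and $\Gm$ acting trivially on $M$, the projection $\mathbb A^N\times M\to M$ is a $\Gm$-equivariant vector bundle, so homotopy invariance of equivariant Chow groups gives $A^*(\Tail S)=A^*_{\Gm}(\mathbb A^N\times M)\cong A^*_{\Gm}(M)$; and since the $\Gm$-action on $M$ is trivial, the Edidin--Graham approximation $M\times^{\Gm}(\mathbb A^{r}\smallsetminus 0)=M\times\PP^{r-1}$ together with the projective bundle formula (in the limit $r\to\infty$) identifies $A^*_{\Gm}(M)$ with $A^*(M)[\lambda_S]$. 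Thus everything comes down to computing $A^*(M)$, i.e.\ to establishing the Künneth isomorphism $A^*(M)\cong\bigotimes_{i=1}^k A^*(\oM_{0,1+s_i})$. For this I would invoke Keel's construction of $\oM_{0,m}$ as an iterated blow-up of $(\PP^1)^{m-3}$ along smooth centres that are themselves products of lower-dimensional $\oM_{0,\bullet}$'s \cite{Keel}: inductively, this exhibits each $\oM_{0,m}$ — and hence any finite product of such spaces — as a smooth projective variety with an affine stratification (a ``linear variety''), for which the exterior product map $A^*(Y)\otimes A^*(\oM_{0,m})\to A^*(Y\times\oM_{0,m})$ is an isomorphism for every $Y$. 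Applying this one factor at a time yields the Künneth decomposition, and then $A^*(\oM_{0,1+s_i})$ is exactly Keel's presentation recalled in \Cref{sec:Keel}.

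Putting the pieces together gives $A^*(\Tail S)\cong\ZZ[\lambda_S]\otimes_{\ZZ}\bigotimes_{\alpha}A^*(\oM_{0,S_\alpha\cup\star})$. A tensor product over $\ZZ$ of quotients $\ZZ[\underline{x}_\alpha]/I_\alpha$ of polynomial rings is the quotient of the joint polynomial ring by the sum of the ideals $I_\alpha$; since $\ZZ[\lambda_S]$ contributes no relations and each $I_\alpha$ is generated by $K_1(S_\alpha;i,j,h)$, $K_1(S_\alpha;i,j,h,k)$ and $K_2(S_\alpha;T_\alpha,T'_\alpha)$ — all of which involve only the generators $\Dcal^{S_\alpha}_{T_\alpha}$ attached to the single part $S_\alpha$ — this is precisely $\ZZ[\lambda_S,\{\Dcal^{S_\alpha}_{T_\alpha}\}]/I_S$. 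In particular there are no ``mixed'' relations: a product $\Dcal^{S_\alpha}_{T_\alpha}\cdot\Dcal^{S_\beta}_{T_\beta}$ with $\alpha\neq\beta$ is the pullback of a product of classes on distinct factors of $M$, and is neither zero nor constrained, in agreement with the statement. I expect the only genuinely non-formal step to be the Künneth isomorphism for $M$: it rests on the cellular/linear structure of $\oM_{0,m}$ provided by Keel's blow-up description — the freeness of $A^*(\oM_{0,m})$ as a $\ZZ$-module being what rules out any $\mathrm{Tor}$-type correction — whereas the vector-bundle and projective-bundle manipulations absorbing the $\Min{s_0}$-factor are routine.
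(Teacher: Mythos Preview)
Your proof is correct and follows essentially the same approach as the paper: use the product decomposition \eqref{Tails}, note that $s_0\leq 5$ so $\Min{s_0}$ is a $\Gm$-quotient of affine space, apply homotopy invariance to reduce to the $\Gm$-gerbe over $M=\prod_\alpha\oM_{0,S_\alpha\cup\star_\alpha}$, and invoke Keel's Chow--K\"unneth property for $M$. The paper cites \cite[Theorem~2]{Keel} directly for this last step, whereas you unpack it via the linear-variety structure, but the argument is the same.
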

\begin{proof}
 By the identification:
 \[ \Tail{S} \simeq \Min{s_0} \times \prod_{\alpha=1}^{k} \oM_{0,S_\alpha\cup\star_\alpha}, \]
 and since $s_0\leq 5$, \Cref{LPtheorem} allows us to identify $\Tail{S}$ with the $\Gm$-quotient of an affine bundle over $\prod_{\alpha=1}^{k} \oM_{0,S_\alpha\cup\star_\alpha}$. The base satisfies Chow--K\"unneth decomposition by \cite[\S 2, Theorem 2, p. 561]{Keel}, hence its Chow ring is a tensor product of the Chow rings of the factors. By homotopy invariance, the Chow ring of $\Tail{S}$ is identified with that of $B\Gm\times \prod_{\alpha=1}^{k} \oM_{0,S_\alpha\cup\star_\alpha}$, adding to the base ring one free variable $\lambda_S$ which can be identified with the pullback of the $\lambda$-class from $\Min{s_0}$.
\end{proof}

The second ingredient is the top Chern class of the normal bundle.
\begin{lemma}\label{lem:N_stratum}
Let $S$ be a partition of $[n]$ into $s_0$ parts. Let us denote by $\bullet_\alpha$ the core marking to which the rational tail marked by $\star_\alpha\sqcup S_\alpha$ cleaves. Then:
 \[c_{\rm{top}}(N_{\Tail{S}})=\prod_{\alpha=1}^{s_0}(-\psi_{\bullet_\alpha}-\psi_{\star_\alpha})=\prod_{\alpha=1}^{s_0}\left(-\lambda_S-\sum_{\{i_\alpha,j_\alpha\}\subseteq T_\alpha\subsetneq S_\alpha}\Dcal_{T_\alpha} \right),\]
where $i_\alpha,j_\alpha$ are any two elements of $S_\alpha$. In particular, $c_{\rm{top}}(N_{\Tail{S}})$ is not a zero-divisor in $A^*(\Tail{S})$.
\end{lemma}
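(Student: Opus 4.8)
The plan is to split $N_{\Tail{S}}$ into line bundles, one for each separating node of a generic curve in the stratum, identify the first Chern class of each summand as a sum of two cotangent $\psi$-classes (using $c_1(T_x)=-\psi_x$), evaluate those $\psi$-classes in the generators of \Cref{prop:Chow_strata}, and deduce the non-zero-divisor statement by a leading-coefficient argument.

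First, \textbf{the normal bundle}. A generic curve in $\Tail{S}$ has $k$ separating nodes, namely the points $\bullet_\alpha\in E$ at which the rational tails $R_\alpha$ (those $S_\alpha$ with $|S_\alpha|\ge 2$) are attached, meeting the tail at $\star_\alpha$. Since $\Gcal_{1,n}^{sm}$ is smooth, $\Tail{S}$ is a local complete intersection, and the first-order deformations transverse to the stratum are exactly the independent smoothing parameters of those $k$ nodes. The local deformation theory of a separating node ($xy=t$ \'etale-locally, the parameter $t$ spanning $T'\otimes T''$, where $T',T''$ are the two branch tangent lines) then produces a canonical splitting
\[ N_{\Tail{S}}=\bigoplus_\alpha \bigl(T_{\bullet_\alpha}E\otimes T_{\star_\alpha}R_\alpha\bigr), \]
and taking the top Chern class, together with $c_1(T_x)=-\psi_x$, yields $c_{\rm{top}}(N_{\Tail{S}})=\prod_\alpha(-\psi_{\bullet_\alpha}-\psi_{\star_\alpha})$, the first claimed identity.

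Next, \textbf{the two $\psi$-classes}, using $\Tail{S}\simeq\Min{s_0}\times\prod_\alpha\oM_{0,S_\alpha\cup\star_\alpha}$. The section $\bullet_\alpha$ is pulled back from the $\Min{s_0}$ factor; there, since the core has trivial dualising sheaf on every geometric fibre \cite{SmythI}, cohomology and base change give $\omega_{E/S}\cong\pi^*\Hcal$ with $\Hcal$ the Hodge bundle, so $\psi_{\bullet_\alpha}=c_1(\sigma_{\bullet_\alpha}^*\omega_{E/S})=c_1(\Hcal)=\lambda_S$. The section $\star_\alpha$ is pulled back from $\oM_{0,S_\alpha\cup\star_\alpha}$, where the standard expression of a $\psi$-class in terms of boundary divisors \cite{Keel} reads
\[ \psi_{\star_\alpha}=\sum_{\{i_\alpha,j_\alpha\}\subseteq T_\alpha\subsetneq S_\alpha}\Dcal^{S_\alpha}_{T_\alpha} \]
for any choice of $i_\alpha,j_\alpha\in S_\alpha$, independence of the choice being precisely the relation $K_1(S_\alpha;i_\alpha,j_\alpha,h)$ of \Cref{sec:Keel}. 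Substituting into the previous formula gives the second displayed equality.

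Finally, \textbf{the non-zero-divisor claim}. By \Cref{prop:Chow_strata} the ideal of relations of $A^*(\Tail{S})$ involves only the classes $\Dcal^{S_\alpha}_{T_\alpha}$, hence $A^*(\Tail{S})=R[\lambda_S]$ is a genuine polynomial ring in the single variable $\lambda_S$ over $R=\ZZ[\{\Dcal^{S_\alpha}_{T_\alpha}\}]/I_S$. The class $c_{\rm{top}}(N_{\Tail{S}})=(-1)^k\prod_\alpha(\lambda_S+\psi_{\star_\alpha})$, viewed as a polynomial in $\lambda_S$ with coefficients in $R$, is monic of degree $k$ up to the sign $(-1)^k$, and a polynomial monic up to sign over any commutative ring is a non-zero-divisor: if $fg=0$ with $f$ monic of degree $k$, the coefficient of the top power of $\lambda_S$ in $fg$ is the leading coefficient of $g$, forcing $g=0$. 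The one genuinely delicate point is the splitting of $N_{\Tail{S}}$ and the correct identification of each summand in the first step; this rests on smoothness of $\Gcal_{1,n}^{sm}$ (so that $\Tail{S}$ is a local complete intersection) and on the local structure of a separating node, and for $n=6$ on the fact that the unique singular point $[C_{1,6}]$ of $\Gcal_{1,6}$ lies in $\Min6$, hence in no stratum $\Tail{S}$ with $S\ne S_{\max}$; the remaining steps are routine.
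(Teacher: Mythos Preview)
Your proof is correct and follows essentially the same approach as the paper's: both split the normal bundle via the deformation theory of separating nodes, identify $\psi_{\bullet_\alpha}=\lambda_S$ (you via triviality of $\omega_{E/S}$ on fibres and base change, the paper by citing \cite[Lemma 1.1.1(3)]{LekiliP}), express $\psi_{\star_\alpha}$ in boundary divisors (you cite Keel, the paper cites \cite[\S1.5.2]{Kock}), and conclude non-zero-divisor from monicity in the free variable $\lambda_S$. Your argument is somewhat more self-contained in the $\psi_{\bullet_\alpha}$ step and spells out the monic-polynomial argument explicitly, but there is no substantive difference in strategy.
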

\begin{proof}
 The first expression follows from standard deformation theory of nodes. 

 The identification of $\psi_{\bullet_{\alpha}}$ with $\lambda_{S}$ (independent of $\alpha$) holds on any minimal curve by \cite[Lemma 1.1.1(3)]{LekiliP}. The expression for $\psi_{\star_{\alpha}}$ can be found in \cite[\S 1.5.2]{Kock}, for instance.
 
 The last statement can be argued for every factor as these are monic in the free variable $\lambda_S$.
\end{proof}

The third ingredient is the surjectivity of pullbacks. Recall the stratification by numerical core level from \Cref{eq:strat G}. Let $U_m=\Gcal_{1,n}^{sm}\setminus\cTail{m}$ denote the locus of curves of numerical core level $m+1$ or higher, so that: 
\begin{equation}                                                                                                                                                                                             \label{open_stratification}
\Min{n}^{sm}=U_{n-1}\subseteq\ldots\subseteq U_0=\Gcal_{1,n}^{sm}.                                                                                                                                                                                                                                            \end{equation}
See Table \ref{table_stratification_of_M16} for a pictorial description of the stratification in case $n=6$.

\newcommand{\whitedisk}[1]{
    if #1>0
        \foreach \n in {1,...,#1}{
            \pgfmathsetmacro\angle{230+60*\n/#1}
            \draw[thick] (0,0) -- ++ (\angle:0.5);
        }
    \draw[thick,fill=white] (0,0) circle (0.3);
    \draw (0,0) node {1};
    }
    
\newcommand{\whitedisknoleg}{
    \draw[thick,fill=white] (0,0) circle (0.3);
    \draw (0,0) node {1};
    }
    
\newcommand{\blackdisk}[3]{
    \filldraw[black] (#1,#2) circle (0.2);
    \draw[thick] (0,0) -- (#1,#2);
    if #3>0
        \foreach \n in {1,...,#3}{
            \pgfmathsetmacro\angle{230+60*\n/#3}
            \draw[thick] (#1,#2) -- ++ (\angle:0.4);
        }
    }

\begin{center}
\renewcommand{\arraystretch}{2}
\begin{table}[hbt]

\begin{tabular}{|c|c|c|c|c|c|}
\hline
$U_5(=\Min{6})$ & $+T_5=U_4$ & $+T_4=U_3$ & $+T_3=U_2$ & $+T_2=U_1$ & $+T_1=U_0(=\Gcal_{1,6})$ \\
\hline
\begin{tikzpicture}
\whitedisk{6}
\end{tikzpicture}
&
\begin{tikzpicture}
\blackdisk{1.4}{0}{2}
\whitedisk{4}
\end{tikzpicture}
&
\begin{tikzpicture}
\blackdisk{1.4}{0}{3}
\whitedisk{3}
\end{tikzpicture}
&
\begin{tikzpicture}
\blackdisk{1.4}{0}{4}
\whitedisk{2}
\end{tikzpicture}
&
\begin{tikzpicture}
\blackdisk{1.4}{0}{5}
\whitedisk{1}
\end{tikzpicture}
&
\begin{tikzpicture}
\blackdisk{1.4}{0}{6}
\whitedisknoleg
\end{tikzpicture}
\\
& & 
\begin{tikzpicture}
\blackdisk{1.4}{.5}{2}
\blackdisk{1.4}{-.5}{2}
\whitedisk{2}
\end{tikzpicture}
&
\begin{tikzpicture}
\blackdisk{1.4}{.5}{3}
\blackdisk{1.4}{-.5}{2}
\whitedisk{1}
\end{tikzpicture}
&
\begin{tikzpicture}
\blackdisk{1}{.5}{4}
\blackdisk{1}{-.5}{2}
\whitedisknoleg
\end{tikzpicture}

\begin{tikzpicture}
\blackdisk{1}{.5}{3}
\blackdisk{1}{-.5}{3}
\whitedisknoleg
\end{tikzpicture}
&
\\
& & &
\begin{tikzpicture}
\blackdisk{1.4}{.7}{2}
\blackdisk{1.4}{0}{2}
\blackdisk{1.4}{-.7}{2}
\whitedisknoleg
\end{tikzpicture}
& &
\\
\hline
\end{tabular}

\caption{Stratification of $\Gcal_{1,6}$ by numerical core level.}
\label{table_stratification_of_M16}
\end{table}
\end{center}

\begin{notation}
We denote by $\lambda$ the first Chern class of the Hodge bundle over $\Gcal_{1,n}$. For every partition $S\neq S_{\max}$ of $[n]$ we denote by $\tau_S$ the fundamental class of $\cTail{S}$, of degree/codimension equal to $s_0$, the number of non-singleton parts of $S$.
Finally, we denote by $\nu\in A^2(\Gcal_{1,6}^{sm})$ the fundamental class of the banana curves locus $\Nod_{\{1,2,3,4\},\{5,6\}}\subseteq \Gcal_{1,6}^{sm}$; by \Cref{lm:schubert classes}, this class extends geometrically the generator $\nu$ in $A^2(\Min{6}^{sm})$.
\end{notation}
\begin{notation}\label{not:disc}
    If $P$ is a partition of a subset $U\subset [n]$, we denote by $\disc{P}$ the partition of $[n]$ obtained from $P$ by adding every element of $[n]\setminus U$ as a singleton. For a subset $B\subseteq[n]$ we shall write $\disc{B}$ for $\disc{\{B\}}$, $\Tail{B}$ for $\Tail{\disc{B}}$ and $\tau_B$ for $\tau_{\disc{B}}$.
\end{notation}

For instance, if $n=6$ and $T=\{\{1,2\},\{3,5\}\}$, then $\disc{T}=\{\{1,2\},\{3,5\},\{4\},\{6\}\}$. 

\begin{lemma}\label{lem:surjective_pullback}
Let $\iota\colon\Tail{S}\hookrightarrow \Gcal_{1,n\leq 6}^{sm}$ denote the locally closed embedding. Then $\iota^*(\lambda)=\lambda_S$, and
 \[\iota^*(\tau_B)=\begin{cases}
 
 -\lambda_S-\sum_{i,j\in B'\subsetneq B}\Dcal^{S_\alpha}_{B'}, & \text{ if } \exists  \alpha\in\{1,\ldots,k\}: B=S_\alpha \\

 \Dcal^{S_\alpha}_B, & \text{ if } \exists \alpha \in\{1,\ldots,k\}: B\subsetneq S_\alpha \\
 
 0 & \text{ otherwise.}
                    \end{cases}
\]
Finally, we have $\iota^*(\nu)=[\Nod_{\{1,2,3,4\},\{5,6\}}\cap\Tail{S}]$. In particular, $\iota^*$ is surjective.
\end{lemma}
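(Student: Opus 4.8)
The plan is to prove the three displayed formulas in turn, and then read off surjectivity as a formal consequence.

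\textbf{The class $\lambda$.} This is forced by the definition of $\lambda_S$. The Hodge bundle $\Hcal=\pi_*\omega_\pi$ of a log-canonically polarised Gorenstein curve depends only on its core, since the dualising sheaf of a rational tail has negative degree and contributes nothing to $\pi_*\omega_\pi$; hence under the identification \eqref{Tails} the line bundle $\Hcal|_{\Tail S}$ is pulled back from the $\Min{s_0}$-factor, and $\lambda_S$ was \emph{defined} in \Cref{prop:Chow_strata} as (the pullback of) its first Chern class. So $\iota^*\lambda=\lambda_S$.

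\textbf{The classes $\tau_B$.} Here $\tau_B=[\cTail{\disc B}]$ is an effective Cartier divisor on the smooth stack $\Gcal_{1,n\leq 6}^{sm}$, so I would compute $\iota^*\tau_B$ by intersecting it with the stratum $\Tail S$; it is harmless to first pass to the open substack $U_{s_0-1}$ of \eqref{open_stratification}, in which $\Tail S$ is closed and $\iota$ is a regular embedding. The shape of the answer is dictated entirely by how $B$ sits relative to the partition $S$, and there are three cases. If $B=S_\alpha$ for some $\alpha\in\{1,\dots,k\}$, then the generic curve of core level $S$ already carries a rational tail with marking set exactly $B$, so $\Tail S\subseteq\cTail{\disc B}$; restricting a divisor to a substack it contains, $\iota^*\tau_B$ equals the first Chern class of the restriction to $\Tail S$ of the normal bundle of $\cTail{\disc B}$ in $\Gcal^{sm}$, and by the standard deformation theory of nodes this normal line bundle is the smoothing bundle of the node joining the tail $R_\alpha$ to the core, i.e.\ it has first Chern class $-\psi_{\bullet_\alpha}-\psi_{\star_\alpha}$; \Cref{lem:N_stratum} rewrites this as $-\lambda_S-\sum_{i,j\in B'\subsetneq B}\Dcal^{S_\alpha}_{B'}$. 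If $B\subsetneq S_\alpha$ for some $\alpha$, then $\Tail S$ is not contained in $\cTail{\disc B}$, but the two meet along the locus where the tail $R_\alpha$ acquires a node separating $B$ from $(S_\alpha\smallsetminus B)\cup\star_\alpha$, i.e.\ along the pullback of the Keel boundary divisor $\Dcal^{S_\alpha}_B$ of the $\oM_{0,S_\alpha\cup\star_\alpha}$-factor; working in deformation-theoretic coordinates around a generic point of this locus — one smoothing parameter for the $R_\alpha$-to-core node, one for the internal node — shows the intersection is transverse, of multiplicity one, and has no further divisorial component, so $\iota^*\tau_B=\Dcal^{S_\alpha}_B$. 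In the remaining case $B$ is neither equal to nor contained in any $S_\alpha$, so the markings of $B$ are distributed among at least two parts of $S$; since the core level is constant along $\Tail S$, no curve parametrised by $\Tail S$ carries $B$ on a common rational tail, whence $\Tail S\cap\cTail{\disc B}=\emptyset$ and $\iota^*\tau_B=0$.

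\textbf{The class $\nu$ and surjectivity.} For $\nu=[\Nod_{\{1,2,3,4\},\{5,6\}}]$ it suffices to note that $\Nod_{\{1,2,3,4\},\{5,6\}}$ is reduced of pure codimension $2$, and that a dimension count over each stratum shows it meets the smooth substack $\Tail S$ in the expected codimension and generically transversally, so that $\iota^*\nu=[\Nod_{\{1,2,3,4\},\{5,6\}}\cap\Tail S]$. Surjectivity is then formal: by \Cref{prop:Chow_strata} the ring $A^*(\Tail S)$ is generated by $\lambda_S$ and the classes $\Dcal^{S_\alpha}_{T_\alpha}$ with $T_\alpha\subsetneq S_\alpha$ and $|T_\alpha|\ge 2$; the former is $\iota^*\lambda$, and each of the latter is $\iota^*\tau_{T_\alpha}$ by the middle case above, so every generator lies in the image of $\iota^*$.

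\textbf{Expected main difficulty.} The delicate points are the multiplicity-one/transversality claim in the case $B\subsetneq S_\alpha$ and the identification of the normal bundle in the case $B=S_\alpha$; both reduce to making the local structure of $\Gcal_{1,n}$ along the stratum $\Tail S$ explicit, which is exactly what the Lekili--Polishchuk normal forms of \Cref{LPtheorem} together with the deformation theory of nodes supply. The emptiness statement in the last case and the codimension count for $\nu$ are routine by comparison.
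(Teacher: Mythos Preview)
Your proposal is correct and follows essentially the same approach as the paper: both rely on the deformation theory of the separating nodes (with the paper simply citing \Cref{lem:N_stratum} and saying ``the claimed equalities follow from the deformation theory of nodes''), while you spell out the three-case analysis for $\tau_B$ and the self-intersection/transversality argument explicitly. Your surjectivity argument is the same formal observation the paper makes, namely that each generator $\Dcal^{S_\alpha}_{T_\alpha}$ is hit by $\tau_{T_\alpha}$ with $T_\alpha\subsetneq S_\alpha$.
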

\begin{proof}
The claimed equalities follow from the deformation theory of nodes, cf. \Cref{lem:N_stratum} as well. As for the surjectivity, if $S$ has numerical core level $m$, then $\Tail{S}\subseteq U_{m-1}\setminus U_m$. If $B\subseteq S_\alpha$ for some part of $S$, then $\disc{B}$ is a refinement of $S$, and in particular it has a higher numerical core level than the latter, hence $\Tail{B}\subseteq U_{m}$ already.
\end{proof}

\subsection{The Chow ring of $\Gcal_{1,n\leq 6}^{sm}$}

The previous lemmas and \Cref{lm:gens} yield the following.
\begin{proposition}
Let $n\leq 5$. The integral Chow ring of $\Gcal_{1,n}$ is generated by the first Chern class $\lambda$ of the Hodge bundle, and by the classes of the boundary 
strata $\tau_S$, for $S\neq S_{\max}$ a partition of $[n]$. 

The integral Chow ring of $\Gcal_{1,6}^{sm}$ is generated by $\lambda$, the boundary classes $\tau_S$, and a class $\nu\in A^2(\Gcal_{1,6}^{sm})$.
\end{proposition}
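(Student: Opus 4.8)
The plan is to feed the stratification of $\Gcal_{1,n}^{sm}$ by numerical core level into \Cref{lm:gens}. Writing $\cTail{0}=\emptyset$, consider the chain of closed substacks $\cTail{0}\subset\cTail{1}\subset\cdots\subset\cTail{n-1}\subset\Gcal_{1,n}^{sm}$ of \eqref{eq:strat G}, with successive open complements $\Min{n}^{sm}=U_{n-1}\subseteq\cdots\subseteq U_0=\Gcal_{1,n}^{sm}$ as in \eqref{open_stratification}. The locally closed layer adjoined at step $m$ is $\cTail{m}\setminus\cTail{m-1}$, and the first thing to pin down is that this is a \emph{smooth} closed substack of $U_{m-1}$. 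It is closed because $\cTail{m}$ is closed in $\Gcal_{1,n}^{sm}$; moreover, since any partition with at most $m$ parts refines to one with exactly $m$ parts (here $m\le n-1$), we have $\cTail{m}\cap U_{m-1}=\bigsqcup_{\lvert S\rvert=m}\oTail{S}$. This union is disjoint, and because $\cTail{S}\setminus\oTail{S}=\bigcup_{S'\prec S}\oTail{S'}$ involves only strictly coarser partitions, hence ones of strictly smaller numerical level, no level-$m$ stratum lies in the $U_{m-1}$-closure of another; so the layer is genuinely the disjoint union of its components $\oTail{S}$. Finally, each $\oTail{S}\cong\Min{s_0}^{sm}\times\prod_{i=1}^{k}\oM_{0,1+s_i}$ with $s_0=\lvert S\rvert\le n-1\le 5$ is smooth by \Cref{LPtheorem} (as also recorded in \Cref{sec:chow G}). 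Consequently we may patch in the level-$m$ strata one at a time, in any order.

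Next I would verify the hypotheses of \Cref{lm:patch2} for each triple $(U_{m-1},U_m,\oTail{S})$ with $\lvert S\rvert=m$. Everything in sight is a smooth quotient stack, by the remarks at the start of \Cref{sec:chow G}, so the equivariant formalism of \Cref{lm:patch,lm:patch2} is available. The equivariant top Chern class of the normal bundle $N_{\oTail{S}}$ is not a zero divisor in $A^*(\oTail{S})$ by \Cref{lem:N_stratum}, each factor in the displayed product being monic in the free variable $\lambda_S$. The restriction $\iota^*\colon A^*(U_{m-1})\to A^*(\oTail{S})$ is surjective by \Cref{lem:surjective_pullback}. Hence \Cref{lm:patch2} applies at every step, and therefore so does \Cref{lm:gens} along the whole chain. (Strictly, the closed substacks $\cTail{m}$ in the chain are not themselves smooth; but the only input \Cref{lm:gens} uses, through iterating \Cref{lm:patch2}, is the smoothness of the successive layers $\cTail{m}\setminus\cTail{m-1}$ inside the opens $U_{m-1}$, which we have just arranged.)

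By \Cref{lm:gens}, $A^*(\Gcal_{1,n}^{sm})$ is generated by lifts of a set of generators of $A^*(\Min{n}^{sm})$ --- the smallest open in the stratification --- together with the fundamental classes of the locally closed strata $\oTail{S}$, $S\neq S_{\max}$. By \Cref{ChowMin}, $A^*(\Min{n}^{sm})$ is generated by $\lambda$ alone for $n\le 5$, and by $\lambda$ together with $\nu$ for $n=6$; in the latter case $\nu\in A^2(\Gcal_{1,6}^{sm})$, the banana class $[\Nod_{\{1,2,3,4\},\{5,6\}}]$, restricts to the Schubert class $\sigma_2$ that generates (with $\lambda$) the Chow ring of $\Min{6}^{sm}\cong\on{Gr}(2,5)$, by \Cref{lem:surjective_pullback} and \Cref{lm:schubert classes}, so it is a legitimate lift of the corresponding generator. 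The fundamental class of $\oTail{S}$, pushed forward to $A^*(\Gcal_{1,n}^{sm})$, is exactly $\tau_S=[\cTail{S}]$, because $\oTail{S}$ is dense open in its closure $\cTail{S}$ and $\cTail{S}\cap U_{m-1}=\oTail{S}$; thus $\tau_S$ is a valid lift at every stage of the chain. This yields the asserted generating sets: $\{\lambda\}\cup\{\tau_S:S\neq S_{\max}\}$ for $n\le 5$ (where $\Gcal_{1,n}^{sm}=\Gcal_{1,n}$), and $\{\lambda,\nu\}\cup\{\tau_S:S\neq S_{\max}\}$ for $n=6$.

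Beyond the bookkeeping, the one point that needs genuine care is the assertion that the numerical-core-level stratification is by \emph{smooth} pieces that are successively closed --- which, as above, reduces to the product description \eqref{Tails} and the observation that distinct $S$-tail loci of the same numerical level are mutually non-adjacent in the refinement poset. For $n=6$ one must also work throughout on the smooth locus $\Gcal_{1,6}^{sm}=\Gcal_{1,6}\setminus\{[C_{1,6}]\}$, so that \Cref{lm:patch} genuinely applies; this costs nothing here, since the unique singular point $[C_{1,6}]$ of $\Gcal_{1,6}$ lies in the open stratum $\Min{6}$ and its removal therefore affects no boundary stratum.
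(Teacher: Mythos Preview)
Your proof is correct and follows essentially the same approach as the paper, which simply writes ``The previous lemmas and \Cref{lm:gens} yield the following.'' You have filled in the details the paper leaves implicit: that the layers $\cTail{m}\cap U_{m-1}=\bigsqcup_{\lvert S\rvert=m}\oTail{S}$ are smooth, that the hypotheses of \Cref{lm:patch2} hold via \Cref{lem:N_stratum} and \Cref{lem:surjective_pullback}, and that $\nu$ and the $\tau_S$ are valid lifts of the generators from the strata.
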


We are left with computing the relations among these classes. 
\begin{notation}\label{not:incomparable}
 We say that two proper subsets $B,B'\subsetneq [n]$ of cardinality at least $2$ are \emph{incomparable} ($B \not\sim B'$) if $\emptyset\neq B\cap B'\subsetneq B,B'$. 
\end{notation}

\begin{definition}\label{def:rels 1}
    For $n\leq 6$, $B\subseteq [n]$ such that $2\leq |B|\leq n$, and $i,j,h,k\in B$, define the following:
    \begin{align*}
        &K_1(B;i,j,h)=\tau_B ( \sum_{\substack{i,j\in B' \\ h\notin B'}} \tau_{B'} - \sum_{\substack{i,h\in B'' \\ j\notin B''}} \tau_{B''});\\
        &K_1(B;i,j,h,k)=\tau_B( \sum_{\substack{i,j\in B' \\ h,k \notin B'}} \tau_{B'} + \sum_{\substack{h,k\in B'' \\ i,j \notin B''}} \tau_{B''}  - \sum_{\substack{i,h\in B''' \\ j,k\notin B'''}} \tau_{B'''}  - \sum_{\substack{j,k\in B'''' \\ i,h\notin B''''}} \tau_{B''''} ) ;\\
        &K_2(B,B')=\tau_{B}\cdot\tau_{B'}\text{ if }B\not\sim B'; \\
        &K_2(B=:B_1,\ldots, B_k)=\tau_{\Sigma} - \prod_{\alpha=1}^{k} \tau_{B_\alpha}\text{for }\Sigma=\disc{\{B_1,\ldots,B_k\}}\text{, if the } B_\alpha \text{ are pairwise disjoint}; \\
        &N(B)=\tau_B(\lambda + \sum_{i,j \in B'} \tau_{B'})\text{ for any choice of }i,j\in B.
    \end{align*}
    Note that, as a consequence of the relations $K_2(B,-)$, in relations $K_1$ and $N$ it is equivalent to take the sum over subsets $B'\subseteq B$ or $B'\subseteq[n]$.
\end{definition}
\begin{lemma}\label{lm:rels 1 hold}
    All the polynomials from \Cref{def:rels 1} restrict to zero in $A^*(\Tail{S})$, for every partition $S$.
\end{lemma}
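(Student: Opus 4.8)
The plan is to verify, for each family of polynomials in \Cref{def:rels 1}, that its pullback along $\iota\colon\Tail{S}\hookrightarrow\Gcal_{1,n}^{sm}$ vanishes, using the explicit formulas for $\iota^*(\tau_B)$, $\iota^*(\lambda)$ recorded in \Cref{lem:surjective_pullback} together with Keel's relations as stated in \Cref{sec:Keel} and the description of the Chow rings of the strata in \Cref{prop:Chow_strata}. First I would dispose of the easy cases. For $K_2(B,B')$ with $B\not\sim B'$: unless $B,B'$ are both contained in a single part $S_\alpha$, at least one of $\iota^*(\tau_B)$, $\iota^*(\tau_{B'})$ is zero by \Cref{lem:surjective_pullback}, so the product vanishes; if both lie in the same $S_\alpha$ they remain incomparable there, so $\iota^*(\tau_B)\iota^*(\tau_{B'})=\Dcal^{S_\alpha}_B\Dcal^{S_\alpha}_{B'}$ is precisely a Keel relation $K_2(S_\alpha;B,B')$ in $A^*(\oM_{0,S_\alpha\cup\star_\alpha})$ — the case where one of the two equals $S_\alpha$ itself is handled by the monomial expansion of $-\psi_{\bullet_\alpha}-\psi_{\star_\alpha}$, whose every term is $\Dcal^{S_\alpha}_{B'}$ with $B'\subsetneq S_\alpha$ incomparable with the other. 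For $K_2(B_1,\ldots,B_k)$ with the $B_\alpha$ pairwise disjoint: both $\tau_\Sigma$ and $\prod\tau_{B_\alpha}$ restrict to zero on $\Tail{S}$ unless $S$ is a refinement of $\Sigma$, i.e. each $B_\alpha$ is contained in some part of $S$; since the $B_\alpha$ are disjoint and $S\preceq\Sigma$ in that case they occupy distinct parts, and one checks the two restrictions agree factor by factor — here the key point is $\iota^*(\tau_{B_\alpha})=\Dcal^{S_{\beta(\alpha)}}_{B_\alpha}$ and $\iota^*(\tau_\Sigma)$ is, by the same deformation-theoretic computation underlying \Cref{lem:surjective_pullback}, the product of exactly these divisor classes.

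Next I would treat $K_1(B;i,j,h)$ and $K_1(B;i,j,h,k)$, which are the genuinely Keel-flavoured relations. The prefactor $\tau_B$ forces the restriction to vanish unless $B\subseteq S_\alpha$ for some $\alpha$ (otherwise $\iota^*(\tau_B)=0$). Assume $B\subseteq S_\alpha$. If $B\subsetneq S_\alpha$, then $\iota^*(\tau_B)=\Dcal^{S_\alpha}_B$, and for each $B'$ appearing in the sum we have $\iota^*(\tau_{B'})=\Dcal^{S_\alpha}_{B'}$ when $B'\subseteq S_\alpha$ and $0$ otherwise; but any $B'$ with, say, $i,j\in B'$ and which is comparable-or-not with $B$ either lies inside $S_\alpha$ or, if it meets $S_\alpha^c$, then $B'\not\subseteq S_\alpha$ so $\iota^*\tau_{B'}=0$ — after discarding those, what remains is, up to the identity $\Dcal^{S_\alpha}_B\cdot(\text{Keel }K_2)$ relations, exactly $\Dcal^{S_\alpha}_B$ times the Keel relation $K_1(S_\alpha;i,j,h)$ resp.\ $K_1(S_\alpha;i,j,h,k)$, which is zero. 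If instead $B=S_\alpha$, then $\iota^*(\tau_B)=-\psi_{\bullet_\alpha}-\psi_{\star_\alpha}$, and I would argue that multiplying a Keel relation on $\oM_{0,S_\alpha\cup\star_\alpha}$ by $-\psi_{\bullet_\alpha}-\psi_{\star_\alpha}$ — where $\psi_{\bullet_\alpha}=\lambda_S$ and $\psi_{\star_\alpha}=\sum_{i,j\in T\subsetneq S_\alpha}\Dcal^{S_\alpha}_T$ for a fixed pair — still gives zero; the only subtlety is that the sums defining $K_1$ in \Cref{def:rels 1} range over $B'\subseteq[n]$ rather than over proper subsets of $S_\alpha$, and one must check that the extra terms (those $B'$ equal to $S_\alpha$ itself, or meeting $S_\alpha^c$) restrict to $0$ or combine to $-\psi$'s in a way that matches. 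This bookkeeping — reconciling the index set "subsets of $[n]$" with Keel's index set "proper subsets of $S_\alpha$'', accounting for the sign and the $\psi_{\star_\alpha}$ term when $B=S_\alpha$ — is the main obstacle, and it is where I would be most careful.

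Finally, for $N(B)=\tau_B(\lambda+\sum_{i,j\in B'}\tau_{B'})$: again $\iota^*$ is zero unless $B\subseteq S_\alpha$. If $B\subsetneq S_\alpha$, then $\iota^*\tau_B=\Dcal^{S_\alpha}_B$ and $\iota^*(\lambda+\sum_{i,j\in B'}\tau_{B'})=\lambda_S+\sum_{i,j\in B'\subsetneq S_\alpha}\Dcal^{S_\alpha}_{B'}$, which is $\psi_{\bullet_\alpha}+\psi_{\star_\alpha}=-\,c_{\rm{top}}$ of the $\alpha$-th normal factor computed in \Cref{lem:N_stratum} — but this is not a priori zero! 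The point here is more subtle: $N(B)$ should restrict to zero because on $\Tail{S}$ with $B\subsetneq S_\alpha$ the divisor $\Dcal^{S_\alpha}_B$ parametrises curves where the $B$-tail has already split off, and on that locus the $\psi$-class sum $\psi_{\bullet_\alpha}+\psi_{\star_\alpha}$ is supported away from $\Dcal^{S_\alpha}_B$ — more precisely, $\Dcal^{S_\alpha}_B\cdot\psi_{\star_\alpha}$ and $\Dcal^{S_\alpha}_B\cdot\psi_{\bullet_\alpha}$ are, by the standard $\psi$-class comparison across a boundary divisor in $\oM_{0,S_\alpha\cup\star_\alpha}$, expressible through boundary classes incomparable with $B$, hence killed by the Keel $K_2$ relations; alternatively one invokes directly \cite[Lemma 1.1.1(3)]{LekiliP} and the $\psi$-restriction formula of \cite[\S1.5.2]{Kock}. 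If $B=S_\alpha$, then $\iota^*N(B)=(-\psi_{\bullet_\alpha}-\psi_{\star_\alpha})(\lambda_S+\psi_{\star_\alpha})$ with $\lambda_S=\psi_{\bullet_\alpha}$, so this is $-(\psi_{\bullet_\alpha}+\psi_{\star_\alpha})^2\cdot(\text{a unit})$? — no: it is $(-\psi_{\bullet_\alpha}-\psi_{\star_\alpha})(\psi_{\bullet_\alpha}+\psi_{\star_\alpha})$ only if the chosen pair in $N(B)$ is such that $\sum_{i,j\in B'}\tau_{B'}$ restricts to $\psi_{\star_\alpha}$, and then one uses that $\psi_{\bullet_\alpha}^2=0$ on a minimal curve (the Hodge bundle restricted to each $\Min{s_0}$-fibre is pulled back appropriately) — concretely the relation $\lambda^2\cdot(\text{stuff})$ vanishing follows because $\psi_{\bullet}$ squares to zero after restricting to the rational-tail factor. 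I would organise all of this by stating once and for all the two identities $\iota^*(-\psi_{\bullet_\alpha}-\psi_{\star_\alpha})=c_{\rm{top}}(N)_\alpha$ and the $\psi$-restriction-to-boundary formula, then checking each relation reduces, modulo the already-established $K_2$'s, to a Keel relation or to $\psi_\bullet^2=0$.
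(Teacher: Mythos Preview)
Your treatment of $N(B)$ contains a genuine error. When $B\subseteq S_\alpha$ and you compute $\iota^*\bigl(\lambda+\sum_{i,j\in B'}\tau_{B'}\bigr)$, you write the answer as $\lambda_S+\sum_{i,j\in B'\subsetneq S_\alpha}\Dcal^{S_\alpha}_{B'}$; but the sum in $N(B)$ ranges over \emph{all} subsets $B'\subseteq[n]$ with $i,j\in B'$, and in particular includes $B'=S_\alpha$ itself. By \Cref{lem:surjective_pullback} one has $\iota^*(\tau_{S_\alpha})=-\lambda_S-\sum_{i,j\in B''\subsetneq S_\alpha}\Dcal^{S_\alpha}_{B''}$, and adding this single term gives the cancellation $\iota^*\bigl(\lambda+\sum_{i,j\in B'}\tau_{B'}\bigr)=0$, so $\iota^*N(B)=0$ immediately. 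This is exactly the paper's argument. Your attempted workarounds are both incorrect: the product $\Dcal^{S_\alpha}_B\cdot(\psi_{\bullet_\alpha}+\psi_{\star_\alpha})$ is \emph{not} zero in $A^*(\Tail{S})$ (indeed $\psi_{\bullet_\alpha}+\psi_{\star_\alpha}$ is a non-zero-divisor by \Cref{lem:N_stratum}), and $\psi_{\bullet_\alpha}^2=\lambda_S^2$ does not vanish on $\Min{s_0}$ (for instance $24\lambda^2=[\Ell{\{1\}}]\neq 0$ on $\Min{1}$).

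The same bookkeeping point dissolves your worry about $K_1$ when $B=S_\alpha$: the conditions $i,j\in B'$ and $h\notin B'$ with $i,j,h\in S_\alpha$ already rule out $B'=S_\alpha$, and any $B'$ meeting $S_\alpha^c$ restricts to zero; so only $B'\subsetneq S_\alpha$ survive and the restriction is $\iota^*(\tau_B)$ times Keel's relation $K_1(S_\alpha;i,j,h)$, which is zero regardless of whether $B=S_\alpha$ or $B\subsetneq S_\alpha$. No separate case analysis is needed. Your $K_2(B,B')$ argument is fine. For $K_2(B_1,\ldots,B_k)$ the paper takes a shorter route: since $\cTail{\disc{\{B_1,\ldots,B_k\}}}=\cTail{B_1}\cap\cdots\cap\cTail{B_k}$ is a complete intersection in $\Gcal_{1,n}^{sm}$, the identity $\tau_\Sigma=\prod_\alpha\tau_{B_\alpha}$ already holds globally, hence restricts to zero on every stratum.
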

\begin{proof}
    Pick $B\subset [n]$ such that $|B|\geq 2$: if $\disc{B}\not\succ S$ (in particular, $\Sigma\not\succ S$ either), then all the polynomials above 
    restrict to zero, since $\tau_B|_{\Tail{S}}=0$ thanks to \Cref{lem:surjective_pullback}.

    Up to permutation, we can therefore assume that $B\subseteq S_1\in S$. In this case, both $K_1(B;i,j,h)$ and $K_1(B;i,j,h,k)$ restrict to multiples of the analogous Keel relations (see \Cref{prop:Chow_strata}) - note that the assumption $i,j\in B'$ and $h\notin B'$ forces $\emptyset\subsetneq B'\subsetneq B$ - hence they are zero. The same holds for $K_2(B,B')$, as we can assume that $B,B'\subsetneq S_1$.
    The relation $N(B)$ restricts to:
    \[\Dcal_B \cdot (\lambda_S + \sum_{\substack{i,j\in B' \\ B'\subsetneq S_1}} \Dcal_{B'} + \tau_{S_1}|_{\Tail{S}}) = \Dcal_B\cdot (\lambda_S + \sum_{\substack{i,j\in B' \\ B'\subsetneq S_1}} \Dcal_{B'} - \lambda_S - (\sum_{\substack{i,j\in B' \\ B'\subsetneq S_1}} \Dcal_{B'}))=0.\]
   
   We are only left with proving that $K_2(B_1,\ldots,B_k)$ is zero. This is easier to prove in $A^*(\Gcal_{1,n}^{sm})$ directly: since $\cTail{\disc{\{B_1,\ldots,B_k\}}}=
   \cTail{B_1}\cap\ldots\cap\cTail{B_k}$ is a complete intersection, the relation follows.
\end{proof}
For $n=6$, we need more relations: on one hand, the Chow ring of the open stratum $\Min{6}^{sm}\simeq\on{Gr}(2,5)$ is not free in $\lambda$ and $\nu$. Consider the two generators of the ideal of relations of $A^*(\on{Gr}(2,5))$, see \Cref{ChowMin}:
\[
B^{(0)}=\lambda^4-\lambda^2\nu-\nu^2, \quad C^{(0)}=\lambda^5-3\lambda^3\nu+2\lambda\nu^2.
\]
\begin{definition}\label{def:rels 2}
We define the polynomial $B$ (respectively $C$) as the polynomial obtained by applying \Cref{const:lift} to $B^{(0)}$ (respectively $C^{(0)}$).
\end{definition}
On the other hand, by \Cref{lm:schubert classes}, the generator $\nu\in A^2(\on{Gr}(2,5))$ extends naturally to the class $[\cNod_{\{1,2,3,4\},\{5,6\}}]\in A^2(\Gcal_{1,6}^{sm})$, but the restriction of the latter to any of the closed strata is expressible in terms of the relevant $\lambda$-class, see \Cref{prop:binodal_classes}, thus giving rise to more relations, which we now make explicit.
For this we introduce the following.
\begin{definition}\label{def:comp part}
    Let $P$, $S$ be partitions of $[n]$. Suppose that $P\preceq S$: we define the partition $P\circ S$ of $\ell(S)$ into $\ell(P)$ parts such that $(P\circ S)_{\beta}=\{S_\alpha\text{ such that }S_\alpha\subseteq P_\beta \}$.
\end{definition}
\begin{lemma}\label{lm:intersection strata}
    Let $P$, $S$ be partitions of $[n]$. Then 
    \[\cNod_{P}\cap\oTail{S}=\begin{cases}
       \Nod_{P\circ S} \times \prod_{|S_i|>1} \oM_{0,S_i\cup \star_{P_i}} & \text{if } P\preceq S, \\
       \emptyset & \text{otherwise,}
      \end{cases}\]
       where $\Nod_{P\circ S}$ is regarded as a substack of $\Min{\ell(S)}$.
\end{lemma}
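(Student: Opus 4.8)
The plan is to reduce everything to the local description of curves near a separating node and the combinatorics of the partially-ordered set $\on{Part}([n])$. First I would observe that a point of $\oTail{S}$ corresponds to a minimal genus one curve $E$ marked with $S_{k+1}\cup\cdots\cup S_{s_0}$ together with $k$ rational trees $R_i$ carrying the markings $S_i$ and attached nodally at a point $\bullet_i$ of $E$; equivalently, via \eqref{Tails}, such a point is a pair consisting of a point of $\Min{s_0}$ and a point of $\prod_{i=1}^k\oM_{0,1+s_i}$. For such a curve to lie in $\cNod_P$, it must admit a degeneration (or already be) a curve with $\lvert P\rvert$ non-separating rational components in its \emph{core}, marked by the parts of $P$. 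The key point is that non-separating nodes live \emph{only in the core}, so they cannot involve the rational tails $R_i$: the markings on $R_i$ must all lie on a single part $P_\beta$ of $P$, because $R_i$ is connected to the rest of the curve through a single separating node at $\bullet_i$, and degenerating within $R_i$ keeps it a tree. Hence $\cNod_P\cap\oTail{S}\neq\emptyset$ forces each $S_i$ (for $i\le k$) — and also each $S_j$ for $j>k$ a singleton on the core — to be contained in a single part of $P$, i.e. $P\preceq S$. This gives the ``otherwise'' case.

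Assuming $P\preceq S$, I would then set up the isomorphism. Under the identification $\oTail{S}\simeq\Min{s_0}\times\prod_{i=1}^k\oM_{0,S_i\cup\star_i}$, imposing that the core $E$ carries $\lvert P\rvert$ non-separating components marked by the induced partition $P\circ S$ of the core markings (Definition \ref{def:comp part}) is a condition purely on the first factor: it cuts out exactly $\Nod_{P\circ S}\subseteq\Min{s_0}=\Min{\lvert S\rvert}$, while leaving the rational-tail factors $\prod_{|S_i|>1}\oM_{0,S_i\cup\star_i}$ untouched (the trees $R_i$ contribute no non-separating nodes and their moduli are unconstrained by $P$). Here one uses Definition \ref{def:comp part}: since $P\preceq S$, each $S_\alpha$ is contained in a unique $P_\beta$, so $P\circ S$ is a well-defined partition of the set of parts of $S$, and its parts record precisely which core-components get glued into a single non-separating rational curve — this is the content of ``regarded as a substack of $\Min{|S|}$''. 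I would then check that the resulting closed substack is indeed the scheme-theoretic intersection, using that both sides are reduced and that $\cNod_P$ and $\oTail{S}$ meet as expected dimensionally (codimensions add up: $\cNod_P$ has codimension $\lvert P\rvert$ by the standard deformation theory of nodes, $\oTail{S}$ has codimension $k$, and the right-hand side has the right codimension inside $\oTail{S}$).

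The main obstacle is bookkeeping rather than conceptual: one must be careful that the ``core'' of a curve in $\oTail{S}$ can itself be nodal or an elliptic $m$-fold point, so ``$\cNod_{P\circ S}$ inside $\Min{|S|}$'' has to be interpreted as the \emph{closure} of the locus where the core has $\lvert P\circ S\rvert$ non-separating rational components — consistent with Definition \ref{def:rels 1}'s $\Nod_S$ and the phenomenon in Paragraph ``Geometry'' (two nodes colliding yield a tacnode), so that the closure automatically picks up the relevant elliptic-singularity strata. The second subtlety is verifying that no spurious components appear in the intersection: a priori a curve in $\oTail{S}$ could degenerate so that a non-separating node ``migrates'' onto what was a rational tail, but this is exactly prevented by the fact that in $\Gcal_{1,n}$ every rational tail component away from the core has at least three special points and is a tree, so it never acquires a cycle. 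Once these points are nailed down, the displayed formula follows by matching the two product descriptions factor-by-factor.
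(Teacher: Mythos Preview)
The paper does not prove this lemma; it is stated and then immediately used. Your plan is essentially the natural argument one would expect, and the core ideas---that non-separating nodes live entirely in the core, that the condition therefore only constrains the $\Min{|S|}$ factor in the product decomposition \eqref{Tails}, and that the induced constraint on the core is precisely $\Nod_{P\circ S}$---are correct.

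A few small points of imprecision. First, your phrase ``must admit a degeneration (or already be)'' is the wrong direction: a point of $\cNod_P$ is a \emph{limit} of curves in the open locus defining $\Nod_P$, not a curve that can be degenerated to one. The argument for the ``otherwise'' case is then cleanest via semicontinuity: if $C\in\oTail_S$ arises as a limit of curves $C_t$ in the generic $\Nod_P$ locus, then each rational tail $R_i$ of $C$ (carrying $S_i$) is produced by collision of markings that, in $C_t$, all lay on a single core component (say the one indexed by $P_\beta$), forcing $S_i\subseteq P_\beta$; singleton parts of $S$ are handled the same way. Second, your codimension check is not needed for the set-theoretic statement and is slightly delicate, since for $|P|\geq 4$ the locus $\Nod_P$ may have several components corresponding to different cyclic orderings of the core; the lemma is most naturally read as an identification of reduced substacks, and your factor-by-factor matching already gives that.
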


\begin{figure}
\begin{tikzpicture}[>=stealth, node distance=2cm]

\node (L) at (0,0) {};
\node (A) at (1.5,0) {};
\node (B) at (3.5,0) {};
\node (R) at (5,0) {};

\draw[-] (L) -- ++(-0.8,0.4) node[left]{1};
\draw[-] (L) -- ++(-0.8,0.0) node[left]{2};
\draw[-] (L) -- ++(-0.8,-0.4) node[left]{3};

\draw[-] (A) -- ++(-0.5,-0.7) node[left]{4};

\draw (A) .. controls +(0.8,1) and +(-0.8,1) .. (B)
      -- (B) .. controls +(-0.8,-1) and +(0.8,-1) .. (A);

\draw (L) -- (A);
\draw (B) -- (R);

\draw[-] (R) -- ++(0.8,0.4) node[right]{5};
\draw[-] (R) -- ++(0.8,-0.4) node[right]{6};

\draw[fill=black] (L) circle(3pt);
\draw[fill=black] (A) circle(3pt);
\draw[fill=black] (B) circle(3pt);
\draw[fill=black] (R) circle(3pt);
\end{tikzpicture}

\caption{Dual graph of a typical member of $\cNod_{P}\cap\oTail{S}$ for $P=\{\{1,2,3,4\},\{5,6\}\}$ and $S=\{\{1,2,3\},\{4\},\{5,6\}\}$. Here $P\circ S=\{\{\star_{123},\star_4\},\{\star_{56}\}\}.$}
\end{figure}

Set $P=\{\{1,2,3,4\},\{5,6\}\}$. For any partition $S$ that refines $P$, set $\gamma_S=\gamma_S(\lambda_S):=[\Nod_{P\circ S}]\in A^2(\Tail{S})$ be obtained by evaluating the expressions for the binodal loci from \Cref{prop:binodal_classes} in $\lambda=\lambda_S$. Let $s_0$ denote the numerical core level of $S$. The class $\nu-\gamma_S$ restricts to $0$ on $\Tail{S}$, hence \[A^{(s_0)}(S):=\tau_S(\nu-\gamma_S)\] is a relation on the open $U_{s_0-1}$.

\begin{definition}\label{def:rels 3}
 Define the polynomial $A(S)$ by applying \Cref{const:lift} to $A^{(s_0)}(S)$.
\end{definition}
By construction, we have the following.
\begin{lemma}\label{lm:rels 2 hold}
    The polynomials $A(S)$, $B$ and $C$ define relations in $A^*(\Gcal_{1,6}^{sm})$.
\end{lemma}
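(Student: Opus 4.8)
The plan is to treat this lemma as a formal consequence of the lifting machinery of \Cref{const:lift} and \Cref{lm:lifting}: both $B,C$ and the $A(S)$ are, by \Cref{def:rels 2} and \Cref{def:rels 3}, the outputs of running \Cref{const:lift} on the stratification \eqref{open_stratification} of $\Gcal_{1,6}^{sm}$ by numerical core level, starting from classes that are already relations on the relevant open. So the argument splits into (i) checking that this stratification meets the hypotheses under which \Cref{const:lift} is valid, and (ii) checking that the input classes are genuinely relations.

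For (i): at the step where $\oTail{m}=\bigsqcup_{|S|=m}\oTail{S}$ is glued onto $U_m$ inside $U_{m-1}$, I would note that $\oTail{m}$ is a smooth closed substack (each $\oTail{S}\simeq\Min{s_0}^{sm}\times\prod_i\oM_{0,1+s_i}$ is smooth, and distinct strata of the same numerical level are disjoint). By \Cref{lem:N_stratum} the top Chern class of the normal bundle of each $\oTail{S}$ is a non-zero-divisor in $A^*(\oTail{S})$, hence the same holds for $\oTail{m}$ in $A^*(\oTail{m})=\prod_{|S|=m}A^*(\oTail{S})$; and by \Cref{lem:surjective_pullback} the restriction $\iota^*$ is surjective. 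Thus every triple $(U_m,U_{m-1},\oTail{m})$ is in the situation of \Cref{lm:patch2}, so \Cref{const:lift} applies verbatim.

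For (ii): the classes $B^{(0)}=\lambda^4-\lambda^2\nu-\nu^2$ and $C^{(0)}=\lambda^5-3\lambda^3\nu+2\lambda\nu^2$ are, by \Cref{ChowMin}, precisely the defining relations of $A^*(\Min{6}^{sm})=A^*(U_5)$, the deepest open in \eqref{open_stratification}; running \Cref{const:lift} on them produces $B$ and $C$, and \Cref{lm:lifting}(2) guarantees that each intermediate output, and hence the final polynomial, is a relation, for any choice of lift. For $A(S)$ with $S$ of numerical core level $s_0$, I first have to check that $A^{(s_0)}(S)=\tau_S(\nu-\gamma_S)$ vanishes in $A^*(U_{s_0-1})$. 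I would do this by patching across the single step $(U_{s_0},U_{s_0-1},\oTail{s_0})$ of \Cref{lm:patch}: its restriction to $U_{s_0}$ is zero because $\tau_S=[\cTail{S}]$ restricts to zero there (a curve in $\cTail{S}$ has numerical core level $\le s_0$, so $\cTail{S}$ is disjoint from the locus $U_{s_0}$ of level $\ge s_0+1$), and its restriction to $\oTail{s_0}=\bigsqcup_{|S'|=s_0}\oTail{S'}$ is zero because on the $\oTail{S}$-component $\nu-\gamma_S$ vanishes — this is exactly how $\gamma_S$ was defined, via \Cref{lm:schubert classes}, \Cref{lm:intersection strata} and \Cref{lem:surjective_pullback} — while on every other component $\tau_S$ restricts to zero, a partition of the same size distinct from $S$ not being a refinement of $S$ (cf. \Cref{lem:surjective_pullback}). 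Then applying \Cref{const:lift} to extend $A^{(s_0)}(S)$ from $U_{s_0-1}$ through the remaining strata to all of $\Gcal_{1,6}^{sm}$ yields $A(S)$, again a relation by \Cref{lm:lifting}(2).

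I do not expect a real obstacle, since the lemma is engineered so that all the substantive computations — the strata Chow rings (\Cref{prop:Chow_strata}), the normal bundle classes (\Cref{lem:N_stratum}), the surjectivity of restriction (\Cref{lem:surjective_pullback}), and the singularity classes $\gamma_S$ (\Cref{prop:classes}, \Cref{lm:schubert classes}, \Cref{lm:intersection strata}) — are already in place. The only mildly delicate bookkeeping is verifying that $A^{(s_0)}(S)$ is genuinely a relation on $U_{s_0-1}$, i.e. carefully matching the identity $\gamma_S=\nu|_{\Tail{S}}$ with the product decomposition of \Cref{lm:intersection strata}; the rest is a direct invocation of the two lifting constructions.
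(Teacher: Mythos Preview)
Your proposal is correct and is precisely an unpacking of the paper's one-line proof ``By construction'': you verify that the stratification \eqref{open_stratification} satisfies the hypotheses of \Cref{lm:patch2} (via \Cref{lem:N_stratum} and \Cref{lem:surjective_pullback}), that $B^{(0)},C^{(0)}$ are relations on $U_5$ by \Cref{ChowMin}, and that $A^{(s_0)}(S)$ is a relation on $U_{s_0-1}$ by patching across the single level-$s_0$ step --- exactly the justification the paper packages into the discussion preceding \Cref{def:rels 3} and then invokes as ``by construction''.
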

We are ready to state the main theorem of this section.
\begin{theorem}\label{thm:chow G}
    The integral Chow ring of $\Gcal_{1,n\leq6}^{sm}$ is generated by the class $\lambda$, the boundary classes $\tau_S$ for $S\vdash[n]$, and, when $n=6$, by the codimension $2$ class $\nu$. 
    
    The ideal of relations is generated by
    the quadrics: 
    \[ K_1(B;i,j,h), K_1(B;i,j,h,k), K_2(B,B'), K_2(B_1,\ldots,B_k),
    N(B)\] given in \Cref{def:rels 1}, and, for $n=6$, by the polynomials $(A(S))_{\{\{1,2,3,4\},\{5,6\}\}\preceq S\neq [6]_{\max}}$, $B$ and $C$ of Definitions \ref{def:rels 2} and \ref{def:rels 3}. 
\end{theorem}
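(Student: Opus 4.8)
The plan is to run the patching machinery of Lemmas~\ref{lm:patch}--\ref{lm:lifting} along the filtration \eqref{open_stratification} of $\Gcal_{1,n\le 6}^{sm}$ by numerical core level, peeling off one locally closed stratum $\Tail S$ at a time in order of increasing numerical core level, starting from the open stratum $\Min n^{sm}$. For a partition $S\neq S_{\max}$ with $|S|=m$, the boundary $\cTail S\setminus\oTail S=\bigcup_{S'\prec S}\oTail{S'}$ lies in $\cTail{m-1}$, so $\oTail S$ is closed in $U_{m-1}=\Gcal_{1,n}^{sm}\setminus\cTail{m-1}$ and remains closed after deleting any of the other (pairwise disjoint) strata of numerical core level $m$; hence the strata of a given core level may be removed from $U_{m-1}$ one by one, in any order. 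At each such step \Cref{lm:patch2} applies: $\oTail S$ is a smooth quotient stack, $c_{\rm{top}}(N_{\Tail S})$ is a non-zero-divisor in $A^*(\Tail S)$ by \Cref{lem:N_stratum}, and $\iota_S^*$ is surjective by \Cref{lem:surjective_pullback} (surjectivity onto the intermediate open follows since that map factors the surjection $A^*(\Gcal_{1,n}^{sm})\twoheadrightarrow A^*(\Tail S)$). Generation of $A^*(\Gcal_{1,n\le 6}^{sm})$ by $\lambda$, the $\tau_S$, and—when $n=6$—the class $\nu$ is then immediate from \Cref{lm:gens} together with \Cref{ChowMin}: the open stratum $\Min n^{sm}$ contributes $\lambda$ for $n\le 5$ and $\lambda,\nu$ for $n=6$ (with $\nu$ lifted from $\sigma_2$ to $[\cNod_{\{1,2,3,4\},\{5,6\}}]$ via \Cref{lm:schubert classes}), while each stratum contributes its fundamental class $\tau_S$. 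That the listed polynomials are relations is precisely the content of Lemmas~\ref{lm:rels 1 hold} and \ref{lm:rels 2 hold}.

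The core of the argument is that these relations generate the entire relation ideal, which I would prove by induction along \eqref{open_stratification} using the exact output of \Cref{lm:patch2}: at the step re-introducing $\oTail S$, the new relations are (i) lifts à la \Cref{const:lift} of the relations already present on the smaller open, together with (ii) the ideal $\tau_S\cdot\ker(\iota_S^*)$. For (ii), combine the presentation of $A^*(\Tail S)$ from \Cref{prop:Chow_strata} with the restriction formulae of \Cref{lem:surjective_pullback}: writing $\iota_S$ for the embedding into the ambient open, $\ker(\iota_S^*)$ is generated by the $\tau_B$ with $\disc B$ not refining $S$, by the classes $\lambda+\tau_{S_\alpha}+\sum_{i,j\in B'\subsetneq S_\alpha}\tau_{B'}$ (one per part $S_\alpha$, for any fixed $i,j\in S_\alpha$, since $\iota_S^*(\tau_{S_\alpha})=-\lambda_S-\sum\Dcal^{S_\alpha}_{B'}$), by the pullbacks of the Keel relations $K_1(S_\alpha;\cdots)$, $K_2(S_\alpha;\cdots)$ from the factors $\oM_{0,S_\alpha\cup\star_\alpha}$, and—when $n=6$—by $\nu-\gamma_S$, where $\gamma_S\in\ZZ[\lambda]$ is the polynomial expressing $[\Nod_{P\circ S}]$ read off from \Cref{prop:classes} with $P=\{\{1,2,3,4\},\{5,6\}\}$ (note $|S|\le 5$ for every relevant $S$, so $A^*(\Min{|S|})=\ZZ[\lambda]$). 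Multiplying these generators by $\tau_S$, and using the relations $K_2(B_1,\dots,B_k)$ that identify $\tau_S$ with $\prod_\alpha\tau_{S_\alpha}$, recovers exactly $N(B)$, $K_1(B;i,j,h)$, $K_1(B;i,j,h,k)$, $K_2(B,B')$, and (for $n=6$) the relation $\tau_S(\nu-\gamma_S)=A^{(|S|)}(S)$, whose lift under \Cref{const:lift} is, by \Cref{def:rels 3}, the polynomial $A(S)$. For (i): when $n\le 5$ the open stratum is $\ZZ[\lambda]$, so there are no relations to lift and the induction closes immediately; when $n=6$ the open-stratum relations are $B^{(0)},C^{(0)}$ of \Cref{ChowMin}, whose lifts under \Cref{const:lift} are, by \Cref{def:rels 2}, the polynomials $B$ and $C$.

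The step I expect to be the main obstacle is the $n=6$ bookkeeping underlying (i)--(ii): one must verify that the ideal generated by $K_1,K_2,N,A(S),B,C$ is stable under \Cref{const:lift} as one continues patching past the stratum where a relation first appears—i.e. that at each subsequent step the correction term $\tau_S\cdot\bar g'$ can be chosen inside this ideal. I would reduce this to two points, checked stratum by stratum. First, $\ker(\iota_S^*)$ has no generator beyond those listed above; this is where the \emph{exact} presentation of $A^*(\Tail S)$ in \Cref{prop:Chow_strata}—ultimately Keel's theorem for the factors $\oM_{0,S_\alpha\cup\star_\alpha}$ and the Lekili--Polishchuk description of $\Min{s_0}$—is indispensable, since a missed kernel element would mean a missed relation. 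Second, the polynomials $\gamma_S$ coming from \Cref{prop:classes} are mutually compatible under pullback along the forgetful maps $\Min{s_0+1}\to\Min{s_0}$ through \Cref{lm:intersection strata}, so that a single coherent class $\nu\in A^2(\Gcal_{1,6}^{sm})$ exists and the relations $A(S)$ it produces are consistent across strata. Once these compatibilities are established the induction runs unobstructed; for $n\le 5$, where the absence of open-stratum relations makes the argument essentially formal, it yields the stated presentation directly.
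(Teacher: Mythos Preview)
Your proposal is correct and follows essentially the same route as the paper: descending induction along the core-level filtration \eqref{open_stratification}, applying \Cref{lm:patch2} stratum by stratum, with generation coming from \Cref{lm:gens} and \Cref{ChowMin}, and the relation ideal identified via the explicit generators of $\ker(\iota_S^*)$ read off from \Cref{prop:Chow_strata} and \Cref{lem:surjective_pullback}. The one place where your phrasing is slightly more cautious than necessary is the lifting step (your point (i)): since Lemmas~\ref{lm:rels 1 hold} and \ref{lm:rels 2 hold} already show that each listed polynomial restricts to zero on \emph{every} stratum $\Tail S$, the polynomial $f^{(m)}$ is itself a valid lift of $f^{(m+1)}$ with no correction term, so the stability of the ideal under \Cref{const:lift} is automatic rather than a bookkeeping obstacle.
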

\begin{remark}
   The relations $K_2(B_1,\ldots,B_k)$ express any $\tau_S$ as a product of boundary divisor classes $\tau_{B_\alpha}$, hence this smaller subset, together with $\lambda$ and $\nu$, suffices to generate the Chow ring.
\end{remark}

\begin{remark}
 Patching shows that the relation $[\Ell{S}]=c(S)\lambda\cdot[\Nod_S]$, where $c(S)\in\NN$ is a constant depending only on $\ell(S)$, holds in $A^*(\Gcal_{1,n}^{sm})$ too, at least in the range $\ell(S)=2,3,4$, see \S\ref{sec:classes}.
\end{remark}

\begin{proof}
    For any of the polynomials $f$ given in \Cref{def:rels 1}, \Cref{def:rels 2} and \Cref{def:rels 3}, denote by $f^{(m)}$ its restriction to $U_m$. We claim that 
    \[A^*(U_m) = \ZZ[\lambda,\nu, \{\tau_S\}_{\ell(S)\geq m+1}]/I^{(m)},\]
    where $I^{(m)}$ is generated by the $f^{(m)}$. 
    We prove the claim by descending induction on $m$.

    The base case $m=n-1$ follows from \Cref{ChowMin}. Consider now $\iota\colon\Tail{m}\hookrightarrow U_m$, with open complement $U_{m+1}$.
    By \Cref{lm:patch2}, we obtain:
    \[ A^*(U_m) = A^*(U_{m+1})[\{\tau_S\}_{\ell(S)=m+1}]/({I^{(m+1)}}',\tau_S\cdot\ker(\iota_S^*)), \]
    where ${I^{(m+1)}}'$ is the ideal generated by polynomials $f^{(m+1)'}$ having the property that $f^{(m+1)'}$ restricts to $f^{(m+1)}$ when we impose $\tau_S=0$ for $\ell(S)=m+1$ and $f^{(m+1)'}$ are zero in $A^*(\Tail{m})$. With a slight abuse of notation, we are going to indicate the generators of $A^*(U_{m+1})$ and their liftings in the same way.

    We have to prove that $({I^{(m+1)}}',\tau_{S}\cdot\ker(\iota_S^*))=I^{(m)}$.
    \begin{lemma}\label{lm:step 1}
        For $n\leq 5$, the ideal ${I^{(m+1)}}'$ is generated by: \[ K_1(B;{i,j,h})^{(m)}\text{, }K_1(B;{i,j,h,k})^{(m)}\text{, }K_2^{(m)}(B,B')\text{, }K_2^{(m)}(B_1,\ldots,B_k)\text{, }N^{(m)}(B)\]
        with $B\subset[n]$ of cardinality $2\leq\lvert B\rvert<n-m$. For $n=6$ we need the additional generators:
    \[ A^{(m)}(S)\text{ for }\ell(S)>m+1, B^{(m)},C^{(m-1)}. \]
    \end{lemma}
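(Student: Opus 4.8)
The plan is to read the generators of ${I^{(m+1)}}'$ off the inductive presentation of $A^*(U_{m+1})$, using the global relations of Definitions \ref{def:rels 1}, \ref{def:rels 2} and \ref{def:rels 3} as the lifts. By the inductive hypothesis in the proof of \Cref{thm:chow G}, $I^{(m+1)}$ is generated by the restrictions to $U_{m+1}$ of those polynomials; and by \Cref{lm:rels 1 hold} and \Cref{lm:rels 2 hold} each such polynomial $f$ is already a relation in $A^*(\Gcal_{1,n}^{sm})$. Hence its restriction $f^{(m)}$ to $U_m$ is a relation in $A^*(U_m)$ whose further restriction to $U_{m+1}$ is $f^{(m+1)}$, so $f^{(m)}$ is an admissible lift of $f^{(m+1)}$ in the sense required by \Cref{lm:patch2} (and, for the $A$-, $B$- and $C$-relations, it is by construction the lift produced by \Cref{const:lift}). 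I would therefore take ${I^{(m+1)}}'$ to be generated by these $f^{(m)}$, noting that it is enough to let $f$ range over the polynomials whose restriction to $U_{m+1}$ is \emph{nonzero}, since a lift of the zero generator is zero.

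The second step is to pin down exactly which $f$ those are, via the vanishing criterion for boundary classes. Since $\disc B$ has $n-|B|+1$ parts, $\tau_B=\tau_{\disc B}$ restricts to zero on $U_{m'}$ as soon as $n-|B|+1\le m'$; likewise, by \Cref{lem:surjective_pullback}, $\tau_S|_{U_{m'}}=0$ whenever $\ell(S)\le m'$. Applying this at $m'=m+1$: whenever the distinguished part $B$ (resp.\ $B_1$) of one of $K_1(B;i,j,h)$, $K_1(B;i,j,h,k)$, $K_2(B,B')$, $N(B)$, $K_2(B_1,\dots,B_k)$ satisfies $|B|\ge n-m$, the $\tau_B$-factor annihilates the polynomial on $U_{m+1}$, so $f^{(m+1)}=0$ and $f^{(m)}$ is discarded; similarly $A^{(m+1)}(S)=0$ once $\ell(S)\le m+1$, because then both the $\tau_S$-prefactor and all the correction terms adjoined by \Cref{const:lift} (which involve only $\tau_{S'}$ with $\ell(S')<\ell(S)\le m+1$) vanish, while $B^{(m+1)}$ and $C^{(m+1)}$ — the genuine relations of $A^*(\Min{6}^{sm})=A^*(\on{Gr}(2,5))$ propagated downward — are never zero. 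What remains is precisely the list in the statement: the $K_1$, $K_2$ and $N$ relations with $2\le|B|<n-m$, the multi-part relations $K_2(B_1,\dots,B_k)$, and, for $n=6$, the relations $A^{(m)}(S)$ with $\ell(S)>m+1$ together with $B^{(m)}$ and $C^{(m-1)}$.

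The point I expect to demand the most care is not any single calculation but the bookkeeping that makes this cutoff sharp on both sides. On one side, a polynomial with $|B|=n-m$ must \emph{not} reappear in ${I^{(m+1)}}'$: its class $\tau_B$ is one of the divisors adjoined at the present patching step, so the corresponding relation lies in the summand $\tau_B\cdot\ker(\iota_B^*)$ of \Cref{lm:patch2} rather than in the lift of the old ideal — here one checks, via \Cref{lem:surjective_pullback}, that the bracketed factor of $K_1(B;\cdot)$, $N(B)$, etc.\ pulls back to a Keel relation, hence to $0$, on $\oTail{\disc B}$; the analogous check for $K_2(B_1,\dots,B_k)$ uses the identity $\iota_\Sigma^*(\prod_\alpha\tau_{B_\alpha})=c_{\mathrm{top}}(N_{\Tail{\Sigma}})$ coming from \Cref{lem:surjective_pullback} and \Cref{lem:N_stratum}, which also confirms that $f^{(m)}$ is the \Cref{const:lift}-output. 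On the other side, every polynomial with $|B|<n-m$ really does occur among the generators of $I^{(m+1)}$ supplied by the inductive hypothesis: those with $|B|=n-m-1$ are exactly the relations adjoined one step earlier through $\tau_B\cdot\ker(\iota_B^*)$, and the ones with $|B|<n-m-1$ were already present in ${I^{(m+2)}}'$. Once this telescoping of index ranges across consecutive strata is in place, the $n\le 5$ statement is immediate, and the $n=6$ statement follows by tracking $A$, $B$ and $C$ through the same dichotomy.
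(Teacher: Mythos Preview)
Your proposal is correct and follows essentially the same approach as the paper: invoke the inductive hypothesis for the generators of $I^{(m+1)}$, observe that each $f^{(m)}$ restricts to $f^{(m+1)}$ (trivially, both being restrictions of the same global polynomial), and appeal to \Cref{lm:rels 1 hold} and \Cref{lm:rels 2 hold} to conclude that the $f^{(m)}$ are genuine relations in $A^*(U_m)$. The paper's own proof is just these three sentences; your second and third paragraphs supply the bookkeeping behind the range $|B|<n-m$ (via the vanishing of $\tau_B$ on $U_{m+1}$ when $\ell(\disc B)\le m+1$) that the paper leaves implicit, and your third paragraph anticipates material that the paper defers to \Cref{lm:step 2}---this is extra but not wrong.
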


    \begin{proof}
    This follows by observing that we can choose $f^{(m+1)'}$ to be $f^{(m)}$: indeed the latter restricts to $f^{(m+1)}$ when we impose $\tau_S=0$ for $\ell(S)=m+1$, so we only need to prove that the polynomials $f^{(m)}$ are actual relations. But this was already shown in \Cref{lm:rels 1 hold} and \Cref{lm:rels 2 hold}.
    \end{proof}
    
    The kernel ideal $\ker(\iota_S^*)$ can be computed using \Cref{lem:surjective_pullback} and the presentation of $A^*(\Tail{S})$ given in \Cref{prop:Chow_strata}.  Write $S=\disc{\{B_1,\ldots,B_k\}}$. Generators of $\ker(\iota_S^*)$ are:
\begin{itemize}
    \item[(1)] $\tau_B$ for $B$ such that $B\not\subseteq B_\alpha$ for any $\alpha$;
    \item[(2)] $\tau_{B_\alpha}+\lambda+\sum_{i,j\in B'\subsetneq B_\alpha}\tau_{B'}$ for every $\alpha$;
    \item[(3)] $\sum_{\substack{i,j \in B' \\ h\notin B'}}\tau_{B'} - \sum_{\substack{i,h \in B' \\ j \notin B'}} \tau_{B'} $, where the sum runs over those $B'\subset B_\alpha$ for a fixed $\alpha$;
    \item[(4)] $\sum_{\substack{i,j \in B' \\ h,k\notin B'}}\tau_{B'} + \sum_{\substack{h,k \in B' \\i,j\notin B'}}\tau_{B'} - \sum_{\substack{i,h \in B' \\ j,k \notin B'}} \tau_{B'} - \sum_{\substack{j,k \in B' \\ i,h \notin B'}} \tau_{B'} $, sum over $B'\subset B_\alpha$ for a fixed $\alpha$;
    \item [(5)] $\tau_{B}\tau_{B'}$ for every $B,B'\subsetneq B_\alpha$ (for some $\alpha$) that are incomparable as subsets of $B_\alpha$.
\end{itemize}
Furthermore, if $n=6$, we also have
\begin{itemize}
    \item[(6)] $\nu-\gamma_S(\lambda)$, see \Cref{def:rels 3}.
\end{itemize}
To conclude our proof, we only need the following.
\begin{lemma}\label{lm:step 2}
 When $S=\disc{\{B\}}$, the ideal $\tau_S\cdot\ker(\iota_S^*)$ is generated by the relations $f^{(m)}(B_1,\ldots, B_k)$ given in \Cref{def:rels 1}, \Cref{def:rels 2} and \Cref{def:rels 3}, where one of the $B_\alpha$ is equal to $B$.

    When $S=\disc{\{B_1,\ldots, B_k\}}$, the ideal $({I^{(m+1)}}',\tau_S\cdot\ker(\iota^*))$ is generated by the relations computed in \Cref{lm:step 1} together with $K_2(B_1,\ldots,B_k)$ and $A^{(m)}(S)$.
\end{lemma}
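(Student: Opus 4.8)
### Proof plan for Lemma \ref{lm:step 2}

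The plan is to verify the two assertions separately, starting from the single-block case $S=\disc{B}$ and then bootstrapping to the general case by combining it with the complete-intersection relation $K_2(B_1,\ldots,B_k)$. For the single-block case, the key point is to match the explicit generators (1)--(6) of $\ker(\iota_S^*)$, each multiplied by $\tau_S = \tau_B$ (since $S=\disc{B}$, we have $\tau_{\disc B}=\tau_B$ by the conventions in force), against the restrictions $f^{(m)}(B)$ of the defining relations. First I would note that by \Cref{lem:surjective_pullback} the pullback $\iota_S^*$ sends $\tau_{B'}$ to $\Dcal^B_{B'}$ when $B'\subsetneq B$, to $-\lambda_S-\sum_{i,j\in B'\subsetneq B}\Dcal^B_{B'}$ when $B'=B$, to $\nu$-related classes in the $n=6$ case, and to $0$ otherwise; so each generator in the list (1)--(5) is, up to sign and the substitution $\Dcal^B_{B'}\leftrightarrow\tau_{B'}$, precisely a Keel relation $K_1$ or $K_2$ for $\oM_{0,B\cup\star}$, or the tautological $\psi$-class identity underlying $N(B)$. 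Multiplying through by $\tau_B$ and invoking \Cref{lm:rels 1 hold} (which guarantees these products actually vanish in $A^*(\Gcal_{1,n}^{sm})$, not just on the stratum) shows that $\tau_B\cdot(\text{generator})$ equals the corresponding $f^{(m)}$ modulo the already-established ideal ${I^{(m+1)}}'$; conversely each $f^{(m)}(B)$ is visibly of the form $\tau_B\cdot(\text{something in }\ker\iota_S^*)$, giving the reverse inclusion. The $n=6$ generator (6) is handled identically, with $A^{(m)}(S)=\tau_S(\nu-\gamma_S)$ matching $\nu-f_S(\lambda)\in\ker(\iota_S^*)$ via \Cref{lm:intersection strata} and \Cref{prop:classes}.

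For the general case $S=\disc{\{B_1,\ldots,B_k\}}$ with $k\geq 2$, the strategy is: first use $K_2(B_1,\ldots,B_k)$, which expresses $\tau_S=\tau_{\disc{\{B_1,\ldots,B_k\}}}=\prod_\alpha \tau_{B_\alpha}$, to reduce every generator of $\tau_S\cdot\ker(\iota_S^*)$ to a product of the divisor classes $\tau_{B_\alpha}$ times an element of $\ker(\iota_S^*)$. Since $A^*(\Tail S)\simeq A^*(\Min{s_0})\otimes\bigotimes_\alpha A^*(\oM_{0,B_\alpha\cup\star_\alpha})$ by \Cref{prop:Chow_strata}, the kernel $\ker(\iota_S^*)$ is generated block-by-block exactly as in list (1)--(6) but now indexed by a choice of block $B_\alpha$; multiplying such a block-$\alpha$ generator by $\tau_S=\prod_\beta\tau_{B_\beta}$ and pulling the factor $\tau_{B_\alpha}$ inside reproduces — modulo $K_2(B_1,\ldots,B_k)$ — the relation $f^{(m)}$ with $B=B_\alpha$ times the remaining $\prod_{\beta\neq\alpha}\tau_{B_\beta}$, which already lies in ${I^{(m+1)}}'$ by \Cref{lm:step 1}. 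Thus $\tau_S\cdot\ker(\iota_S^*)\subseteq ({I^{(m+1)}}', K_2(B_1,\ldots,B_k), A^{(m)}(S))$, and the reverse containment is immediate since $K_2(B_1,\ldots,B_k)=\tau_\Sigma-\prod\tau_{B_\alpha}$ and $A^{(m)}(S)$ are by construction in $({I^{(m+1)}}',\tau_S\cdot\ker(\iota^*))$ (the former because $\tau_\Sigma\mapsto\prod\Dcal_{B_\alpha}$ is the complete-intersection identity restricting into the kernel, the latter by \Cref{def:rels 3}).

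I expect the main obstacle to be bookkeeping rather than conceptual: one must be careful that the generators (1)--(5) of $\ker(\iota_S^*)$ listed in the statement are genuinely a \emph{generating set} — this uses that Keel's relations generate the kernel of the corresponding surjection and that the only additional kernel element contributed by the $\Min{s_0}$-factor, beyond what $N(B)$ already records via $\psi_{\bullet_\alpha}=\lambda_S$, is the codimension-two class $\nu-f_S(\lambda)$ in the $n=6$ case. A subtle point worth spelling out is the interaction between blocks: an a priori kernel element could mix $\Dcal$-classes from two different blocks $B_\alpha,B_\beta$, but since $\cTail{B_\alpha}\cap\cTail{B_\beta}$ is a transverse (complete) intersection — the same fact underlying the validity of $K_2(B_1,\ldots,B_k)$ — no such mixed relations occur beyond products of the single-block ones, so the K\"unneth description of $A^*(\Tail S)$ is faithful and the block-by-block analysis is exhaustive. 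Once this is in place, assembling the equality of ideals is a routine verification using \Cref{lem:surjective_pullback} for the explicit form of $\iota_S^*$ and \Cref{lm:rels 1 hold}, \Cref{lm:rels 2 hold} for the vanishing of the lifted relations.
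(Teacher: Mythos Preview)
Your plan is correct and follows essentially the same route as the paper: both proceed by matching the explicit generators (1)--(6) of $\ker(\iota_S^*)$, multiplied by $\tau_S$, against the named relations $K_1$, $K_2$, $N$, $A^{(m)}$, treating first the codimension-one case $S=\disc{B}$ and then reducing the general case via the complete-intersection identity $\tau_S=\prod_\alpha\tau_{B_\alpha}$. The paper's proof is terser and does not spell out the K\"unneth argument excluding mixed-block kernel elements, but the logic is the same. One small imprecision in your reverse-containment paragraph: the reason $K_2(B_1,\ldots,B_k)$ lies in $({I^{(m+1)}}',\tau_S\cdot\ker(\iota_S^*))$ is not that ``$\tau_\Sigma\mapsto\prod\Dcal_{B_\alpha}$'' (it does not), but rather that $K_2(B_1,\ldots,B_k)$ restricts to the relation $-\prod_\alpha\tau_{B_\alpha}=0$ on $U_{m+1}$ (the intersection being empty there) and restricts to zero on $\Tail{S}$ by \Cref{lm:rels 1 hold}; the paper phrases this by observing that $\tau_S-\prod\tau_{B_\alpha}$ already sits in ${I^{(m+1)}}'$ via \Cref{lm:step 1}.
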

\begin{proof}
    In the following, the numbering refers to the list appearing right before the statement of the lemma.
    We start by considering the case $S=\disc{\{B\}}$.
    We have to identify the polynomials $\tau_S\cdot (i)$ for $i=1,\ldots, 6$ with the the relations $f^{(m)}(B_1,\ldots, B_k)$ given in \Cref{def:rels 1}, \Cref{def:rels 2} and \Cref{def:rels 3}, where one of the $B_\alpha$ is equal to $B$.
    
    The polynomials $\tau_S\cdot (1)$ are equal to $K_2^{(m)}(B,B')$ for any $B'\not\sim B$.  Similarly, the polynomials $\tau_S\cdot (5)$ are multiples of $K_2^{(m)}(B,B')$.
    The polynomial $\tau_S\cdot (2)$ corresponds to $N^{(m)}(B)$.
    The polynomials $\tau_S\cdot (3)$ and $\tau_S\cdot (4)$ are equal to $K_1^{(m)}(B)_{i,j,h}$ and $K_1^{(m)}(B)_{i,j,h,k}$.
    
    Now we tackle the case $S= \disc{\{B_1,\ldots, B_k\}}$; the arguments are similar, although in this case we have to show that $({I^{(m+1)}}',\tau_S\cdot\ker(\iota^*))$ is generated by the relations computed in \Cref{lm:step 1} together with $K_2^{(m)}(B_1,\ldots,B_k)$ and $A^{(m)}(S)$.
    First, observe that in this case it follows from \Cref{lm:step 1} that $\tau_S-\prod \tau_{B_\alpha}$ is in ${I^{(m+1)}}'$.
    
    The polynomials $\tau_S\cdot (1)$ are equal, up to a multiple of the relation $\tau_S-\prod \tau_{B_\alpha}$, to a multiple of $K_2^{(m)}(B_1',B_2')$ for some $B_1',B_2'$ such that $\ell(\disc{\{B_\alpha'\}})>m+1$. A similar argument shows that $\tau_S\cdot (5)$ is also multiple of a relation $K_2^{(m)}(B'_1,B_2')$.
    The polynomial $\tau_S\cdot (2)$ corresponds to a multiple of $N^{(m)}(B_\alpha)$, again up to a multiple of the relation $\tau_S-\prod \tau_{B_\alpha}$.

    The polynomials $\tau_S\cdot (3)$ and $\tau_S\cdot (4)$ are equal to a multiple of $K_1^{(m)}(B_\alpha)_{i,j,h}$ or $K_1^{(m)}(B_\alpha)_{i,j,h,k}$, again up to a multiple of the relation $\tau_S-\prod \tau_{B_\alpha}$.

    Finally, in both cases the polynomial $\tau_S\cdot (6)$ is equal to $A^{(m)}(S)$.
\end{proof}
\Cref{lm:step 1} and \Cref{lm:step 2} prove that $({I^{(m+1)}}',\tau_{S}\cdot\ker(\iota_S^*))=I^{(m)}$, thus completing the inductive step and the proof of \Cref{thm:chow G}.
\end{proof}
\begin{corollary}\label{cor:chow G is free}
    The Chow ring of $\Gcal_{1,n\leq 5}$ is a free $\ZZ[\lambda]$-module. For $n=6$, the Chow ring of $\Gcal_{1,6}^{sm}$ is a free $\ZZ$-module.
\end{corollary}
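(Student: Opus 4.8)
The plan is to prove both parts simultaneously by descending induction on $m$ along the filtration $\Min{n}^{sm}=U_{n-1}\subseteq\cdots\subseteq U_0=\Gcal_{1,n}^{sm}$ of \eqref{open_stratification}, showing that $A^*(U_m)$ is a free $\ZZ[\lambda]$-module for $n\leq 5$ and a free $\ZZ$-module for $n=6$; the Corollary is then the case $m=0$. The base case $m=n-1$ is immediate, since $U_{n-1}=\Min{n}^{sm}$ has Chow ring $\ZZ[\lambda]$ for $n\leq 5$, and $A^*(\on{Gr}(2,5))$ — which is $\ZZ$-free on the Schubert basis — for $n=6$, by \Cref{ChowMin}.

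For the inductive step I would fix $m\leq n-2$, let $j\colon U_{m+1}\hookrightarrow U_m$ be the open inclusion and $\iota\colon Z\hookrightarrow U_m$ the closed complement, which by the discussion preceding \Cref{thm:chow G} is the disjoint union $Z=\coprod_{\ell(S)=m+1}\oTail{S}$ of smooth closed substacks. By \Cref{lm:patch} — applicable because $c_{\rm{top}}(N)$ is a non-zero-divisor in $A^*(Z)$ by \Cref{lem:N_stratum} — the ring $A^*(U_m)$ is the fibre product $A^*(U_{m+1})\times_{A^*(Z)/(c_{\rm{top}}(N))}A^*(Z)$, and since $A^*(Z)\twoheadrightarrow A^*(Z)/(c_{\rm{top}}(N))$ and $c_{\rm{top}}(N)$ is a non-zero-divisor, this fibre product sits in a short exact sequence
\[ 0\longrightarrow A^*(Z)\xrightarrow{\iota_*}A^*(U_m)\xrightarrow{j^*}A^*(U_{m+1})\longrightarrow 0 \]
of $\ZZ[\lambda]$-modules (resp.\ of $\ZZ$-modules): indeed $\iota^*\lambda=\lambda_S$ by \Cref{lem:surjective_pullback}, and $\iota_*$ is $\ZZ[\lambda]$-linear by the projection formula. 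Now $A^*(Z)=\bigoplus_{\ell(S)=m+1}A^*(\oTail{S})$, and by \Cref{prop:Chow_strata} each summand equals $A^*\bigl(\prod_\alpha\oM_{0,S_\alpha\cup\star_\alpha}\bigr)[\lambda_S]$, which is a free $\ZZ[\lambda_S]$-module on a Keel basis of the product — the product being $\ZZ$-free by \cite[Theorems 1 and 2]{Keel} — hence, via $\lambda\mapsto\lambda_S$, a free $\ZZ[\lambda]$-module, and a fortiori $\ZZ$-free. As $A^*(U_{m+1})$ is free by induction, hence projective, the sequence splits and $A^*(U_m)\cong A^*(Z)\oplus A^*(U_{m+1})$ is a direct sum of free modules, therefore free; this completes the induction.

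If there is any subtlety it is in the bookkeeping rather than in a conceptual obstacle: one must make sure that each stratum's Chow ring is indeed $\ZZ$-free (which follows from Keel's integral description of $A^*(\oM_{0,k})$ and the $\Gm$-gerbe structure of \Cref{prop:Chow_strata}), and that the localization sequence splits (automatic once $A^*(U_{m+1})$ is free). The one genuinely delicate point is that for $n=6$ one cannot strengthen the conclusion to $\ZZ[\lambda]$-freeness: the open stratum $\Min{6}^{sm}=\on{Gr}(2,5)$ is itself not $\ZZ[\lambda]$-free, as the relations $B^{(0)},C^{(0)}$ of \Cref{ChowMin} involve $\lambda$. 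Every proper stratum $\oTail{S}$ (with $S\neq S_{\max}$), by contrast, has numerical core level $s_0=\ell(S)\leq 5$ and is $\ZZ[\lambda_S]$-free by \Cref{prop:Chow_strata} and \Cref{ChowMin}; so it is only the open part that obstructs $\ZZ[\lambda]$-freeness, and the induction merely has to be run over $\ZZ$ when $n=6$.
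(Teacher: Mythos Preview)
Your proof is correct and follows the same stratification-by-core-level approach as the paper. In fact, your argument is more complete than the paper's written proof: the paper argues that if $f(\lambda)\cdot\alpha=0$ with $\alpha\neq 0$, then restricting to a suitable stratum $\oTail{S}$ (where the Chow ring is $\ZZ[\lambda_S]$-free) forces $f=0$. Taken literally, this only establishes that $A^*(\Gcal_{1,n})$ is \emph{torsion-free} over $\ZZ[\lambda]$, which for $n\leq 5$ is strictly weaker than freeness (e.g.\ the ideal $(2,\lambda)\subset\ZZ[\lambda]$ is graded, torsion-free, not free). Your explicit short exact sequence $0\to A^*(Z)\to A^*(U_m)\to A^*(U_{m+1})\to 0$, together with the splitting forced by projectivity of the inductively-free quotient, is exactly what is needed to upgrade torsion-freeness to freeness; it also makes transparent why the $n=6$ case must drop to $\ZZ$-coefficients (only the open stratum $\on{Gr}(2,5)$ fails to be $\ZZ[\lambda]$-free). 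The paper's argument does suffice over $\ZZ$ for $n=6$, since graded torsion-free with finitely generated pieces implies free over a PID.
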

\begin{proof}
    Consider the stratification
    \[\widetilde{\Mcal}_{1,n}^{sm}=U_{n-1} \subset U_{n-2} \subset \ldots \subset U_0=\Gcal_{1,n}^{sm}, \]
    and further define $U_n=\emptyset$.
    If $\alpha$ is a non-zero cycle such that $f(\lambda)\cdot \alpha=0$ for some polynomial $f(\lambda)$, then there must exist a maximal $i\leq n-1$ such that $\alpha|_{(U_i \smallsetminus U_{i+1})}\neq 0$. This implies that the restriction of $\alpha$ to a stratum
    \[ \Tail{S} \simeq \widetilde{\Mcal}_{1,s_0} \times \prod_{|S_i|>1} \Mbar_{0,S_i\cup\star_i}\]
    is non-zero. As the Chow ring of the latter is a free $\ZZ[\lambda]$-module, we deduce that $f(\lambda)=0$. For $n=6$ the same argument works, albeit with base ring $\ZZ$ instead of $\ZZ[\lambda]$.
\end{proof}
\begin{remark}
    Recall that the Chow ring of $\Mbar_{0,s}$ is a free $\ZZ$-module, generated by the closed subschemes $D_\Gamma$ that are the closure of the locally closed strata parametrizing curves whose dual graph is equal to $\Gamma$.
    
    Given a partition $S$ and a set of graphs $\Gamma=\{\Gamma_i\}$ for every $i$ such that $S_i\subset S$ a subset of cardinality $>1$, we can consider the closed substack $\overline{\Tail{S,\Gamma}}$ in $\Gcal_{1,n}^{sm}$, given by the curves having core level $S$, and such that if $R_i$ is a rational tail marked by $S_i$, then $[R_i]$ belongs to $D_{\Gamma_i}$ in $\Mbar_{0,S_i \cup \star_i}$.
    It is not hard to see then that $\Gcal_{1,n\leq 5}$ is generated as a $\ZZ[\lambda]$-module by $\tau_{S,\Gamma}=[\overline{\Tail{S,\Gamma}}]$.
\end{remark}

\section{The integral Chow ring of $\Mbar_{1,n\leq 6}(Q)$ for every $Q$} \label{sec:chow M}
In this section, we give a general formula for the integral Chow ring of $\Mbar_{1,n}(Q)$. Recall that 
\[ \oM_{1,n}(Q) = \Gcal_{1,n} \smallsetminus \left((\cup_{S\in Q}\Tail{S}) \cup (\cup_{S\not\in Q} \Ell{S})\right).\]
The main result of the section is the following.
\begin{theorem}\label{thm:chow mbar 1}
    For $n\leq 5$, the integral Chow ring of $\Mbar_{1,n}(Q)$ is generated by $\lambda$ and the boundary divisors $\tau_B$ for $B$ such that $\disc{B}\not\in Q$, modulo the relations
    \begin{align*}
        &\tau_B ( \sum_{\substack{i,j\in B' \\ h\notin B'}} \tau_{B'} - \sum_{\substack{i,h\in B'' \\ j\notin B''}} \tau_{B''}) \text{ for }i,j,h \in B;\\
        &\tau_B( \sum_{\substack{i,j\in B' \\ h,k \notin B'}} \tau_{B'} + \sum_{\substack{h,k\in B'' \\ i,j \notin B''}} \tau_{B''}  - \sum_{\substack{i,h\in B''' \\ j,k\notin B'''}} \tau_{B'''}  - \sum_{\substack{j,k\in B'''' \\ i,h\notin B''''}} \tau_{B''''} ), \text{ for }i,j,h,k \in B;\\
        &\tau_{B_1} \cdot \tau_{B_2}\cdot\cdots\cdot\tau_{B_k}, \text{ if there are }1\leq i,j\leq k\text{ such that }B_i\not\sim B_j\text{ or }\disc{\{B_{1},\ldots,B_{k}\}}\in Q;\\
        &\tau_B(\lambda + \sum_{i,j \in B'} \tau_{B'}), \text{ for any choice of }i,j\in B; \\
        &[{\cEll{S}}] \text{ for every }S\notin Q\text{, (explicit expressions can be found in \Cref{app:poly}).}
    \end{align*}
    For $n=6$, the integral Chow ring of $\Mbar_{1,n}(Q)$ is generated by $\lambda$, $\nu$ and the boundary divisors $\tau_B$ for $B$ such that $\disc{B}\notin Q$, modulo the same relations above plus the polynomials
    \[ A(S)\text{ for every partition }S\notin Q,\quad B,\quad C \]
    given in \Cref{def:rels 2} and \Cref{def:rels 3}.   
\end{theorem}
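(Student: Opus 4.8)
The plan is to realise $\oM_{1,n}(Q)$ as an open substack of $\Gcal_{1,n}^{sm}$ and to obtain its Chow ring by excision from the presentation of $A^*(\Gcal_{1,n}^{sm})$ in \Cref{thm:chow G}. First I would check the openness: a $Q$-stable curve is a log-canonically polarised Gorenstein curve of genus one, so $\oM_{1,n}(Q)\subseteq\Gcal_{1,n}$; and since $\oM_{1,n}(Q)$ is Deligne--Mumford it avoids the locus $\Gcal_{1,n}\smallsetminus\Gcal_{1,n}^{sm}$ --- empty for $n\le5$, the single $\Gm$-point $[C_{1,6}]$ for $n=6$ --- while \cite[Theorem~1.8]{BKN} makes the inclusion $\oM_{1,n}(Q)\hookrightarrow\Gcal_{1,n}$ open. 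Hence $\oM_{1,n}(Q)$ is a smooth open substack of $\Gcal_{1,n}^{sm}$ whose closed complement $Z_Q$ is, by the description recalled before the statement, a union of closed tail strata $\cTail{S}$ and closed elliptic strata $\cEll{S}$.

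By the excision exact sequence $A^*(Z_Q)\to A^*(\Gcal_{1,n}^{sm})\to A^*(\oM_{1,n}(Q))\to 0$ (exact since $\Gcal_{1,n}^{sm}$ is smooth), we get $A^*(\oM_{1,n}(Q))=A^*(\Gcal_{1,n}^{sm})/J_Q$ with $J_Q$ the image of $\iota_{Z_Q\,*}$. The key point is that $J_Q$ is the ideal generated by the fundamental classes of the excised closed strata, i.e.\ by $\{\tau_S : \cTail{S}\subseteq Z_Q\}\cup\{[\cEll{S}]:\cEll{S}\subseteq Z_Q\}$: writing $Z_Q$ as the union of its closed pieces $W$, we have $\iota_{Z_Q\,*}A_*(Z_Q)=\sum_W\iota_{W\,*}A_*(W)$, and $\iota_{W\,*}A_*(W)=[W]\cdot A^*(\Gcal_{1,n}^{sm})$ as soon as $\iota_W^*$ is surjective, by the projection formula. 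Surjectivity of $\iota_W^*$ for $W=\cTail{S}$ follows from \Cref{lem:surjective_pullback} together with \Cref{prop:Chow_strata}, and for $W=\cEll{S}$ from the explicit presentation \eqref{Ells}: by homotopy invariance and \cite[Theorem~1]{Keel} the Chow ring of $\cEll{S}$ is generated by $\lambda$ and by boundary classes of the factors $\oM_{0,\star_i\cup S_i}$, all of which restrict from $\Gcal_{1,n}^{sm}$ --- in both cases one runs the patching argument on $W$ itself, stratified by deeper strata, to pass from the open stratum to the closure.

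It remains to substitute the presentation of \Cref{thm:chow G} and simplify modulo $J_Q$. The excision relations $\tau_S=0$ for the removed tail strata do two things: through the relations $K_2(B_1,\dots,B_k)\colon\tau_S=\prod_\alpha\tau_{B_\alpha}$ of \Cref{def:rels 1} they turn into the stated relations $\tau_{B_1}\cdots\tau_{B_k}=0$ whenever $\disc{\{B_1,\dots,B_k\}}\notin Q$ (incomparable factors being already covered by $K_2(B_i,B_j)$ in $\Gcal_{1,n}^{sm}$), and they kill each divisor generator $\tau_B$ with $\disc{B}\notin Q$, so that the surviving generators are exactly $\lambda$, the $\tau_B$ with $\disc{B}\in Q$, and --- for $n=6$ --- $\nu$. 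The relations $K_1(B;i,j,h)$, $K_1(B;i,j,h,k)$, $K_2$, $N(B)$ of \Cref{def:rels 1} then descend, with all sums read over the surviving generators; for $n=6$ the relations $A(S)$, $B$, $C$ of \Cref{def:rels 2} and \Cref{def:rels 3} descend as well (those $A(S)$ with $S\notin Q$ becoming redundant); and the classes $[\cEll{S}]$ for the removed elliptic strata $S\in Q$ give the last batch of relations, their explicit expressions being the ones produced via \Cref{const:class} and recorded in \Cref{app:poly}.

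The step I expect to be the main obstacle is this last one: verifying that the quotient collapses to \emph{precisely} the stated generators and relations, with nothing spurious left over --- in particular, for $n=6$, pinning down how the relations $A(S)$, $B$, $C$ and the classes $[\cEll{S}]$ interact, and which $A(S)$ are genuinely needed. On the input side one must also be careful that the restriction maps $\iota^*$ are surjective onto the \emph{closed} strata $\cEll{S}$, not merely onto the open strata $\oEll{S}$ where the computation is transparent; this is exactly where the internal patching argument on $\cEll{S}$ is required.
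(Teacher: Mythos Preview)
Your proposal is correct and follows essentially the same route as the paper: realise $\oM_{1,n}(Q)$ as an open in $\Gcal_{1,n}^{sm}$, apply excision, and show the image of pushforward from the complement is the ideal generated by the fundamental classes of the removed strata, using surjectivity of pullback to strata and the projection formula. The only organisational difference is that the paper packages the stratification argument into a standalone lemma (\Cref{lm:strat}) phrased directly in terms of surjectivity onto the \emph{open} strata $\Tail{S}$ and $\Ell{S}$---so it never needs surjectivity onto the closures $\cTail{S}$, $\cEll{S}$ that you flag as the main obstacle; your proposed ``internal patching on $W$'' is precisely the induction hidden inside that lemma.
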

To compute the integral Chow ring of $\oM_{1,n}(Q)$, we make use of the following technical result.
\begin{lemma}\label{lm:strat}
    Let $Y$ be a $G$-invariant closed subscheme of a smooth $G$-scheme $X$, for some algebraic group $G$. Suppose that $\emptyset=Y_{-1}\subset Y_0\subset\ldots\subset Y_n=Y$ is a $G$-invariant stratification by closed subschemes such that $Y_i\smallsetminus Y_{i-1}$ is irreducible, and the pullback homomorphisms $A_G^*(X) \to A_G^*(Y_i\smallsetminus Y_{i-1})$ are surjective for every $i$.
    Then the image of $A_G^*(Y)\to A_G^*(X)$ is generated as an ideal by the set $\{[\overline{Y_i}]_G\}_{i=0,\ldots,n}$.
\end{lemma}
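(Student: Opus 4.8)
The map in question is the equivariant proper pushforward $\iota_*$ along the closed embedding $\iota\colon Y\hookrightarrow X$; since $X$ is smooth I identify $A_*^G(X)$ with the operational ring $A^*_G(X)$, so that ``ideal'' makes sense, and the projection formula $\alpha\cdot\iota_*\beta=\iota_*(\iota^*\alpha\cap\beta)$ already shows that the image of $\iota_*$ is an ideal. Write $W_i=Y_i\smallsetminus Y_{i-1}$ for the $i$-th (locally closed, irreducible) stratum and $\zeta_i=[\overline{W_i}]_G$ for the class of its closure --- this is the class denoted $[\overline{Y_i}]_G$ in the statement. One inclusion is free: each $\overline{W_i}$ is closed inside $Y$, so $(\zeta_0,\dots,\zeta_n)\subseteq\operatorname{image}(\iota_*)$. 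The plan is to prove, by induction on $i$, the sharper assertion
\[ \operatorname{image}\!\left(A_*^G(Y_i)\xrightarrow{\ \iota_{i*}\ }A^*_G(X)\right)=(\zeta_0,\dots,\zeta_i), \]
where $\iota_i\colon Y_i\hookrightarrow X$ is the inclusion; the case $i=-1$ gives the zero ideal and the case $i=n$ gives the lemma.

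For the inductive step I would invoke the equivariant excision (localization) sequence of the closed pair $Y_{i-1}\subseteq Y_i$ with open complement $W_i$,
\[ A_*^G(Y_{i-1})\longrightarrow A_*^G(Y_i)\xrightarrow{\ r\ }A_*^G(W_i)\longrightarrow 0, \]
so that $A_*^G(Y_i)$ is spanned, as an abelian group, by the image of $A_*^G(Y_{i-1})$ together with any choice of lifts of a generating set of $A_*^G(W_i)$. Because $W_i$ is smooth and irreducible, capping with the fundamental class is an isomorphism $A^*_G(W_i)\xrightarrow{\ \sim\ }A_*^G(W_i)$, and by the surjectivity hypothesis $A^*_G(X)\twoheadrightarrow A^*_G(W_i)$ every class in $A_*^G(W_i)$ has the form $\alpha|_{W_i}\cap[W_i]_G$ with $\alpha\in A^*_G(X)$. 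Such a class lifts to $\iota_i^*\alpha\cap\zeta_i\in A_*^G(Y_i)$, using that $\zeta_i$, viewed in $A_*^G(Y_i)$, restricts to $[W_i]_G$ along the open inclusion $W_i\hookrightarrow\overline{W_i}$ and that operational pullback is compatible with restriction to an open. The projection formula then gives $\iota_{i*}(\iota_i^*\alpha\cap\zeta_i)=\alpha\cdot\zeta_i$. Combining this with the inductive hypothesis (applied via the functoriality $\iota_{i*}\circ(\text{pushforward }A_*^G(Y_{i-1})\to A_*^G(Y_i))=\iota_{(i-1)*}$) yields $\operatorname{image}(\iota_{i*})=(\zeta_0,\dots,\zeta_{i-1})+(\zeta_i)=(\zeta_0,\dots,\zeta_i)$, closing the induction.

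The only points where the hypotheses are genuinely used are the surjectivity of the pullbacks and the Poincar\'e-type isomorphism $A^*_G(W_i)\cong A_*^G(W_i)$ given by the fundamental class: the latter is why the strata must be smooth, which holds in every application here since the strata are the loci $\Tail{S}$, smooth quotient stacks as established in \S\ref{sec:chow G}; and irreducibility of $W_i$ is exactly what makes a single class $\zeta_i$ suffice for each stratum. I expect the main obstacle to be foundational bookkeeping rather than conceptual: confirming that the excision sequence, the projection formula, and the compatibility of operational pullback with open restriction are all available in the equivariant / quotient-stack setting (this is standard, following Edidin--Graham and Kresch), and keeping careful track of the distinction between $Y_i$ and the closure $\overline{W_i}$ of the stratum when $Y_i$ is reducible or non-reduced --- a distinction that is handled, as above, by consistently working with $\overline{W_i}$ and its fundamental class rather than with $Y_i$ itself.
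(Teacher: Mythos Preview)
Your proof is correct and follows essentially the same approach as the paper's: induction on the length of the stratification, the localization sequence for $Y_{i-1}\subset Y_i$, use of the surjectivity hypothesis to lift classes on the open stratum, and the projection formula to conclude. You are if anything more careful than the paper, which glosses over the need for the cap-product isomorphism $A^*_G(W_i)\cong A_*^G(W_i)$ on the strata (i.e., their smoothness)---a hypothesis not stated in the lemma but satisfied in every application.
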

\begin{proof}
    We argue by induction on the length of the stratification. If $n=0$, the statement follows from the projection formula. 
    
    For the inductive step, let us denote by $\iota:Y\hookrightarrow X$ the closed embedding, and $j\colon Y\smallsetminus Y_{n-1} \hookrightarrow Y$ the open embedding. For $\xi$ a cycle on $Y$, by hypothesis there exists a cycle $\zeta$ on $X$ such that $j^*\xi=j^*\iota^*\zeta$. From the localization exact sequence associated to the closed embedding $g\colon Y_{n-1} \hookrightarrow Y$ we deduce that $\xi=\iota^*\zeta + g_*\eta$, where $\eta$ is some cycle supported on $Y_{n-1}$, hence $\iota_*\xi=\iota_*\iota^*\zeta + \iota_*g_*\eta=\zeta\cdot [Y] + (\iota\circ g)_*\eta$. This implies that $\on{im}(\iota_*)=([Y],\on{im}(\iota\circ g)_*)$. We conclude by the inductive assumption on the closed embedding $\iota\circ g:Y_{n-1}\hookrightarrow Y$.
\end{proof}
We aim at applying the lemma above to the complement of $\oM_{1,n}(Q)$ in $\Gcal_{1,n}^{sm}$. Surjectivity of $A^*(\Gcal_{1,n}^{sm}) \to A^*(\Tail{S})$ has already been established in \Cref{lem:surjective_pullback}, so we only have to prove the surjectivity of the other pullback homomorphisms.
\begin{lemma}\label{lm:pullback to ell sur}
    The pullback homomorphism $A^*(\Gcal_{1,n}^{sm})\to A^*(\Ell{S})$ is surjective for every $S$.
\end{lemma}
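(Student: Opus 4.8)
The plan is to read off $A^*(\Ell{S})$ from the explicit presentation \eqref{Ells} and then match generators. Since $\Ell{S}=[\on{Tot}(\bigoplus_{i=1}^k\mathbb{L}_{0,\star_i})_B/\Gm]$ with $B=\prod_{i=1}^k\oM_{0,\star_i\cup S_i}$, and $\Gm$ acts linearly on the fibres of the vector bundle and trivially on $B$, equivariant homotopy invariance together with \cite{EdidinGrahamIntersection} gives
\[ A^*(\Ell{S})=A^*_{\Gm}\!\big(\on{Tot}(\textstyle\bigoplus_i\mathbb{L}_{0,\star_i})\big)=A^*_{\Gm}(B)=A^*(B)[\lambda], \]
the last identification because the $\Gm$-action on $B$ is trivial and $\oM_{0,m}$ has the Chow--K\"unneth property \cite[Theorem 2]{Keel}; here $\lambda$ is the equivariant parameter, which agrees up to sign with the restriction of the Hodge class, since the automorphism torus of the core (an elliptic $m$-fold point) acts with weight $\pm1$ on its one-dimensional space of differentials \cite[\S2.1]{SmythI}. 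By Keel's presentation \cite[Theorem 1]{Keel}, $A^*(B)$ is generated by the boundary divisor classes $D^{(i)}_T$ of the factors $\oM_{0,\star_i\cup S_i}$, indexed by $T\subsetneq S_i$ with $|T|\ge 2$. So the task reduces to exhibiting $\lambda$ and every $D^{(i)}_T$ in the image of $\iota^*$.

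For $\lambda$ this is immediate. For a boundary class $D^{(i)}_T$ I would use the boundary divisor $\cTail{T}\subset\Gcal_{1,n}^{sm}$: on any curve parametrised by $\Ell{S}$ the markings in $T\subsetneq S_i$ all lie on the $i$-th rational branch, so a separating node cutting off a rational tail marked by $T$ must sit on that branch, whence $\cTail{T}\cap\Ell{S}$ is precisely the pullback to $\Ell{S}$ (along the projection to the $i$-th factor) of the smooth, irreducible divisor $D^{(i)}_T$. What then remains is to check that this intersection is transverse along its generic point, equivalently that $\iota^*\tau_T=D^{(i)}_T$ carries no multiplicity: the local equation of $\cTail{T}$ in the smooth stack $\Gcal_{1,n}^{sm}$ is the smoothing parameter of the node it records, and restricting this parameter to $\Ell{S}$ leaves it untouched by the elliptic singularity and turns it into the local equation of $D^{(i)}_T$ in $\oM_{0,\star_i\cup S_i}$, which is reduced since $\oM_{0,m}$ is smooth with simple normal crossing boundary. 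Granting this, $\iota^*\tau_T=D^{(i)}_T$ for all $i$ and $T$, and together with $\iota^*\lambda=\lambda$ these generate $A^*(\Ell{S})$.

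I expect the transversality (multiplicity-one) step to be the only genuine point; everything else is formal bookkeeping with \eqref{Ells}. Two remarks keep the argument uniform over $n\le 6$: first, even for $n=6$ every stratum $\Ell{S}$ with $S\ne S_{\max}$ is contained in the smooth locus $\Gcal_{1,6}^{sm}$ (it avoids the point $[C_{1,6}]$), so the presentation \eqref{Ells} and the displayed computation of $A^*(\Ell{S})$ apply verbatim; second, the extra generator $\nu$ plays no role here, as $A^*(\Ell{S})=A^*(B)[\lambda]$ is already generated by $\lambda$ and the boundary classes $D^{(i)}_T$.
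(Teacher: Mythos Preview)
Your proof is correct and follows essentially the same approach as the paper: compute $A^*(\Ell{S})$ via the presentation \eqref{Ells} and equivariant homotopy invariance, identify the generators as the gerbe class plus Keel's boundary divisors, then exhibit each as the pullback of $\lambda$ (up to sign) and of the $\tau_T$ respectively. The paper is terser---it simply asserts $\imath^*(\tau_B)=\Dcal^{S_\alpha}_B$ without the transversality discussion, and cites \cite[Proposition~3.4]{SmythII} for $\imath^*(\lambda)=-\xi_S$ rather than arguing via the $\Gm$-weight on differentials---but the content is the same.
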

\begin{proof}
Recall from \Cref{Ells} that $\Ell{S}$ admits an explicit description as the $\Gm$-quotient of a vector bundle over a product of moduli space of stable rational curves:
\[\oEll{S}=[\on{Tot}(\bigoplus_{\alpha=1}^k \mathbb L_{0,\star_\alpha}^{\vee})_{\prod_{\alpha=1}^k\oM_{0,\star_\alpha\cup S_\alpha}}/\Gm].\]
By homotopy invariance and Keel's results, we obtain the following description of the Chow ring:
\[ A^*(\Ell{S})\simeq \ZZ[\xi_S,\{\Dcal^{S_\alpha}_{T_\alpha}\}]/I_S \]
 where $\xi_S$ comes from the gerbe, and the generators $\Dcal^{S_\alpha}_{T_\alpha}$ are indexed by $T_\alpha\subsetneq S_\alpha$, $|T_\alpha|\geq 2$. (The ideal $I_S$ is generated by the relations $K_1(S_\alpha;i,j,h),\ K_1(S_\alpha;i,j,h,k), \ K_2(S_\alpha;T_\alpha,T'_\alpha)$ as in \Cref{sec:Keel},  with the symbols $D_T$ replaced by $\Dcal^{S_\alpha}_{T_\alpha}$, but we will not need this information.)
 
 Let $\imath\colon\Ell{S}\hookrightarrow \Gcal_{1,n}^{sm}$ denote the locally closed embedding. Then $\imath^*(\lambda)=-\xi_S$ follows from \cite[Proposition 3.4]{SmythII}, and, for every $T_\alpha\subsetneq S_\alpha$, we have $\imath^*(\tau_B)=
 \Dcal^{S_\alpha}_B$, from which the surjectivity follows.
\end{proof}
\begin{proof}[Proof of \Cref{thm:chow mbar 1}]
As in the proof of \Cref{thm:chow G}, pick a total order on the set of partitions of $[n]$ by first ordering by numerical core level, and then by ordering partitions with the same numerical core level in any way. This induces a 
stratification on the closed subscheme $\cup_{S\in Q} \Tail{S}$ where the $i^{\rm th}$-stratum is $\cup_{j\leq i} \Tail{S_j}$. 

The difference between consecutive strata is isomorphic to $\Tail{S}$ for some $S$. As the pullback homomorphism $A^*(\Gcal_{1,n}^{sm})\to A^*(\Tail{S})$ is surjective by \Cref{lem:surjective_pullback}, we can apply \Cref{lm:strat} to deduce that
\[ \on{Im}(A^*(\cup_{S\in Q} \Tail{S}) \to A^*(\Gcal_{1,n}^{sm})) = (\{\tau_B\}_{\disc{B}{\in Q}}).\]
Similarly, pick a total order on the set of partitions of $[n]$ by first reverse-ordering by numerical singularity level, and then by ordering in any way the partitions sharing the same numerical singularity level. This induces a 
stratification on $\cup_{S\notin Q}\Ell{S}$ where the strata are $\cup_{j\leq i}\Ell{S_j}$.

The difference between consecutive strata is isomorphic to $\Ell{S}$ for some $S$. As the pullback homomorphism $A^*(\Gcal_{1,n}^{sm})\to A^*(\Ell{S})$ is surjective by \Cref{lm:pullback to ell sur}, we conclude by \Cref{lm:strat} that
\[ \on{Im}(A^*(\cup_{S\notin Q} \Ell{S}) \to A^*(\Gcal_{1,n}^{sm})) = (\{[\cEll{S}]\}_{S\notin Q}).\]
The localization exact sequence for Chow groups gives us an exact sequence
\[ A^*(\cup_{S\in Q}\Tail{S})\oplus A^*(\cup_{S\notin Q}\Ell{S}) \longrightarrow A^*(\Gcal_{1,n}^{sm}) \longrightarrow A^*(\oM_{1,n}(Q)) \longrightarrow 0.\]
As we already have a presentation of $A^*(\Gcal_{1,n}^{sm})$ from \Cref{thm:chow G}, we deduce the claimed presentation for the integral Chow ring of $\oM_{1,n}(Q)$.
\end{proof}

\subsection{Computation of fundamental classes}\label{sec:poly}
In order to make the presentation given in \Cref{thm:chow mbar 1} completely explicit, we are left with finding an expression of the fundamental classes $[\cEll{S}]$ in terms of the generators $\lambda$, $\tau_B$, and possibly $\nu$ (when $n=6$).

Fundamental classes can be computed by \Cref{const:class}, once their restriction to the strata $\Tail{P}$ is known. By \Cref{lm:intersection strata} we know that $\Ell{S}\cap\Tail{P}$ is equal to $\Ell{S\circ P}\times \prod_{|P_i|>1} \oM_{0,P_1\cup \star_{P_i}}$ when $P\succeq S$, and zero otherwise. The class of this intersection is therefore a monomial in $\lambda$ and is equal to $[\Ell{S\circ P}]$ in $A^*(\Min{\lvert P\rvert})$, which we explicitly calculated in \Cref{prop:sing_classes}. We therefore have all the data we need to apply \Cref{const:class} and obtain $[\cEll{S}]$. We have implemented the algorithm in Sage: the results for $n\leq5$ are shown in \Cref{app:poly}. 

The next observation can be made in general.
\begin{lemma}\label{lem:cuspidal_class}
The class of the locus of worse-than-nodal curves is \[[\Ell{[n]}]=24\lambda^2\in A^2(\Gcal_{1,n}) \text{ for every } n\geq1.\]
\end{lemma}
\begin{proof}
    This follows from patching and \Cref{lem:cusps}.
\end{proof}
In particular, the formula holds in $A^2(\oM_{1,n}(Q))$ for every $Q\neq\emptyset$, and $24\lambda^2=0$ in $A^2(\oM_{1,n})$.
\subsection{Rational cohomology}
Our computation of the integral Chow ring of $\oM_{1,n}(Q)$ also gives access to the rational cohomology of the coarse moduli spaces.
\begin{proposition}\label{prop:cycle}
    For $n\leq 6$, the moduli stack $\oM_{1,n}(Q)$ satisfies the Chow--K\"unneth generation property. In particular, the cycle class map $\on{cl}\colon A^*(\oM_{1,n}(Q))_{\mathbb Q} \to H^{2*}_{\text{\'{e}t}}(\oM_{1,n}(Q),\mathbb Q_\ell)$ is an isomorphism, and the odd cohomology vanishes.
\end{proposition}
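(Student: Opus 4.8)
Since $\oM_{1,n}(Q)$ is a smooth and proper Deligne--Mumford stack with finite inertia, passing to its coarse moduli space changes neither the rational Chow groups nor the $\mathbb Q_\ell$-\'etale cohomology, so the plan is to work with the stack throughout and to follow the route announced in the introduction: deduce the statement from the Chow--K\"unneth generation property (CKgP) of $\oM_{1,n}(Q)$ together with its polynomial point count, both of which propagate from the strata.

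\textbf{Step 1: CKgP for $\Gcal_{1,n\le6}^{sm}$.} First I would record that the CKgP is stable under products of quotient stacks, under restriction to open substacks, and under the closed/open patching operation (if $\mathcal Z\hookrightarrow\mathcal X$ is a closed substack with open complement $\mathcal U$ and both $\mathcal Z$ and $\mathcal U$ have the CKgP, then so does $\mathcal X$); each of these is formal, following from the surjectivity of the relevant exterior-product and localisation maps on Chow groups. Now every stratum of \eqref{open_stratification}, namely $\Tail{S}\cap\Gcal_{1,n}^{sm}\simeq\Min{s_0}^{sm}\times\prod_i\oM_{0,1+s_i}$, and likewise every locus $\Ell{S}$ of \eqref{Ells}, is a $\Gm$-quotient of an affine bundle over a product of spaces $\oM_{0,m}$, whose genus-one factor $\Min{s_0}^{sm}$ is, by \Cref{LPtheorem}, either $[\Aaff^N/\Gm]$ (for $s_0\le5$) or $\on{Gr}(2,5)$ (for $s_0=6$). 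Since $\BGm$, $\on{Gr}(2,5)$, the spaces $\oM_{0,m}$ (Keel \cite{Keel}) and affine bundles over them all have the CKgP, so does each such stratum; patching up \eqref{open_stratification} then yields the CKgP for $\Gcal_{1,n}^{sm}$. The same bundle descriptions show that each of these strata has $\mathbb Q_\ell$-cohomology concentrated in even degree and spanned by algebraic classes, with the cycle class map an isomorphism, and has polynomial point count.

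\textbf{Step 2: descent to $\oM_{1,n}(Q)$.} The space $\oM_{1,n}(Q)$ is an \emph{open} substack of $\Gcal_{1,n}^{sm}$: for $n\le5$ because $\Gcal_{1,n}$ is already smooth, and for $n=6$ because the unique singular point $[C_{1,6}]$ of $\Gcal_{1,6}$ has infinite automorphism group, hence violates condition~(3) of $Q$-stability and is excluded. Thus $\oM_{1,n}(Q)$ inherits the CKgP; it is stratified by the traces on it of the strata above---locally closed substacks that are themselves assembled from linear quotient stacks by stratification---and its coarse space has polynomial point count by additivity. Being smooth and proper, its $\mathbb Q_\ell$-cohomology is then concentrated in even degree, of Tate type, with $\dim H^{2i}$ read off from the point-count polynomial, so the odd cohomology vanishes; and the CKgP, combined with this, forces the cycle class map to be an isomorphism. (The abstract's two companion conclusions---CKgP and polynomial point count for $\oM_{1,n}(Q)$---are produced along the way.)

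\textbf{Main obstacle.} The delicate point is the descent in Step~2. Up the core-level stratification of $\Gcal_{1,n}^{sm}$ one can propagate ``$\on{cl}$ is an isomorphism and $H^{\mathrm{odd}}=0$'' by a bare five-lemma argument, because there $c_{\rm{top}}$ of the normal bundle of every closed stratum is a non-zero-divisor (\Cref{lem:N_stratum}), so the Chow localisation sequences are short exact and a Gysin long-exact-sequence chase suffices. But $\oM_{1,n}(Q)$ is cut out of $\Gcal_{1,n}^{sm}$ by excision along the $\Ell{S}$-strata (and the unwanted $\Tail{S}$-strata), for which the Chow localisation sequence is only right exact and the Gysin maps on cohomology need not be injective; this is precisely the situation that the combination of the CKgP with the polynomial point count is meant to handle, and carrying that out, rather than a localisation chase, is the real content of the proof. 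A secondary nuisance is the $n=6$ bookkeeping needed to remain inside the smooth quotient stack $\Gcal_{1,6}^{sm}$.
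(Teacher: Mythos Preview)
Your proposal is correct and follows essentially the same approach as the paper: show each stratum $\Tail{S}$ has the CKgP, deduce it for $\Gcal_{1,n}^{sm}$ via the stratification, pass to the open substack $\oM_{1,n}(Q)$, and conclude. The paper streamlines your Step~2 by directly invoking the known implication (from \cite{CL}) that a smooth proper Deligne--Mumford stack with the CKgP automatically has cycle class map an isomorphism and vanishing odd cohomology, so the polynomial point count is not needed for the proof itself and your ``main obstacle'' dissolves.
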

\begin{proof}
    We prove that $\Gcal_{1,n}^{sm}$ has the Chow-K\"{u}nneth generation property (CKgP) \cite[Definition 2.5]{BaeSchmittII}. First observe that $\Min{n}^{sm}$ has the CKgP: if $n\leq 5$, then $\Min{n}\simeq [\Aaff^{n+1}/\Gm]$ is an affine bundle over $B\Gm$, which has the CKgP \cite[Lemma 3.8]{CL}; we deduce that $[\Aaff^{n+1}/\Gm]$ has the CKgP as well \cite[Lemma 3.5]{CL}. If $n=6$, then $\Min{6}^{sm}\simeq \on{Gr}(2,5)$, which has the CKgP \cite[Lemma 3.7]{CL}.

    Consider the stratification of $\Gcal_{1,n}^{sm}$ with strata $\Tail{S}$. If we prove that $\Tail{S}$ has the CKgP, then we are done by \cite[Lemma 3.4]{CL}. As $\Tail{S}\simeq \Min{\lvert S\rvert}\times \prod_{\alpha} \oM_{0,S_\alpha\cup\star_\alpha}$, if each term of the product has the CKgP, then $\Tail{S}$ has it too \cite[Lemma 3.2]{CL}. We already proved that $\Min{n}^{sm}$ has the CKgP for every $m$, and $\oM_{0,d}$ has the CKgP for every $d\geq 3$ \cite[\S 5.1]{CL}.

    As $\oM_{1,n}(Q)$ is an open subset of $\Gcal_{1,n}^{sm}$, we deduce that $\oM_{1,n}(Q)$ has the CKgP \cite[Lemma 3.3]{CL}. As $\oM_{1,n}(Q)$ is a smooth and proper Deligne-Mumford stack having the CKgP, we deduce that the cycle class map is an isomorphism \cite[Lemma 3.11]{CL}.
\end{proof}
\begin{remark}
    In particular, using the explicit presentation of \Cref{thm:chow mbar 1}, we can compute the Hilbert-Poincar\'{e} polynomial of $\oM_{1,n}(Q)$. On the other hand, it follows from \Cref{prop:cycle} that $H^{2i}_{\text{\'{e}t}}(\oM_{1,n}(Q),{\mathbb Q}_{\ell})$ is of pure weight $i$, and that the odd cohomology vanishes. By the Grothendieck-Lefschetz trace formula for stacks \cite{BehrendLefschetz}, the Hilbert-Poincar\'{e} polynomial $h_{\oM_{1,n}(Q)(q)}$ is equal to the point count $|\oM_{1,n}(\mathbb{F}_q)|$. The latter can be computed easily using the stratification by core level, and as a sanity check of our result we verified that the two expressions agree for the stacks $\oM_{1,n}(m)$ of $m$-stable curves.
\end{remark}

\subsection{Additive structure and Getzler's relation}
The following is an easy consequence of our main theorem.
\begin{corollary}
    Let $\alpha$ be a cycle in $A^*(\Mbar_{1,n}(Q))$ such that $r\cdot \alpha=0$ for $r$ a positive integer. Then $r\cdot\alpha$ belongs to the ideal generated by $[\cEll{S}]$ for $S\in Q$.
\end{corollary}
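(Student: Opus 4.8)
The plan is to read the statement off the presentation of $A^*(\oM_{1,n}(Q))$ in \Cref{thm:chow mbar 1}, by isolating the torsion-free part of the construction. Recall from the proof of that theorem that the localisation exact sequence exhibits $A^*(\oM_{1,n}(Q))$ as the quotient of $A^*(\Gcal_{1,n}^{sm})$ by the sum of two ideals: $I_\tau$, the image of $A^*$ of the union of the removed tail strata --- generated, by \Cref{lm:strat}, by the corresponding boundary classes $\tau_B$ --- and $I_E$, the image of $A^*$ of the union of the removed elliptic strata, generated by the classes $[\cEll S]$, $S\in Q$. Set $T:=A^*(\Gcal_{1,n}^{sm})/I_\tau$. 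By the same localisation sequence $T$ is the integral Chow ring of the open substack $\Gcal_{1,n}^{Q}\subseteq\Gcal_{1,n}^{sm}$ obtained by deleting only those tail strata, and $A^*(\oM_{1,n}(Q))=T/\overline{I_E}$, with $\overline{I_E}$ the ideal of $T$ generated by the images of the classes $[\cEll S]$, $S\in Q$.

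The crux is to show $T$ is $\ZZ$-torsion-free; I would in fact prove it is a free $\ZZ$-module, and a free $\ZZ[\lambda]$-module when $n\leq 5$, by rerunning the proof of \Cref{cor:chow G is free} on $\Gcal_{1,n}^{Q}$ in place of $\Gcal_{1,n}^{sm}$. Indeed $\Gcal_{1,n}^{Q}$ still carries the stratification by loci of $S$-tails, now over the partitions not deleted, and by \eqref{Tails} each such stratum is a product $\Min{|S|}^{sm}\times\prod_i\oM_{0,S_i\cup\star_i}$ whose Chow ring is a free $\ZZ$-module (a free $\ZZ[\lambda]$-module when $|S|\leq 5$), by \Cref{LPtheorem} and Keel's theorem. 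Hence a nonzero class of $T$ killed by a nonzero integer would restrict nontrivially to the topmost stratum on which it is supported, contradicting torsion-freeness there.

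Granting this, the corollary is a tautology: lift $\alpha$ to $\widetilde\alpha\in T$; since $A^*(\oM_{1,n}(Q))=T/\overline{I_E}$, the hypothesis $r\alpha=0$ says precisely that $r\widetilde\alpha\in\overline{I_E}$, i.e.\ $r\widetilde\alpha$ lies in the ideal generated by the classes $[\cEll S]$, $S\in Q$. The content worth recording is that, $T$ being torsion-free, no torsion is created before the final quotient, so every torsion class of $A^*(\oM_{1,n}(Q))$ is forced by the relations $[\cEll S]=0$ alone and never by the boundary relations.

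I do not foresee a real obstacle. The only substantive ingredient is the torsion-freeness of $T$, a verbatim transcription of the argument for \Cref{cor:chow G is free}; the one thing to verify is that deleting tail strata from $\Gcal_{1,n}^{sm}$ leaves a locally closed stratification whose pieces are again products of stacks $\Min{m}^{sm}$ of minimal genus-one curves with moduli spaces of stable pointed rational curves, which is immediate from \eqref{Tails}. A purely bookkeeping point, settled as in the proof of \Cref{thm:chow mbar 1}, is that $I_\tau$ is exactly the kernel of restriction to $\Gcal_{1,n}^{Q}$; for this one invokes \Cref{lm:strat} and the surjectivity in \Cref{lem:surjective_pullback}.
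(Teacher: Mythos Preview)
Your proposal is correct and follows essentially the same route as the paper: both pass to the intermediate open substack obtained by removing only the tail strata, argue that its Chow ring $T$ is a free $\ZZ$-module by rerunning the stratification argument of \Cref{cor:chow G is free}, and then observe that any lift $\widetilde\alpha\in T$ of $\alpha$ has $r\widetilde\alpha$ in the ideal generated by the $[\cEll{S}]$. You are somewhat more explicit than the paper about why the stratification argument still applies to the open substack (the paper just says ``it follows easily''), but the content is identical.
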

\begin{proof}
    It follows easily from \Cref{cor:chow G is free} that the Chow ring of $\Gcal_{1,n}^{sm}\smallsetminus \cup_{S\in Q} \Tail{S}$ is a free $\ZZ$-module. Let $\alpha'$ be any lifting of $\alpha$ to $A^*(\Gcal_{1,n}^{sm}\smallsetminus \cup_{S\in Q} \Tail{S})$, then $r\cdot \alpha'\neq 0$, hence it belongs to the kernel of the pullback along the open embedding
    \[ \Mbar_{1,n}(Q) \hookrightarrow \Gcal_{1,n}^{sm}\smallsetminus \cup_{S\in Q} \Tail{S}. \] 
    The latter is generated by the fundamental classes $[\cEll{S}]$ for $S\in Q$.
\end{proof}
\begin{corollary}\label{cor:torsion}
    For $n\leq 6$, we have $A^2(\Mbar_{1,n})=\ZZ^{d} \oplus \ZZ/24$. Moreover, for any $Q$ different from the empty set, we see that $A^2(\Mbar_{1,n}(Q))$ is a free $\ZZ$-module.
\end{corollary}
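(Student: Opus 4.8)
The plan is to read both parts off the freeness of $A^*(\Gcal_{1,n}^{sm})$ (\Cref{cor:chow G is free}) via the excision sequences relating $\Gcal_{1,n}^{sm}$ to the spaces $\Mbar_{1,n}(Q)$, the only real work being the bookkeeping of codimensions. Recall that $\Mbar_{1,n}(Q)$ is an open substack of $\Gcal_{1,n}^{sm}$ obtained by removing a collection of boundary tail strata $\cTail{S}$ together with the singularity strata $\cEll{S}$ for $S\notin Q$. For $Q=\emptyset$ no tail stratum is removed and one removes every singularity stratum, so $\Mbar_{1,n}=\Mbar_{1,n}(\emptyset)$ is the complement of $\bigcup_{S}\cEll{S}=\cEll{\{[n]\}}$, the closure of the cuspidal locus (since $\cEll{S_{\min}}\supseteq\oEll{S'}$ for every $S'$), which is irreducible of codimension $2$. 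Since $Q$ is downward-closed, $Q\neq\emptyset$ forces the minimal partition $S_{\min}=\{[n]\}$ to lie in $Q$, so the cuspidal stratum is removed precisely when $Q=\emptyset$ — this dichotomy is the whole point.

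The first computational input I would record is that $[\cEll{\{[n]\}}]=24\lambda^2$ in $A^2(\Gcal_{1,n}^{sm})$: by \Cref{prop:classes} its restriction to the open stratum $\Min{n}^{sm}$ is $24\lambda^2$, and by \Cref{lm:intersection strata} its restriction to any tail stratum $\Tail{S}$ is the analogous cuspidal class $24(\lambda|_{\Tail{S}})^2$, so all correction terms in the algorithm of \Cref{const:class} vanish and the global class is exactly $24\lambda^2$. The second input is that $\lambda^2$ is a primitive element of the free $\ZZ$-module $A^2(\Gcal_{1,n}^{sm})$ (free over $\ZZ[\lambda]$ for $n\leq5$, over $\ZZ$ for $n=6$): restriction to the open stratum carries $\lambda^2$ to $\lambda^2\in A^2(\Min{n}^{sm})$, which is $\ZZ\lambda^2$ for $n\leq5$ and $A^2(\on{Gr}(2,5))=\ZZ\sigma_2\oplus\ZZ\sigma_{1,1}$ for $n=6$, where $\lambda^2=\sigma_1^2=\sigma_2+\sigma_{1,1}$ has coprime coordinates; a prime dividing $\lambda^2$ upstairs would divide its image, which is impossible, so $\ZZ\lambda^2$ is a direct summand and $A^2(\Gcal_{1,n}^{sm})=\ZZ\lambda^2\oplus\ZZ^{d}$.

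For $Q=\emptyset$: since $\cEll{\{[n]\}}$ is irreducible of codimension $2$, the image of $A_*(\cEll{\{[n]\}})$ in $A^2(\Gcal_{1,n}^{sm})$ is exactly $\ZZ\cdot[\cEll{\{[n]\}}]=24\ZZ\lambda^2$, so the excision sequence gives $A^2(\Mbar_{1,n})=A^2(\Gcal_{1,n}^{sm})/24\ZZ\lambda^2=(\ZZ/24)\oplus\ZZ^{d}$. For $Q\neq\emptyset$: let $W$ be the open substack of $\Gcal_{1,n}^{sm}$ obtained by removing only the deleted tail strata. It is stratified by the surviving tail loci $\oTail{S}$, each of which is a product $\Min{|S|}^{sm}\times\prod_{|S_i|>1}\oM_{0,S_i\cup\star_i}$ whose Chow ring is free over $\ZZ[\lambda]$ (over $\ZZ$ when $|S|=6$), so the stratification argument of \Cref{cor:chow G is free} applies verbatim and $A^*(W)$ is a free $\ZZ$-module. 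Because $\{[n]\}\in Q$, each remaining deleted stratum $\cEll{S}$ with $S\notin Q$ has codimension $|S|+1\geq3$; hence $\Mbar_{1,n}(Q)=W\setminus\bigcup_{S\notin Q}(\cEll{S}\cap W)$ differs from $W$ only in codimension $\geq3$, and the excision sequence yields $A^2(\Mbar_{1,n}(Q))=A^2(W)$, which is free.

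I expect the main obstacle to be precisely this codimension bookkeeping: one must be sure that $\cEll{\{[n]\}}$ is the only deleted locus of codimension $2$ when $Q=\emptyset$ (the downward-closedness of $Q$, which puts $\{[n]\}$ into every non-empty $Q$, is exactly what removes it from the list otherwise), and that passing to $W$ genuinely preserves freeness of the whole Chow ring — which, as in \Cref{cor:chow G is free}, is inherited stratum by stratum from the freeness of $A^*(\Min{m}^{sm})$ together with Keel's description of $A^*(\oM_{0,d})$. Pinning the cuspidal class down as $24\lambda^2$ rather than as a multiple of a non-primitive class — which would alter the torsion — is the remaining point that genuinely uses the earlier computations.
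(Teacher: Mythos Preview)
Your proof is correct and follows essentially the same route as the paper's (which states the corollary without a standalone proof, expecting it to follow from the preceding corollary together with \Cref{cor:chow G is free}): you pass to the intermediate open $W=\Gcal_{1,n}^{sm}\setminus\bigcup_{S\in Q}\Tail{S}$, use the stratification argument of \Cref{cor:chow G is free} to see that $A^*(W)$ is free, and then observe that for $Q\neq\emptyset$ the remaining excised loci have codimension $\geq 3$, while for $Q=\emptyset$ the only codimension-$2$ contribution is $[\cEll{\{[n]\}}]=24\lambda^2$. Your explicit check that the patching corrections for the cuspidal class vanish and that $\lambda^2$ is primitive (including the $n=6$ Schubert computation) are exactly the details the paper leaves implicit.
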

For $n=4$, 
there is an interesting relation in $A^2_{\mathbb Q}(\oM_{1,4})$, known as Getzler's relation \cite[Theorem 1.8]{Get}, see also \cite{Pand}. Setting
\[ \tau_i=\sum_{|B|=i} \tau_B , \qquad \Nod_{2,2}=\Nod_{\{1,2\},\{3,4\}} + \Nod_{\{1,3\},\{2,4\}}+\Nod_{\{1,4\},\{2,3\}},\]
\[ \tau_{2,2} = \tau_{\{1,2\}}\tau_{\{3,4\}} + \tau_{\{1,3\}}\tau_{\{2,4\}}+\tau_{\{1,4\}}\tau_{\{2,3\}},  \]
and $\tau_0=[\Nod_{[4]}]$, the fundamental class of the locus of curves with a non-separating node, Getzler's relation can be formulated as follows:
\begin{equation}\label{eq:Getzler}
    \tau_0\tau_3 - 4 \tau_2 \tau_3 + 12\tau_{2,2} - 2\tau_2\tau_4 + 6\tau_3\tau_4 + \tau_0\tau_4 - 2[\Nod_{2,2}]  = 0.
\end{equation}

It is natural to ask whether this relation holds with integral coefficients as well, or whether it holds on every $\Mbar_{1,n}(Q)$. With this goal in mind we establish the following.
\begin{proposition}
    The following relation holds in $A^2(\Gcal_{1,4})$:
    \begin{equation}\label{eq:Getzler on G}
         [\Nod_{2,2}] = 6\lambda^2 + 6\lambda\tau_3 - 2\tau_2\tau_3 + 6\tau_{2,2} + 6\lambda\tau_4 + 3\tau_3\tau_4 - \tau_2\tau_4. 
     \end{equation}
\end{proposition}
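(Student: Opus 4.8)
The plan is to compute the fundamental class $[\Nod_{2,2}]=[\Nod_{\{1,2\},\{3,4\}}]+[\Nod_{\{1,3\},\{2,4\}}]+[\Nod_{\{1,4\},\{2,3\}}]$ by applying the patching procedure of \Cref{const:class} to the (smooth) stack $\Gcal_{1,4}$ with its stratification by numerical core level, working outwards from the generic stratum $\Min{4}$. Since $\Nod_{2,2}$ has codimension two, only strata of codimension at most two can contribute a correction: the open stratum $\Min{4}$, the divisorial strata $\Tail{\disc{B}}$ with $2\le|B|\le 4$, and the three strata $\Tail{\{\{i,j\},\{k,l\}\}}\simeq\Min{2}$.

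First I would record the restriction of $[\Nod_{2,2}]$ to each stratum. By \Cref{lm:intersection strata}, $\Nod_P\cap\oTail{S}$ is non-empty exactly when $S$ refines $P$, in which case it equals $\Nod_{P\circ S}$ times factors $\oM_{0,m}$, and its class is read off from \Cref{prop:classes} (equivalently \Cref{n2}, \Cref{n3} and \Cref{n4}): one obtains $6\lambda^2$ on $\Min{4}$ (three copies of $[\Nod_{\{i,j\},\{k,l\}}]=2\lambda^2$); $6\lambda_S^2$ on $\Tail{\disc{B}}$ with $|B|=2$ (only $P=\{B,B^c\}$ contributes, namely $[\Nod_{\{\star\},B^c}]=6\lambda_S^2$ in $\Min{3}$); $0$ on $\Tail{\disc{B}}$ for $|B|\in\{3,4\}$ and on $\Tail{\{[4]\}}$; and $12\lambda_S^2$ on each $\Tail{\{\{i,j\},\{k,l\}\}}$ (only $P=\{\{i,j\},\{k,l\}\}$ contributes, namely $[\Nod_{\{\star_1\},\{\star_2\}}]=12\lambda_S^2$ in $\Min{2}$).

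Then I would run the lifts of \Cref{const:class}, using \Cref{lem:N_stratum} for the top Chern classes of the normal bundles and \Cref{lem:surjective_pullback} for the restriction of $\lambda$ and of the $\tau_B$. Starting from $6\lambda^2$, the strata with $|B|=2$ need no correction; each $\Tail{\disc{B}}$ with $|B|=3$ contributes $\tau_B(6\lambda-6\tau_{B'})$ for a chosen two-element $B'\subset B$ (divide $6\lambda_S^2$ by $c_{\rm top}(N)=-\lambda_S-\Dcal_{B'}$ in $A^*(\Min{2}\times\oM_{0,4})$ and lift); and each $\Tail{\{\{i,j\},\{k,l\}\}}$, where $c_{\rm top}(N)=\lambda_S^2$, contributes $6\,\tau_{\{i,j\}}\tau_{\{k,l\}}$ (using $\tau_{\{\{i,j\},\{k,l\}\}}=\tau_{\{i,j\}}\tau_{\{k,l\}}$). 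Reducing modulo the relations of \Cref{def:rels 1} — the vanishing $K_2(B',B)=0$ for incomparable $B',B$, together with the relations $N(B)$, which force $\tau_B\tau_{B'}$ to be independent of the two-element subset $B'\subset B$ when $|B|=3$, so that $\tau_2\tau_3=3\sum_{|B|=3}\tau_B\tau_{B'}$ — the $|B|=3$ corrections collapse to $-2\tau_2\tau_3$, and after incorporating all strata except $\Tail{\{[4]\}}$ the running class reads $6\lambda^2+6\lambda\tau_3-2\tau_2\tau_3+6\tau_{2,2}$. The final lift, through $\Tail{\{[4]\}}\simeq\Min{1}\times\oM_{0,5}$, adds $-\tau_4\,\bar g'$, where $\bar g'$ lifts the quotient of the running class by $c_{\rm top}(N)=-\lambda_S-\psi_{\star}$ computed in $A^*(\oM_{0,5})$; this last correction works out to $6\lambda\tau_4+3\tau_3\tau_4-2\tau_2\tau_4$, and the sum of all contributions is \eqref{eq:Getzler on G}.

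I expect the main obstacle to be this last lift: both the division by $-\lambda_S-\psi_\star$ and the identification of its quotient must be carried out inside $A^*(\oM_{0,5})$ via Keel's presentation, and, throughout, one must reduce a fairly large collection of monomials in the $\tau_B$ modulo the ideal of \Cref{thm:chow G} in order to recognise the compact right-hand side; these reductions are unambiguous because $A^*(\Gcal_{1,4})$ is a free $\ZZ[\lambda]$-module (\Cref{cor:chow G is free}). In practice one feeds the restriction data of the second paragraph into the Sage implementation of \Cref{const:class} from \Cref{sec:poly}. Alternatively, one can shortcut part of the bookkeeping by using the relation $[\Ell{\{i,j\},\{k,l\}}]=2\lambda\,[\Nod_{\{i,j\},\{k,l\}}]$ in $A^*(\Gcal_{1,4})$ and dividing the sum of the three classes $[\cEll{\{i,j\},\{k,l\}}]$ of \Cref{app:poly} by the non-zero-divisor $2\lambda$.
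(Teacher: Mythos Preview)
Your argument is correct, and the restriction data you compute in the second paragraph match exactly those used in the paper. However, your route differs from the paper's. The paper does not run \Cref{const:class}: since the identity to be proved is already given, the authors simply check that the difference between $[\Nod_{2,2}]$ and the right-hand side of \eqref{eq:Getzler on G} restricts to zero on every stratum $\Tail{S}$ (the open $\Min{4}$, the six divisors $\Tail{\disc{\{i,j\}}}$, the four divisors $\Tail{\disc{\{i,j,h\}}}$, the three codimension-two strata $\Tail{\{\{i,j\},\{k,l\}\}}$, and $\Tail{\{[4]\}}$). By the patching description of $A^*(\Gcal_{1,4})$ this suffices. Your approach instead \emph{derives} the right-hand side by successive lifts, which is more laborious but has the advantage of not requiring the answer in advance; both arguments rest on the same restriction values and on the same final ``straightforward computation'' in $A^*(\Min{1}\times\oM_{0,5})$, which the paper also leaves implicit.

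One small correction: the independence of $\tau_B\tau_{B'}$ from the choice of two-element subset $B'\subset B$ when $|B|=3$ follows most directly from the relation $K_1(B;i,j,h)$ (together with $K_2$ to kill the incomparable terms), not from $N(B)$; the two ultimately give the same constraint here, but $K_1$ is the natural reference. Your alternative shortcut via $[\cEll{\{i,j\},\{k,l\}}]=2\lambda\,[\Nod_{\{i,j\},\{k,l\}}]$ and \Cref{cor:chow G is free} is also valid and is essentially the Remark following \Cref{thm:chow G}.
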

\begin{proof}
    To verify that this relation holds in $A^*(\Gcal_{1,4})$, we verify that its restriction to every stratum $\Tail{S}\simeq \widetilde{\Mcal}_{1,s_0}\times \prod_{|S_i|>1} \Mbar_{0, S_i\cup\star_i}$ vanishes.
    First we have $[\Nod_{2,2}]|_{\widetilde{\Mcal}_{1,4}}=3\cdot 2\lambda^2$ by \Cref{n4}, and the right hand side of \eqref{eq:Getzler on G} restricts to $6\lambda^2$.

    Given any stratum $\Tail{\disc{\{i,j\}}}$, the restriction of $[\Nod_{2,2}]$ is equal to $[\Nod_{\{i,j\},\{h,k\}}]=6\lambda^2$. As every $\tau_i$ for $i\neq 2$ restricts to zero, the restriction of \eqref{eq:Getzler on G} holds.

    Given any stratum $\Tail{\disc{\{i,j,h\}}}$, the restriction of $[\Nod_{2,2}]$ is zero. Observe that $\tau_4$ and $\tau_{2,2}$ restricts to zero, $\tau_2$ restricts to $3\Dcal_{\{i,j\}}$ and $\tau_3$ restrict to $-\lambda-\Dcal_{\{i,j\}}$. It is straightforward to check that the right hand side of \eqref{eq:Getzler on G} restricts to zero.

    Given any stratum $\Tail{\{i,j\},\{h,k\}}$, the restriction of $[\Nod_{2,2}]$ is equal to $12\lambda^2$ by \Cref{n2}. We see that $\tau_{3}$ and $\tau_{4}$ restrict to zero, and $\tau_{2,2}$ restricts to $\lambda^2$. We then check that the right hand side of \eqref{eq:Getzler on G} restricts to $12\lambda^2$.

    Finally, the restriction of $[\Nod_{2,2}]$ to $\Tail{[4]}$ is zero. We also have the following:
    \[ \tau_2 \longmapsto \sum_{i,j\in[4]} \Dcal_{\{i,j\}},\quad \tau_3 \longmapsto \sum_{i,j,h\in[4]} \Dcal_{\{i,j,h\}},\quad \tau_{4} \longmapsto -\lambda - \sum_{s_1,s_2\in S} \Dcal_S\]
    (recall our conventions from \Cref{prop:Chow_strata}), and $\tau_{2,2}$ restricts to zero. A straightforward computation\footnote{After performing the aforementioned substitutions, the linear term in $\lambda$ is \[\lambda\left(\sum_{i,j\in[4]}\Dcal_{\{i,j\}}+3\sum_{i,j,h\in[4]} \Dcal_{\{i,j,h\}}-6\sum_{s_1,s_2\in S}\Dcal_S\right)=\lambda\left(\sum_{i,j\in[4]}\Dcal_{\{i,j\}}+3\sum_{i,j,h\in[4]} \Dcal_{\{i,j,h\}}-\sum_{\{s_1,s_2\}\in{[4]\choose 2}}\sum_{s_1,s_2\in S}\Dcal_S\right)=0.\]} in the Chow ring of $\widetilde{\Mcal}_{1,1}\times \Mbar_{0,5}$ shows that the right hand side of \eqref{eq:Getzler on G} restricts to zero.
\end{proof}
\begin{corollary}
    Let $G$ be Getzler's cycle 
    \eqref{eq:Getzler}. The following relations hold in $A^2(\Gcal_{1,4})$:
    \[ G + 12\lambda^2 = 0, \qquad 2G+[\cEll{[4]}] = 0 \]
    In particular:
    \begin{enumerate}
        \item Getzler's relation does not hold integrally in $A^2(\Mbar_{1,4})$, but $2G$ does;
        \item for any $Q$ different from the empty set, Getzler's relation does not hold in $A^*(\Mbar_{1,4}(Q))$, even with $\mathbb Q$-coefficients.
    \end{enumerate}
\end{corollary}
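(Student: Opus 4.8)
The plan is to extract both identities from formula \eqref{eq:Getzler on G} of the preceding proposition, once I have pinned down two auxiliary classes in $A^*(\Gcal_{1,4})$: the divisor $\tau_0=\delta_\emptyset$ and the codimension-two class $[\cEll{\{1,2,3,4\}}]$. First I would identify $\delta_\emptyset$. The locus of curves carrying a non-separating node has generic point an irreducible nodal genus-one curve with all four markings on its core, so it coincides with $\cNod_{\{1,2,3,4\}}$; along the flat forgetful morphism $\Min{4}\to\Min{1}\simeq[V_4\oplus V_6/\Gm]$ of \Cref{LPtheorem} it is the pullback of the discriminant divisor $\{4a^3+27b^2=0\}$, so its restriction to the open stratum $\Min{4}$ is $12\lambda$. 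Running \Cref{const:class} --- equivalently, restricting $[\cNod_{\{1,2,3,4\}}]-12\lambda$ to each $\Tail{S}$ via \Cref{lm:intersection strata} and checking the vanishing, exactly as in the proof of the proposition --- then fixes the boundary corrections and yields the full expression of $\tau_0$ in terms of $\lambda$ and the $\tau_B$.

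Next I would substitute this expression for $\tau_0$ together with \eqref{eq:Getzler on G} into the Getzler cycle $G$ (the left-hand side of \eqref{eq:Getzler}), expand $\tau_i=\sum_{|B|=i}\tau_B$ as well as $\tau_{2,2}$ and $\Nod_{2,2}$, and reduce modulo the ideal of relations of \Cref{thm:chow G}. The relations doing the work are the $N(B)$'s, which rewrite the products $\lambda\tau_B$ and in particular absorb the ``excess'' products $\tau_B\,\tau_{\{1,2,3,4\}}$ --- these are genuine nonzero pushforwards from the closed stratum $\Tail{[4]}$, not zeros --- together with the vanishings $K_2(B,B')$. After the cancellations one is left with $G=-12\lambda^2$ in $A^2(\Gcal_{1,4})$, which is the first identity. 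For the second, I would compute $[\cEll{\{1,2,3,4\}}]$: since $\cEll{\{1,2,3,4\}}$ is the closure of the cusp locus and $[\Ell{S_{\min}}]=24\lambda^2$ on every open stratum $\Min{|S|}$ by \Cref{prop:classes}, the class $[\cEll{\{1,2,3,4\}}]-24\lambda^2$ restricts to zero on each $\Tail{S}$ and hence vanishes in $A^2(\Gcal_{1,4})$; combining with $G=-12\lambda^2$ gives $2G+[\cEll{\{1,2,3,4\}}]=-24\lambda^2+24\lambda^2=0$.

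Finally I would deduce the two consequences. For (1), restrict $G+12\lambda^2=0$ along the open immersion $\oM_{1,4}\hookrightarrow\Gcal_{1,4}$: by \Cref{cor:torsion} the class $\lambda^2$, pulled back from $A^2(\oM_{1,1})=\ZZ/24$, generates the torsion of $A^2(\oM_{1,4})$, so $G=-12\lambda^2$ has order $2$ and is nonzero, while $2G=-24\lambda^2=0$; thus the Getzler relation fails integrally on $\oM_{1,4}$ but holds rationally, recovering the classical relation. For (2), take any $Q$ with $\oM_{1,4}(Q)\neq\oM_{1,4}$, i.e.\ $Q\neq\emptyset$; as $Q$ is downward-closed it contains $S_{\min}=\{[4]\}$, so the cusp stratum $\Ell{\{1,2,3,4\}}$ survives in $\oM_{1,4}(Q)$. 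Since $\oM_{1,4}(Q)$ is a smooth proper Deligne--Mumford stack with projective coarse moduli space, the class of the nonempty effective cycle $\cEll{\{1,2,3,4\}}$ is nonzero in $A^2(\oM_{1,4}(Q))_{\mathbb Q}$; hence $G=-\tfrac12[\cEll{\{1,2,3,4\}}]\neq 0$ there, so the Getzler relation fails even with $\mathbb Q$-coefficients.

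The hard part will be the reduction step: correctly determining the boundary corrections in $\delta_\emptyset$ and then carrying the expansion of $G$ through the many quadratic relations of \Cref{thm:chow G} without sign errors. The genuine subtlety is that the products $\tau_B\,\tau_{\{1,2,3,4\}}$ are honest nonzero classes supported on the closed stratum $\Tail{[4]}$ (pushforwards of the boundary divisors of $\oM_{0,\{1,2,3,4\}\cup\star}$), so they cannot be discarded and must instead be eliminated using the $N(\{1,2,3,4\})$ and $N(B)$ relations before the expression collapses to $-12\lambda^2$.
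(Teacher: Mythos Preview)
Your approach is correct and matches what the paper has in mind (the Corollary is stated without proof, as a direct consequence of the Proposition and \Cref{cor:torsion}). Two small tightenings make the argument cleaner and closer to the paper's spirit.

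First, $\delta_\emptyset = 12\lambda$ holds \emph{exactly} in $A^1(\Gcal_{1,4})$, with no boundary corrections. On each stratum $\Tail{S}\simeq\Min{s_0}\times\prod\oM_{0,\bullet}$ the non-separating node lives in the core factor, where it is the pullback of the discriminant from $\Min{1}$ and hence has class $12\lambda_S$; by the patching description (\Cref{lm:patch}), a class agreeing with $12\lambda$ on every stratum equals $12\lambda$. So \Cref{const:class} returns no correction terms, and your substitution step is shorter than you anticipate: after inserting \eqref{eq:Getzler on G} and $\tau_0=12\lambda$ into $G$, one gets $G+12\lambda^2=2\tau_2\tau_4$ up to the quadratic relations of \Cref{thm:chow G}, and the remaining reduction is the ``straightforward computation'' alluded to at the end of the Proposition's proof.

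Second, for consequence (1) it is cleaner to avoid invoking a forgetful map to $\oM_{1,1}$ (not used in the paper) and argue directly from \Cref{cor:chow G is free}: since $A^*(\Gcal_{1,4})$ is a free $\ZZ[\lambda]$-module, $\lambda^2$ is part of a $\ZZ$-basis of $A^2(\Gcal_{1,4})$; passing to $\oM_{1,4}=\Gcal_{1,4}\setminus\bigcup_S\cEll{S}$ adds in codimension~$2$ only the single relation $[\cEll{\{1,2,3,4\}}]=24\lambda^2=0$, so $\lambda^2$ has order exactly $24$ there and $G=-12\lambda^2$ is nonzero $2$-torsion. Your argument for consequence (2) via effectivity on a smooth proper DM stack is exactly right.
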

\begin{proof}
    By substituting \Cref{eq:Getzler on G} in \Cref{eq:Getzler}, and by identifying $\tau_0$ with $12\lambda$ (\Cref{lem:nod}), we obtain the first equation. The second one follows from the identification of the class of the locus of worse-than-nodal curves with $24\lambda^2$ (\Cref{lem:cuspidal_class}).
\end{proof}

\appendix
\section{Fundamental classes of loci of singular curves}\label{app:poly}
 We only write the fundamental classes up to $n=5$, as for $n=6$ the explicit expressions are quite long. Furthermore, the fundamental classes of $\cEll{S}$ for $n\leq 5$ not appearing in the list below can be obtained from the ones in the list by permutation.
 
$n=1$.

\parbox[b]{0.9\textwidth}{\raggedright\hangafter=1\hangindent=2em$\displaystyle [\cEll{\{{1}\}}]= 24 \, {{\lambda}}^{2} $}

$n=2$.

\parbox[b]{0.9\textwidth}{\raggedright\hangafter=1\hangindent=2em$\displaystyle [\cEll{\{{1, 2}\}}]= 24 \, {{\lambda}}^{2} $}

 \parbox[b]{0.9\textwidth}{\raggedright\hangafter=1\hangindent=2em$\displaystyle [\cEll{\{{1}\}\{{2}\}}]= 24 \, {{\lambda}}^{3} + 24 \, {{\lambda}}^{2} {{\tau_{12}}} $}

$n=3$.

\parbox[b]{0.9\textwidth}{\raggedright\hangafter=1\hangindent=2em$\displaystyle [\cEll{\{{1, 2, 3}\}}]= 24 \, {{\lambda}}^{2} $}

 \parbox[b]{0.9\textwidth}{\raggedright\hangafter=1\hangindent=2em$\displaystyle [\cEll{\{{1}\}\{{2, 3}\}}]= 12 \, {{\lambda}}^{3} + 12 \, {{\lambda}}^{2} {{\tau_{12}}} + 12 \, {{\lambda}}^{2} {{\tau_{13}}} - 12 \, {{\lambda}}^{2} {{\tau_{23}}} + 12 \, {{\lambda}}^{2} {{\tau_{123}}} $}

 \parbox[b]{0.9\textwidth}{\raggedright\hangafter=1\hangindent=2em$\displaystyle [\cEll{\{{1}\}\{{2}\}\{{3}\}}]= 12 \, {{\lambda}}^{4} + 12 \, {{\lambda}}^{3} {{\tau_{12}}} + 12 \, {{\lambda}}^{3} {{\tau_{13}}} + 12 \, {{\lambda}}^{3} {{\tau_{23}}} + 12 \, {{\lambda}}^{3} {{\tau_{123}}} + 24 \, {{\lambda}}^{2} {{\tau_{12}}} {{\tau_{123}}} $}
 
$n=4$.

\parbox[b]{0.9\textwidth}{\raggedright\hangafter=1\hangindent=2em$\displaystyle [\cEll{\{{1, 2, 3, 4}\}}]= 24 \, {{\lambda}}^{2} $}

 \parbox[b]{0.9\textwidth}{\raggedright\hangafter=1\hangindent=2em$\displaystyle [\cEll{\{{1}\}\{{2, 3, 4}\}}]= 8 \, {{\lambda}}^{3} + 8 \, {{\lambda}}^{2} {{\tau_{12}}} - 16 \, {{\lambda}}^{2} {{\tau_{234}}} + 8 \, {{\lambda}}^{2} {{\tau_{1234}}} + 4 \, {{\lambda}} {{\tau_{12}}} {{\tau_{1234}}} - 8 \, {{\lambda}} {{\tau_{234}}} {{\tau_{1234}}} + 8 \, {{\lambda}}^{2} {{\tau_{13}}} + 4 \, {{\lambda}} {{\tau_{1234}}} {{\tau_{13}}} - 4 \, {{\lambda}}^{2} {{\tau_{23}}} + 4 \, {{\lambda}} {{\tau_{234}}} {{\tau_{23}}} - 4 \, {{\lambda}} {{\tau_{1234}}} {{\tau_{23}}} + 8 \, {{\lambda}}^{2} {{\tau_{14}}} - 4 \, {{\lambda}} {{\tau_{23}}} {{\tau_{14}}} - 4 \, {{\lambda}}^{2} {{\tau_{24}}} - 4 \, {{\lambda}} {{\tau_{13}}} {{\tau_{24}}} - 4 \, {{\lambda}}^{2} {{\tau_{34}}} - 4 \, {{\lambda}} {{\tau_{12}}} {{\tau_{34}}} + 8 \, {{\lambda}}^{2} {{\tau_{123}}} + 4 \, {{\lambda}} {{\tau_{12}}} {{\tau_{123}}} + 8 \, {{\lambda}}^{2} {{\tau_{124}}} + 4 \, {{\lambda}} {{\tau_{12}}} {{\tau_{124}}} + 8 \, {{\lambda}}^{2} {{\tau_{134}}} + 4 \, {{\lambda}} {{\tau_{13}}} {{\tau_{134}}} $}

 \parbox[b]{0.9\textwidth}{\raggedright\hangafter=1\hangindent=2em$\displaystyle [\cEll{\{{1, 2}\}\{{3, 4}\}}]= 4 \, {{\lambda}}^{3} - 8 \, {{\lambda}}^{2} {{\tau_{12}}} + 4 \, {{\lambda}}^{2} {{\tau_{234}}} + 4 \, {{\lambda}}^{2} {{\tau_{1234}}} - 4 \, {{\lambda}} {{\tau_{12}}} {{\tau_{1234}}} + 8 \, {{\lambda}} {{\tau_{234}}} {{\tau_{1234}}} + 4 \, {{\lambda}}^{2} {{\tau_{13}}} - 4 \, {{\lambda}} {{\tau_{1234}}} {{\tau_{13}}} + 4 \, {{\lambda}}^{2} {{\tau_{23}}} - 4 \, {{\lambda}} {{\tau_{234}}} {{\tau_{23}}} + 4 \, {{\lambda}} {{\tau_{1234}}} {{\tau_{23}}} + 4 \, {{\lambda}}^{2} {{\tau_{14}}} + 4 \, {{\lambda}} {{\tau_{23}}} {{\tau_{14}}} + 4 \, {{\lambda}}^{2} {{\tau_{24}}} + 4 \, {{\lambda}} {{\tau_{13}}} {{\tau_{24}}} - 8 \, {{\lambda}}^{2} {{\tau_{34}}} + 4 \, {{\lambda}} {{\tau_{12}}} {{\tau_{34}}} + 4 \, {{\lambda}}^{2} {{\tau_{123}}} - 4 \, {{\lambda}} {{\tau_{12}}} {{\tau_{123}}} + 4 \, {{\lambda}}^{2} {{\tau_{124}}} - 4 \, {{\lambda}} {{\tau_{12}}} {{\tau_{124}}} + 4 \, {{\lambda}}^{2} {{\tau_{134}}} - 4 \, {{\lambda}} {{\tau_{13}}} {{\tau_{134}}} $}

 \parbox[b]{0.9\textwidth}{\raggedright\hangafter=1\hangindent=2em$\displaystyle [\cEll{\{{1}\}\{{2, 4}\}\{{3}\}}]= 4 \, {{\lambda}}^{4} + 4 \, {{\lambda}}^{3} {{\tau_{12}}} + 4 \, {{\lambda}}^{3} {{\tau_{234}}} + 4 \, {{\lambda}}^{3} {{\tau_{1234}}} - 4 \, {{\lambda}}^{2} {{\tau_{12}}} {{\tau_{1234}}} + 8 \, {{\lambda}}^{2} {{\tau_{234}}} {{\tau_{1234}}} + 4 \, {{\lambda}}^{3} {{\tau_{13}}} + 8 \, {{\lambda}}^{2} {{\tau_{1234}}} {{\tau_{13}}} + 4 \, {{\lambda}}^{3} {{\tau_{23}}} - 4 \, {{\lambda}}^{2} {{\tau_{234}}} {{\tau_{23}}} + 4 \, {{\lambda}}^{2} {{\tau_{1234}}} {{\tau_{23}}} + 4 \, {{\lambda}}^{3} {{\tau_{14}}} + 4 \, {{\lambda}}^{2} {{\tau_{23}}} {{\tau_{14}}} - 8 \, {{\lambda}}^{3} {{\tau_{24}}} - 8 \, {{\lambda}}^{2} {{\tau_{13}}} {{\tau_{24}}} + 4 \, {{\lambda}}^{3} {{\tau_{34}}} + 4 \, {{\lambda}}^{2} {{\tau_{12}}} {{\tau_{34}}} + 4 \, {{\lambda}}^{3} {{\tau_{123}}} + 8 \, {{\lambda}}^{2} {{\tau_{12}}} {{\tau_{123}}} + 4 \, {{\lambda}}^{3} {{\tau_{124}}} - 4 \, {{\lambda}}^{2} {{\tau_{12}}} {{\tau_{124}}} + 4 \, {{\lambda}}^{3} {{\tau_{134}}} + 8 \, {{\lambda}}^{2} {{\tau_{13}}} {{\tau_{134}}} $}

 \parbox[b]{0.9\textwidth}{\raggedright\hangafter=1\hangindent=2em$\displaystyle [\cEll{\{{1}\}\{{2}\}\{{3}\}\{{4}\}}]= 4 \, {{\lambda}}^{5} + 4 \, {{\lambda}}^{4} {{\tau_{12}}} + 4 \, {{\lambda}}^{4} {{\tau_{234}}} + 4 \, {{\lambda}}^{4} {{\tau_{1234}}} - 4 \, {{\lambda}}^{3} {{\tau_{12}}} {{\tau_{1234}}} - 24 \, {{\lambda}}^{2} {{\tau_{12}}}^{2} {{\tau_{1234}}} + 8 \, {{\lambda}}^{3} {{\tau_{234}}} {{\tau_{1234}}} + 4 \, {{\lambda}}^{4} {{\tau_{13}}} - 4 \, {{\lambda}}^{3} {{\tau_{1234}}} {{\tau_{13}}} + 4 \, {{\lambda}}^{4} {{\tau_{23}}} + 8 \, {{\lambda}}^{3} {{\tau_{234}}} {{\tau_{23}}} + 16 \, {{\lambda}}^{3} {{\tau_{1234}}} {{\tau_{23}}} + 4 \, {{\lambda}}^{4} {{\tau_{14}}} + 12 \, {{\lambda}}^{3} {{\tau_{1234}}} {{\tau_{14}}} + 4 \, {{\lambda}}^{3} {{\tau_{23}}} {{\tau_{14}}} + 4 \, {{\lambda}}^{4} {{\tau_{24}}} + 4 \, {{\lambda}}^{3} {{\tau_{13}}} {{\tau_{24}}} + 4 \, {{\lambda}}^{4} {{\tau_{34}}} + 4 \, {{\lambda}}^{3} {{\tau_{12}}} {{\tau_{34}}} + 4 \, {{\lambda}}^{4} {{\tau_{123}}} + 8 \, {{\lambda}}^{3} {{\tau_{12}}} {{\tau_{123}}} + 4 \, {{\lambda}}^{4} {{\tau_{124}}} + 8 \, {{\lambda}}^{3} {{\tau_{12}}} {{\tau_{124}}} + 4 \, {{\lambda}}^{4} {{\tau_{134}}} + 8 \, {{\lambda}}^{3} {{\tau_{13}}} {{\tau_{134}}} $}

$n=5$.

\parbox[b]{0.9\textwidth}{\raggedright\hangafter=1\hangindent=2em$\displaystyle
 [\cEll{\{{1, 2, 3, 4, 5}\}}]= 24 \, {{\lambda}}^{2} $}
\\\\
\parbox[b]{0.9\textwidth}{\raggedright\hangafter=1\hangindent=2em$\displaystyle
 [\cEll{\{{1}\}\{{2, 3, 4, 5}\}}]= 6 \, {{\lambda}}^{3} + 6 \, {{\lambda}}^{2} {{\tau_{12}}} - 2 \, {{\lambda}}^{2} {{\tau_{45}}} - 2 \, {{\lambda}} {{\tau_{12}}} {{\tau_{45}}} + 6 \, {{\lambda}}^{2} {{\tau_{123}}} + 4 \, {{\lambda}} {{\tau_{12}}} {{\tau_{123}}} - 2 \, {{\lambda}} {{\tau_{45}}} {{\tau_{123}}} + 6 \, {{\lambda}}^{2} {{\tau_{124}}} + 4 \, {{\lambda}} {{\tau_{12}}} {{\tau_{124}}} + 6 \, {{\lambda}}^{2} {{\tau_{125}}} + 4 \, {{\lambda}} {{\tau_{12}}} {{\tau_{125}}} + 6 \, {{\lambda}}^{2} {{\tau_{134}}} + 6 \, {{\lambda}}^{2} {{\tau_{135}}} - 6 \, {{\lambda}}^{2} {{\tau_{234}}} - 6 \, {{\lambda}}^{2} {{\tau_{235}}} + 6 \, {{\lambda}}^{2} {{\tau_{145}}} - 6 \, {{\lambda}}^{2} {{\tau_{245}}} + 6 \, {{\lambda}}^{2} {{\tau_{13}}} - 2 \, {{\lambda}} {{\tau_{45}}} {{\tau_{13}}} + 4 \, {{\lambda}} {{\tau_{134}}} {{\tau_{13}}} + 4 \, {{\lambda}} {{\tau_{135}}} {{\tau_{13}}} - 6 \, {{\lambda}} {{\tau_{245}}} {{\tau_{13}}} - 6 \, {{\lambda}}^{2} {{\tau_{345}}} - 6 \, {{\lambda}} {{\tau_{12}}} {{\tau_{345}}} + 6 \, {{\lambda}}^{2} {{\tau_{1234}}} + 2 \, {{\lambda}} {{\tau_{12}}} {{\tau_{1234}}} + 2 \, {{\lambda}} {{\tau_{13}}} {{\tau_{1234}}} + 6 \, {{\lambda}}^{2} {{\tau_{1235}}} + 2 \, {{\lambda}} {{\tau_{12}}} {{\tau_{1235}}} + 2 \, {{\lambda}} {{\tau_{13}}} {{\tau_{1235}}} + 6 \, {{\lambda}}^{2} {{\tau_{1245}}} + 2 \, {{\lambda}} {{\tau_{12}}} {{\tau_{1245}}} + 6 \, {{\lambda}}^{2} {{\tau_{1345}}} + 2 \, {{\lambda}} {{\tau_{13}}} {{\tau_{1345}}} - 18 \, {{\lambda}}^{2} {{\tau_{2345}}} + 12 \, {{\lambda}} {{\tau_{345}}} {{\tau_{2345}}} + 6 \, {{\lambda}}^{2} {{\tau_{12345}}} - 2 \, {{\lambda}} {{\tau_{45}}} {{\tau_{12345}}} - 6 \, {{\lambda}} {{\tau_{123}}} {{\tau_{12345}}} - 3 \, {{\lambda}} {{\tau_{1234}}} {{\tau_{12345}}} - 3 \, {{\lambda}} {{\tau_{1235}}} {{\tau_{12345}}} - 6 \, {{\lambda}} {{\tau_{2345}}} {{\tau_{12345}}} - 2 \, {{\lambda}}^{2} {{\tau_{23}}} + 2 \, {{\lambda}} {{\tau_{45}}} {{\tau_{23}}} - 2 \, {{\lambda}} {{\tau_{145}}} {{\tau_{23}}} - 6 \, {{\lambda}} {{\tau_{2345}}} {{\tau_{23}}} - 2 \, {{\lambda}} {{\tau_{12345}}} {{\tau_{23}}} + 6 \, {{\lambda}}^{2} {{\tau_{14}}} - 6 \, {{\lambda}} {{\tau_{235}}} {{\tau_{14}}} + 4 \, {{\lambda}} {{\tau_{145}}} {{\tau_{14}}} + 2 \, {{\lambda}} {{\tau_{1234}}} {{\tau_{14}}} + 2 \, {{\lambda}} {{\tau_{1245}}} {{\tau_{14}}} + 2 \, {{\lambda}} {{\tau_{1345}}} {{\tau_{14}}} + 3 \, {{\lambda}} {{\tau_{12345}}} {{\tau_{14}}} - 2 \, {{\lambda}} {{\tau_{23}}} {{\tau_{14}}} - 2 \, {{\lambda}}^{2} {{\tau_{24}}} - 2 \, {{\lambda}} {{\tau_{135}}} {{\tau_{24}}} - 2 \, {{\lambda}} {{\tau_{13}}} {{\tau_{24}}} - 6 \, {{\lambda}} {{\tau_{2345}}} {{\tau_{24}}} + {{\lambda}} {{\tau_{12345}}} {{\tau_{24}}} - 2 \, {{\lambda}}^{2} {{\tau_{34}}} - 2 \, {{\lambda}} {{\tau_{12}}} {{\tau_{34}}} - 2 \, {{\lambda}} {{\tau_{125}}} {{\tau_{34}}} + 12 \, {{\lambda}} {{\tau_{2345}}} {{\tau_{34}}} + {{\lambda}} {{\tau_{12345}}} {{\tau_{34}}} + 6 \, {{\lambda}}^{2} {{\tau_{15}}} - 6 \, {{\lambda}} {{\tau_{234}}} {{\tau_{15}}} + 2 \, {{\lambda}} {{\tau_{1235}}} {{\tau_{15}}} + 2 \, {{\lambda}} {{\tau_{1245}}} {{\tau_{15}}} + 2 \, {{\lambda}} {{\tau_{1345}}} {{\tau_{15}}} + 3 \, {{\lambda}} {{\tau_{12345}}} {{\tau_{15}}} - 2 \, {{\lambda}} {{\tau_{23}}} {{\tau_{15}}} - 2 \, {{\lambda}} {{\tau_{24}}} {{\tau_{15}}} - 2 \, {{\lambda}} {{\tau_{34}}} {{\tau_{15}}} - 2 \, {{\lambda}}^{2} {{\tau_{25}}} - 2 \, {{\lambda}} {{\tau_{134}}} {{\tau_{25}}} - 2 \, {{\lambda}} {{\tau_{13}}} {{\tau_{25}}} + 6 \, {{\lambda}} {{\tau_{2345}}} {{\tau_{25}}} + {{\lambda}} {{\tau_{12345}}} {{\tau_{25}}} - 2 \, {{\lambda}} {{\tau_{14}}} {{\tau_{25}}} + 2 \, {{\lambda}} {{\tau_{34}}} {{\tau_{25}}} - 2 \, {{\lambda}}^{2} {{\tau_{35}}} - 2 \, {{\lambda}} {{\tau_{12}}} {{\tau_{35}}} - 2 \, {{\lambda}} {{\tau_{124}}} {{\tau_{35}}} + {{\lambda}} {{\tau_{12345}}} {{\tau_{35}}} - 2 \, {{\lambda}} {{\tau_{14}}} {{\tau_{35}}} + 2 \, {{\lambda}} {{\tau_{24}}} {{\tau_{35}}} $}
\\\\
 \parbox[b]{0.9\textwidth}{\raggedright\hangafter=1\hangindent=2em$\displaystyle
 [\cEll{\{{1, 3}\}\{{2, 4, 5}\}}]= 2 \, {{\lambda}}^{3} + 2 \, {{\lambda}}^{2} {{\tau_{12}}} - 2 \, {{\lambda}}^{2} {{\tau_{45}}} - 2 \, {{\lambda}} {{\tau_{12}}} {{\tau_{45}}} + 2 \, {{\lambda}}^{2} {{\tau_{123}}} - 4 \, {{\lambda}} {{\tau_{12}}} {{\tau_{123}}} - 2 \, {{\lambda}} {{\tau_{45}}} {{\tau_{123}}} + 2 \, {{\lambda}}^{2} {{\tau_{124}}} + 2 \, {{\lambda}}^{2} {{\tau_{125}}} + 2 \, {{\lambda}}^{2} {{\tau_{134}}} + 2 \, {{\lambda}}^{2} {{\tau_{135}}} + 2 \, {{\lambda}}^{2} {{\tau_{234}}} + 2 \, {{\lambda}}^{2} {{\tau_{235}}} + 2 \, {{\lambda}}^{2} {{\tau_{145}}} - 10 \, {{\lambda}}^{2} {{\tau_{245}}} - 6 \, {{\lambda}}^{2} {{\tau_{13}}} + 2 \, {{\lambda}} {{\tau_{45}}} {{\tau_{13}}} - 4 \, {{\lambda}} {{\tau_{134}}} {{\tau_{13}}} - 4 \, {{\lambda}} {{\tau_{135}}} {{\tau_{13}}} + 6 \, {{\lambda}} {{\tau_{245}}} {{\tau_{13}}} + 2 \, {{\lambda}}^{2} {{\tau_{345}}} + 2 \, {{\lambda}} {{\tau_{12}}} {{\tau_{345}}} + 2 \, {{\lambda}}^{2} {{\tau_{1234}}} - 2 \, {{\lambda}} {{\tau_{12}}} {{\tau_{1234}}} + 4 \, {{\lambda}} {{\tau_{234}}} {{\tau_{1234}}} - 6 \, {{\lambda}} {{\tau_{13}}} {{\tau_{1234}}} + 2 \, {{\lambda}}^{2} {{\tau_{1235}}} - 2 \, {{\lambda}} {{\tau_{12}}} {{\tau_{1235}}} + 4 \, {{\lambda}} {{\tau_{235}}} {{\tau_{1235}}} - 6 \, {{\lambda}} {{\tau_{13}}} {{\tau_{1235}}} + 2 \, {{\lambda}}^{2} {{\tau_{1245}}} + 2 \, {{\lambda}} {{\tau_{12}}} {{\tau_{1245}}} - 8 \, {{\lambda}} {{\tau_{245}}} {{\tau_{1245}}} + 2 \, {{\lambda}}^{2} {{\tau_{1345}}} - 6 \, {{\lambda}} {{\tau_{13}}} {{\tau_{1345}}} + 4 \, {{\lambda}} {{\tau_{345}}} {{\tau_{1345}}} + 2 \, {{\lambda}}^{2} {{\tau_{2345}}} - 8 \, {{\lambda}} {{\tau_{345}}} {{\tau_{2345}}} + 2 \, {{\lambda}}^{2} {{\tau_{12345}}} + 2 \, {{\lambda}} {{\tau_{45}}} {{\tau_{12345}}} + 2 \, {{\lambda}} {{\tau_{123}}} {{\tau_{12345}}} - 2 \, {{\lambda}} {{\tau_{13}}} {{\tau_{12345}}} - {{\lambda}} {{\tau_{1234}}} {{\tau_{12345}}} - {{\lambda}} {{\tau_{1235}}} {{\tau_{12345}}} + 4 \, {{\lambda}} {{\tau_{1245}}} {{\tau_{12345}}} - 2 \, {{\lambda}} {{\tau_{1345}}} {{\tau_{12345}}} + 4 \, {{\lambda}} {{\tau_{2345}}} {{\tau_{12345}}} + 2 \, {{\lambda}}^{2} {{\tau_{23}}} - 2 \, {{\lambda}} {{\tau_{45}}} {{\tau_{23}}} + 2 \, {{\lambda}} {{\tau_{145}}} {{\tau_{23}}} + 4 \, {{\lambda}} {{\tau_{1234}}} {{\tau_{23}}} + 4 \, {{\lambda}} {{\tau_{1235}}} {{\tau_{23}}} + 2 \, {{\lambda}} {{\tau_{2345}}} {{\tau_{23}}} + 2 \, {{\lambda}}^{2} {{\tau_{14}}} + 2 \, {{\lambda}} {{\tau_{235}}} {{\tau_{14}}} + 2 \, {{\lambda}} {{\tau_{1234}}} {{\tau_{14}}} + 2 \, {{\lambda}} {{\tau_{1245}}} {{\tau_{14}}} - 2 \, {{\lambda}} {{\tau_{1345}}} {{\tau_{14}}} - {{\lambda}} {{\tau_{12345}}} {{\tau_{14}}} + 2 \, {{\lambda}} {{\tau_{23}}} {{\tau_{14}}} - 2 \, {{\lambda}}^{2} {{\tau_{24}}} - 2 \, {{\lambda}} {{\tau_{135}}} {{\tau_{24}}} + 4 \, {{\lambda}} {{\tau_{245}}} {{\tau_{24}}} + 2 \, {{\lambda}} {{\tau_{13}}} {{\tau_{24}}} - 4 \, {{\lambda}} {{\tau_{1245}}} {{\tau_{24}}} + 6 \, {{\lambda}} {{\tau_{2345}}} {{\tau_{24}}} + {{\lambda}} {{\tau_{12345}}} {{\tau_{24}}} + 2 \, {{\lambda}}^{2} {{\tau_{34}}} + 2 \, {{\lambda}} {{\tau_{12}}} {{\tau_{34}}} + 2 \, {{\lambda}} {{\tau_{125}}} {{\tau_{34}}} + 4 \, {{\lambda}} {{\tau_{1345}}} {{\tau_{34}}} - 8 \, {{\lambda}} {{\tau_{2345}}} {{\tau_{34}}} - {{\lambda}} {{\tau_{12345}}} {{\tau_{34}}} + 2 \, {{\lambda}}^{2} {{\tau_{15}}} + 2 \, {{\lambda}} {{\tau_{234}}} {{\tau_{15}}} + 2 \, {{\lambda}} {{\tau_{1235}}} {{\tau_{15}}} - 2 \, {{\lambda}} {{\tau_{1245}}} {{\tau_{15}}} + 2 \, {{\lambda}} {{\tau_{1345}}} {{\tau_{15}}} - {{\lambda}} {{\tau_{12345}}} {{\tau_{15}}} + 2 \, {{\lambda}} {{\tau_{23}}} {{\tau_{15}}} - 2 \, {{\lambda}} {{\tau_{24}}} {{\tau_{15}}} + 2 \, {{\lambda}} {{\tau_{34}}} {{\tau_{15}}} - 2 \, {{\lambda}}^{2} {{\tau_{25}}} - 2 \, {{\lambda}} {{\tau_{134}}} {{\tau_{25}}} + 2 \, {{\lambda}} {{\tau_{13}}} {{\tau_{25}}} - 2 \, {{\lambda}} {{\tau_{2345}}} {{\tau_{25}}} + {{\lambda}} {{\tau_{12345}}} {{\tau_{25}}} - 2 \, {{\lambda}} {{\tau_{14}}} {{\tau_{25}}} - 2 \, {{\lambda}} {{\tau_{34}}} {{\tau_{25}}} + 2 \, {{\lambda}}^{2} {{\tau_{35}}} + 2 \, {{\lambda}} {{\tau_{12}}} {{\tau_{35}}} + 2 \, {{\lambda}} {{\tau_{124}}} {{\tau_{35}}} - {{\lambda}} {{\tau_{12345}}} {{\tau_{35}}} + 2 \, {{\lambda}} {{\tau_{14}}} {{\tau_{35}}} - 2 \, {{\lambda}} {{\tau_{24}}} {{\tau_{35}}} $}
\\\\
 \parbox[b]{0.9\textwidth}{\raggedright\hangafter=1\hangindent=2em$\displaystyle
 [\cEll{\{{1}\}\{{2, 4, 5}\}\{{3}\}}]= 2 \, {{\lambda}}^{4} + 2 \, {{\lambda}}^{3} {{\tau_{12}}} - 2 \, {{\lambda}}^{3} {{\tau_{45}}} - 2 \, {{\lambda}}^{2} {{\tau_{12}}} {{\tau_{45}}} + 2 \, {{\lambda}}^{3} {{\tau_{123}}} + 4 \, {{\lambda}}^{2} {{\tau_{12}}} {{\tau_{123}}} - 2 \, {{\lambda}}^{2} {{\tau_{45}}} {{\tau_{123}}} - 4 \, {{\lambda}} {{\tau_{12}}} {{\tau_{45}}} {{\tau_{123}}} + 2 \, {{\lambda}}^{3} {{\tau_{124}}} + 2 \, {{\lambda}}^{3} {{\tau_{125}}} + 2 \, {{\lambda}}^{3} {{\tau_{134}}} + 2 \, {{\lambda}}^{3} {{\tau_{135}}} + 2 \, {{\lambda}}^{3} {{\tau_{234}}} + 2 \, {{\lambda}}^{3} {{\tau_{235}}} + 2 \, {{\lambda}}^{3} {{\tau_{145}}} - 10 \, {{\lambda}}^{3} {{\tau_{245}}} + 2 \, {{\lambda}}^{3} {{\tau_{13}}} - 2 \, {{\lambda}}^{2} {{\tau_{45}}} {{\tau_{13}}} + 4 \, {{\lambda}}^{2} {{\tau_{134}}} {{\tau_{13}}} + 4 \, {{\lambda}}^{2} {{\tau_{135}}} {{\tau_{13}}} - 10 \, {{\lambda}}^{2} {{\tau_{245}}} {{\tau_{13}}} + 2 \, {{\lambda}}^{3} {{\tau_{345}}} + 2 \, {{\lambda}}^{2} {{\tau_{12}}} {{\tau_{345}}} + 2 \, {{\lambda}}^{3} {{\tau_{1234}}} - 2 \, {{\lambda}}^{2} {{\tau_{12}}} {{\tau_{1234}}} - 4 \, {{\lambda}} {{\tau_{12}}}^{2} {{\tau_{1234}}} + 4 \, {{\lambda}}^{2} {{\tau_{234}}} {{\tau_{1234}}} + 2 \, {{\lambda}}^{2} {{\tau_{13}}} {{\tau_{1234}}} + 2 \, {{\lambda}}^{3} {{\tau_{1235}}} - 2 \, {{\lambda}}^{2} {{\tau_{12}}} {{\tau_{1235}}} - 4 \, {{\lambda}} {{\tau_{12}}}^{2} {{\tau_{1235}}} + 4 \, {{\lambda}}^{2} {{\tau_{235}}} {{\tau_{1235}}} + 2 \, {{\lambda}}^{2} {{\tau_{13}}} {{\tau_{1235}}} + 2 \, {{\lambda}}^{3} {{\tau_{1245}}} + 2 \, {{\lambda}}^{2} {{\tau_{12}}} {{\tau_{1245}}} - 8 \, {{\lambda}}^{2} {{\tau_{245}}} {{\tau_{1245}}} + 2 \, {{\lambda}}^{3} {{\tau_{1345}}} + 2 \, {{\lambda}}^{2} {{\tau_{13}}} {{\tau_{1345}}} - 4 \, {{\lambda}} {{\tau_{13}}}^{2} {{\tau_{1345}}} + 4 \, {{\lambda}}^{2} {{\tau_{345}}} {{\tau_{1345}}} + 2 \, {{\lambda}}^{3} {{\tau_{2345}}} - 8 \, {{\lambda}}^{2} {{\tau_{345}}} {{\tau_{2345}}} + 2 \, {{\lambda}}^{3} {{\tau_{12345}}} - 2 \, {{\lambda}} {{\tau_{12}}}^{2} {{\tau_{12345}}} + 2 \, {{\lambda}}^{2} {{\tau_{45}}} {{\tau_{12345}}} - 4 \, {{\lambda}} {{\tau_{12}}} {{\tau_{45}}} {{\tau_{12345}}} + 2 \, {{\lambda}}^{2} {{\tau_{123}}} {{\tau_{12345}}} + 6 \, {{\lambda}}^{2} {{\tau_{13}}} {{\tau_{12345}}} + 2 \, {{\lambda}} {{\tau_{13}}}^{2} {{\tau_{12345}}} - {{\lambda}}^{2} {{\tau_{1234}}} {{\tau_{12345}}} - {{\lambda}}^{2} {{\tau_{1235}}} {{\tau_{12345}}} + 4 \, {{\lambda}}^{2} {{\tau_{1245}}} {{\tau_{12345}}} - 2 \, {{\lambda}} {{\tau_{12}}} {{\tau_{1245}}} {{\tau_{12345}}} - 2 \, {{\lambda}}^{2} {{\tau_{1345}}} {{\tau_{12345}}} - 6 \, {{\lambda}} {{\tau_{13}}} {{\tau_{1345}}} {{\tau_{12345}}} + 4 \, {{\lambda}}^{2} {{\tau_{2345}}} {{\tau_{12345}}} + 2 \, {{\lambda}}^{3} {{\tau_{23}}} - 2 \, {{\lambda}}^{2} {{\tau_{45}}} {{\tau_{23}}} + 2 \, {{\lambda}}^{2} {{\tau_{145}}} {{\tau_{23}}} + 4 \, {{\lambda}}^{2} {{\tau_{1234}}} {{\tau_{23}}} + 4 \, {{\lambda}}^{2} {{\tau_{1235}}} {{\tau_{23}}} + 2 \, {{\lambda}}^{2} {{\tau_{2345}}} {{\tau_{23}}} + 2 \, {{\lambda}}^{3} {{\tau_{14}}} + 2 \, {{\lambda}}^{2} {{\tau_{235}}} {{\tau_{14}}} + 2 \, {{\lambda}}^{2} {{\tau_{1234}}} {{\tau_{14}}} + 2 \, {{\lambda}}^{2} {{\tau_{1245}}} {{\tau_{14}}} - 2 \, {{\lambda}}^{2} {{\tau_{1345}}} {{\tau_{14}}} - {{\lambda}}^{2} {{\tau_{12345}}} {{\tau_{14}}} + 2 \, {{\lambda}}^{2} {{\tau_{23}}} {{\tau_{14}}} - 2 \, {{\lambda}}^{3} {{\tau_{24}}} - 2 \, {{\lambda}}^{2} {{\tau_{135}}} {{\tau_{24}}} + 4 \, {{\lambda}}^{2} {{\tau_{245}}} {{\tau_{24}}} - 2 \, {{\lambda}}^{2} {{\tau_{13}}} {{\tau_{24}}} - 4 \, {{\lambda}} {{\tau_{135}}} {{\tau_{13}}} {{\tau_{24}}} + 4 \, {{\lambda}} {{\tau_{245}}} {{\tau_{13}}} {{\tau_{24}}} - 4 \, {{\lambda}}^{2} {{\tau_{1245}}} {{\tau_{24}}} + 6 \, {{\lambda}}^{2} {{\tau_{2345}}} {{\tau_{24}}} + {{\lambda}}^{2} {{\tau_{12345}}} {{\tau_{24}}} + 2 \, {{\lambda}} {{\tau_{13}}} {{\tau_{12345}}} {{\tau_{24}}} + 2 \, {{\lambda}}^{3} {{\tau_{34}}} + 2 \, {{\lambda}}^{2} {{\tau_{12}}} {{\tau_{34}}} + 2 \, {{\lambda}}^{2} {{\tau_{125}}} {{\tau_{34}}} + 4 \, {{\lambda}}^{2} {{\tau_{1345}}} {{\tau_{34}}} - 8 \, {{\lambda}}^{2} {{\tau_{2345}}} {{\tau_{34}}} - {{\lambda}}^{2} {{\tau_{12345}}} {{\tau_{34}}} + 2 \, {{\lambda}} {{\tau_{12}}} {{\tau_{12345}}} {{\tau_{34}}} + 2 \, {{\lambda}}^{3} {{\tau_{15}}} + 2 \, {{\lambda}}^{2} {{\tau_{234}}} {{\tau_{15}}} + 2 \, {{\lambda}}^{2} {{\tau_{1235}}} {{\tau_{15}}} - 2 \, {{\lambda}}^{2} {{\tau_{1245}}} {{\tau_{15}}} + 2 \, {{\lambda}}^{2} {{\tau_{1345}}} {{\tau_{15}}} - {{\lambda}}^{2} {{\tau_{12345}}} {{\tau_{15}}} + 2 \, {{\lambda}}^{2} {{\tau_{23}}} {{\tau_{15}}} - 2 \, {{\lambda}}^{2} {{\tau_{24}}} {{\tau_{15}}} + 2 \, {{\lambda}}^{2} {{\tau_{34}}} {{\tau_{15}}} - 2 \, {{\lambda}}^{3} {{\tau_{25}}} - 2 \, {{\lambda}}^{2} {{\tau_{134}}} {{\tau_{25}}} - 2 \, {{\lambda}}^{2} {{\tau_{13}}} {{\tau_{25}}} - 4 \, {{\lambda}} {{\tau_{134}}} {{\tau_{13}}} {{\tau_{25}}} - 2 \, {{\lambda}}^{2} {{\tau_{2345}}} {{\tau_{25}}} + {{\lambda}}^{2} {{\tau_{12345}}} {{\tau_{25}}} + 2 \, {{\lambda}} {{\tau_{13}}} {{\tau_{12345}}} {{\tau_{25}}} - 2 \, {{\lambda}}^{2} {{\tau_{14}}} {{\tau_{25}}} - 2 \, {{\lambda}}^{2} {{\tau_{34}}} {{\tau_{25}}} + 2 \, {{\lambda}}^{3} {{\tau_{35}}} + 2 \, {{\lambda}}^{2} {{\tau_{12}}} {{\tau_{35}}} + 2 \, {{\lambda}}^{2} {{\tau_{124}}} {{\tau_{35}}} - {{\lambda}}^{2} {{\tau_{12345}}} {{\tau_{35}}} + 2 \, {{\lambda}} {{\tau_{12}}} {{\tau_{12345}}} {{\tau_{35}}} + 2 \, {{\lambda}}^{2} {{\tau_{14}}} {{\tau_{35}}} - 2 \, {{\lambda}}^{2} {{\tau_{24}}} {{\tau_{35}}} $}
\\\\
 \parbox[b]{0.9\textwidth}{\raggedright\hangafter=1\hangindent=2em$\displaystyle
 [\cEll{\{{1}\}\{{2, 3}\}\{{4, 5}\}}]= {{\lambda}}^{4} + {{\lambda}}^{3} {{\tau_{12}}} - 3 \, {{\lambda}}^{3} {{\tau_{45}}} - 3 \, {{\lambda}}^{2} {{\tau_{12}}} {{\tau_{45}}} + {{\lambda}}^{3} {{\tau_{123}}} - 2 \, {{\lambda}}^{2} {{\tau_{12}}} {{\tau_{123}}} - 3 \, {{\lambda}}^{2} {{\tau_{45}}} {{\tau_{123}}} + 2 \, {{\lambda}} {{\tau_{12}}} {{\tau_{45}}} {{\tau_{123}}} + {{\lambda}}^{3} {{\tau_{124}}} + 2 \, {{\lambda}}^{2} {{\tau_{12}}} {{\tau_{124}}} + {{\lambda}}^{3} {{\tau_{125}}} + 2 \, {{\lambda}}^{2} {{\tau_{12}}} {{\tau_{125}}} + {{\lambda}}^{3} {{\tau_{134}}} + {{\lambda}}^{3} {{\tau_{135}}} + {{\lambda}}^{3} {{\tau_{234}}} + {{\lambda}}^{3} {{\tau_{235}}} + {{\lambda}}^{3} {{\tau_{145}}} + {{\lambda}}^{3} {{\tau_{245}}} + {{\lambda}}^{3} {{\tau_{13}}} - 3 \, {{\lambda}}^{2} {{\tau_{45}}} {{\tau_{13}}} + 2 \, {{\lambda}}^{2} {{\tau_{134}}} {{\tau_{13}}} + 2 \, {{\lambda}}^{2} {{\tau_{135}}} {{\tau_{13}}} + {{\lambda}}^{2} {{\tau_{245}}} {{\tau_{13}}} + {{\lambda}}^{3} {{\tau_{345}}} + {{\lambda}}^{2} {{\tau_{12}}} {{\tau_{345}}} + {{\lambda}}^{3} {{\tau_{1234}}} - {{\lambda}}^{2} {{\tau_{12}}} {{\tau_{1234}}} + 2 \, {{\lambda}} {{\tau_{12}}}^{2} {{\tau_{1234}}} + 2 \, {{\lambda}}^{2} {{\tau_{234}}} {{\tau_{1234}}} - {{\lambda}}^{2} {{\tau_{13}}} {{\tau_{1234}}} + {{\lambda}}^{3} {{\tau_{1235}}} - {{\lambda}}^{2} {{\tau_{12}}} {{\tau_{1235}}} + 2 \, {{\lambda}} {{\tau_{12}}}^{2} {{\tau_{1235}}} + 2 \, {{\lambda}}^{2} {{\tau_{235}}} {{\tau_{1235}}} - {{\lambda}}^{2} {{\tau_{13}}} {{\tau_{1235}}} + {{\lambda}}^{3} {{\tau_{1245}}} + 3 \, {{\lambda}}^{2} {{\tau_{12}}} {{\tau_{1245}}} + 2 \, {{\lambda}} {{\tau_{12}}}^{2} {{\tau_{1245}}} + 2 \, {{\lambda}}^{2} {{\tau_{245}}} {{\tau_{1245}}} + {{\lambda}}^{3} {{\tau_{1345}}} + 3 \, {{\lambda}}^{2} {{\tau_{13}}} {{\tau_{1345}}} + 2 \, {{\lambda}} {{\tau_{13}}}^{2} {{\tau_{1345}}} + 2 \, {{\lambda}}^{2} {{\tau_{345}}} {{\tau_{1345}}} + {{\lambda}}^{3} {{\tau_{2345}}} + 2 \, {{\lambda}}^{2} {{\tau_{345}}} {{\tau_{2345}}} + {{\lambda}}^{3} {{\tau_{12345}}} + 3 \, {{\lambda}}^{2} {{\tau_{12}}} {{\tau_{12345}}} + 2 \, {{\lambda}} {{\tau_{12}}} {{\tau_{45}}} {{\tau_{12345}}} + 9 \, {{\lambda}}^{2} {{\tau_{123}}} {{\tau_{12345}}} - {{\lambda}}^{2} {{\tau_{124}}} {{\tau_{12345}}} - {{\lambda}}^{2} {{\tau_{125}}} {{\tau_{12345}}} + 4 \, {{\lambda}}^{2} {{\tau_{13}}} {{\tau_{12345}}} + 2 \, {{\lambda}} {{\tau_{13}}}^{2} {{\tau_{12345}}} + 3 \, {{\lambda}}^{2} {{\tau_{1234}}} {{\tau_{12345}}} + 3 \, {{\lambda}}^{2} {{\tau_{1235}}} {{\tau_{12345}}} - 2 \, {{\lambda}}^{2} {{\tau_{1245}}} {{\tau_{12345}}} + 4 \, {{\lambda}} {{\tau_{12}}} {{\tau_{1245}}} {{\tau_{12345}}} - {{\lambda}}^{2} {{\tau_{1345}}} {{\tau_{12345}}} + 6 \, {{\lambda}} {{\tau_{13}}} {{\tau_{1345}}} {{\tau_{12345}}} - {{\lambda}}^{2} {{\tau_{2345}}} {{\tau_{12345}}} + 3 \, {{\lambda}} {{\tau_{2345}}}^{2} {{\tau_{12345}}} - 3 \, {{\lambda}}^{3} {{\tau_{23}}} + 5 \, {{\lambda}}^{2} {{\tau_{45}}} {{\tau_{23}}} - 2 \, {{\lambda}}^{2} {{\tau_{234}}} {{\tau_{23}}} - 2 \, {{\lambda}}^{2} {{\tau_{235}}} {{\tau_{23}}} - 3 \, {{\lambda}}^{2} {{\tau_{145}}} {{\tau_{23}}} - {{\lambda}}^{2} {{\tau_{2345}}} {{\tau_{23}}} - 2 \, {{\lambda}} {{\tau_{2345}}} {{\tau_{23}}}^{2} + {{\lambda}} {{\tau_{12345}}} {{\tau_{23}}}^{2} + {{\lambda}}^{3} {{\tau_{14}}} + {{\lambda}}^{2} {{\tau_{235}}} {{\tau_{14}}} - 2 \, {{\lambda}}^{2} {{\tau_{145}}} {{\tau_{14}}} + 3 \, {{\lambda}}^{2} {{\tau_{1234}}} {{\tau_{14}}} - {{\lambda}}^{2} {{\tau_{1245}}} {{\tau_{14}}} - {{\lambda}}^{2} {{\tau_{1345}}} {{\tau_{14}}} - {{\lambda}}^{2} {{\tau_{12345}}} {{\tau_{14}}} - 3 \, {{\lambda}}^{2} {{\tau_{23}}} {{\tau_{14}}} - 2 \, {{\lambda}} {{\tau_{235}}} {{\tau_{23}}} {{\tau_{14}}} + 2 \, {{\lambda}} {{\tau_{145}}} {{\tau_{23}}} {{\tau_{14}}} + 4 \, {{\lambda}} {{\tau_{12345}}} {{\tau_{23}}} {{\tau_{14}}} - 3 \, {{\lambda}} {{\tau_{12345}}} {{\tau_{14}}}^{2} + {{\lambda}}^{3} {{\tau_{24}}} + {{\lambda}}^{2} {{\tau_{135}}} {{\tau_{24}}} - 2 \, {{\lambda}}^{2} {{\tau_{245}}} {{\tau_{24}}} + {{\lambda}}^{2} {{\tau_{13}}} {{\tau_{24}}} + 2 \, {{\lambda}} {{\tau_{135}}} {{\tau_{13}}} {{\tau_{24}}} - 2 \, {{\lambda}} {{\tau_{245}}} {{\tau_{13}}} {{\tau_{24}}} - {{\lambda}}^{2} {{\tau_{2345}}} {{\tau_{24}}} - {{\lambda}}^{2} {{\tau_{12345}}} {{\tau_{24}}} - 4 \, {{\lambda}} {{\tau_{13}}} {{\tau_{12345}}} {{\tau_{24}}} + {{\lambda}}^{3} {{\tau_{34}}} + {{\lambda}}^{2} {{\tau_{12}}} {{\tau_{34}}} + {{\lambda}}^{2} {{\tau_{125}}} {{\tau_{34}}} + 2 \, {{\lambda}} {{\tau_{12}}} {{\tau_{125}}} {{\tau_{34}}} - 2 \, {{\lambda}}^{2} {{\tau_{345}}} {{\tau_{34}}} - 2 \, {{\lambda}} {{\tau_{12}}} {{\tau_{345}}} {{\tau_{34}}} - {{\lambda}}^{2} {{\tau_{12345}}} {{\tau_{34}}} - 2 \, {{\lambda}} {{\tau_{12}}} {{\tau_{12345}}} {{\tau_{34}}} + {{\lambda}}^{3} {{\tau_{15}}} + {{\lambda}}^{2} {{\tau_{234}}} {{\tau_{15}}} + 3 \, {{\lambda}}^{2} {{\tau_{1235}}} {{\tau_{15}}} - {{\lambda}}^{2} {{\tau_{1245}}} {{\tau_{15}}} - {{\lambda}}^{2} {{\tau_{1345}}} {{\tau_{15}}} - {{\lambda}}^{2} {{\tau_{12345}}} {{\tau_{15}}} - 3 \, {{\lambda}}^{2} {{\tau_{23}}} {{\tau_{15}}} - 2 \, {{\lambda}} {{\tau_{234}}} {{\tau_{23}}} {{\tau_{15}}} - 2 \, {{\lambda}} {{\tau_{12345}}} {{\tau_{23}}} {{\tau_{15}}} + {{\lambda}}^{2} {{\tau_{24}}} {{\tau_{15}}} + {{\lambda}}^{2} {{\tau_{34}}} {{\tau_{15}}} - 3 \, {{\lambda}} {{\tau_{12345}}} {{\tau_{15}}}^{2} + {{\lambda}}^{3} {{\tau_{25}}} + {{\lambda}}^{2} {{\tau_{134}}} {{\tau_{25}}} + {{\lambda}}^{2} {{\tau_{13}}} {{\tau_{25}}} + 2 \, {{\lambda}} {{\tau_{134}}} {{\tau_{13}}} {{\tau_{25}}} - {{\lambda}}^{2} {{\tau_{2345}}} {{\tau_{25}}} - {{\lambda}}^{2} {{\tau_{12345}}} {{\tau_{25}}} + 2 \, {{\lambda}} {{\tau_{13}}} {{\tau_{12345}}} {{\tau_{25}}} + {{\lambda}}^{2} {{\tau_{14}}} {{\tau_{25}}} - 6 \, {{\lambda}} {{\tau_{12345}}} {{\tau_{14}}} {{\tau_{25}}} + {{\lambda}}^{2} {{\tau_{34}}} {{\tau_{25}}} + {{\lambda}}^{3} {{\tau_{35}}} + {{\lambda}}^{2} {{\tau_{12}}} {{\tau_{35}}} + {{\lambda}}^{2} {{\tau_{124}}} {{\tau_{35}}} + 2 \, {{\lambda}} {{\tau_{12}}} {{\tau_{124}}} {{\tau_{35}}} - {{\lambda}}^{2} {{\tau_{12345}}} {{\tau_{35}}} - 2 \, {{\lambda}} {{\tau_{12}}} {{\tau_{12345}}} {{\tau_{35}}} + {{\lambda}}^{2} {{\tau_{14}}} {{\tau_{35}}} + {{\lambda}}^{2} {{\tau_{24}}} {{\tau_{35}}} $}
\\\\
 \parbox[b]{0.9\textwidth}{\raggedright\hangafter=1\hangindent=2em$\displaystyle
 [\cEll{\{{1}\}\{{2, 4}\}\{{3}\}\{{5}\}}]= {{\lambda}}^{5} + {{\lambda}}^{4} {{\tau_{12}}} + {{\lambda}}^{4} {{\tau_{45}}} + {{\lambda}}^{3} {{\tau_{12}}} {{\tau_{45}}} + {{\lambda}}^{4} {{\tau_{123}}} + 2 \, {{\lambda}}^{3} {{\tau_{12}}} {{\tau_{123}}} + {{\lambda}}^{3} {{\tau_{45}}} {{\tau_{123}}} + 2 \, {{\lambda}}^{2} {{\tau_{12}}} {{\tau_{45}}} {{\tau_{123}}} + {{\lambda}}^{4} {{\tau_{124}}} - 2 \, {{\lambda}}^{3} {{\tau_{12}}} {{\tau_{124}}} + {{\lambda}}^{4} {{\tau_{125}}} + 2 \, {{\lambda}}^{3} {{\tau_{12}}} {{\tau_{125}}} + {{\lambda}}^{4} {{\tau_{134}}} + {{\lambda}}^{4} {{\tau_{135}}} + {{\lambda}}^{4} {{\tau_{234}}} + {{\lambda}}^{4} {{\tau_{235}}} + {{\lambda}}^{4} {{\tau_{145}}} + {{\lambda}}^{4} {{\tau_{245}}} + {{\lambda}}^{4} {{\tau_{13}}} + {{\lambda}}^{3} {{\tau_{45}}} {{\tau_{13}}} + 2 \, {{\lambda}}^{3} {{\tau_{134}}} {{\tau_{13}}} + 2 \, {{\lambda}}^{3} {{\tau_{135}}} {{\tau_{13}}} + {{\lambda}}^{3} {{\tau_{245}}} {{\tau_{13}}} + {{\lambda}}^{4} {{\tau_{345}}} + {{\lambda}}^{3} {{\tau_{12}}} {{\tau_{345}}} + {{\lambda}}^{4} {{\tau_{1234}}} - {{\lambda}}^{3} {{\tau_{12}}} {{\tau_{1234}}} + 2 \, {{\lambda}}^{2} {{\tau_{12}}}^{2} {{\tau_{1234}}} + 2 \, {{\lambda}}^{3} {{\tau_{234}}} {{\tau_{1234}}} + 3 \, {{\lambda}}^{3} {{\tau_{13}}} {{\tau_{1234}}} + {{\lambda}}^{4} {{\tau_{1235}}} - {{\lambda}}^{3} {{\tau_{12}}} {{\tau_{1235}}} - 6 \, {{\lambda}}^{2} {{\tau_{12}}}^{2} {{\tau_{1235}}} + 2 \, {{\lambda}}^{3} {{\tau_{235}}} {{\tau_{1235}}} - {{\lambda}}^{3} {{\tau_{13}}} {{\tau_{1235}}} + {{\lambda}}^{4} {{\tau_{1245}}} - {{\lambda}}^{3} {{\tau_{12}}} {{\tau_{1245}}} + 2 \, {{\lambda}}^{2} {{\tau_{12}}}^{2} {{\tau_{1245}}} + 2 \, {{\lambda}}^{3} {{\tau_{245}}} {{\tau_{1245}}} + {{\lambda}}^{4} {{\tau_{1345}}} - {{\lambda}}^{3} {{\tau_{13}}} {{\tau_{1345}}} - 6 \, {{\lambda}}^{2} {{\tau_{13}}}^{2} {{\tau_{1345}}} + 2 \, {{\lambda}}^{3} {{\tau_{345}}} {{\tau_{1345}}} + {{\lambda}}^{4} {{\tau_{2345}}} + 2 \, {{\lambda}}^{3} {{\tau_{345}}} {{\tau_{2345}}} + {{\lambda}}^{4} {{\tau_{12345}}} + 3 \, {{\lambda}}^{3} {{\tau_{12}}} {{\tau_{12345}}} + 4 \, {{\lambda}}^{2} {{\tau_{12}}}^{2} {{\tau_{12345}}} + 4 \, {{\lambda}}^{3} {{\tau_{45}}} {{\tau_{12345}}} + 2 \, {{\lambda}}^{2} {{\tau_{12}}} {{\tau_{45}}} {{\tau_{12345}}} + 9 \, {{\lambda}}^{3} {{\tau_{123}}} {{\tau_{12345}}} - {{\lambda}}^{3} {{\tau_{124}}} {{\tau_{12345}}} - {{\lambda}}^{3} {{\tau_{125}}} {{\tau_{12345}}} + 4 \, {{\lambda}}^{3} {{\tau_{13}}} {{\tau_{12345}}} - 4 \, {{\lambda}}^{2} {{\tau_{13}}}^{2} {{\tau_{12345}}} + 3 \, {{\lambda}}^{3} {{\tau_{1234}}} {{\tau_{12345}}} + 3 \, {{\lambda}}^{3} {{\tau_{1235}}} {{\tau_{12345}}} - 2 \, {{\lambda}}^{3} {{\tau_{1245}}} {{\tau_{12345}}} - {{\lambda}}^{3} {{\tau_{1345}}} {{\tau_{12345}}} - {{\lambda}}^{3} {{\tau_{2345}}} {{\tau_{12345}}} - 3 \, {{\lambda}}^{2} {{\tau_{2345}}}^{2} {{\tau_{12345}}} + {{\lambda}}^{4} {{\tau_{23}}} + {{\lambda}}^{3} {{\tau_{45}}} {{\tau_{23}}} - 2 \, {{\lambda}}^{3} {{\tau_{234}}} {{\tau_{23}}} + 2 \, {{\lambda}}^{3} {{\tau_{235}}} {{\tau_{23}}} + {{\lambda}}^{3} {{\tau_{145}}} {{\tau_{23}}} + 4 \, {{\lambda}}^{3} {{\tau_{1235}}} {{\tau_{23}}} - {{\lambda}}^{3} {{\tau_{2345}}} {{\tau_{23}}} + 4 \, {{\lambda}}^{3} {{\tau_{12345}}} {{\tau_{23}}} + 2 \, {{\lambda}}^{2} {{\tau_{2345}}} {{\tau_{23}}}^{2} - {{\lambda}}^{2} {{\tau_{12345}}} {{\tau_{23}}}^{2} + {{\lambda}}^{4} {{\tau_{14}}} + {{\lambda}}^{3} {{\tau_{235}}} {{\tau_{14}}} + 2 \, {{\lambda}}^{3} {{\tau_{145}}} {{\tau_{14}}} - {{\lambda}}^{3} {{\tau_{1234}}} {{\tau_{14}}} - {{\lambda}}^{3} {{\tau_{1245}}} {{\tau_{14}}} - {{\lambda}}^{3} {{\tau_{1345}}} {{\tau_{14}}} - {{\lambda}}^{3} {{\tau_{12345}}} {{\tau_{14}}} + {{\lambda}}^{3} {{\tau_{23}}} {{\tau_{14}}} + 2 \, {{\lambda}}^{2} {{\tau_{235}}} {{\tau_{23}}} {{\tau_{14}}} + 2 \, {{\lambda}}^{2} {{\tau_{145}}} {{\tau_{23}}} {{\tau_{14}}} + 4 \, {{\lambda}}^{2} {{\tau_{12345}}} {{\tau_{23}}} {{\tau_{14}}} + 3 \, {{\lambda}}^{2} {{\tau_{12345}}} {{\tau_{14}}}^{2} - 3 \, {{\lambda}}^{4} {{\tau_{24}}} - 3 \, {{\lambda}}^{3} {{\tau_{135}}} {{\tau_{24}}} - 2 \, {{\lambda}}^{3} {{\tau_{245}}} {{\tau_{24}}} - 3 \, {{\lambda}}^{3} {{\tau_{13}}} {{\tau_{24}}} - 6 \, {{\lambda}}^{2} {{\tau_{135}}} {{\tau_{13}}} {{\tau_{24}}} - 2 \, {{\lambda}}^{2} {{\tau_{245}}} {{\tau_{13}}} {{\tau_{24}}} - 5 \, {{\lambda}}^{3} {{\tau_{2345}}} {{\tau_{24}}} - 5 \, {{\lambda}}^{3} {{\tau_{12345}}} {{\tau_{24}}} - 6 \, {{\lambda}}^{2} {{\tau_{13}}} {{\tau_{12345}}} {{\tau_{24}}} + {{\lambda}}^{4} {{\tau_{34}}} + {{\lambda}}^{3} {{\tau_{12}}} {{\tau_{34}}} + {{\lambda}}^{3} {{\tau_{125}}} {{\tau_{34}}} + 2 \, {{\lambda}}^{2} {{\tau_{12}}} {{\tau_{125}}} {{\tau_{34}}} + 2 \, {{\lambda}}^{3} {{\tau_{345}}} {{\tau_{34}}} + 2 \, {{\lambda}}^{2} {{\tau_{12}}} {{\tau_{345}}} {{\tau_{34}}} + 4 \, {{\lambda}}^{3} {{\tau_{1345}}} {{\tau_{34}}} + 4 \, {{\lambda}}^{3} {{\tau_{2345}}} {{\tau_{34}}} - {{\lambda}}^{3} {{\tau_{12345}}} {{\tau_{34}}} + 2 \, {{\lambda}}^{2} {{\tau_{12}}} {{\tau_{12345}}} {{\tau_{34}}} + {{\lambda}}^{4} {{\tau_{15}}} + {{\lambda}}^{3} {{\tau_{234}}} {{\tau_{15}}} + 3 \, {{\lambda}}^{3} {{\tau_{1235}}} {{\tau_{15}}} + 3 \, {{\lambda}}^{3} {{\tau_{1245}}} {{\tau_{15}}} + 3 \, {{\lambda}}^{3} {{\tau_{1345}}} {{\tau_{15}}} - {{\lambda}}^{3} {{\tau_{12345}}} {{\tau_{15}}} + {{\lambda}}^{3} {{\tau_{23}}} {{\tau_{15}}} - 2 \, {{\lambda}}^{2} {{\tau_{234}}} {{\tau_{23}}} {{\tau_{15}}} - 2 \, {{\lambda}}^{2} {{\tau_{12345}}} {{\tau_{23}}} {{\tau_{15}}} - 3 \, {{\lambda}}^{3} {{\tau_{24}}} {{\tau_{15}}} + {{\lambda}}^{3} {{\tau_{34}}} {{\tau_{15}}} - 5 \, {{\lambda}}^{2} {{\tau_{12345}}} {{\tau_{15}}}^{2} + {{\lambda}}^{4} {{\tau_{25}}} + {{\lambda}}^{3} {{\tau_{134}}} {{\tau_{25}}} + {{\lambda}}^{3} {{\tau_{13}}} {{\tau_{25}}} + 2 \, {{\lambda}}^{2} {{\tau_{134}}} {{\tau_{13}}} {{\tau_{25}}} + 3 \, {{\lambda}}^{3} {{\tau_{2345}}} {{\tau_{25}}} - {{\lambda}}^{3} {{\tau_{12345}}} {{\tau_{25}}} + 4 \, {{\lambda}}^{2} {{\tau_{13}}} {{\tau_{12345}}} {{\tau_{25}}} + {{\lambda}}^{3} {{\tau_{14}}} {{\tau_{25}}} - 2 \, {{\lambda}}^{2} {{\tau_{12345}}} {{\tau_{14}}} {{\tau_{25}}} + {{\lambda}}^{3} {{\tau_{34}}} {{\tau_{25}}} + {{\lambda}}^{4} {{\tau_{35}}} + {{\lambda}}^{3} {{\tau_{12}}} {{\tau_{35}}} + {{\lambda}}^{3} {{\tau_{124}}} {{\tau_{35}}} - 2 \, {{\lambda}}^{2} {{\tau_{12}}} {{\tau_{124}}} {{\tau_{35}}} - {{\lambda}}^{3} {{\tau_{12345}}} {{\tau_{35}}} - 2 \, {{\lambda}}^{2} {{\tau_{12}}} {{\tau_{12345}}} {{\tau_{35}}} + {{\lambda}}^{3} {{\tau_{14}}} {{\tau_{35}}} - 3 \, {{\lambda}}^{3} {{\tau_{24}}} {{\tau_{35}}} $}
\\\\
 \parbox[b]{0.85\textwidth}{\raggedright\hangafter=1\hangindent=2em$\displaystyle
 [\cEll{\{{1}\}\{{2}\}\{{3}\}\{{4}\}\{{5}\}}]= {{\lambda}}^{6} + {{\lambda}}^{5} {{\tau_{12}}} + {{\lambda}}^{5} {{\tau_{45}}} + {{\lambda}}^{4} {{\tau_{12}}} {{\tau_{45}}} + {{\lambda}}^{5} {{\tau_{123}}} + 2 \, {{\lambda}}^{4} {{\tau_{12}}} {{\tau_{123}}} + {{\lambda}}^{4} {{\tau_{45}}} {{\tau_{123}}} + 2 \, {{\lambda}}^{3} {{\tau_{12}}} {{\tau_{45}}} {{\tau_{123}}} + {{\lambda}}^{5} {{\tau_{124}}} + 2 \, {{\lambda}}^{4} {{\tau_{12}}} {{\tau_{124}}} + {{\lambda}}^{5} {{\tau_{125}}} + 2 \, {{\lambda}}^{4} {{\tau_{12}}} {{\tau_{125}}} + {{\lambda}}^{5} {{\tau_{134}}} + {{\lambda}}^{5} {{\tau_{135}}} + {{\lambda}}^{5} {{\tau_{234}}} + {{\lambda}}^{5} {{\tau_{235}}} + {{\lambda}}^{5} {{\tau_{145}}} + {{\lambda}}^{5} {{\tau_{245}}} + {{\lambda}}^{5} {{\tau_{13}}} + {{\lambda}}^{4} {{\tau_{45}}} {{\tau_{13}}} + 2 \, {{\lambda}}^{4} {{\tau_{134}}} {{\tau_{13}}} + 2 \, {{\lambda}}^{4} {{\tau_{135}}} {{\tau_{13}}} + {{\lambda}}^{4} {{\tau_{245}}} {{\tau_{13}}} + {{\lambda}}^{5} {{\tau_{345}}} + {{\lambda}}^{4} {{\tau_{12}}} {{\tau_{345}}} + {{\lambda}}^{5} {{\tau_{1234}}} - {{\lambda}}^{4} {{\tau_{12}}} {{\tau_{1234}}} - 6 \, {{\lambda}}^{3} {{\tau_{12}}}^{2} {{\tau_{1234}}} + 2 \, {{\lambda}}^{4} {{\tau_{234}}} {{\tau_{1234}}} - {{\lambda}}^{4} {{\tau_{13}}} {{\tau_{1234}}} + {{\lambda}}^{5} {{\tau_{1235}}} - {{\lambda}}^{4} {{\tau_{12}}} {{\tau_{1235}}} - 6 \, {{\lambda}}^{3} {{\tau_{12}}}^{2} {{\tau_{1235}}} + 2 \, {{\lambda}}^{4} {{\tau_{235}}} {{\tau_{1235}}} - {{\lambda}}^{4} {{\tau_{13}}} {{\tau_{1235}}} + {{\lambda}}^{5} {{\tau_{1245}}} - {{\lambda}}^{4} {{\tau_{12}}} {{\tau_{1245}}} - 6 \, {{\lambda}}^{3} {{\tau_{12}}}^{2} {{\tau_{1245}}} + 2 \, {{\lambda}}^{4} {{\tau_{245}}} {{\tau_{1245}}} + {{\lambda}}^{5} {{\tau_{1345}}} - {{\lambda}}^{4} {{\tau_{13}}} {{\tau_{1345}}} - 6 \, {{\lambda}}^{3} {{\tau_{13}}}^{2} {{\tau_{1345}}} + 2 \, {{\lambda}}^{4} {{\tau_{345}}} {{\tau_{1345}}} + {{\lambda}}^{5} {{\tau_{2345}}} + 2 \, {{\lambda}}^{4} {{\tau_{345}}} {{\tau_{2345}}} + {{\lambda}}^{5} {{\tau_{12345}}} + 3 \, {{\lambda}}^{4} {{\tau_{12}}} {{\tau_{12345}}} + 6 \, {{\lambda}}^{3} {{\tau_{12}}}^{2} {{\tau_{12345}}} + 24 \, {{\lambda}}^{2} {{\tau_{12}}}^{3} {{\tau_{12345}}} + 4 \, {{\lambda}}^{4} {{\tau_{45}}} {{\tau_{12345}}} + 2 \, {{\lambda}}^{3} {{\tau_{12}}} {{\tau_{45}}} {{\tau_{12345}}} + 9 \, {{\lambda}}^{4} {{\tau_{123}}} {{\tau_{12345}}} - {{\lambda}}^{4} {{\tau_{124}}} {{\tau_{12345}}} - {{\lambda}}^{4} {{\tau_{125}}} {{\tau_{12345}}} + 4 \, {{\lambda}}^{4} {{\tau_{13}}} {{\tau_{12345}}} + 4 \, {{\lambda}}^{3} {{\tau_{13}}}^{2} {{\tau_{12345}}} + 3 \, {{\lambda}}^{4} {{\tau_{1234}}} {{\tau_{12345}}} + 3 \, {{\lambda}}^{4} {{\tau_{1235}}} {{\tau_{12345}}} - 2 \, {{\lambda}}^{4} {{\tau_{1245}}} {{\tau_{12345}}} + 2 \, {{\lambda}}^{3} {{\tau_{12}}} {{\tau_{1245}}} {{\tau_{12345}}} - {{\lambda}}^{4} {{\tau_{1345}}} {{\tau_{12345}}} - {{\lambda}}^{4} {{\tau_{2345}}} {{\tau_{12345}}} - 3 \, {{\lambda}}^{3} {{\tau_{2345}}}^{2} {{\tau_{12345}}} + {{\lambda}}^{5} {{\tau_{23}}} + {{\lambda}}^{4} {{\tau_{45}}} {{\tau_{23}}} + 2 \, {{\lambda}}^{4} {{\tau_{234}}} {{\tau_{23}}} + 2 \, {{\lambda}}^{4} {{\tau_{235}}} {{\tau_{23}}} + {{\lambda}}^{4} {{\tau_{145}}} {{\tau_{23}}} + 4 \, {{\lambda}}^{4} {{\tau_{1234}}} {{\tau_{23}}} + 4 \, {{\lambda}}^{4} {{\tau_{1235}}} {{\tau_{23}}} - {{\lambda}}^{4} {{\tau_{2345}}} {{\tau_{23}}} + 4 \, {{\lambda}}^{4} {{\tau_{12345}}} {{\tau_{23}}} - 6 \, {{\lambda}}^{3} {{\tau_{2345}}} {{\tau_{23}}}^{2} - 9 \, {{\lambda}}^{3} {{\tau_{12345}}} {{\tau_{23}}}^{2} + {{\lambda}}^{5} {{\tau_{14}}} + {{\lambda}}^{4} {{\tau_{235}}} {{\tau_{14}}} + 2 \, {{\lambda}}^{4} {{\tau_{145}}} {{\tau_{14}}} + 3 \, {{\lambda}}^{4} {{\tau_{1234}}} {{\tau_{14}}} - {{\lambda}}^{4} {{\tau_{1245}}} {{\tau_{14}}} - {{\lambda}}^{4} {{\tau_{1345}}} {{\tau_{14}}} - {{\lambda}}^{4} {{\tau_{12345}}} {{\tau_{14}}} + {{\lambda}}^{4} {{\tau_{23}}} {{\tau_{14}}} + 2 \, {{\lambda}}^{3} {{\tau_{235}}} {{\tau_{23}}} {{\tau_{14}}} + 2 \, {{\lambda}}^{3} {{\tau_{145}}} {{\tau_{23}}} {{\tau_{14}}} - 5 \, {{\lambda}}^{3} {{\tau_{12345}}} {{\tau_{14}}}^{2} + {{\lambda}}^{5} {{\tau_{24}}} + {{\lambda}}^{4} {{\tau_{135}}} {{\tau_{24}}} + 2 \, {{\lambda}}^{4} {{\tau_{245}}} {{\tau_{24}}} + {{\lambda}}^{4} {{\tau_{13}}} {{\tau_{24}}} + 2 \, {{\lambda}}^{3} {{\tau_{135}}} {{\tau_{13}}} {{\tau_{24}}} + 2 \, {{\lambda}}^{3} {{\tau_{245}}} {{\tau_{13}}} {{\tau_{24}}} + 4 \, {{\lambda}}^{4} {{\tau_{1245}}} {{\tau_{24}}} - {{\lambda}}^{4} {{\tau_{2345}}} {{\tau_{24}}} - {{\lambda}}^{4} {{\tau_{12345}}} {{\tau_{24}}} + 2 \, {{\lambda}}^{3} {{\tau_{13}}} {{\tau_{12345}}} {{\tau_{24}}} + {{\lambda}}^{5} {{\tau_{34}}} + {{\lambda}}^{4} {{\tau_{12}}} {{\tau_{34}}} + {{\lambda}}^{4} {{\tau_{125}}} {{\tau_{34}}} + 2 \, {{\lambda}}^{3} {{\tau_{12}}} {{\tau_{125}}} {{\tau_{34}}} + 2 \, {{\lambda}}^{4} {{\tau_{345}}} {{\tau_{34}}} + 2 \, {{\lambda}}^{3} {{\tau_{12}}} {{\tau_{345}}} {{\tau_{34}}} + 4 \, {{\lambda}}^{4} {{\tau_{1345}}} {{\tau_{34}}} + 4 \, {{\lambda}}^{4} {{\tau_{2345}}} {{\tau_{34}}} - {{\lambda}}^{4} {{\tau_{12345}}} {{\tau_{34}}} + {{\lambda}}^{5} {{\tau_{15}}} + {{\lambda}}^{4} {{\tau_{234}}} {{\tau_{15}}} + 3 \, {{\lambda}}^{4} {{\tau_{1235}}} {{\tau_{15}}} + 3 \, {{\lambda}}^{4} {{\tau_{1245}}} {{\tau_{15}}} + 3 \, {{\lambda}}^{4} {{\tau_{1345}}} {{\tau_{15}}} - {{\lambda}}^{4} {{\tau_{12345}}} {{\tau_{15}}} + {{\lambda}}^{4} {{\tau_{23}}} {{\tau_{15}}} + 2 \, {{\lambda}}^{3} {{\tau_{234}}} {{\tau_{23}}} {{\tau_{15}}} + 2 \, {{\lambda}}^{3} {{\tau_{12345}}} {{\tau_{23}}} {{\tau_{15}}} + {{\lambda}}^{4} {{\tau_{24}}} {{\tau_{15}}} + {{\lambda}}^{4} {{\tau_{34}}} {{\tau_{15}}} - 5 \, {{\lambda}}^{3} {{\tau_{12345}}} {{\tau_{15}}}^{2} + {{\lambda}}^{5} {{\tau_{25}}} + {{\lambda}}^{4} {{\tau_{134}}} {{\tau_{25}}} + {{\lambda}}^{4} {{\tau_{13}}} {{\tau_{25}}} + 2 \, {{\lambda}}^{3} {{\tau_{134}}} {{\tau_{13}}} {{\tau_{25}}} + 3 \, {{\lambda}}^{4} {{\tau_{2345}}} {{\tau_{25}}} - {{\lambda}}^{4} {{\tau_{12345}}} {{\tau_{25}}} + {{\lambda}}^{4} {{\tau_{14}}} {{\tau_{25}}} + 2 \, {{\lambda}}^{3} {{\tau_{12345}}} {{\tau_{14}}} {{\tau_{25}}} + {{\lambda}}^{4} {{\tau_{34}}} {{\tau_{25}}} + {{\lambda}}^{5} {{\tau_{35}}} + {{\lambda}}^{4} {{\tau_{12}}} {{\tau_{35}}} + {{\lambda}}^{4} {{\tau_{124}}} {{\tau_{35}}} + 2 \, {{\lambda}}^{3} {{\tau_{12}}} {{\tau_{124}}} {{\tau_{35}}} - {{\lambda}}^{4} {{\tau_{12345}}} {{\tau_{35}}} + {{\lambda}}^{4} {{\tau_{14}}} {{\tau_{35}}} + {{\lambda}}^{4} {{\tau_{24}}} {{\tau_{35}}} $}

\bibliographystyle{halpha-abbrv}
\bibliography{bibliography.bib}

\subsection*{Funding} During the preparation of this work, L.B. was partially supported by the European Union - NextGenerationEU under the National Recovery and Resilience Plan (PNRR) - Mission 4: Education and Research - Component 2: From research to business - Investment 1.1 Notice Prin 2022 - DD N. 104 del 2/2/2022, titled "Symplectic varieties: their interplay with Fano manifolds and derived categories", proposal code 2022PEKYBJ – CUP J53D23003840006. L.B. is a member of INdAM group GNSAGA.
\appendix
\end{document}